  \newtheorem{thm}{Theorem}[section]
  \newtheorem{lemma}[thm]{Lemma}
  \newtheorem{cor}[thm]{Corollary}
  \newtheorem{prop}[thm]{Proposition}
  \newtheorem*{main}{Main Theorem}{}
  \theoremstyle{remark}
  \newtheorem{definition}[thm]{Definition}
  \newtheorem{remark}[thm]{Remark}
  \newtheorem*{definition*}{Definition}
  \newtheorem*{remark*}{Remark}
  \def\R{{\mathbb R}}
  \def\Z{{\mathbb Z}}
  \def\N{{\mathbb N}}
  \def\S{{\mathcal S}}
  \def\t{{\tau}}
  \def\F{{\mathcal F}}
  \def\diam{\operatorname{diam}}
  \def\A{{\mathcal A}}
  \def\ML{{\mathcal{ML}}}
    \def\PML{{\mathcal{PML}}}
  \def\C{{\mathcal C}}
  \def\Mod{\operatorname{Mod}}
  \def\MCG{\operatorname{MCG}}
  \def\asdim{\operatorname{asdim}}
  \def\Out{\operatorname{Out}}
  \def\ind{\operatorname{ind}}
  \def\ss{{\overset{ss}\longrightarrow}}
    \def\T{{\mathcal T}}
\newcommand{\bS}{{\bf S}}
\newcommand{\ATL}{\operatorname{ATL}}
\newcommand{\cF}{{\cal F}}
\newcommand{\cI}{{\mathcal I}}
\newcommand{\cP}{{\mathcal P}}
\date \today
\title{Disintegrating the curve complex}
\author{Mladen Bestvina, Kenneth Bromberg, and Alexander J. Rasmussen}
\begin{document}
\maketitle

\begin{abstract}
We study a finite sequence of graphs, beginning with the curve graph and ending
with a graph quasi-isometric to a tree. There is a Lipschitz map from one
graph in the sequence to the next. This sequence was first introduced by
Hamenst\"{a}dt. We prove (as conjectured by Hamenst\"{a}dt) that the graphs in this
sequence are hyperbolic and that the coarse fibers of the maps in the sequence
are quasi-trees. This gives an upper bound on the asymptotic dimension of each
graph in the sequence and as a result, an upper bound on the asymptotic dimension
of the curve graph. Additionally, we show that the action of the mapping class
group on each graph in the sequence is acylindrical, and classify the boundary
and actions of individual mapping classes for each graph in the sequence.
\end{abstract}

\section{Introduction}

The curve graph $\C(\Sigma)$ of a finite type orientable surface $\Sigma$
is a crucial tool for studying the coarse geometry of the mapping class group
$\Mod(\Sigma)$. The graphs $\C=\C(\Sigma)$ were introduced by Harvey in \cite{Harvey}
as an analog of Tits buildings for mapping class groups. From the perspective
of coarse geometry, the most important property of $\C(\Sigma)$ is that it is
an infinite diameter \emph{hyperbolic} graph, as proved by Masur-Minsky in \cite{MM}.
Our purpose in this paper is to study $\C(\Sigma)$ via
``disintegration.'' Our main result is the following. Some of the terms in the Main
Theorem will be explained below.
See the numbered references in parentheses for precise statements of results.

\begin{main}
\label{bigthm}
  There is a tower of graphs:
  $$\C_{m(\Sigma)}(\Sigma)\to \C_{m(\Sigma)-1}(\Sigma)\to\cdots\to\C_0(\Sigma)$$
  with the following properties:
  \begin{itemize}
    \item $\C_{m(\Sigma)}(\Sigma)$ is quasi-isometric to $\C(\Sigma)$
          (Section \ref{sec:2}).
    \item The mapping class group $\Mod(\Sigma)$ acts on $\C_k(\Sigma)$
          by isometries (Section \ref{sec:2}).
          Moreover, $\C_k(\Sigma)$ is hyperbolic and has infinite diameter
          (Proposition \ref{hyperbolic}).
    \item $m(\Sigma)$ is a linear function of the complexity of
          $\Sigma$
          (Section \ref{sec:2}).
    \item The graph $\C_0(\Sigma)$ at the bottom of the tower is a
          quasi-tree (Theorem \ref{mainthm}).
    \item Each map $\C_{k+1}(\Sigma)\to\C_k(\Sigma)$ in the tower is 1-Lipschitz,
          coarsely surjective, alignment-preserving, and $\Mod(\Sigma)$-equivariant
          (Section \ref{sec:2}).
    \item The coarse fibers of each $\C_{k+1}(\Sigma)\to\C_k(\Sigma)$ are
          quasi-trees and are quasi-convex in $\C_{k+1}(\Sigma)$ (Theorem \ref{mainthm}).
    \item The action of $\Mod(\Sigma)$ on each $\C_k(\Sigma)$ is
          acylindrical (and in particular there is a positive
          lower bound on the asymptotic translation lengths of loxodromic
          elements of $\Mod(\Sigma)$ on $\C_k(\Sigma)$) (Theorem \ref{allacyl}).
    \item The Gromov boundary of each $\C_k(\Sigma)$ is a certain
          explicit subset of the space of ending laminations on
          $\Sigma$ (which is the Gromov boundary of $\C(\Sigma)$,
          \cite{klarreich}) (Theorem \ref{boundary}).
    \item There is an explicit classification of which elements of
          $\Mod(\Sigma)$ act loxodromically on $\C_k(\Sigma)$. All other
          elements act elliptically (Theorem \ref{loxclassification}).
  \end{itemize}
\end{main}

\noindent See Section \ref{sec:2} for the definition of the graphs $\C_k$
and the definition of coarse fibers.

An important part of the Main Theorem (specifically,
Theorem \ref{mainthm}) was first announced by Hamenst\"{a}dt in a
very inspiring lecture \cite{ursula-video} at MSRI in Fall
2016.
However, a proof
has not yet been circulated.

The graph $\C_0(\Sigma)$ has been studied before, by Maher-Masai-Schleimer
\cite{MMS2} and by Hamenst\"adt \cite{HamenstadtPrincipal,ursula_random}
(who called it the \emph{principal
  curve graph}). The definitions of $\C_0(\Sigma)$ differ slightly in the two
papers, but yield quasi-isometric spaces. We will adopt the analog of the
Maher-Masai-Schleimer definition as our working definition, but we
discuss the relationship with Hamenst\"adt's definition; see Section
\ref{sec:3}. The two papers together prove hyperbolicity and infinite diameter
of $\C_0(\Sigma)$, identify the Gromov boundary and loxodromic elements,
show that $\C(\Sigma)\to\C_0(\Sigma)$ is
Lipschitz and alignment-preserving, and that the action
is weakly properly discontinuous (\emph{WPD}) (see \cite{BF} for the definition
of WPD; the definition of \emph{alignment-preserving} is given below). Our
contribution to the study of $\C_0(\Sigma)$ is that it is a quasi-tree,
and that the action is acylindrical
(see \cite{bowditch:tight} for the definition of acylindricity).
We also prove the corresponding
results for all $\C_k(\Sigma)$. The most difficult part of the Main Theorem is the
proof that the coarse fibers of each map are quasi-trees; see Section \ref{sec:7}.
The workhorse in
this proof is an analog of the Masur-Minsky Nesting Lemma \cite[Lemma 4.7]{MM} giving a
criterion for when a splitting sequence of train tracks makes progress in
$\C_k(\Sigma)$. See Section \ref{sec:progress}.

We point out that this tower immediately implies
a bound on asymptotic dimensions:
\begin{cor}
  For each $k$, $\asdim \C_k(\Sigma) \leq k+1$. In particular,
  $\asdim \C(\Sigma) \leq m(\Sigma) + 1$.
\end{cor}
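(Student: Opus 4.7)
The plan is induction on $k$, leveraging the Hurewicz-type theorem for asymptotic dimension due to Bell--Dranishnikov: if $f\colon X\to Y$ is a Lipschitz map of metric spaces and the family of preimages of sets of bounded diameter has asymptotic dimension at most $n$ uniformly, then $\asdim X\le \asdim Y+n$.

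\textbf{Base case.} When $k=0$, the Main Theorem asserts that $\C_0(\Sigma)$ is a quasi-tree. Every quasi-tree has asymptotic dimension at most $1$, since $\asdim$ is a quasi-isometry invariant and a simplicial tree visibly has $\asdim\le 1$. Hence $\asdim\C_0(\Sigma)\le 1=0+1$.

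\textbf{Inductive step.} Suppose $\asdim\C_k(\Sigma)\le k+1$. The map $f_k\colon\C_{k+1}(\Sigma)\to\C_k(\Sigma)$ is $1$-Lipschitz and coarsely surjective, and the Main Theorem says its coarse fibers are quasi-trees and quasi-convex (with uniform constants, thanks to $\Mod(\Sigma)$-equivariance). For $R>0$ and $y\in\C_k(\Sigma)$, the preimage $f_k^{-1}(B_R(y))$ is, by the $1$-Lipschitz property, contained in the $R$-neighborhood of the coarse fiber over $y$, hence is at bounded Hausdorff distance from a quasi-tree whose constants depend only on $R$. Such a set has $\asdim\le 1$, uniformly in $y$. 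Applying Bell--Dranishnikov to $f_k$ gives
\[\asdim\C_{k+1}(\Sigma)\le \asdim\C_k(\Sigma)+1\le k+2,\]
completing the induction.

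The ``in particular'' clause is then immediate: by the first bullet of the Main Theorem, $\C(\Sigma)$ is quasi-isometric to $\C_{m(\Sigma)}(\Sigma)$, and $\asdim$ is invariant under quasi-isometry, so $\asdim\C(\Sigma)\le m(\Sigma)+1$. The one step that requires genuine care is the uniform $\asdim\le 1$ bound on $f_k^{-1}(B_R(y))$; this is the place where the uniformity of the quasi-tree constants for coarse fibers (implicit in the Main Theorem) is indispensable, and is essentially the only obstacle between the bullet-point properties and the stated corollary.
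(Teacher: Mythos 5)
Your proof is correct and follows essentially the same route as the paper: induction on $k$ via the Bell--Dranishnikov Hurewicz-type theorem, with the base case that $\C_0(\Sigma)$ is a quasi-tree, preimages of balls contained in quasi-trees (hence $\asdim\le 1$ by subspace monotonicity), and uniformity obtained from the $\Mod(\Sigma)$-action (the paper phrases this as finitely many orbits of balls $B_R(y)$ with the preimages permuted isometrically). The only quibble is your phrase ``contained in the $R$-neighborhood of the coarse fiber over $y$'': the preimage $f_k^{-1}(B_R(y))$ is itself a coarse fiber in the paper's sense and is directly contained in a quasi-convex quasi-tree, which is all that is needed.
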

\noindent This follows from the Bell-Dranishnikov theorem
\cite{Hurewicz} about asymptotic dimension and induction on $k$.
Thus the tower can be viewed as a strong
witness of the fact that $\asdim\C(\Sigma)<\infty$.

To put this work in context, let us briefly describe some
disintegration results, for other hyperbolic complexes. In \cite{FS},
Fujiwara-Schleimer constructed a tower interpolating between the arc
and the curve graph of (bordered or punctured) $\Sigma$ with coarse
fibers which are quasi-trees of curve graphs of subsurfaces.
A \emph{witness}
$W$ is an essential subsurface that intersects every arc.
There is a notion
of complexity $\xi(W)$, taking on positive integer values, so that $V\subset W$
implies $\xi(V)\leq \xi(W)$ and equality holds exactly if $V$ and $W$ are
isotopic. Now let $X_0=\mathcal A$ be the arc graph of the surface
$\Sigma$. For $c>0$ define $X_c$ by adding a cone point over
the set of arcs contained
in a witness $W$ of complexity $\leq c$, for each such witness $W$.
This yields a tower of 1-Lipschitz maps
$$\mathcal A=X_0\to X_1\to\cdots\to X_{\xi(\Sigma)-1}.$$ It turns out
that the last graph $X_{\xi(S)-1}$ is quasi-isometric to the curve
graph $\C(\Sigma)$, so the tower yields an interpolation between the arc
and the curve graphs. Moreover, each $X_c$ is hyperbolic and each
$X_c\to X_{c+1}$ is obtained by coning off a collection
of pairwise disjoint quasi-convex subsets with bounded closest-point
projections to
each other, each of which is quasi-isometric to the curve graph of a witness
of complexity $c+1$. Fujiwara-Schleimer observe that this structure
implies that the arc graph has finite asymptotic dimension, by using
\cite{Hurewicz} and the fact that curve graphs have finite asymptotic
dimension. In \cite{bf:boundary} the first author and Feighn use this structure in
a similar inductive fashion to give an alternative description of the
Gromov boundary of the arc graph, originally due to Schleimer and
Pho-On \cite{PhoOn}.

There is an analogous construction for $\Out(F_n)$-complexes. In
\cite{bf:boundary}, the first author and Feighn construct a tower interpolating
between the free splitting and cyclic splitting complexes of the free
group $F_n$. All complexes in the tower are hyperbolic and the maps are
Lipschitz and $\Out(F_n)$-equivariant. The coarse fibers are
quasi-trees of certain relative cyclic splitting
complexes. The existence of interpolating towers between cyclic
splitting and free factor complexes, and between free factor complexes
and quasi-trees is still open.

In both the arc-to-curve-graph and free-to-cyclic-splitting-complex towers,
each map is coning off of a collection of
quasi-convex sets with uniformly bounded projections to each
other. The coarse fibers can then be understood using the projection
complex \cite{BBF} constructed from this collection. That is \emph{not} the
case in our Main Theorem. The sets being coned off at each stage,
while quasi-convex, typically have infinite diameter
intersections, and one such set may in fact be properly
contained in another.

We now say a bit more about some terms in
the Main Theorem that may be less familiar to the reader.

Given a train track $\tau\subset\Sigma$, the set of projective
measured laminations it carries is a cell in the space $\PML(\Sigma)$
of all projective measured laminations, which is a sphere. The
codimension of this cell is the {\it index} $\ind(\tau)$ of $\tau$. We
define $\C_k(\Sigma)$ by starting with $\C(\Sigma)$ and adding edges
between curves $\alpha,\beta$ provided they are both carried by a
train track of index $>k$. If $m(\Sigma)$ is the smallest number such
that train tracks of index $>m(\Sigma)$ are necessarily non-filling,
then $\C_{m(\Sigma)}(\Sigma)$ is quasi-isometric to $\C(\Sigma)$. See Section
\ref{sec:2}.

A triple $x,y,z$ in a metric space $X$
is called $K$-aligned if $d(x,y)+d(y,z) \leq d(x,z) + K$.
We say that a Lipschitz map $f:X\to Y$ is \emph{alignment-preserving} if
there exists $K>0$ such that whenever $x,y,z$ is $0$-aligned, $f(x),f(y),f(z)$
is $K$-aligned. If $X$ and $Y$ are hyperbolic, then an
alignment-preserving map $f$ sends
re-parametrized quasi-geodesics
to re-parametrized quasi-geodesics.

The statement that the coarse fibers of $\C_{k+1}(\Sigma) \to\C_k(\Sigma)$ are
quasi-convex quasi-trees means that for $B\subset \C_k(\Sigma)$ bounded,
its preimage in $\C_{k+1}(\Sigma)$ is contained in a quasi-convex set
which is quasi-isometric to a tree. The quasi-isometry constants
here necessarily deteriorate with the diameter of $B$.

Recall that an \emph{ending lamination} is a filling geodesic
lamination $\Lambda$ which is the support of a transverse measure. The space of
ending laminations is identified with the Gromov boundary $\partial\C(\Sigma)$
\cite{klarreich}.
The \emph{index} $\ind(\Lambda)$ of $\Lambda$ is the
largest index of a train track that carries $\Lambda$. The Gromov
boundary $\partial\C_k(\Sigma)$ of $\C_k(\Sigma)$ can then be identified with the
space of ending laminations of index $\leq k$ with the
coarse Hausdorff topology, which is the quotient of the topology on $\PML(\Sigma)$
(Theorem \ref{boundary}).

An element $\phi\in \Mod(\Sigma)$ is loxodromic in the action on $\C_k(\Sigma)$ if and only
if it is pseudo-Anosov and its (un)stable lamination has index $\leq
  k$ (Theorem \ref{loxclassification}).
Thus going down the tower, the set of loxodromic elements decreases, as does
the Gromov boundary.

We note that if $X$ is a hyperbolic metric space with finite asymptotic dimension, then
there is an analogous filtration
\[
  X=X_n \to X_{n-1} \to \cdots \to X_0
\]
by hyperbolic graphs, where the coarse fibers are quasi-trees and $n$ is the
asymptotic dimension of $X$. This follows from the quasi-median quasi-isometric
embedding of $X$ in a product of quasi-trees
proven by Petyt recently in \cite{Petyt}. Furthermore, Petyt-Spriano-Zalloum
recently proved in \cite[Theorem 1.3]{stablecylinders} that $\C(\Sigma)$ equivariantly
quasi-isometrically embeds in a product of quasi-trees. An immediate corollary
is that there is a tower of graphs for $\C(\Sigma)$ for which coarse fibers
are quasi-trees. This recovers certain parts of the main theorem.
We compare \cite[Theorem 1.3]{stablecylinders} and the main theorem.
On one hand, our main theorem does not prove equivariant quasi-isometric
embedding of $\C(\Sigma)$ into a finite product of quasi-trees.
On the other hand \cite[Theorem 1.3]{stablecylinders} does not give a
bound on the length of a tower of hyperbolic graphs as in our main theorem
and thus cannot be used to bound $\asdim \C(\Sigma)$.
The main advantage of our main theorem is that it produces a tower of
natural hyperbolic graphs with very explicit topological definitions.
These topological definitions yield detailed information on the boundaries
of the associated hyperbolic graphs, the dynamics of their mapping class group
actions, and the structure of their coarse fibers.

Finally, we suggest some ideas for further study. The filtration of $\C(\Sigma)$
and the techniques of this paper may be useful for the following purposes. Firstly,
they may be useful for precisely computing the asymptotic dimensions of curve graphs
and mapping class groups. If one could prove that the asymptotic dimension
of $\C_{k+1}(\Sigma)$ is greater than that of $\C_k(\Sigma)$ for $k<m(\Sigma)$,
this would show that
the asymptotic dimension of $\C(\Sigma)$ is exactly $m(\Sigma)+1$.
One could hope to use this in turn to compute $\asdim \Mod(\Sigma)$ exactly.
It is an interesting question whether $\asdim \Mod(\Sigma)$ is equal to the
virtual cohomological dimension of $\Mod(\Sigma)$, or indeed whether there exists any
finitely generated group with finite asymptotic dimension and finite virtual cohomological
dimension which are unequal. The current best known bounds on $\asdim \Mod(\Sigma)$
are quadratic in the complexity of $\Sigma$ \cite{BBF, BHS} while
$\operatorname{vcd} \Mod(\Sigma)$ is linear in the complexity \cite{vcd},
so there is currently a mysterious discrepancy between the two bounds.

The filtration of $\C(\Sigma)$ also gives a route for
proving geometric properties of curve graphs and subgroups of mapping class
groups by \emph{induction} on the filtration. For instance, one could attempt to use
induction to produce quasi-isometric embeddings of hyperbolic planes
in $\C(\Sigma)$ (see \cite{LS} for previous work on this problem)
or to study the structure of certain subgroups of mapping class groups.
As an example, if a subgroup quasi-isometrically embeds in $\C_0(\Sigma)$
under the orbit map then it must be virtually free. It would be interesting
to know e.g. if Veech groups of quadratic differentials with vertical and
horizontal laminations in $\partial \C_k(\Sigma)$ quasi-isometrically embed
in $\C_k(\Sigma)$. 

\subsection{Acknowledgments}

The authors thank Ursula Hamenst\"{a}dt for an inspiring lecture that
gave rise to this project. The first author was partially supported by
NSF grant DMS-2304774 and by a grant from the Simons
Foundation (SFI-MPS-SFM-00011601). The second author was partially
supported by NSF grants DMS-1906095 and DMS-2405104 and a grant from the Simons foundation (SFI-MPS-SFM-00006400). The third author was partially
supported by NSF grants DMS-2202986 and DMS-1840190.

\section{The tower}\label{sec:2}

Let $\Sigma$ be a connected orientable surface of finite type, without
boundary.  We will always assume that $\Sigma$ is not sporadic,
i.e. it is not a sphere with $\leq 4$ punctures or a torus with $\leq
1$ puncture. The goal of this section is to define a tower of graphs
with the curve graph $\C(\Sigma)$ at the top and the \emph{principal
curve graph} at the bottom.

We will consider train tracks on $\Sigma$.
We recall a few basic definitions about train tracks, and assume that the reader
is familiar with the basic theory of train tracks. We refer the reader to
\cite{Penner} for further background.

The train tracks we consider may not be connected,
but unless stated otherwise they will be trivalent (that is, {\it generic}
in the terminology of \cite{Penner}).
A train track will always be assumed not to have
(smooth) disks, (smooth) annuli, (smooth) punctured disks,
monogons, or bigons as complementary components.

If $\tau$ is a train track, then the edges of $\tau$ are called \emph{branches}
and the vertices are called \emph{switches}. A \emph{carrying map} from a
geodesic lamination or train track $\sigma$ to $\tau$ is a map
$f:\Sigma\to\Sigma$ such that $f(\sigma)\subset \tau$, $f$ is homotopic to the
identity, and the derivative of $f$ on each tangent line to $\sigma$ is injective.
We will denote a carrying map from $\sigma$ to $\tau$ by $\sigma\to \tau$,
leaving the notation for the map itself implicit. A \emph{train path} is a smooth
immersion of an interval in $\tau$. If $f:\sigma \to \tau$ is a carrying map
between train tracks then the image of any branch of $\sigma$ is a train path
on $\tau$. Recall that $\tau$ is \emph{recurrent} if, for every branch $b$ of
$\tau$, there is a closed curve carried by $\tau$ whose image passes through
$b$.
We will generally assume that train tracks are recurrent,
but we will make this hypothesis explicit
wherever necessary.
Additionally, we will sometimes require train tracks to be \emph{transversely recurrent}.
The definition of transverse recurrence is given in \cite{Penner}. A track
which is both recurrent and transversely recurrent is called \emph{birecurrent}.

The set $M(\tau)$ of measured laminations carried by $\tau$ is a polyhedral cone
in Thurston's measured lamination space $\ML(\Sigma)$.
Namely, it is a cone over $P(\tau)$, the projectivization of $M(\tau)$, which has the structure of a compact polyhedron.
We denote by $\S(\tau)$ the set of simple closed curves carried by $\tau$.
The vertices of $P(\tau)$ are represented by simple closed curves lying in
$\S(\tau)$, which we call \emph{vertex cycles}. We denote the set of vertex
cycles of $\tau$ by $V(\tau) \subset \S(\tau)$.
According to
\cite[Proposition 3.11.3]{Mosher}, a vertex cycle is either an embedded simple
closed curve in $\tau$ or a barbell. In particular, a vertex cycle of
$\tau$ traverses each branch of $\tau$ at most twice.
By \cite[Corollary 2.3]{HamenstadtTT}, the diameter of $V(\tau)$
in $\C(\Sigma)$ is bounded in terms of the track $\tau$ (and hence the diameter is
also bounded only in terms of $\Sigma$, since there are finitely many train tracks
on $\Sigma$ up to the mapping class group action).
Note that if
$\sigma$ and $\tau$ are train tracks and there is a carrying map $\sigma\to \tau$,
then there is an inclusion $\S(\sigma) \subset \S(\tau)$.

We define the \emph{index} of $\tau$ as
$$\ind(\tau)\vcentcolon=\dim \ML(\Sigma)-\dim M(\tau)$$ i.e. as the codimension of
$M(\tau)$ in $\ML$. If $\tau$ is
recurrent and $\sigma\subset\tau$ is a proper subtrack, then
$M(\sigma)$ is a proper face of $M(\tau)$,
and in particular $\ind(\sigma)>\ind(\tau)$.

We next give a combinatorial description of $\ind(\tau)$.
Recall that a train track $\tau$ is \emph{orientable}
if every branch can be oriented so that every carried
curve has an orientation that agrees with the orientation of every branch it traverses.

\begin{prop}[{\cite[Lemma 3.1]{BB}, \cite[Lemma 2.1.1]{Penner}}]\label{dimensions}
  Suppose that $\tau$ is recurrent with $v$ switches and $b$ branches.
  If $\tau$ is non-orientable then $\dim M(\tau)=b-v=-\chi(\tau)$. If $\tau$
  is orientable then $\dim M(\tau)=b-(v-1)=1-\chi(\tau)$.
\end{prop}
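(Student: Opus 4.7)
The plan is to view a transverse measure on $\tau$ as a non-negative weight vector on branches satisfying the switch conditions, and then do linear algebra. Let $A$ be the $v\times b$ matrix whose row indexed by a switch $s$ is the linear form $L_s(\mu)=\sum_b \epsilon_s(b)\mu_b$, where $\epsilon_s(b)\in\{\pm 1\}$ records which side of $s$ the branch $b$ lies on. Then $M(\tau)=\ker(A)\cap \R^b_{\geq 0}$, and the proof reduces to computing $\rank(A)$.

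First I would use recurrence to show that $\dim M(\tau)=b-\rank(A)$. For each branch $b$, pick a closed curve carried by $\tau$ passing through $b$; this yields an element of $M(\tau)$ with positive weight on $b$. A convex combination of finitely many such measures produces $\mu_0\in M(\tau)$ with $\mu_0(b)>0$ on every branch. Then $\mu_0$ lies in the interior of $\R^b_{\geq 0}$ and in $\ker(A)$, so $M(\tau)$ spans $\ker(A)$ and the two have the same dimension.

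Second, I would compute $\rank(A)$. Reducing to connected $\tau$, suppose $\sum_s c_s L_s=0$. Reading off the coefficient of $\mu_b$ for each branch $b$ with endpoints $s,s'$ gives
\[
c_s\epsilon_s(b)+c_{s'}\epsilon_{s'}(b)=0, \qquad \text{i.e.}\qquad c_{s'}=-\epsilon_s(b)\epsilon_{s'}(b)\,c_s.
\]
Hence a value $c_{s_0}$ at one switch determines $c_s$ at every other switch up to a sign propagated along any path of branches, and the dependency exists globally iff this propagation is consistent around every loop in $\tau$. This loop consistency is equivalent to the existence, at every switch, of a coherent choice of ``incoming'' and ``outgoing'' side, which is exactly orientability of $\tau$. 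So in the orientable case the dependencies form a one-dimensional space and $\rank(A)=v-1$, while in the non-orientable case the only dependency is trivial and $\rank(A)=v$.

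Combining the two steps gives $\dim M(\tau)=b-v+1$ for orientable $\tau$ and $b-v$ for non-orientable $\tau$; since $\chi(\tau)=v-b$ these rewrite as $1-\chi(\tau)$ and $-\chi(\tau)$, as claimed. The only real obstacle is the equivalence in the second step between orientability of $\tau$ and sign-consistent propagation of the $c_s$'s around loops; this requires careful matching between the smooth small/large side structure at switches and a coherent branch orientation, but once set up it is a standard graph-homology computation (see \cite{Penner}).
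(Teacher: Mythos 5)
Your proof is correct and is exactly the argument the paper has in mind: the paper gives no proof of its own (it cites \cite{BB} and \cite{Penner}) and only remarks that the switch equations are all independent in the non-orientable case and have exactly one redundancy in the orientable case, which is precisely your rank computation. The sign-propagation step you flag as the ``only real obstacle'' does go through: a nontrivial dependency forces $|c_s|$ to be constant and nonzero, and orienting each branch $b$ to be incoming at the endpoint where $c_s\epsilon_s(b)=+1$ gives an orientation that is coherent at every switch (all branches on one side in, all on the other side out), while conversely an orientation yields the dependency $\sum_s L_s=0$ with $L_s$ written as incoming minus outgoing. The one input you use silently is that the cone of non-negative solutions of the switch equations parametrizes $M(\tau)\subset\ML(\Sigma)$ faithfully, so that the two cones have the same dimension; this is standard and is likewise taken for granted by the paper.
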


The point of this result is that in the non-orientable case the switch equations are
all independent, and in the orientable case one is redundant.
For the next result, a \emph{maximal} track is a track
which is maximal with respect to inclusions of tracks: its complementary components
are all either triangles or once-punctured monogons. Recall here that we
are excluding sporadic surfaces.
A track is \emph{filling} if it
carries a minimal filling lamination: that is, it carries a minimal lamination
whose complementary regions are ideal polygons with at most
one puncture. If a track is filling, it is necessary that all
complementary regions are polygons with at most one puncture. However,
this condition is not sufficient.
Each such polygon is bounded by finitely many (smooth) train paths (called \emph{sides})
meeting at switches pointing outward from the polygon (called \emph{corners}).

\begin{prop}\label{numerology}
Let $\tau$ be a filling recurrent train track and let $s_1, \dots, s_k$ be the numbers of sides of the complementary polygons of $\tau$.
Let $p$ be the number of punctures of $\Sigma$.
\begin{enumerate}[(i)]
\item If $\tau$ is non-orientable then
$$\ind(\tau) = 2p + \sum(s_i - 3).$$

\item If $\tau$ is orientable then
$$\ind(\tau) =-1+ 2p + \sum(s_i -3).$$
\end{enumerate}
\end{prop}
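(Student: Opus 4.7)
The plan is to compute $\ind(\tau) = \dim \ML(\Sigma) - \dim M(\tau)$ by expressing both sides in terms of $g$, $p$, $v$, $b$, and the number $k_0$ of unpunctured complementary polygons, and then translating to the side data $s_i$ via two simple combinatorial identities: double-counting half-branches at switches, and double-counting cusps.

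First, compute $\chi(\Sigma)$ in two ways. On one hand $\chi(\Sigma) = 2-2g-p$. On the other hand, $\Sigma$ decomposes as the disjoint union of $\tau$ and its complementary regions, each of which is a polygon ($\chi=1$) or a once-punctured polygon ($\chi=0$). Let $k_0$ be the number of unpunctured complementary polygons; then the number of once-punctured ones is exactly $p$. Additivity of Euler characteristic gives
\[
2-2g-p = (v-b) + k_0, \qquad \text{i.e. } b - v = 2g - 2 + p + k_0.
\]
Since $\tau$ is trivalent, counting half-branches at switches two ways gives $2b = 3v$, and combining the two identities yields
\[
v = 4g - 4 + 2p + 2k_0.
\]

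Next, count sides. At a trivalent switch, two of the three half-branches lie on the same tangent side and the third lies on the opposite side; this creates exactly one cusp (between the two co-tangent half-branches). Each side of a complementary polygon is a maximal smooth train path, and its endpoints are precisely the cusps at its two terminal switches, so the total number of sides equals the total number of cusps, which equals $v$. Writing $k = k_0 + p$ for the total number of complementary regions,
\[
\sum_{i=1}^{k} s_i = v, \qquad \sum_{i=1}^k (s_i - 3) = v - 3k_0 - 3p.
\]

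Finally, for non-sporadic $\Sigma$ one has $\dim \ML(\Sigma) = 6g - 6 + 2p$. In the non-orientable case, Proposition~\ref{dimensions} gives $\dim M(\tau) = b - v = 2g - 2 + p + k_0$, so
\[
\ind(\tau) = (6g - 6 + 2p) - (2g - 2 + p + k_0) = 4g - 4 + p - k_0,
\]
and substituting $v = 4g - 4 + 2p + 2k_0$ into $v - p - 3k_0 = 2p + \sum (s_i - 3)$ produces exactly this value, proving (i). In the orientable case, Proposition~\ref{dimensions} decreases $\dim M(\tau)$ by $1$, hence increases $\ind(\tau)$ by $1$... no, increases $\ind$ by $-1$, producing $-1 + 2p + \sum(s_i - 3)$, which is (ii). The only non-mechanical step is the cusp count $\sum s_i = v$; once that is in place the proposition reduces to two Euler characteristic calculations and an application of Proposition~\ref{dimensions}.
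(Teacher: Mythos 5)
Your proof is correct and follows essentially the same route as the paper's: both rest on Euler-characteristic additivity over $\tau$ and its complementary regions, the trivalence relation $2b=3v$, the corner count $\sum s_i = v$, and Proposition \ref{dimensions}; the only cosmetic difference is that you solve explicitly for $v$ in terms of $g,p,k_0$ and verify the identity, whereas the paper substitutes $\chi(\tau)=-\tfrac12\sum s_i$ directly. One small wobble: in the orientable case $\dim M(\tau)$ \emph{increases} by $1$ (it is $b-v+1$ rather than $b-v$), hence $\ind(\tau)$ decreases by $1$ --- your final formula is right, but the sentence deriving it should be cleaned up.
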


\begin{proof}
Note that $\dim \ML(\Sigma) = -3\chi(\Sigma) - p$ and that
$\chi(\Sigma)=\chi(\tau)+(k-p)$ since the number of unpunctured
polygons of $\Sigma \smallsetminus \tau$ is $k-p$.
Therefore, if $\tau$ is non-orientable,
$$\ind(\tau) = -3\chi(\Sigma) - p + \chi(\tau) = -2\chi(\tau) -3k + 2p$$
by Proposition \ref{dimensions}.
There is one switch of $\tau$ for each corner, so the total number of switches is $\sum s_i$. Since the train track is generic, each vertex is valence three and the number of edges of $\tau$ is $\frac32\sum s_i$. Therefore
$$\chi(\tau) = -\frac12 \sum s_i$$
and
$$\ind(\tau) =  \sum s_i - 3k + 2p = 2p +\sum(s_i -3).$$

If $\tau$ is orientable, by Proposition \ref{dimensions}, the dimension of $M(\tau)$ increases by one so
$$\ind(\tau) =-1+ 2p + \sum(s_i -3).$$
\end{proof}

When the index of a train track is sufficiently large, the train track is necessarily
not filling. The next proposition quantifies the exact upper bound on the index of a filling
track.

\begin{prop}\label{ms}
  The largest index $m(\Sigma)$ of a recurrent filling track on $\Sigma$ is
  \begin{itemize}
  \item $4g-5$ if $\Sigma$ is closed of genus $g>2$,
  \item 2 if $\Sigma$ is closed of genus $g=2$,
    \item $4g+p-4$ if $\Sigma$ is nonsporadic of genus $g$ with $p$
      punctures and $(g,p)\neq (1,2)$,
      \item 1 if $\Sigma$ is the twice-punctured torus.
  \end{itemize}
\end{prop}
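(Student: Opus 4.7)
The plan is to combine Proposition \ref{numerology} with the Euler-characteristic identity $\chi(\Sigma) = \chi(\tau) + (k-p)$ used in its proof (where $k$ is the number of complementary polygons of $\tau$) to reduce maximizing $\ind(\tau)$ to minimizing $k$, and then to establish realizability.

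Let $\tau$ be a filling recurrent track on $\Sigma$. Since $\tau$ is generic, $3v = 2b$ and hence $\chi(\tau) = -v/2$. Combining with $\chi(\Sigma) = \chi(\tau) + (k-p) = 2 - 2g - p$ gives $v = 4g + 2k - 4$. Each switch is the unique corner of some complementary polygon, so $\sum s_i = v = 4g + 2k - 4$. Substituting into Proposition \ref{numerology},
\[
\ind(\tau) \;=\; \begin{cases} 2p + 4g - 4 - k & \text{if } \tau \text{ is non-orientable,} \\ -1 + 2p + 4g - 4 - k & \text{if } \tau \text{ is orientable.}\end{cases}
\]
To maximize $\ind(\tau)$, I minimize $k$. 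Since each puncture of $\Sigma$ lies in its own complementary polygon, $k \geq \max(1,p)$, which yields non-orientable upper bounds $4g-5$ (closed) and $4g+p-4$ (for $p \geq 1$), with the orientable maxima each one less.

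For the lower bound in the generic cases, I would construct explicit filling birecurrent non-orientable train tracks attaining the minimum $k$: a track with a single $(4g-2)$-gon complement on a closed surface of genus $g \geq 3$, and a track with $p$ once-punctured polygons of total side-count $4g+2p-4$ when $p \geq 1$ and $(g,p) \neq (1,2)$. Such tracks can be assembled from a suitable filling multicurve on $\Sigma$ by smoothing each intersection into a trivalent switch with a consistent cusp pattern, arranging one smoothing to be orientation-reversing so that $\tau$ is non-orientable. Recurrence is then verified by exhibiting carried curves whose images together traverse every branch.

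The main obstacle is the two exceptional cases, where the non-orientable maximum cannot be attained. For closed genus $2$, the non-orientable bound $3$ would require a filling recurrent track with a single hexagonal complement, i.e.\ $v = 6$ and $b = 9$; a short case analysis on how the six sides of the hexagon are paired along the nine branches (or a $\mathbb{Z}/2$-homology argument on branch orientations) shows that any such track is necessarily orientable, so the maximum is the orientable value $4g - 6 = 2$. The twice-punctured torus is handled analogously: $k = p = 2$ forces two once-punctured complementary polygons of total $4$ sides, and a similar enumeration shows every such filling track is orientable, yielding maximum $4g + p - 5 = 1$. The combinatorial realizability analysis in these two low-complexity cases is thus where the bulk of the work lies.
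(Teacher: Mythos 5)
Your first paragraph is correct and is exactly the paper's own first step: from Proposition \ref{numerology} and the Euler characteristic count one gets $\ind(\tau)=2p+4g-4-k$ in the non-orientable case (one less in the orientable case), where $k$ is the number of complementary polygons, so maximizing the index amounts to minimizing $k$ subject to $k\geq\max(1,p)$, with non-orientable tracks beating orientable ones by one.

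The gap is everything after that: the existence of extremal non-orientable tracks in the generic cases and their non-existence in the two exceptional cases is the actual content of the proposition, and you only state it as a plan. The paper obtains both from Masur--Smillie's classification of non-empty strata of quadratic differentials, via the dictionary in which a complementary (once-punctured) $s$-gon corresponds to a singularity of order $s-2$ and non-orientability of the track corresponds to the differential not being the square of an abelian differential. Under this dictionary your exceptional non-existence claims are precisely the emptiness of the genuinely-quadratic stratum $\mathcal{Q}(4)$ in genus $2$, and, for the twice-punctured torus, the emptiness of $\mathcal{Q}(1,-1)$ together with the fact that a quadratic differential on a torus with only order-zero singularities is a square. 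These are exactly the delicate exceptional cases of Masur--Smillie; the known arguments use the hyperelliptic involution or degenerations, and it is far from clear that ``a short case analysis on how the six sides of the hexagon are paired along the nine branches'' closes the genus-$2$ case --- note the analogous statement is \emph{false} for a single $(4g-2)$-gon once $g\geq 3$, so any such argument must use genus $2$ in an essential way. The existence half also hides a subtlety your sketch misses: the partition of $\sum s_i$ among the complementary polygons matters, not just its total. For instance, on a $p$-punctured torus the natural choice of $p$ once-punctured bigons corresponds to $\mathcal{Q}(0,\dots,0)$, which consists only of squares of abelian differentials and hence yields only orientable tracks; for $p\geq 3$ one must instead realize data such as $\mathcal{Q}(2,-1,-1,0,\dots,0)$. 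So either carry out the constructions and exclusions in full, or do what the paper does and quote the Masur--Smillie classification.
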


\begin{proof}
  From the formulas above, it follows that the index of a filling track
  is maximized when the
  number of complementary regions is minimized and the track is non-orientable. For
  closed surfaces of genus $g>2$ there are non-orientable recurrent
  tracks with one complementary component (with $4g-2$ sides).
  When $g=2$ such a train track is necessarily
  orientable. When $p>0$ the smallest possible number of complementary
  components is $p$ and there are such non-orientable tracks as long as
  $(g,p)\neq (1,2)$, in which case such tracks are orientable. The
  (non)-existence of tracks as claimed follows from
  \cite{masur-smillie}. Note that the results there are phrased in terms of
  singularities of quadratic differentials.
  \end{proof}

Set $m(\Sigma)$ to be the number given by Proposition \ref{ms}.

\begin{definition}
  \label{ckdefn}
  Let $0\leq k \leq m(\Sigma)$. Define $\C_k(\Sigma)$ to be the graph whose
  vertices are the isotopy classes of essential simple closed curves on $\Sigma$
  and whose edges join pairs $\{\alpha,\beta\}$ such that there is a track $\tau$
  with $\ind(\tau) >k$ that carries both $\alpha$ and $\beta$. Distance in $\C_k(\Sigma)$ is denoted by $d_k$.
\end{definition}

We do not require the track $\tau$ in Definition \ref{ckdefn} to be connected.
For any $\alpha,\beta$ having disjoint representatives, we may consider
their disjoint union to be a train track $\tau$ and we have $\ind(\tau) > m(\Sigma)$.
Thus, $\alpha,\beta$ are joined by an edge in $\C_k(\Sigma)$ for each $k$ and the
natural map $\C(\Sigma) \to \C_k(\Sigma)$ is 1-Lipschitz. Moreover,
$\C_{k+1}(\Sigma)\to \C_k(\Sigma)$ is also 1-Lipschitz.
Although we have restricted $k$ to lie between 0 and $m(\Sigma)$,
$\C_{-1}(\Sigma)$ would naturally be the complete graph, since any two
curves are carried by a common train track. The graph $\C_0(\Sigma)$ is the
\emph{principal curve graph}, so named by Hamenst\"adt in
\cite{HamenstadtPrincipal}.
Hamenst\"adt's definition is slightly
different; we are using the one given by Maher-Masai-Schleimer
in \cite{MMS2}.

Throughout the rest of the paper, $d=d_{\C(\Sigma)}$ denotes distance in the
curve graph.
Note that if $d_{m(\Sigma)}(\alpha, \beta) = 1$, then $\alpha$ and $\beta$ are carried by a track $\tau$ that doesn't fill, so $\alpha$ and $\beta$ don't fill. In particular, $d(\alpha, \beta) \le 2$ and therefore
$\C_{m(\Sigma)}(\Sigma)$ is quasi-isometric to
$\C(\Sigma)$. This yields a tower of graphs with natural 1-Lipschitz maps
that are the identity on the vertices:
$$\C\overset{qi}{\simeq}\C_{m(\Sigma)}\to \mathcal
  C_{m(\Sigma)-1}\to\cdots\to \C_{1}\to \mathcal
  C_{0}$$

Our main theorem is:

\begin{thm}
  \label{mainthm}
  Every $\C_{k}(\Sigma)$ is hyperbolic. The principal curve graph
  $\C_{0}(\Sigma)$ is a quasi-tree. Each map $\mathcal
    C_{k+1}(\Sigma)\to\C_k(\Sigma)$ has the property that every bounded set in
  $\C_k(\Sigma)$ is contained in a bounded set whose preimage in $\C_{k+1}(\Sigma)$ is
  a quasi-convex quasi-tree.
\end{thm}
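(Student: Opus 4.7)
The plan is to prove the three assertions of Theorem~\ref{mainthm} by downward induction on $k$, beginning at $k = m(\Sigma)$, where $\C_{m(\Sigma)}(\Sigma)$ is quasi-isometric to $\C(\Sigma)$ and hence hyperbolic by Masur--Minsky. At the inductive step, assuming $\C_{k+1}(\Sigma)$ is hyperbolic, I would first establish the coarse-fiber statement for $\C_{k+1}(\Sigma)\to\C_k(\Sigma)$ and then deduce hyperbolicity of $\C_k(\Sigma)$ from it. The quasi-tree statement for $\C_0(\Sigma)$ is a degenerate case of the coarse-fiber statement when the base is treated as one bounded set: the same splitting-sequence machinery used in the inductive step will verify Manning's bottleneck criterion directly on all of $\C_0(\Sigma)$.

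For the coarse-fiber step, given a bounded set $B\subset\C_k(\Sigma)$ I would take as enlargement the set $B'$ of curves carried by some recurrent train track $\tau$ of index $>k$ whose vertex cycle set $V(\tau)$ meets $B$. Because $V(\tau)$ has bounded diameter in $\C(\Sigma)$ (cited as \cite[Corollary 2.3]{HamenstadtTT}) and the natural map $\C(\Sigma)\to\C_k(\Sigma)$ is $1$-Lipschitz, $B'$ remains bounded in $\C_k(\Sigma)$. Since the map is the identity on vertices, the preimage of $B'$ in $\C_{k+1}(\Sigma)$ is the same vertex set $B'$ equipped with the $\C_{k+1}$-metric; the task is to prove that this is a quasi-convex quasi-tree.

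The workhorse is the progress lemma from Section~\ref{sec:progress}, the announced analog of the Masur--Minsky Nesting Lemma detecting when a splitting sequence of train tracks produces motion in $\C_k(\Sigma)$. Given $\alpha,\beta\in B'$, I would interpolate between a track carrying $\alpha$ and a track carrying $\beta$ by a splitting sequence of recurrent tracks of index $>k$. A coarse midpoint of $\alpha,\beta$ in $\C_{k+1}(\Sigma)$ is then realized as a vertex cycle of an intermediate track $\tau_m$ whose combinatorial type is forced by both endpoints. Any alternative $\C_{k+1}$-path from $\alpha$ to $\beta$ staying in $B'$ corresponds to another splitting sequence, and the progress lemma forces it to pass within bounded $\C_{k+1}$-distance of $V(\tau_m)$, since the only way to avoid this midpoint is to perform splittings whose success in the progress criterion would push the path out of $B$ in $\C_k(\Sigma)$; this verifies Manning's bottleneck property. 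Quasi-convexity of $B'$ follows from the same estimate applied to any $\C_{k+1}$-geodesic leaving $B'$.

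Once the coarse-fiber statement is in hand, hyperbolicity of $\C_k(\Sigma)$ should follow from a guessing-geodesics argument in the style of Bowditch: a candidate path from $\alpha$ to $\beta$ in $\C_k(\Sigma)$ is assembled by lifting to $\C_{k+1}(\Sigma)$, using the fiber quasi-tree structure to straighten pieces, and inserting shortcuts through the quasi-convex fibers; the alignment-preserving property of $\C_{k+1}\to\C_k$ ensures the thin-triangles condition is inherited. The main obstacle is the progress lemma itself. As emphasized in the introduction, the quasi-convex sets $\S(\tau)$ for $\ind(\tau)>k$ typically have infinite-diameter intersections and can even be nested, so the standard Bestvina--Bromberg--Fujiwara projection-complex framework is unavailable. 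One must control splitting sequences directly, showing that an index drop of one along the sequence produces controlled progress in $\C_k(\Sigma)$, with estimates uniform across the finitely many combinatorial types of tracks modulo $\Mod(\Sigma)$; this is the heart of the paper and everything else in the proof is downstream of it.
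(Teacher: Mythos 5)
You have correctly located the hard part of the theorem (a progress criterion for splitting sequences in $\C_k$, the analog of the Masur--Minsky Nesting Lemma), and your treatment of $\C_0$ as the degenerate case of the fiber statement over the bounded graph $\C_{-1}$ matches the paper. But the proposal has a genuine gap precisely where the work is: the passage from the progress lemma to the quasi-tree property of the fiber is asserted rather than argued, and the one concrete assertion you make in that passage is false. An arbitrary $K$-connected path in the fiber between $\alpha$ and $\beta$ does \emph{not} ``correspond to another splitting sequence,'' so the progress lemma cannot be applied to it directly; this is exactly why the paper cannot run a one-step bottleneck argument on the whole fiber. What the paper actually does is: (i) first find a \emph{single} maximal birecurrent track $\tau$ carrying every curve of the bounded set $B$ (Lemma \ref{tau}) --- your enlargement $B'$, a union of $\S(\tau)$ over all index $>k$ tracks meeting $B$, has no such organizing structure and there is no argument that it is quasi-convex or a quasi-tree; (ii) prove that each individual $\S(\sigma)$ with $\ind(\sigma)=k$ is a quasi-convex quasi-tree in $\C_k$ (Theorem \ref{qtree}, itself a nontrivial bottleneck argument using diagonal extensions and Corollary \ref{subtrack distance}); (iii) decompose the fiber as an increasing union of sets $Q^{k+1}_i(\tau)$, each a union of infinitely many $\S(\sigma)$ indexed by a rooted splitting tree, and prove these unions are quasi-trees by a general criterion for infinite unions of quasi-trees (Theorem \ref{quasitreelem}) whose hypotheses require uniform diameter bounds on $N$-fold coarse intersections (Proposition \ref{intersection_small}, Corollary \ref{qtree_ii}) and a polynomial bound on the number of index-$(k+1)$ blocks in a splitting sequence in terms of $d_k$ (Theorem \ref{block bound}). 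None of this union/intersection machinery, which is forced on the authors by the fact that the $\S(\sigma)$ overlap in infinite-diameter sets and can be nested, appears in your outline; acknowledging that the projection-complex route is blocked does not substitute for supplying the alternative.

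A secondary structural point: the paper does not obtain hyperbolicity of $\C_k$ by downward induction through the fibers. It proves all the $\C_k$ are uniformly hyperbolic at the outset (Proposition \ref{hyperbolic}) by applying the Kapovich--Rafi criterion to the vertex-bijective $1$-Lipschitz map $\C\to\C_k$, the only input being that adjacent vertices of $\C_k$ lie on a common $\S(\tau)$ which is quasi-convex in $\C$ and has diameter $1$ in $\C_k$. This is both simpler than your guessing-geodesics induction and logically prior to the fiber analysis, since the quasi-convexity and bottleneck arguments for the fibers are carried out inside the hyperbolic spaces $\C_k$ and use their hyperbolicity constants throughout. Your proposed induction also leans on the alignment-preserving property of $\C_{k+1}\to\C_k$ to establish hyperbolicity of $\C_k$, whereas in the paper that property is a \emph{consequence} of the Kapovich--Rafi argument; as written your step would need to be reorganized to avoid assuming what is being proved.
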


\section{Quasi-trees}
Let $X$ be a graph. A subgraph $A \subset X$ is
 \emph{$K$-coarsely connected} if for every $x,y\in A$, there is a
sequence of vertices $x=x_0,x_1,\ldots,x_n=y$ in $A$ with $d(x_i,x_{i+1})\leq K$ for each $i$.

For $x,y \in X$ we let $[x,y]$ denote a geodesic between $x$ and $y$. The set $A$ is {\em $Q$-quasi-convex} if, for all $x, y \in A$, we have $[x,y] \subset B_Q(S)$ where $B_Q(A)$ is the $Q$-neighborhood of $A$.
If $x,y \in A$ and $x = x_0, \dots, x_n =y$ is geodesic in $X$, then for each $x_i$ there is a $y_i \in A$ with $d(x_i, y_i) \le Q$. Therefore $x= y_0, \dots, y_n = y$ is a $(1, 2Q)$-quasi-geodesic between $x$ and $y$ that lies in $A$, and since $d(y_i, y_{i+1}) \le 2Q+1$, $A$ is $2Q+1$-connected. We denote the path $\{y_i\}_{i=0}^n$ by $q_{x,y}$.

For sets $A_1, \dots, A_m$, define their {\em coarse intersection} to be
$$I_C(A_1, \dots, A_m) = B_C(A_1) \cap \cdots \cap B_C(A_m).$$

We collect some facts about quasi-convex subsets of a $\delta$-hyperbolic graph that we will use repeatedly.

\begin{lemma}\label{fellow travel}
Fix $0<\delta\le Q \le C$ and let $X$ be a $\delta$-hyperbolic geodesic metric space.
\begin{enumerate}
\item Let $A$ be a $Q$-quasi-convex subset of $X$. If $x,y \in B_C(A)$ then 
\begin{eqnarray*}
[x,y] &\subset & B_{Q+2\delta}(A) \cup B_{C+2\delta}(\{x,y\})\\  & \subset & B_{C+2\delta}(B_C(A)).
\end{eqnarray*}

\item\label{intersection quasiconvex} If $A_1, \dots, A_n$ are $Q$-quasi-convex subsets of $X$  then $I_C(A_1, \dots, A_n)$ is $(C+2\delta)$-quasi-convex.

\item\label{union quasiconvex} If $A_0, A_1, \dots$ are $Q$-quasi-convex subsets of $X$ with $I_C(A_0, A_i) \neq \emptyset$ for all $i\ge 1$ and $C\ge Q$, then $\bigcup_{i\ge 0} A_i$ is $(2C+4\delta)$-quasi-convex.

\item\label{triple} If $A_1, A_2$, and $A_3$ are $Q$-quasi-convex subsets and $I_C(A_1, A_2)$, $I_C(A_1, A_3)$, and $I_C(A_2, A_3)$ are all non-empty, then $I_{2C+3\delta}(A_1, A_2, A_3) \neq \emptyset$.

\item\label{larger}  If $A_1, \dots, A_n$ are $Q$-quasi-convex subsets of $X$ and the diameter of $I_C(A_1, \dots, A_n)$ is $> 4C + 8\delta$, then
$$I_C(A_1, \dots, A_n) \subset B_{C+2\delta}(I_{Q+2\delta}(A_1, \dots, A_n)).$$
\end{enumerate}
\end{lemma}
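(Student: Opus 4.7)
The plan is to bootstrap all five items from a single quadrilateral-thinness argument that establishes (1), and then combine it with standard triangle-thinness to obtain (2)--(5).

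\emph{Part (1).} I would pick $x', y' \in A$ with $d(x, x') \le C$ and $d(y, y') \le C$, form the geodesic quadrilateral with vertices $x, x', y', y$, and decompose it into two $\delta$-thin triangles along the diagonal $[x, y']$. This makes the quadrilateral $2\delta$-thin, so each point $p \in [x, y]$ is within $2\delta$ of a point on $[x, x']$, $[x', y']$, or $[y', y]$. The first and last sides have length at most $C$ and so contribute $B_{C + 2\delta}(\{x, y\})$, while the middle side lies in $B_Q(A)$ by $Q$-quasi-convexity and contributes $B_{Q + 2\delta}(A)$. The second containment is immediate since $\{x, y\} \subset B_C(A)$.

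\emph{Parts (2)--(4).} For (2), I would apply (1) to each $A_i$ with the same endpoints $x, y \in I_C(A_1, \ldots, A_n)$: a point $p \in [x, y]$ outside $B_{C + 2\delta}(\{x, y\})$ must land in $B_{Q + 2\delta}(A_i)$ for every $i$ simultaneously, and both alternatives keep $p$ within $C + 2\delta$ of $I_C$ (using that $x, y \in I_C$ for the first and $Q \le C$ for the second). For (3), I would use a telescoping argument: given $x \in A_i$ and $y \in A_j$, pick anchor points $u \in I_C(A_0, A_i)$ and $v \in I_C(A_0, A_j)$, consider the concatenation $[x, u] \cup [u, v] \cup [v, y]$ (each piece controlled by (1) and (2) applied to two of the sets), and compare with a true geodesic via $\delta$-hyperbolicity to absorb the anchoring overhead into $2C + 4\delta$. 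For (4), I would pick $z_{ij} \in I_C(A_i, A_j)$ for each of the three pairs, form the geodesic triangle on $\{z_{12}, z_{13}, z_{23}\}$, and examine its $\delta$-thin incenter: each side has both endpoints in $B_C$ of a common $A_i$, so by (1) each side lies within $Q + 2\delta$ of that $A_i$ except in $C + 2\delta$-neighborhoods of its endpoints, and the incenter lies within $\delta$ of all three sides, giving a point of $I_{2C + 3\delta}(A_1, A_2, A_3)$.

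\emph{Part (5).} Given $p \in I_C$ and the diameter hypothesis, I would choose $u, v \in I_C$ with $d(u, v) > 4C + 8\delta$. By the triangle inequality, at least one of $u, v$, say $u$, satisfies $d(p, u) > 2C + 4\delta$, so on the geodesic from $p$ to $u$ I can locate a point $q$ with $d(p, q)$ slightly greater than $C + 2\delta$ and $d(q, u) > C + 2\delta$. Since $p, u \in I_C \subset B_C(A_i)$ for each $i$, applying (1) to each $A_i$ with endpoints $p$ and $u$ forces $q \in B_{Q + 2\delta}(A_i)$ for every $i$, hence $q \in I_{Q + 2\delta}$. Letting the excess over $C + 2\delta$ shrink to zero yields $d(p, I_{Q + 2\delta}) \le C + 2\delta$.

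The expected main obstacle is (5): the geometric idea that the ``deep interior'' of a large coarse intersection sits in the tighter $I_{Q + 2\delta}$ is clean, but the constant bookkeeping (simultaneously controlling distance from $p$ and from the auxiliary endpoints $u, v$, and securing a witness rather than merely an infimum) is delicate. Part (4) is the next most subtle, because its constant must survive both (1) and the triangle-center estimate.
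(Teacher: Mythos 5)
Your proof is correct and follows essentially the same route as the paper: quadrilateral thinness for (1), direct application for (2), a telescoping concatenation through $A_0$ for (3) (the paper anchors at a single basepoint $t\in A_0$ rather than two points of the coarse intersections, but the idea and constant are the same), the thin-triangle incenter for (4), and the ``deep point of a long geodesic'' argument for (5). Your explicit handling of the $\epsilon$-excess in (5) is if anything slightly more careful than the paper's one-line version.
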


\begin{proof}
\begin{enumerate}
\item Let $x', y' \in A$ with $d(x,x'), d(y,y') \le C$. By $\delta$-hyperbolicity, $[x,y]$ is contained in the $2\delta$-neighborhood of $[x,x'] \cup [x', y'] \cup [y', y]$, and $B_{2\delta}([x,x']\cup [y,y']) \cap [x,y] \subset B_{C+2\delta}(\{x,y\})$. The inclusion follows.

\item This follows directly from (1).

\item For $i \ge 1$, fix $t_i \in I_C(A_0, A_i)$ and fix $t\in A_0$. By $\delta$-hyperbolicity, for any $x_i \in A_i$ we have $[t_i, x_i] \subset B_\delta([t,t_i] \cup [t, x_i])$ and therefore $[x_i, t_i] \subset B_{2C+3\delta}(A_0\cup A_i)$. For $x_j \in A_j$, we also have $[x_i, x_j] \subset B_{\delta}([t, x_i]\cup[t, x_j])$. Combining with the previous inclusion gives the result.

\item By assumption, there are points $x_{ij} \in I_C(A_i, A_j)$. Applying $\delta$-hyperbolicity, there is a point $x' \in [x_{12}, x_{23}]$ that is contained in $I_\delta([x_{12}, x_{13}], [x_{23}, x_{13}])$. Then $x' \in I_{2C+3\delta}(A_1, A_2, A_3)$.

\item If there is an $x \in I_C(A_1, \dots, A_n)$ such that $d(x,y) \le 2C+4\delta$ for all $y \in I_C(A_1, \dots, A_n)$, then the diameter of $I_C(A_1, \dots, A_n)$ is $\le 4C+8\delta$. If not, for all $x \in I_C(A_1, \dots, A_n)$ there is a $y \in I_C(A_1, \dots, A_n)$ with $d(x,y) > 2C+4\delta$ so, by (1), there is an $x' \in I_{Q+2\delta}(A_1, \dots, A_n)$ with $d(x,x') \le C+ 2\delta$.
\end{enumerate}
\end{proof}

A metric space is a {\em quasi-tree} if it is quasi-isometric to a simplicial tree. Manning's {\em bottleneck condition} gives a criterion for a geodesic metric space to be a quasi-tree. We use the version described on
\cite[Page 13]{BBF}. 

\begin{thm}[{\cite[Theorem 4.6]{M}}]
  \label{bottleneckcrit}
  Suppose that $X$ is a geodesic metric space and $\Delta>0$ is a constant such that the
  following property is satisfied. For any pair of points $x,y\in X$ there is a path $p_{x,y}$
  from $x$ to $y$ such that any other path $q$ between $x$ and $y$ contains $p_{x,y}$
  in its $\Delta$-neighborhood. Then $X$ is quasi-isometric to a tree, with quasi-isometry
  constants depending only on $\Delta$.
\end{thm}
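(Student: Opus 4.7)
The plan is to first derive hyperbolicity of $X$ from the bottleneck condition, and then to construct a quasi-isometric tree by choosing a basepoint and extracting a discrete family of bottleneck paths from it.

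For hyperbolicity: given any three points $x,y,z\in X$, the concatenation $p_{x,y}\cup p_{y,z}$ is a path from $x$ to $z$ and so, by hypothesis, contains $p_{x,z}$ in its $\Delta$-neighborhood. Cycling through the three pairs shows that each of the three bottleneck paths lies in the $\Delta$-neighborhood of the union of the other two. This is a thin-triangles condition on the $p$-paths, and a standard argument upgrades it to $\delta$-hyperbolicity of $X$ for some $\delta$ depending only on $\Delta$, after comparing the $p$-paths with actual geodesics (which, being paths between their endpoints, also fellow-travel the $p$-paths up to $\Delta$).

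To build the tree, I would fix a basepoint $x_0\in X$, choose a maximal $R$-separated set $N\ni x_0$ with $R$ a large multiple of $\Delta$, and for each $x\in N\setminus\{x_0\}$ assign a parent $\pi(x)\in N$ by taking a point of $N$ close to the point on $p_{x_0,x}$ at distance approximately $R$ from $x$ along $p_{x_0,x}$. Arranging that $d_X(x_0,\pi(x))<d_X(x_0,x)-R/2$ guarantees that iterating $\pi$ reaches $x_0$ in finitely many steps, so the graph $T$ with vertex set $N$ and edges $\{x,\pi(x)\}$ (of length $d_X(x,\pi(x))$) is connected; since every vertex has a unique parent, $T$ is automatically a tree.

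The final step is to verify that the identity map $N\hookrightarrow X$, with $N$ carrying the path metric of $T$, is a quasi-isometry with constants depending only on $\Delta$. One inequality, $d_X\le d_T$, is immediate since $T$ is built from actual paths in $X$. For the reverse, given $x,y\in N$ the concatenation $p_{x_0,x}\cup p_{x_0,y}$ is a path from $x$ to $y$, so it contains $p_{x,y}$ in its $\Delta$-neighborhood. Walking the $T$-path from $x$ to $y$ amounts to tracking the parent chain backward along $p_{x_0,x}$ to a highest common ancestor and then forward along $p_{x_0,y}$; the hyperbolicity step ensures that $\{x_0,x,y\}$ forms an approximate tripod whose center is close to this ancestor, so the $T$-length is bounded by a multiple of $d_X(x,y)$ plus an additive constant. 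Combining this with the $R$-density of $N$ in $X$ yields the quasi-isometry between $X$ and $T$.

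The main obstacle is the coordination between the parent function and the bottleneck paths: one needs the parent chains of distinct net points to shadow the bottleneck paths $p_{x_0,\cdot}$ closely enough that the highest common ancestor in $T$ lies near the tripod center in $X$, and simultaneously to be separated enough that two diverging branches of $T$ do not reconverge through some unlucky bottleneck fellow-traveling. Both of these are controlled by choosing $R$ sufficiently large relative to $\Delta$ and $\delta$, but pinning down this comparison is the delicate part of the proof.
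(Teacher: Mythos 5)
This theorem is not proved in the paper at all --- it is quoted verbatim from Manning \cite{M} (via the formulation in \cite{BBF}) --- so there is no in-paper argument to compare against; I will judge your sketch on its own terms. Your hyperbolicity step is essentially sound, but note that the hypothesis only gives one of the two containments you need: since a geodesic $[x,y]$ is a path from $x$ to $y$, you get $p_{x,y}\subset B_\Delta([x,y])$, whereas the reverse containment $[x,y]\subset B_{3\Delta}(p_{x,y})$ is not ``immediate'' and requires a separate (standard) argument, e.g.\ projecting $p_{x,y}$ to $[x,y]$ and using that the projected image is a coarsely connected subset of the geodesic containing both endpoints, hence coarsely dense in it. With both containments in hand, thinness of the $p$-triangles does transfer to geodesic triangles.

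The genuine gap is in the tree construction, and it is not merely ``delicate'': the parent-pointer tree on a net, as you describe it, is simply not quasi-isometric to $X$, for any choice of $R$. The problem is that parent chains of nearby net points need never merge. Already for $X=[0,\infty)$ with net $\{0,R,2R,\dots\}$ and parent ``the net point near the point $2R$ closer to $x_0$,'' the chains of $100R$ and $101R$ are $\{100R,98R,\dots\}$ and $\{101R,99R,\dots\}$, which first meet near the origin, so $d_T(100R,101R)\approx 200R$ while $d_X=R$; the map $(N,d_T)\to X$ fails to be a quasi-isometry even for a ray. A second symptom of the same gap: after the hyperbolicity step, your argument only ever applies the bottleneck hypothesis to paths that are geodesics or concatenations of $p$-paths, which yields nothing beyond thin triangles; if the argument were complete it would show that every hyperbolic space, e.g.\ $\mathbb{H}^2$, is a quasi-tree. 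The content of the bottleneck condition beyond hyperbolicity is that \emph{arbitrary} paths from $x$ to $y$ --- including ones that try to detour far around --- must pass $\Delta$-close to all of $p_{x,y}$, and this is exactly what is needed to repair the construction: one must define the tree's vertices not as individual net points but as coarse connected components of the annuli $\{z: nR\le d(x_0,z)<(n+1)R\}$ (so that all points in one cluster share a parent and chains are forced to merge at the tripod center), and the bottleneck condition applied to a path running within such a component is what shows each component has uniformly bounded diameter. That step is absent from your sketch and cannot be supplied by taking $R$ large.
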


For our applications, there is an extra subtlety in that the metric spaces we will be interested in will not be geodesic and, in fact, will not even be connected. More concretely, we need a version of the bottleneck condition that we can apply to a quasi-convex subgraph of a graph.

Let $X$ be a graph and $A$ a $Q$-quasi-convex subgraph. For $K \ge 2Q+1$, we construct a new graph $X_K$ by adding an edge between any pair of vertices $x, y \in X$ with $2\le d(x,y) \le D$. Let $A_K \subset X_K$ be the subgraph spanned by $A$, i.e. the largest subgraph of $X_K$ with the same vertex set as $A$. Let $d$, $d_A$, and $d_K$ be the metrics for $X,A$, and $X_K$, respectively, where $d_A$ is the subspace metric. Let $\tilde d_A$ be the path metric on $A_K$. Clearly, $d(x,y) \le K\tilde d_A(x,y)$ for vertices $x,y \in A$. We also observe that the paths $q_{x,y}$ are $K$-connected and have length $d(x,y)$, so $\tilde d_A(x,y) \le d(x,y)$.
As $d_A(x,y) = d(x,y)$, we have
$$\tilde d_A(x,y) \le  d_A(x,y) \le K\tilde d_A(x,y)$$
and therefore $(A, d_A)$ and $(A_K, \tilde d_A)$ are quasi-isometric. Moreover, a $K$-connected path in $A$ determines an actual path in $A_K$ and conversely the vertices of any path in $A_K$ are a $K$-connected path in $A$. Combining this with Theorem \ref{bottleneckcrit}, we have:

\begin{cor}\label{subspacebottleneck}
Let $X$ be a graph and $A\subset X$ a $Q$-quasi-convex subgraph. Assume there is $K>2Q+1$ and $\Delta>0$ such that for any pair of points $x,y \in A$ there is a $K$-connected path $p_{x,y}$ in $A$ from $x$ to $y$ such that the $\Delta$-neighborhood of any $K$-connected path in $A$ between $x$ and $y$ contains $p_{x,y}$. Then $A$ is quasi-isometric to a simplicial tree with quasi-isometry constants depending only on $K$ and $\Delta$.
\end{cor}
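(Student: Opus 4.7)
The plan is to verify Manning's bottleneck criterion (Theorem \ref{bottleneckcrit}) on the auxiliary graph $A_K$ and then transfer the resulting quasi-tree structure back to $A$ via the bi-Lipschitz equivalence between $(A, d_A)$ and $(A_K, \tilde d_A)$ noted in the paragraph preceding the statement.

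First, I would verify that $A_K$ is a connected graph, hence a genuine geodesic metric space to which Theorem \ref{bottleneckcrit} applies. For any vertices $x, y \in A$, the path $q_{x,y}$ constructed earlier from a geodesic in $X$ by closest-point projection into $A$ is $(2Q+1)$-connected; since $K > 2Q+1$, its consecutive vertices are joined by edges of $X_K$ both of whose endpoints lie in $A$, so $q_{x,y}$ lifts to an honest edge-path in $A_K$ from $x$ to $y$.

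Next, I would translate the hypothesis into a bottleneck statement in $A_K$. Edge-paths in $A_K$ are precisely $K$-connected sequences of vertices of $A$, so the hypothesis supplies, for each $x, y \in A$, an $A_K$-path $p_{x,y}$ such that every other $A_K$-path $q$ between $x$ and $y$ contains $p_{x,y}$ in its $\Delta$-neighborhood measured in the ambient metric $d$. Because $\tilde d_A \leq d_A = d|_{A \times A}$, the $\Delta$-neighborhood of $q$ with respect to $d$, intersected with $A$, is contained in the $\Delta$-neighborhood of $q$ with respect to the path metric $\tilde d_A$. Hence the bottleneck condition persists inside $(A_K, \tilde d_A)$ with the same constant $\Delta$, and Theorem \ref{bottleneckcrit} produces a quasi-isometry from $A_K$ to a simplicial tree with constants depending only on $\Delta$.

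To finish, I would compose with the $K$-bi-Lipschitz identity $(A, d_A) \to (A_K, \tilde d_A)$ (available from $\tilde d_A \leq d_A \leq K \tilde d_A$), obtaining the desired quasi-isometry from $A$ to a tree with constants depending only on $K$ and $\Delta$. The only step requiring genuine care — and really the only nontrivial one — is the transfer of the bottleneck condition from the ambient metric on $X$ to the intrinsic path metric of $A_K$; the remaining estimates are routine bookkeeping of Lipschitz constants.
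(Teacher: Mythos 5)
Your proposal is correct and follows essentially the same route as the paper, which derives the corollary directly from the preceding construction of $A_K$, the bi-Lipschitz comparison $\tilde d_A \le d_A \le K\tilde d_A$, and the correspondence between $K$-connected paths in $A$ and edge-paths in $A_K$, then invokes Theorem \ref{bottleneckcrit}. Your extra care in checking that the $\Delta$-neighborhood condition transfers from the ambient metric to the intrinsic path metric of $A_K$ (via $\tilde d_A \le d_A$) is a point the paper leaves implicit, but it is the same argument.
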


If $A$ is a $Q$-quasi-convex subgraph of a graph $X$ and $K \ge 2Q+1$, we say that $A$ satisfies a \emph{$(\Delta,K)$-bottleneck condition} if for any $x,y \in A$ and any $K$-connected path $p$ in $A$ between $x$ and $y$, we have $[x,y] \subset B_\Delta(p)$. Observe that $q_{x,y}\subset B_Q([x,y])$, so if $A$ satisfies a $(\Delta,K)$-bottleneck condition then $q_{x,y} \subset B_{\Delta +Q}(p)$. Therefore Corollary \ref{subspacebottleneck} yields:
\begin{cor}\label{geodesic_bottleneck}
Let $A$ be a $Q$-quasi-convex subgraph of a graph $X$ and let $K \ge 2Q+1$ and $\Delta>0$. If $A$ satisfies a
$(\Delta, K)$-bottleneck condition then $A$ is quasi-isometric to a simplicial tree with quasi-isometry constants depending only on $K$ and $\Delta$.
\end{cor}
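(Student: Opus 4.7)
\medskip

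\noindent\textbf{Proof plan for Corollary \ref{geodesic_bottleneck}.} The plan is to verify the hypothesis of Corollary \ref{subspacebottleneck} by producing, for every pair $x,y\in A$, a distinguished $K$-connected path $p_{x,y}$ in $A$ that is contained in a uniform neighborhood of every other $K$-connected path in $A$ from $x$ to $y$.

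The candidate for $p_{x,y}$ is the path $q_{x,y}$ constructed at the beginning of Section 3: take a geodesic $x = x_0, x_1, \dots, x_n = y$ in $X$, choose $y_i \in A$ with $d(x_i, y_i) \le Q$ (for the endpoints take $y_0 = x$ and $y_n = y$), and let $q_{x,y}$ be the resulting vertex sequence in $A$. By $Q$-quasi-convexity, consecutive vertices satisfy $d(y_i, y_{i+1}) \le 2Q+1 \le K$, so $q_{x,y}$ is indeed a $K$-connected path in $A$. By construction $q_{x,y} \subset B_Q([x,y])$.

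Now let $p$ be any $K$-connected path in $A$ from $x$ to $y$. The $(\Delta, K)$-bottleneck assumption applied to $p$ gives $[x,y] \subset B_\Delta(p)$, and combining this with $q_{x,y} \subset B_Q([x,y])$ yields
\[
q_{x,y} \subset B_{\Delta+Q}(p).
\]
This is precisely the hypothesis of Corollary \ref{subspacebottleneck} with bottleneck constant $\Delta' = \Delta + Q$ and connectivity constant $K$ (noting $K \ge 2Q+1$). Applying that corollary, $A$ is quasi-isometric to a simplicial tree with constants depending only on $K$ and $\Delta'$, hence only on $K$ and $\Delta$ (since $Q \le (K-1)/2$).

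There is essentially no obstacle here: the work was already done in setting up the machinery of Section 3. The only thing to double-check is that the constants in Corollary \ref{subspacebottleneck} depend only on $K$ and $\Delta + Q$, and that $Q$ can be absorbed into a function of $K$ alone via $Q \le (K-1)/2$, so the final quasi-isometry constants depend only on $K$ and $\Delta$ as claimed.
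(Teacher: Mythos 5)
Your proof is correct and is essentially the same as the paper's: the paper also observes $q_{x,y}\subset B_Q([x,y])$, combines this with the bottleneck condition to get $q_{x,y}\subset B_{\Delta+Q}(p)$, and then applies Corollary \ref{subspacebottleneck}. The extra remark that $Q\le (K-1)/2$ lets the constants be stated in terms of $K$ and $\Delta$ alone is a fine, if minor, point of bookkeeping.
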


Using the quasi-isometry of the quasi-tree to a simplicial tree we also have a converse:
\begin{prop}\label{bottleneck_converse}
Let $A$ be a $Q$-quasi-convex subgraph of a $\delta$-hyperbolic graph $X$. If $A$
(with the subspace metric) is quasi-isometric to a simplicial tree then,
for any $K\ge 2Q+1$, there is $\Delta>0$, depending on $K$ and the quasi-isometry constants, such that $A$ satisfies a $(\Delta, K)$-bottleneck condition.

Furthermore, for $D>0$ any $D$-connected subset of $A$ is a quasi-tree.
\end{prop}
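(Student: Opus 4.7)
Fix $K \ge 2Q+1$ and let $f \colon (A, d_A) \to T$ be the given $(L,C)$-quasi-isometry to a simplicial tree. By the discussion immediately preceding the proposition, $f$ is also a quasi-isometry from $(A_K, \tilde d_A)$ to $T$ with constants depending on $L, C, K$. For $x, y \in A$, the plan is to fix a geodesic $\gamma^* = (v_0, \ldots, v_m)$ in the graph $A_K$ from $x$ to $y$ and show that both $\gamma^* \subset B_{\Delta'}(p)$ and $[x, y]_X \subset B_{\Delta_M}(\gamma^*)$ hold in $(X, d_X)$, for any $K$-connected path $p = (a_0, \ldots, a_n)$ in $A$ from $x$ to $y$.

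For $\gamma^* \subset B_{\Delta'}(p)$, I push both paths into $T$. The image $f(\gamma^*)$ is a quasi-geodesic in the tree, so stability of quasi-geodesics in a tree places it within a uniform $\Delta_M^T$ of the tree geodesic $[f(x), f(y)]_T$. Connecting the consecutive vertices of $f(p)$ by tree geodesics yields a continuous path containing $[f(x), f(y)]_T$, so each point of that tree geodesic --- in particular, the tree-projection of each $f(v_i)$ --- lies within half the maximal step length of some $f(a_j)$. Chaining and applying the quasi-inverse of $f$ gives $\tilde d_A(v_i, a_j) \le \Delta'$ for a constant $\Delta' = \Delta'(L,C,K)$, and the bound $d_X \le K\tilde d_A$ converts this into a $d_X$-neighborhood. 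For $[x,y]_X \subset B_{\Delta_M}(\gamma^*)$, note that consecutive vertices of $\gamma^*$ lie at $d_X$-distance at most $K$, and since $\gamma^*$ is an $A_K$-geodesic we have $|i-j| = \tilde d_A(v_i, v_j) \le d_X(v_i, v_j) \le K|i-j|$; hence $\gamma^*$ is a $(K,0)$-quasi-geodesic in the $\delta$-hyperbolic graph $X$, and the Morse lemma supplies $\Delta_M = \Delta_M(K, \delta)$. Combining the two inclusions gives the $(\Delta_M + \Delta', K)$-bottleneck.

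For the furthermore, let $B \subset A$ be $D$-connected. Its image $f(B) \subset T$ is $(LD+C)$-connected in $T$, so the thickening $T' = \{t \in T : d_T(t, f(B)) \le LD + C\}$ is a connected subset of the tree $T$ and hence itself a (simplicial) subtree, in which $f(B)$ is $(LD+C)$-coarsely dense by construction. Therefore $(f(B), d_T|_{f(B)})$ is quasi-isometric to $T'$, and composing with the quasi-isometric embedding $f|_B \colon (B, d_X|_B) \to (f(B), d_T|_{f(B)})$ exhibits $(B, d_X|_B)$ as a quasi-tree. The main obstacle is bookkeeping: one must carefully coordinate the three metrics $d_X$, $\tilde d_A$, and $d_T$ through $A_K$ so that the stability constants in $T$ and $X$ combine with the quasi-isometry constants into a single $\Delta$ depending only on $K$ and $(L,C)$.
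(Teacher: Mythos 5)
Your proof is correct, and it follows exactly the route the paper intends: the paper states Proposition \ref{bottleneck_converse} without proof, offering only the hint ``using the quasi-isometry of the quasi-tree to a simplicial tree,'' and your argument --- pushing an $A_K$-geodesic and an arbitrary $K$-connected path into the tree, using that any path in a tree contains the geodesic between its endpoints, and then pulling back and applying the Morse lemma in $X$ --- is the standard way to fill in that sketch. The only cosmetic caveat is that your $\Delta$ also depends on $\delta$ (via the Morse lemma in $X$), which is implicit in the statement and harmless for all of the paper's applications.
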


Next we show that a union of finitely many quasi-trees is a quasi-tree.
\begin{lemma}\label{finite quasitree}
Let $X$ be a $\delta$-hyperbolic graph and  let $T_0,\dots, T_n$ be $Q$-quasi-convex subsets. If each $T_i$ is a quasi-tree and there exists $C\ge Q$ such that $I_C(T_0, T_i) \neq \emptyset$ for each $i$, then $T = T_0 \cup \cdots \cup T_n$ is a quasi-tree with constants only depending on $\delta$, $C$, $n$ and the quasi-tree constants for the sets $T_i$.
\end{lemma}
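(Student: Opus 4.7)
The plan is to induct on $n$. The base case $n=0$ is trivially the hypothesis that $T_0$ is a quasi-tree. For the inductive step, put $T' = T_0 \cup \cdots \cup T_{n-1}$; by the inductive hypothesis $T'$ is a quasi-tree, by Lemma \ref{fellow travel}(\ref{union quasiconvex}) it is $(2C+4\delta)$-quasi-convex in $X$, and since $T_0 \subset T'$ we have $I_C(T', T_n) \supset I_C(T_0, T_n) \neq \emptyset$. Thus the problem reduces to the case $n=1$: show that the union $T = A \cup B$ of two $Q$-quasi-convex quasi-trees $A, B$ with $I_C(A, B) \neq \emptyset$ is a quasi-tree.

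For this I would verify a bottleneck condition for $T$ and invoke Corollary \ref{geodesic_bottleneck}. By Lemma \ref{fellow travel}(\ref{union quasiconvex}), $T$ is $Q'$-quasi-convex with $Q' = 2C+4\delta$; Proposition \ref{bottleneck_converse} then supplies, for any sufficiently large $K$, uniform $(\Delta_0, K)$-bottleneck conditions for $A$ and for $B$ individually. Fix such a $K \ge 2Q'+1$. Let $x, y \in T$ and let $p = (x = p_0, \dots, p_m = y)$ be a $K$-connected path in $T$; I must exhibit a uniform $\Delta$ with $[x,y] \subset B_\Delta(p)$.

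In the main case $x, y \in A$, decompose $p$ into maximal $A$-runs and $B$-runs and build a modified path $p' \subset A$ by replacing each $B$-run $p_{i+1}, \dots, p_j$ (flanked by $p_i, p_{j+1} \in A$) with the $A$-geodesic $[p_i, p_{j+1}]_A$. Then $p'$ is still a $K$-connected path in $A$ from $x$ to $y$, and the bottleneck condition for $A$ yields $[x,y] \subset B_{\Delta_0}(p')$. The crux is to show $p' \subset B_R(p)$ for a uniform $R$: the $A$-portions of $p'$ coincide with those of $p$, and for each bridge $[p_i, p_{j+1}]_A$ I would chain three ingredients---bottleneck for $B$ applied to the $B$-run places $[p_{i+1}, p_j]_X$ in a $\Delta_0$-neighborhood of that run; $\delta$-hyperbolicity together with the endpoint bounds $d(p_i, p_{i+1}), d(p_j, p_{j+1}) \leq K$ makes $[p_{i+1}, p_j]_X$ and $[p_i, p_{j+1}]_X$ uniformly Hausdorff close; and $Q$-quasi-convexity of $A$ in the $\delta$-hyperbolic $X$ makes $[p_i, p_{j+1}]_A$ a uniform quasi-geodesic close to $[p_i, p_{j+1}]_X$. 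Combining these, each bridge lies in a uniform neighborhood of the corresponding $B$-run, so $p' \subset B_R(p)$ and $[x,y] \subset B_{\Delta_0 + R}(p)$. The case $x, y \in B$ is symmetric.

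The mixed case $x \in A$, $y \in B$ reduces to the previous ones: let $v \in A$ be the last vertex of $p$ lying in $A$. The initial subpath from $x$ to $v$ has endpoints in $A$, so the preceding argument gives $[x,v] \subset B_{\Delta'}(p)$, while the final $B$-tail $p_{v+1}, \dots, p_m = y$ is a $K$-connected path in $B$, so bottleneck for $B$ gives $[p_{v+1}, y] \subset B_{\Delta_0}(p)$ and hence $[v, y] \subset B_{\Delta_0 + K}(p)$. Then $\delta$-thinness of the triangle $\{x, v, y\}$ yields $[x,y] \subset B_\delta([x,v] \cup [v,y]) \subset B_{\Delta}(p)$ for uniform $\Delta$; the symmetric case $x \in B$, $y \in A$ is handled by splitting at the first $A$-vertex of $p$. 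The hardest step will be the bridge-closeness estimate in the $x, y \in A$ case, which truly uses both individual bottleneck conditions together with the ambient hyperbolicity; everything else is routine bookkeeping, and all constants produced depend only on $\delta$, $C$, $n$, and the quasi-tree data of the $T_i$, as claimed.
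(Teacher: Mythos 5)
Your proof is correct and takes essentially the same route as the paper's: reduce to $n=1$ by induction, replace the $B$-runs of a $K$-connected path by quasi-geodesic paths in $A$ that stay uniformly close to the original path (via the bottleneck condition for $B$, hyperbolicity, and quasi-convexity of $A$), and then apply the bottleneck condition for $A$ to the modified path. The only cosmetic difference is that the paper absorbs the flanking $T_0$-points into the neighborhood $B_D(T_1)$ of each run and applies the bottleneck condition to $B_D(T_1)$ directly, whereas you keep those points in $A$ and insert an extra thin-triangle step.
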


\begin{proof}
It is enough to prove this when $n=1$, as the general case follows by induction.  By \eqref{union quasiconvex} of Lemma \ref{fellow travel},  $T_0 \cup T_1$ is $2C+ 4\delta$-quasi-convex and $D$-connected with $D = 4C+ 8\delta +1>2C+4\delta$. Let $p$ be a $D$-connected path from $x \in T_0$ to $y \in T_1$.

We break $p$ into subpaths $p_1, \dots, p_{2m}$, where the $p_{2j-1}$ are in $T_0$, the $p_{2j}$ are in $B_D(T_1)$, and the endpoints of each $p_i$, except possibly for the terminal endpoint of $p_{2m}$, are in $T_0$.

As $T_0$ and $B_D(T_1)$ are quasi-trees, they satisfy a $(\Delta, D)$-bottleneck condition for some $\Delta>0$.
For each $i$, let $g_i$ be a geodesic with the same endpoints as $p_i$.
Then $g_i \subset B_\Delta(p_i)$.

The geodesics $g_2, g_4, \dots, g_{2m-2}$ all have endpoints in the $Q$-quasi-convex set $T_0$, so there are $(2Q+1)$-connected paths $q_2, q_4, \dots, q_{2m-2}$ in $T_0$ with the same endpoints as $g_{2j}$ and $q_{2i} \subset B_Q(g_{2i}) \subset B_{\Delta + Q}(p_{2i})$.

Then $\tilde p = p_1 \cup q_1 \cup \cdots \cup p_{2m-3} \cup q_{2m-2} \cup p_{2m-1}$ is a $D$-connected path in $T_0$ with $\tilde p \subset B_{\Delta+Q}(p)$. If $\tilde g$ is the geodesic with the same endpoints as $\tilde p$, we have $\tilde g \subset B_\Delta(\tilde p) \subset B_{2\Delta + Q}(\tilde p)$. Then by $\delta$-hyperbolicity of $X$, we have
$$[x,y] \subset B_\delta(\tilde g \cup g_{2m}) \subset B_{2\Delta +Q}(p)$$
and $T$ satisfies the $(2\Delta + Q, D)$-bottleneck condition. 
\end{proof}

In the following lemma, we think of a quasi-tree $T_1$ ``overlapping'' a
quasi-tree $T_0$. The lemma gives a coarse decomposition of the union
into $T_0$ plus many quasi-trees that have small overlap with $T_0$ and
with each other.

\begin{lemma}\label{subtrees}
Let $T_0$ and  $T_1$ be $Q$-quasi-convex subsets of a $\delta$-hyperbolic graph that are quasi-trees with $I_C(T_0, T_1) \neq \emptyset$. Then there are quasi-trees $T'_0$ and $T^i_1$ with 
 $T_0 \cup T_1 = T'_0 \cup \left(\bigcup_j T^i_1\right)$
where
\begin{itemize}
\item $T_0 \subset T'_0\subset B_{C'}(T_0)$;

\item for each $i$, $T_1^i$ is $Q^*$-quasi-convex and $T_1^i \subset T_1$;

\item for each $i$, $I_{C'}(T_0, T^i_1) \neq \emptyset$;

\item for $0\le i<j$, the diameter of $I_{Q^*+2\delta}(T^i_1, T^j_1)$ is $< D$.
\end{itemize}
\end{lemma}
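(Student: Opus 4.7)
The plan is to transfer the decomposition problem to a simplicial tree via a quasi-isometry, split the tree along the overlap with $T_0$, and pull the decomposition back to $X$. Fix a quasi-isometry $\phi : T_1 \to \mathcal{T}_1$ to a simplicial tree (which exists since $T_1$ is a quasi-tree), and set $J = T_1 \cap B_C(T_0)$; the hypothesis $I_C(T_0, T_1) \neq \emptyset$ ensures $J \neq \emptyset$. Let $H$ be the convex hull of $\phi(J)$ in $\mathcal{T}_1$, a subtree, and let $\{H_i\}_{i \ge 1}$ be the connected components of $\mathcal{T}_1 \setminus H$. For a sufficiently large constant $R$ absorbing the quasi-isometry defect, define
\[
T'_0 \;=\; T_0 \cup \phi^{-1}(N_R(H)), \qquad T^i_1 \;=\; \phi^{-1}(N_R(H_i)).
\]
Since $\mathcal{T}_1 = H \cup \bigcup_i H_i$, this gives $T_0 \cup T_1 = T'_0 \cup \bigcup_i T^i_1$.

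The key preliminary step is to show $\phi(J)$ is quasi-convex in $\mathcal{T}_1$. This combines the quasi-convexity of $T_0$ in $X$ (via Lemma \ref{fellow travel}, $X$-geodesics between points of $J$ stay in a bounded neighborhood of $T_0$), the bottleneck condition for $T_1$ in $X$ (Proposition \ref{bottleneck_converse}, which forces $K$-connected paths in $T_1$ between such points to stay near the $X$-geodesic), and the fact that $\phi$ is a quasi-isometry for the intrinsic metric on $T_1$. Quasi-convexity in a tree implies $H \subset N_S(\phi(J))$ for some $S$, hence $\phi^{-1}(N_R(H)) \subset B_{C'}(T_0)$ after enlarging constants, giving $T_0 \subset T'_0 \subset B_{C'}(T_0)$. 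Moreover, $T'_0$ is a quasi-tree by Lemma \ref{finite quasitree} applied to $T_0$ and $\phi^{-1}(N_R(H))$, both quasi-trees sharing a neighborhood of $J$ as coarse intersection.

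For each $T^i_1$: it is a quasi-tree because $\phi$ restricts to a quasi-isometry with the thickened subtree $N_R(H_i)$. For quasi-convexity in $X$, observe that $H_i$ is convex in $\mathcal{T}_1$, being a component of the complement of a subtree. Hence for $x, y \in T^i_1$, the tree geodesic $[\phi(x), \phi(y)]$ stays in $H_i$, pulling back to a $K$-connected path in $T^i_1$; the bottleneck condition for $T_1$ in $X$ then forces the $X$-geodesic $[x, y]$ into a uniform neighborhood of this path, hence of $T^i_1$. For $I_{C'}(T_0, T^i_1) \neq \emptyset$: the attaching vertex $v_i \in \bar H_i \cap H$ lies within $S$ of a point of $\phi(J)$, whose preimage in $J \subset B_C(T_0)$ is close to $\phi^{-1}(v_i) \in T^i_1$. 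Finally, for $i \neq j$, the $R$-neighborhoods of the distinct components $H_i, H_j$ intersect in a region of bounded diameter in the tree (they are separated by $H$), and transferring through $\phi$ yields the required diameter bound on $I_{Q^* + 2\delta}(T^i_1, T^j_1)$.

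The principal difficulty is the interplay between the intrinsic tree-like geometry of $T_1$ (captured by $\phi$ and the bottleneck condition) and the extrinsic hyperbolic geometry of $X$: both the quasi-convexity of $\phi(J)$ in $\mathcal{T}_1$ and the quasi-convexity of each $T^i_1$ in $X$ require carefully translating between these two points of view, and the bookkeeping of constants must be arranged so that a single $R$ simultaneously works for all the required properties.
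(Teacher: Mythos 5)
Your argument is correct in outline but takes a genuinely different route from the paper's. The paper first invokes Lemma \ref{finite quasitree} to conclude that the \emph{whole union} $T_0\cup T_1$ is a quasi-tree, takes a quasi-isometry $\phi\colon T_0\cup T_1\to\Lambda$ to a simplicial tree, chooses a subtree $\Lambda_0$ containing $\phi(T_0)$ inside a bounded neighborhood of $\phi(T_0)$ (possible because $\phi(T_0)$ is coarsely connected), and sets $T_0'=\phi^{-1}(\Lambda_0)$ and $T_1^i=\phi^{-1}(\Lambda_i)$ for the components $\Lambda_i$ of $\Lambda\smallsetminus\Lambda_0$. Modelling the union means the overlap of $T_1$ with $T_0$ is swallowed by $\Lambda_0$ automatically. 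You instead tree-model only $T_1$ and must locate the overlap by hand as the hull of $\phi(J)$ with $J=T_1\cap B_C(T_0)$, which obliges you to prove that $\phi(J)$ is quasi-convex in $\mathcal T_1$ --- a step the paper's route avoids. That is also where your justification is misdirected: Proposition \ref{bottleneck_converse} says the $X$-geodesic lies near \emph{every} $K$-connected path in $T_1$, not that every such path lies near the geodesic, so it cannot ``force $K$-connected paths to stay near the $X$-geodesic.'' What you actually need is to project $[x,y]$ (which lies in a bounded neighborhood of $T_0$ by Lemma \ref{fellow travel}) to $T_1$ to obtain the $K$-connected path $q_{x,y}$, note that it lies in a slightly enlarged $J'=T_1\cap B_{C''}(T_0)$, and then use the elementary fact that in a simplicial tree any coarsely connected set joining $\phi(x)$ to $\phi(y)$ contains the geodesic $[\phi(x),\phi(y)]$ in a bounded neighborhood; with $J$ replaced by $J'$ throughout, this repairs the step. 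Your remaining verifications --- convexity of the complementary components $H_i$ in the tree, the bounded pairwise overlaps of their $R$-neighborhoods via the attaching vertices, and nonemptiness of $I_{C'}(T_0,T_1^i)$ --- are sound, and in fact your write-up makes explicit the quasi-convexity of the pieces and the bounded coarse intersections, which the paper leaves to the reader.
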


\begin{proof}
By Lemma \ref{finite quasitree} the union $T_0 \cup T_1$ is a quasi-tree, so there is a quasi-isometry $\phi\colon T_0\cup T_1 \to \Lambda$ to a simplicial tree $\Lambda$. The image of $T_0$ in $\Lambda$ is coarsely connected, so there is a sub-tree $\Lambda_0\subset \Lambda$ that contains $\phi(T_0)$ and is contained in a uniformly bounded neighborhood of $\phi(T_0)$. Let $\{\Lambda_i\}$ be the components of $\Lambda\smallsetminus \Lambda_0$. Then we may set
$T'_0 = \phi^{-1}(\Lambda_0)$ and $T^i_1 = \phi^{-1}(\Lambda_i)$.
\end{proof}

Finally, we give a criterion for an infinite union of quasi-trees to be a quasi-tree.
\begin{thm}\label{quasitreelem}
Let $X$ be a $\delta$-hyperbolic graph and let $T_0, T_1,\dots$ be subgraphs that are $Q$-quasi-convex, uniform quality quasi-trees.
Assume that
\begin{enumerate}
\item there exists $C>0$ such that $I_{C}(T_0, T_i) \neq \emptyset$ for all
$i\ge 1$;
\item there exists $N>0$ and $D>0$ such that the diameter of $I_{Q+2\delta}(T_{i_1}, \dots, T_{i_N})$ is bounded by $D$ for all $1\le i_1 < i_2 < \cdots < i_N$.
\end{enumerate}
Then $T = \displaystyle{\bigcup_{i\ge 0}} T_i$ is a quasi-tree with constants only depending on $\delta$, $Q$, and the quasi-tree constants for the sets $T_i$.
\end{thm}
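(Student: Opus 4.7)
The plan is to verify a $(\Delta, K)$-bottleneck condition for $T$ and apply Corollary \ref{geodesic_bottleneck}. By Lemma \ref{fellow travel}(3) applied to the collection $\{T_i\}_{i \ge 0}$ together with hypothesis (1), the union $T$ is $Q'$-quasi-convex in $X$ for $Q' = 2C + 4\delta$; set $K = 2Q' + 1$. Since any $K$-connected path $p$ in $T$ is finite, it lies in a finite subunion $T_{(S)} = T_0 \cup \bigcup_{i \in S} T_i$ with $S \subset \N$ finite, so it suffices to show that each $T_{(S)}$ satisfies a $(\Delta, K)$-bottleneck condition with $\Delta$ independent of $|S|$.

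I would prove this by induction on $|S|$, refining Lemma \ref{finite quasitree} so that the constants do not blow up. The base case $|S| = 0$ holds because $T_0$ is a quasi-tree. For the inductive step, pick $i_* \in S$ and apply Lemma \ref{subtrees} to $T_0 \cup T_{i_*}$ to decompose $T_{i_*}$ (relative to $T_0$) into quasi-trees $T^{i_*}_1, T^{i_*}_2, \ldots$ hanging off $T_0$ with pairwise small coarse intersections. Given a $K$-connected path $p$ in $T_{(S)}$ from $x$ to $y$, break it into maximal sub-excursions into the various $T_i$ for $i \in S$, and in each $T_i$ replace the sub-excursion by a geodesic in $X$ (which fellow-travels the sub-excursion by the bottleneck condition within the quasi-tree $T_i$). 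Combine the inductive bottleneck condition for $T_{(S \setminus \{i_*\})}$ with the two-term bottleneck from Lemma \ref{finite quasitree} for $T_0 \cup T_{i_*}$ to pass the bottleneck condition up to $T_{(S)}$.

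The main obstacle will be keeping $\Delta$ uniform in $|S|$. Lemma \ref{finite quasitree} on its own has $\Delta$ deteriorating with the number of sets, so hypothesis (2) must be used as a genuine structural input rather than just for counting. Condition (2) translates (via the decomposition in Lemma \ref{subtrees}) into a uniform upper bound on how many of the pieces $T^{i,j}$ can accumulate in any coarse-intersection region near $T_0$; this bounded local complexity is what prevents the inductive step from losing a constant factor. The technical heart of the argument will be choosing $i_*$ at each stage so that its removal genuinely decouples its sub-excursions from the rest of $T_{(S)}$, and tracking the coarse-intersection bookkeeping so that $\Delta$ depends only on $\delta$, $Q$, $C$, $D$, $N$, and the uniform quasi-tree constants of the $T_i$.
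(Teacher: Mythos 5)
There is a genuine gap, and it sits exactly where the theorem is hard: uniformity of the bottleneck constant over arbitrarily many trees. Your plan is to induct on $|S|$, the number of trees in a finite subunion, and you correctly flag that the inductive step must not lose a constant; but the mechanism you offer for this is not valid. Hypothesis (2) bounds the \emph{diameter} of the $N$-fold coarse intersections $I_{Q+2\delta}(T_{i_1},\dots,T_{i_N})$; it gives no bound at all on how many of the $T_i$ can meet a fixed bounded region. (Think of infinitely many branches of a tree passing through a single vertex: any two overlap in a bounded set, yet arbitrarily many ``accumulate'' there.) So the asserted ``uniform upper bound on how many of the pieces $T^{i,j}$ can accumulate in any coarse-intersection region near $T_0$'' is false, and with it the claim that the $|S|$-induction does not degrade $\Delta$; as written, each step of your induction (essentially Lemma \ref{finite quasitree}) loses an additive constant, and nothing replaces that loss. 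The existence of an $i_*$ whose removal ``decouples'' its excursions from the rest is likewise asserted but not argued, and in general the $T_i$ interleave along $T_0$ so that no single index is extremal in the needed sense. Finally, your sketch treats hypothesis (2) as if it were a pairwise condition; it never explains how the $N$-fold hypothesis for $N>2$ enters.

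For contrast, the paper never inducts on the number of trees: it inducts on the fixed constant $N$. In the base case $N=1$ the whole infinite union is handled at once: after Lemma \ref{subtrees} is applied to each pair $(T_0,T_i)$ to enlarge $T_0$ and replace each $T_i$ by pieces with uniformly bounded coarse overlap with the enlargement, one fixes $t_i\in I_C(T_0,T_i)$ and collapses every excursion of a $K$-connected path into $T_i$ ($i\ge 1$) to the single point $t_i$; the new path lies in $T_0$ within uniformly bounded Hausdorff distance of the old one, and the bottleneck condition for $T_0$ (plus those for the first and last trees visited) gives a $(\Delta,K)$-bottleneck for $T$ with $\Delta$ independent of how many $T_i$ the path meets. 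The induction step reduces $N$ to $N-1$ by adjoining to $T_0$ the sets $J_{i_1,\dots,i_{N-1}}=I_{Q+2\delta}(T_{i_1},\dots,T_{i_{N-1}})$ (quasi-convex quasi-trees with pairwise bounded overlaps, via Lemma \ref{fellow travel}), invoking the base case to see this union $J_0$ is a quasi-tree, and then applying Lemma \ref{subtrees} relative to $J_0$ to obtain a new collection satisfying hypothesis (2) with $N-1$. To repair your outline you would need to replace the $|S|$-induction with an argument of this type, in which all excursions are processed simultaneously at a uniform cost, rather than one tree at a time.
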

\begin{proof}
By Lemma \ref{fellow travel} part \eqref{union quasiconvex}, $T$ is quasi-convex. We induct on $N$ to prove that $T$ is a quasi-tree.

The base case is $N=1$. We first prove this with the extra assumption that the diameter of $I_{Q+2\delta}(T_i, T_j)$ is also bounded above by $D$ for all
$i,j$.

By Lemma \ref{fellow travel} part \eqref{union quasiconvex}, $T$ is $(2C+4\delta)$-quasi-convex and is $K$-connected with $K = 4C + 8\delta +1$.
Furthermore, if $I_K(T_i, T_j) \neq \emptyset$ with $0<i<j$ then, by Lemma \ref{fellow travel} part \eqref{triple}, we have that $I_{K'}(T_0, T_i, T_j) \neq \emptyset$ with $K' = 2K+3\delta$ and, using our extra assumption, by \eqref{larger} the diameter of $I_{K'}(T_0, T_i, T_j)$ is $< D'$ where $D' = 2K'+4\delta + D$.

For each $T_i$ with $i\ge1$, choose $t_i \in  I_{C}(T_0, T_i)$. If $x \in T_i$ and $y \in T_j$ with $0\le i<j$ and $d(x,y) \le K$ then $d(x, t_i) \le 2D'$. Therefore if $i> 0$ we have $d(t_i, t_j) \le 2D' +K$.

Now let $x_0, \dots, x_n$ be a $K$-connected path in $T$. We assume $x_0 \in T_a$ and $x_n \in T_b$ with $0<a<b$. The other cases are similar (and simpler). Let $x_{i_a}$ be the first point for which $x_{i_a+1} \not\in  T_a$ and let $x_{i_b}$ be the last point for which $x_{i_b-1} \not\in T_b$.

Form a new sequence $y_{i_a}, \dots, y_{i_b}$ where $y_i = x_i$ if $x_i \in T_0$ and $y_i = t_j$ if $x_i \in T_j$ with $j \neq 0$. It is possible that $x_i$ will be contained in more than one $T_j$. In this case we choose $T_j$ arbitrarily. We claim that $y_0 = t_a, y_1, \dots, y_n = t_b$ is a $(2D'+K)$-connected path. There are four cases:
\begin{itemize}
\item If $x_i, x_{i+1} \in T_j$ with $j \neq 0$ then $d(y_i, y_{i+1}) = d(t_j, t_j) = 0$. 
\item If $x_i, x_{i+1} \in T_0$ then $d(y_i,y_{i+1})= d(x_i, x_{i+1}) \le K$.
\item If $x_i \in T_j$ and $x_{i+1} \in T_k$ with $j,k\neq 0$ and $j\neq k$ then $d(y_i, y_{i+1}) = d(t_j, t_k) \le 2D'+K$.
\item If $x_i \in T_0$ and $x_{i+1} \in T_j$ with $j\neq 0$ then $d(y_i, y_{i+1}) = d(x_i,t_j) \le 2D'$. This similarly holds if $x_i \in T'_j$ and $x_{i+1} \in T_0$.
\end{itemize}

The vertices in the path $\{y_{i_a},\ldots,y_{i_b}\}$ are either in the path $\{x_0,\ldots,x_n\}$ or in $\{t_j\}_j$. If $t_j$ is in $\{y_{i_a},\ldots,y_{i_b}\}$ then there is $x_i \in T_j$ with $x_{i+1}$ or $x_{i-1}$ not in $T_j$. Therefore $d(x_i, t_j) \le 2D'$ and $\{y_{i_a},\ldots,y_{i_b}\}$ is therefore contained in the $2D'$-neighborhood of $\{x_0,\ldots,x_n\}$.

The sets $T_i$ are $Q$-quasi-convex, uniform quality quasi-trees in $X$. Therefore, by Lemma \ref{bottleneck_converse}, there is $\Delta>0$ such that they satisfy the $(\Delta,2D'+K)$-bottleneck condition and
$[x_0, x_{i_a}] \subset B_\Delta(\{x_0, \dots, x_{i_a}\})$,
$ [y_{i_a}, y_{i_b}] \subset  B_\Delta(\{y_{i_a}, \dots, y_{i_b}\})$, and 
$ [x_{i_b}, x_n] \subset  B_\Delta(\{x_{i_b}, \dots, x_n\})$.

 Thus $B_{\Delta +2D'}(\{x_0, \dots, x_n\})$ contains the union of geodesics $[x_0, x_{i_a}] \cup [x_{i_a}, y_{i_a}] \cup [y_{i_a}, y_{i_b}] \cup [y_{i_b}, x_{i_b}] \cup [x_{i_b}, x_n]$. By $\delta$-hyperbolicity, a $4\delta$-neighborhood of this last union contains $[x_0, x_n]$ so
 $$[x_0, x_n] \subset B_{\Delta + 2D' + 4\delta}(\{x_0, \dots, x_n\})$$ and $T$ satisfies the $(\Delta + 2D'+4\delta, K)$-bottleneck condition and is quasi-tree by Corollary \ref{geodesic_bottleneck}.
 
In general, we do not have a uniform bound on the diameter of $I_{Q+2\delta}(T_0, T_i)$. However, we can reduce to this case by applying Lemma \ref{subtrees} to each pair $T_0$ and $T_i$. Each time we do this $T_0$ is enlarged to a subset $T_{0,i}$ (this is $T'_0$ in the lemma) with $T_0 \subset T_{0,i} \subset B_{C'}(T_0)$. Hence if we replace $T_0$ with the union $T'_0 =  \cup T_{0,i}$ we get a $T_0 \subset T'_{0} \subset B_{C'}(T_0)$ so $T'_0$ is a quasi-tree with uniform constants. At each application of the lemma the $T_i$ become quasi-trees $T^j_i$. Then $T'_0$ and the $T^j_i$ have the property that $I_{Q^*+2\delta}(T'_0, T^j_i)$ has uniformly bounded diameter and we have reduced to this case.

Now we complete the induction step. Assume the theorem holds for $N-1$. By assumption, for $i_1, \ldots, i_N$ distinct, the diameter of $I_{Q+2\delta}(T_{i_1}, \dots, T_{i_N})$ is $<D$.
Furthermore, if $I_{Q+2\delta}(T_{i_1}, \dots, T_{i_{N-1}})$ is non-empty then, by Lemma \ref{fellow travel} part \eqref{intersection quasiconvex}, it is a $(Q+4\delta)$-quasi-convex subset and hence a quasi-tree. Denote the set $I_{Q+2\delta}(T_{i_1},\ldots,T_{i_{N-1}})$ by $J_{{i_1}, \dots, {i_{N-1}}}$. Let $J = J_{{i_1}, \dots, {i_{N-1}}}$ and $J'=J_{{i'_1}, \dots, {i'_{N-1}}}$ be two distinct sets of this type. We can assume that $i'_1 \not\in \{i_1, \dots, i_{N-1}\}$. Then $I_{Q+6\delta}(J, J') \subset I_{2Q+ 8\delta}(T_{i_1}, \dots, T_{i_{N-1}}, T_{i'_1})$ and therefore, by Lemma \ref{fellow travel} part \eqref{larger}, the diameter of $I_{Q+2\delta}(J, J')$ is $< D'$ where $D' = D+4Q+20\delta$. By the base case, the union of $T_0$ and the trees $J_{{i_1}, \dots, {i_{N-1}}}$ is a quasi-tree. Label this union $J_0$.

Next apply Lemma \ref{subtrees} to $J_0$ and each $T_i$ to obtain quasi-trees $T'_0$ and $T^j_i$. 
We'll show that $T'_0$ and the sets $T^j_i$ satisfy the theorem conditions for $N-1$. First, we observe that $T'_0$ and the sets $T^j_i$ are $Q^*$-quasi-convex, by Lemma \ref{subtrees}, where $Q^*$ is bounded in terms of our earlier constants.
 
 We claim that the diameter of $I_{Q^*+2\delta}(T^{j_1}_{i_1}, \dots, T^{j_{N-1}}_{i_{N-1}})$ is uniformly bounded. If $i_k = i_\ell$ for some $k \neq \ell$ then $I_{Q^*+2\delta}(T_{i_k}^{j_k}, T_{i_\ell}^{j_\ell})$ is bounded in terms of the constant from Lemma \ref{subtrees}, so that $I_{Q^*+2\delta}(T^{j_1}_{i_1}, \dots, T^{j_{N-1}}_{i_{N-1}})$ is also bounded as a subset. Therefore we can assume that all the $i_k$ are distinct. If the
 diameter of $I_{Q^*+2\delta}(T^{j_1}_{i_1}, \dots, T^{j_{N-1}}_{i_{N-1}})$ is at most $4Q^* + 16\delta$ then there is nothing to show. If not, then the diameter of $I_{Q^*+2\delta}(T_{i_1}, \dots, T_{i_{N-1}})$ is greater than $4Q^* + 16\delta$ and
\begin{eqnarray*}
I_{Q^*+2\delta}\left(T^{j_1}_{i_1}, \dots, T^{j_{N-1}}_{i_{N-1}}\right) &\subset & I_{Q^*+2\delta}(T_{i_1}, \dots, T_{i_{N-1}}) \\
& \subset & B_{Q^* + 4\delta}(J_{i_1, \dots, i_{N-1}}) \\ 
& \subset & B_{Q^*+4\delta}(T'_0)
\end{eqnarray*}
where the middle inclusion follows from Lemma \ref{fellow travel} part \eqref{larger}.
It follows that
\begin{eqnarray*}
I_{Q^*+2\delta}\left(T^{j_1}_{i_1}, \dots, T^{j_{N-1}}_{i_{N-1}}\right)& = &I_{Q^*+2\delta}\left(T^{j_1}_{i_1}, \dots, T^{j_{N-1}}_{i_{N-1}}\right) \cap B_{Q^*+4\delta}(T'_0)\\ & = & I_{Q^*+4\delta}\left(T^{j_1}_{i_1}, \dots, T^{j_{N-1}}_{i_{N-1}}, T'_0\right) \\
&\subset& I_{Q^*+4\delta}\left(T^{j_1}_{i_1}, T'_0\right) \cup \cdots \cup I_{Q^*+4\delta}\left(T^{j_{N-1}}_{i_{N-1}}, T'_0\right).
\end{eqnarray*}
As the final sets on the right are uniformly bounded and $I_{Q^*+2\delta}(T^{j_1}_{i_1}, \dots, T^{j_{N-1}}_{i_{N-1}})$ is coarsely connected (with controlled bounds) this gives a uniform bound on $I_{Q^*+2\delta}(T^{j_1}_{i_1}, \dots, T^{j_{N-1}}_{i_{N-1}})$. This completes the induction step of the proof.
\end{proof}

\section{Hamenst\"{a}dt's graphs}\label{sec:3}
Hamenst\"{a}dt originally constructed a graph related to $\C_k$ using a definition
which does not involve train tracks. We will show that her graphs are in fact
quasi-isometric to the graphs $\C_k$.

We define an index $\ind(\alpha,\beta)$ associated to a pair of curves $\alpha,\beta$
in minimal position.
First suppose that $\alpha$ and $\beta$ fill $\Sigma$. Then the complementary components of $\alpha\cup \beta$ are even-sided polygons (since the curves fill) each with at most one puncture.
Any bigon in the complement of $\alpha\cup\beta$ has a single puncture.
We say that a component of $\Sigma \smallsetminus (\alpha\cup\beta)$ is {\em large} if it is not a non-punctured quadrilateral.
 We label the large components $P_1, \dots, P_n$ and let $p$ be the number of punctures.
 Let $2s_i$ be the number of sides of $P_i$. If $P_i$ is non-punctured,
 then $s_i \ge 3$. If $P_i$ is punctured, it is possible that $s_i = 1$ or $2$.
If, after orienting $\alpha$ and $\beta$, all intersection are of the same sign,
we say that $\alpha$ and $\beta$ are an {\em orientable pair} and define
$\ind(\alpha, \beta) =-1+2p + \sum (s_i -3)$.
 Otherwise we define $\ind(\alpha, \beta) = 2p+\sum(s_i -3)$.
 Finally, if $\alpha,\beta$ don't fill $\Sigma$,
 we define $\ind(\alpha, \beta) = \infty$.

Next, we define $\C^H_k(\Sigma)$ to be the graph with vertex set
$\mathcal{V}(\C^H_k(\Sigma))=\mathcal{V}(\C(\Sigma))$ and such that $\alpha,\beta\in \mathcal{V}(\C^H_k(\Sigma))$
are joined by an edge if $\ind(\alpha, \beta) > k$.
This is Hamenst\"{a}dt's original definition. Note that if all the complementary
components of $\alpha\cup \beta$ are quadrilaterals, hexagons, or punctured bigons,
then there is at least one hexagon or punctured bigon.
Thus, the algebraic intersection number of $\alpha$ and $\beta$
must be strictly less than the geometric
intersection number, and we have $\ind(\alpha, \beta) = 0$.

Denote by $d_k^H$ the metric defined on $\mathcal V(\C(\Sigma))$ by the graph
$\C_k^H(\Sigma)$. The remainder of this section will be dedicated to proving the
following:
\begin{thm}\label{ursuladef}
If $d_k(\alpha, \beta) = 1$ then $d^H_k(\alpha,\beta) \le 2$.
If $d^H_k(\alpha, \beta) =1$ then $d_k(\alpha,\beta) =1$.
In particular, $\C_k(\Sigma)$ and $\C^H_k(\Sigma)$ are quasi-isometric.
\end{thm}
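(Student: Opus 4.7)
The plan is to prove the two implications of the theorem by passing through an explicit train track that translates between Hamenst\"adt's polygon index $\ind(\alpha,\beta)$ and the train-track index $\ind(\tau)$ defining $\C_k$.

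For the implication $d_k^H(\alpha,\beta) = 1 \Rightarrow d_k(\alpha,\beta) = 1$, place $\alpha,\beta$ in minimal position. When $\alpha\cup\beta$ fills $\Sigma$, I would construct a generic recurrent train track $\tau_{\alpha\beta}$ as follows: smooth each transverse intersection so that both $\alpha$ and $\beta$ remain train paths, and then comb each resulting $4$-valent switch into two trivalent switches joined by a new short edge. The track is recurrent because every branch lies on the train-path image of $\alpha$ or $\beta$. A direct Euler count gives $\chi(\tau_{\alpha\beta}) = -i(\alpha,\beta)$, and enumerating cusps (two per combed pair) shows that the complementary polygons of $\tau_{\alpha\beta}$ have side counts $s_i$ precisely matching those in Hamenst\"adt's formula, with $\tau_{\alpha\beta}$ orientable iff $(\alpha,\beta)$ is an orientable pair. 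Proposition \ref{numerology} then gives $\ind(\tau_{\alpha\beta}) = \ind(\alpha,\beta) > k$, so $d_k(\alpha,\beta) = 1$. When $\alpha,\beta$ do not fill, the combed smoothing sits inside a proper filled subsurface $S' \subsetneq \Sigma$; one can then produce a recurrent track of index $> m(\Sigma) \geq k$ carrying both curves by combining the inclusion $M(\tau) \subset \ML(S')$ with a simple closed curve lying in a non-disk complementary component of $S'$.

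For the implication $d_k(\alpha,\beta) = 1 \Rightarrow d_k^H(\alpha,\beta)\leq 2$, choose a recurrent trivalent track $\tau$ with $\ind(\tau)>k$ carrying both $\alpha$ and $\beta$, and let $\gamma$ be any vertex cycle of $\tau$. The main input is the monotonicity

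\emph{Sublemma.} If a recurrent trivalent track $\tau$ carries two simple closed curves $\mu,\nu$, then $\ind(\mu,\nu) \geq \ind(\tau)$.

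To prove this, let $\sigma \subset \tau$ be the sub-train-track spanned by the branches traversed by the carried train paths of $\mu$ and $\nu$. Then $M(\sigma) \subset M(\tau)$, so $\ind(\sigma) \geq \ind(\tau)$, while the combinatorial structure of $\sigma$ is (up to isotopy of the carrying maps) the combed smoothing of $\mu\cup\nu$ constructed in the first implication, giving $\ind(\sigma) = \ind(\mu,\nu)$. When $\mu\cup\nu$ does not fill, $\ind(\mu,\nu) = \infty$ and the inequality is automatic. Applying the sublemma to $(\alpha,\gamma)$ and $(\gamma,\beta)$ yields $\ind(\alpha,\gamma) > k$ and $\ind(\gamma,\beta) > k$, so $d_k^H(\alpha,\beta) \leq 2$; in the degenerate case $\gamma \in \{\alpha,\beta\}$ we obtain $d_k^H(\alpha,\beta) = 1$ directly.

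The hardest step will be verifying $\ind(\tau_{\alpha\beta}) = \ind(\alpha,\beta)$ (and the analogous identification in the sublemma). This requires careful bookkeeping, across all combing choices, that matches the sides of the complementary polygons of $\tau_{\alpha\beta}$ with the integers $s_i$ appearing in Hamenst\"adt's definition: combing a $4$-valent switch halves the number of cusps contributed by that vertex, so that a $2s_i$-gon in the complement of $\alpha\cup\beta$ becomes an $s_i$-gon in the complement of $\tau_{\alpha\beta}$, and the orientable/non-orientable correction of Proposition \ref{numerology} must be checked to coincide with the sign of the algebraic intersection number. The non-filling subcase of the first implication is a secondary technicality, requiring an augmentation via a curve in the complementary subsurface. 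Once these identifications are in place, the quasi-isometry statement is immediate: the identity on the common vertex set is $1$-Lipschitz in one direction and $2$-Lipschitz in the other.
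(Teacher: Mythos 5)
Your second implication contains the decisive gap. You take the middle curve $\gamma$ to be a vertex cycle of $\tau$, so in each pair $(\alpha,\gamma)$ and $(\gamma,\beta)$ both curves are \emph{carried} by $\tau$, and everything rests on your Sublemma that two carried curves satisfy $\ind(\mu,\nu)\geq\ind(\tau)$. The justification you give does not work: the subtrack $\sigma\subset\tau$ spanned by the branches traversed by $\mu$ and $\nu$ is not, even combinatorially, the combed smoothing of $\mu\cup\nu$ in minimal position. That smoothing has $2\,i(\mu,\nu)$ trivalent switches, which is unbounded, whereas $\sigma$ has at most as many switches as $\tau$; two carried curves can traverse exactly the same set of branches and yet intersect each other arbitrarily many times, so neither $i(\mu,\nu)$ nor $\ind(\mu,\nu)$ is determined by $\sigma$, and the claimed identity $\ind(\sigma)=\ind(\mu,\nu)$ fails. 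Worse, your Sublemma (applied with $\ind(\tau)>k$) would give $d^H_k(\alpha,\beta)=1$ whenever $d_k(\alpha,\beta)=1$, i.e.\ it would prove $\C_k(\Sigma)=\C^H_k(\Sigma)$ edge for edge, which the paper explicitly flags as something it cannot establish. This is exactly why the paper's argument uses a third curve $\gamma$ that is \emph{transverse} to $\tau$ and only cuts corners of its complementary polygons (Lemma \ref{cuttingcorners}): for a carried curve paired with such a corner-cutting curve, the inequality $\ind(\cdot,\gamma)\geq\ind(\tau)$ is proved via a compatible singular foliation whose singularities are forced to sit one per complementary polygon of $\tau$ (the unlabeled lemma before Lemma \ref{cuttingcorners}, together with Corollary \ref{one singularity} and Lemma \ref{induced orientation}). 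The carried/carried comparison your Sublemma asserts is precisely what that construction is designed to avoid.

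Your first implication is closer to the paper's Lemma \ref{track index} but skips a real issue: the combed smoothing of $\alpha\cup\beta$ is not yet a train track, because every non-punctured quadrilateral component of $\Sigma\smallsetminus(\alpha\cup\beta)$ becomes a complementary \emph{bigon} of the smoothing, which is excluded by the standing conventions and blocks the application of Proposition \ref{numerology}. Most of the work in Lemma \ref{track index} goes into collapsing these bigons (via the square structure on $\alpha\cup\beta$, a foliation of irrational slope, and Moore's theorem) so that one obtains a genuine track on $\Sigma$ whose large complementary polygons and orientability match Hamenst\"adt's data; only then does $\ind(\tau(\alpha,\beta))=\ind(\alpha,\beta)$ make sense. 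Your treatment of the non-filling case is roughly in the right spirit, but the bigon collapse, and above all the replacement of your carried middle curve by a transverse corner-cutting one, are what the proposal is missing.
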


For the rest of the paper, we will typically fix the surface $\Sigma$
and write $\C$ for $\C(\Sigma)$, $\PML$ for $\PML(\Sigma)$, etc.

We note that it may in fact be true that $\C^H_k(\Sigma) = \C_k(\Sigma)$.
The difficulty in proving this is that we don't have a direct criterion for
checking if $d_k(\alpha, \beta) \ge 2$.

\begin{lemma}\label{track index}
Let $\alpha$ and $\beta$ be a pair of curves that fill $\Sigma$. Then there exists a train track $\tau(\alpha, \beta)$ that carries both $\alpha$ and $\beta$ with $\ind(\tau(\alpha,\beta)) = \ind(\alpha,\beta)$.
\end{lemma}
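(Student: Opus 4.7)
The plan is to construct $\tau(\alpha,\beta)$ from $\alpha\cup\beta$ by a smoothing-and-collapse procedure and then deduce the index equality from Proposition~\ref{numerology}. First, orient $\alpha$ and $\beta$. At each crossing of $\alpha\cup\beta$, replace the transverse intersection by the orientation-consistent smoothing: the two incoming branches become tangent on one side of a new switch, and the two outgoing branches become tangent on the other. Splitting each 4-valent switch into two trivalent switches by inserting a short edge produces a trivalent train track $\tau_1$ that carries both $\alpha$ and $\beta$ by construction. Moreover $\tau_1$ is orientable if and only if the signs of all crossings agree, which is precisely the condition that $(\alpha,\beta)$ is an orientable pair.

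Next, I would analyze the complementary components of $\tau_1$, which are in natural bijection with those of $\alpha\cup\beta$: at each crossing, the smoothing turns exactly two of the four adjacent corners into cusps, and the other two become smooth transitions absorbing the inserted short edge. A local analysis using the four positions (NE, NW, SE, SW) of the corners shows that each unpunctured quadrilateral component of $\alpha\cup\beta$ becomes a bigon in $\tau_1$. I would then collapse each such bigon by identifying its two boundary train paths; this preserves the carrying of $\alpha$ and $\beta$, and produces the desired train track $\tau(\alpha,\beta)$. Its complementary components are precisely the large components of $\alpha\cup\beta$.

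Finally, with the orientability matching and with $\sum s'_i = 2\,i(\alpha,\beta) - 2q = \sum s_i$ (where $s'_i$ is the number of sides/cusps of the $i$-th polygon of $\tau(\alpha,\beta)$, $q$ is the number of unpunctured quadrilaterals of $\alpha\cup\beta$, and $s_i$ is as in the definition of $\ind(\alpha,\beta)$; the key identity is that each crossing contributes exactly two cusps, for a total of $2\,i(\alpha,\beta)$, of which $2q$ are destroyed by the bigon collapses), Proposition~\ref{numerology} gives
\[
\ind(\tau(\alpha,\beta)) = 2p + \sum(s'_i - 3) = 2p + \sum(s_i - 3) = \ind(\alpha,\beta),
\]
or the orientable analogue with a shared extra $-1$ on both sides. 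The main obstacle is the local combinatorial analysis establishing that each unpunctured quadrilateral becomes a bigon under the chosen smoothing (so the collapse is well-defined and produces a valid train track with no smooth disk, monogon, or bigon complementary components); once that is in hand, the rest of the count is routine bookkeeping.
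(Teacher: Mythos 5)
Your overall strategy (smooth the crossings of $\alpha\cup\beta$ into a track, collapse bigons, and apply Proposition \ref{numerology}) matches the paper's, but the particular smoothing you chose is the wrong one, and precisely the local claim you defer as ``the main obstacle'' is false for it. At a crossing, your orientation-consistent smoothing places the two cusps in the quadrant between the two incoming ends and the quadrant between the two outgoing ends, and which pair of opposite quadrants this is depends on the sign of the crossing. Going around a complementary polygon $P$ of $\alpha\cup\beta$ and recording for each side whether the curve orientation agrees ($+$) or disagrees ($-$) with the boundary orientation, a corner of $P$ becomes a cusp exactly when the two adjacent sides carry different signs; hence the number of cusps of $P$ equals the number of sign changes of a cyclic $\pm$-sequence, which is always \emph{even}. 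So a hexagonal region, which must end up with exactly $3$ cusps for the bijection ``$2s$ sides $\mapsto s$ sides'' and for Proposition \ref{numerology} to return $\ind(\alpha,\beta)$, can never receive the right number of cusps; note that on a closed surface an index-zero filling pair has \emph{all} large regions hexagonal, so this is not a marginal case. Quadrilaterals likewise need not become bigons (they can acquire $0$ or $4$ cusps, and a $0$-cusped region is a forbidden smooth disk), so the collapse step you rely on is not available, and your identity $\sum s'_i=\sum s_i$ only holds as a global total, not region by region, which is what the index formula needs.

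There is a second, independent error: with your smoothing, orienting each branch by the orientation it inherits from $\alpha$ or $\beta$ is coherent at every switch (incoming branches on one side, outgoing on the other), so $\tau_1$ is \emph{always} orientable, regardless of crossing signs; your claimed equivalence with all crossings having the same sign is false. Consequently, even if the side counts did work out, the orientable case of Proposition \ref{numerology} would give an index one less than $\ind(\alpha,\beta)$ whenever the pair is non-orientable. The paper avoids both problems by using the orientation-\emph{independent} smoothing in which $\alpha$ turns right and $\beta$ turns left at every crossing: then every $2s$-sided region becomes an $s$-cusped one, quadrilaterals become bigons, and the resulting track is orientable exactly when the pair is. The paper is also careful with the bigon collapse itself (collapsing along leaves of an auxiliary foliation and invoking Moore's theorem to see the quotient is again $\Sigma$); your ``identify the two boundary train paths'' would need comparable justification, but that is secondary to the smoothing issue.
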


\begin{proof}
Recall that $\alpha$ and $\beta$ are chosen to be in minimal position.
A train track with bigons $\tau'(\alpha,\beta)$ is obtained from $\alpha\cup \beta$ by
replacing each intersection point by a large branch at which $\alpha$ turns
right and $\beta$ turns left.
We will collapse the bigons to form a track $\tau(\alpha,\beta)$. That is, we will
define a quotient map of the surface,
which is a self-homotopy equivalence, so that the image of $\tau'(\alpha,\beta)$ will be the track $\tau(\alpha, \beta)$. Some care needs to be taken to ensure that the quotient space is still the same surface.

We make the collapsing procedure explicit as follows.
The union $\alpha\cup \beta$ is a graph embedded in $\Sigma$. There is thus a dual
graph $(\alpha\cup\beta)^*$.
As a graph, all vertices of $\alpha\cup \beta$ have valence four and therefore all of
the complementary components of $(\alpha\cup\beta)^*$ are quadrilaterals. The graph
$(\alpha \cup \beta)^*$ can be realized in $\Sigma$ with its punctures filled in.
We place one vertex in each
complementary component of $\alpha\cup\beta$ and when the region contains a puncture we
place the vertex of $(\alpha\cup \beta)^*$ at the puncture.
We can then realize each quadrilateral as a unit square.
These squares define a quadratic differential $q$ on $\Sigma$ with two corresponding
transverse singular foliations.
The union of lines in the squares parallel to $\alpha$ form the {\em vertical
foliation} while the lines parallel to $\beta$ form the {\em horizontal foliation}.
These foliations have singularities at those vertices of $(\alpha \cup \beta)^*$
which are incident to more than four squares.
The number of squares that meet at a vertex of $(\alpha\cup \beta)^*$ is exactly the
number of sides of the corresponding complementary component of $\alpha\cup \beta$.
Each square of $q$ contains a single vertex of $\alpha\cup\beta$ and each edge
of a square intersects a single edge of $\alpha\cup\beta$.

Recall the train track with bigons $\tau'(\alpha, \beta)$ defined above. We use
the foliations of $q$ to explicitly construct a train track $\tau(\alpha, \beta)$
without bigons. We first replace
$\alpha\cup\beta$ with a trivalent graph $(\alpha\cup\beta)'$ by inserting an
edge of length $\sqrt{0.02}$ and slope one centered at each vertex of
$\alpha\cup\beta$ (contained inside a square of the quadratic differential $q$)
and then modifying the original edges to line segments that start and end at the vertices of the slope one edges. After doing this, the vertical edges of $\alpha\cup \beta$ will have slope $-9$ and the horizontal edges will have slope $-1/9$.
We can then smooth the vertices of $\alpha\cup \beta$ to define a train track
$\tau'(\alpha, \beta)$ which has the property that the slopes of all its tangent lines
lie outside the range $\left(-9, -1/9\right)$.
See Figure \ref{fig:smoothing}.

\begin{figure}[h]

  \centering

  \def\svgwidth{0.9\textwidth}
  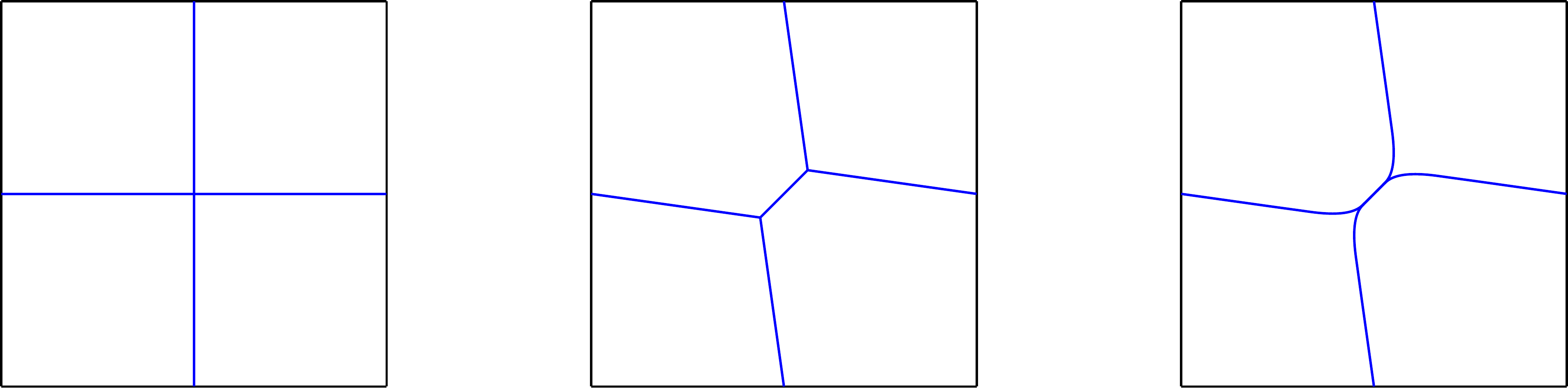

  \caption{Constructing a train track with tangent lines avoiding a fixed slope in the
  range $(-9,-1/9)$.}
  \label{fig:smoothing}
\end{figure}

Now fix an irrational slope $\delta \in (-9, -1/9)$. This slope also determines a foliation $\cF_\delta$ on $\Sigma$ with possible singularities at the vertices of $(\alpha\cup\beta)^*$. Note that this foliation does not have closed leaves.
Furthermore, since non-punctured bigons of $\tau'(\alpha, \beta)$ correspond to quadrilaterals in $\alpha\cup\beta$, they don't contain singularities, and therefore, by the Mean Value Theorem,
any arc of $\cF_\delta\setminus \tau'(\alpha,\beta)$ contained in such a bigon connects
the two sides of the bigon.
We then define a quotient map $\Sigma \to \Sigma'$  by collapsing the closed segments
of $\cF_\delta$ joining the two sides of a non-punctured bigon to points.
Under this map, the image of $\tau'(\alpha,\beta)$ is a train
track $\tau(\alpha, \beta)$ (without bigons). By R.L. Moore's theorem,
$\Sigma'$ is homeomorphic to $\Sigma$, see \cite[Section 25]{daverman}.

There is a bijection between the set of complementary components of $\tau(\alpha,\beta)$ and the set of complementary components of
$\alpha\cup \beta$ that aren't unpunctured quadrilaterals.
This bijection has the property that if a complementary
component of $\alpha\cup\beta$ has $2s$ sides then the corresponding complementary
component of $\tau(\alpha,\beta)$ has $s$ sides. Furthermore, $\tau(\alpha,\beta)$
is orientable if and only if the algebraic intersection of $\alpha$ and $\beta$ is equal to the geometric intersection number. Therefore $\ind(\alpha,\beta) = \ind(\tau(\alpha, \beta))$.
\end{proof}

Note that in the construction above we built two singular foliations associated to a
quadratic differential $q$ that we called vertical and horizontal. The curve $\alpha$
is a leaf of the vertical foliation while $\beta$ is transverse to it. The roles of
$\alpha$ and $\beta$ are reversed for the horizontal foliation. We can use  either of
these foliations to calculate $\ind(\alpha,\beta)$. We say that a foliation $\cF$ is
{\em compatible} with a pair of curves $\alpha,\beta$ if one curve is a leaf of
$\cF$ and the other curve is transverse to it.

\begin{lemma}\label{side count}
Assume that $\alpha$ and $\beta$ fill $\Sigma$ and that $\cF$ is a foliation which is
compatible with $\alpha,\beta$. Let $Q$ be a component of
$\Sigma\smallsetminus (\alpha\cup \beta)$ with $2s$ sides. Suppose $Q$ contains
$\ell$ singularities, which have $i_1, \dots, i_\ell$ prongs, respectively. Then
$$s = 2-2\ell + \sum_{j=1}^\ell i_j.$$
\end{lemma}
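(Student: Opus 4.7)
The plan is to apply Gauss--Bonnet (equivalently, Poincar\'e--Hopf with corners) to $Q$ endowed with a flat metric adapted to $\cF$. Since $\alpha\cup\beta$ fills, $Q$ is topologically a closed disk, and its boundary consists of $2s$ arcs alternating between arcs of $\alpha$, which are leaves of $\cF$, and arcs of $\beta$, which are transverse to $\cF$, meeting at $2s$ corners. Any puncture of $\Sigma$ contained in $Q$ is either a regular point of $\cF$, which one fills in harmlessly, or a singularity of $\cF$ already counted among the $i_j$; either way one may treat $Q$ as a closed disk with $\chi(Q)=1$.

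Away from singularities, $\cF$ is locally modeled on the horizontal foliation of $\R^2$, and near an $i_j$-prong singularity it is modeled on the horizontal foliation of a flat cone of total angle $i_j\pi$. Patching these standard charts over $Q$ yields a flat metric with cone singularities in which each $\alpha$-arc is a horizontal geodesic, each $\beta$-arc is a vertical geodesic, and each corner has interior angle $\pi/2$. In this metric the geodesic curvature vanishes along $\partial Q$, the concentrated Gaussian curvature at the $j$-th singularity equals $(2-i_j)\pi$, and each of the $2s$ corners contributes an exterior angle of $\pi/2$.

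Gauss--Bonnet then reads
$$2\pi\chi(Q) \;=\; \sum_{j=1}^{\ell}(2-i_j)\pi \;+\; 2s\cdot\tfrac{\pi}{2}.$$
Substituting $\chi(Q)=1$ and dividing by $\pi$ gives $2=2\ell-\sum_{j} i_j+s$, i.e.\ $s=2-2\ell+\sum_{j=1}^{\ell}i_j$, as claimed. The main step to justify is the existence of the adapted flat structure; in the setting of this paper $\cF$ is the horizontal or vertical foliation of a quadratic differential (as constructed in the proof of Lemma~\ref{track index}), so the transverse measures provide the needed cone metric from the start and the argument goes through verbatim.
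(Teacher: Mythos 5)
Your argument is correct and arrives at the right identity, but it takes a genuinely different route from the paper. The paper's proof is purely combinatorial: it collapses the leaves of $\cF|_Q$ to points, observes that the quotient is a finite tree with $\ell$ vertices of valences $i_1,\dots,i_\ell$ plus $s$ terminal vertices (one for each side in an alternating family), counts $s+\ell$ vertices against $\tfrac12\bigl(s+\sum i_j\bigr)$ edges, and uses $\chi(\mathrm{tree})=1$. Your proof instead runs Gauss--Bonnet on $Q$ with a singular flat metric adapted to $\cF$: cone curvature $(2-i_j)\pi$ at an $i_j$-prong singularity, geodesic sides, and $2s$ right-angled corners, giving $2\pi=\sum(2-i_j)\pi+s\pi$. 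The arithmetic is right, your handling of a possible puncture is fine (a $2$-pronged point contributes nothing either way), and the alternation of $\alpha$-sides and $\beta$-sides is as you describe. What the paper's approach buys is that it is entirely elementary and needs no auxiliary structure beyond the foliation itself; what yours buys is that it is the standard Euler--Poincar\'e index computation, instantly recognizable and generalizable. The one point where you are slightly off is the justification of the adapted flat structure: you appeal to $\cF$ being the horizontal or vertical foliation of a quadratic differential as in Lemma~\ref{track index}, but the lemma is stated for an arbitrary compatible foliation, and its main application later (via Lemma~\ref{train track side count} and Lemma~\ref{lower bound}) is to the topologically constructed foliation $\cF(\alpha;\beta,\tau)$, which is not handed to you with a flat metric. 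This is not a fatal gap --- any singular foliation with the standard local models admits such a structure on the disk $Q$ (indeed one can build it from the very leaf-space tree the paper uses), or one can phrase the whole computation as a Poincar\'e--Hopf index count for line fields --- but as written that step is asserted rather than proved, whereas the paper's tree count avoids the issue entirely.
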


\begin{proof}
Collapsing the leaves of $\cF$ in $Q$ to points yields a finite simplicial tree.
This tree contains $\ell$ vertices of valence $i_1, \dots, i_\ell$, corresponding to
the singularities, and an additional $s$ terminal (valence one) vertices,
corresponding to an alternating set of sides. Therefore the tree has
$s+\ell$ vertices and $\frac12(s + \sum i_j)$ edges. Since the Euler characteristic
of a tree is one, we have
$$s + \ell -\frac12\left( s+ \sum i_j\right) = 1$$
which rearranges to give the equality.
\end{proof}

Summing over all complementary regions we have the following corollary:
\begin{cor}\label{one singularity}
Assume that $\alpha$ and $\beta$ fill $\Sigma$ and that $\cF$ is a foliation compatible with
$\alpha,\beta$ which has $\ell$ singularities with $i_1, \dots, i_\ell$ prongs,
respectively.
Then
\begin{itemize}
\item If $\alpha,\beta$ is not an orientable pair then $\ind(\alpha, \beta) \ge 2p + \sum(i_j - 3)$.

\item If $\alpha,\beta$ is an orientable pair then $\ind(\alpha, \beta) \ge -1+ 2p + \sum(i_j - 3)$.

\end{itemize}
In both cases equality holds if and only if there is one singularity of $\cF$ in each large region of $\Sigma \smallsetminus (\alpha\cup\beta)$.
\end{cor}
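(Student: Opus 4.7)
The plan is to sum the identity of Lemma \ref{side count} over the complementary regions of $\alpha\cup\beta$ and compare with the index formula from Proposition \ref{numerology}. For each region $Q$ with $2s_Q$ sides containing singularities of $\cF$ of prong counts $i_{Q,1},\ldots,i_{Q,\ell_Q}$, Lemma \ref{side count} rearranges to
$$s_Q - 3 \;=\; \sum_{j=1}^{\ell_Q}(i_{Q,j} - 3) + (\ell_Q - 1).$$
Summing this over the large regions and adding $2p$ (with an additional $-1$ in the orientable case) recovers exactly $\ind(\alpha,\beta)$ on the left.

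Next I would account for any singularities lying in a non-punctured quadrilateral. For such a $Q$ we have $s_Q = 2$, so Lemma \ref{side count} gives $\sum_j(i_{Q,j}-3) = -\ell_Q$. Combining with the summed identity, the difference between $\ind(\alpha,\beta)$ and the claimed lower bound simplifies to
$$\sum_{\text{large }Q}(\ell_Q - 1) \;+\; \sum_{\text{non-punctured quadrilaterals}}\ell_Q,$$
which is nonnegative as long as every large region contains at least one singularity of $\cF$.

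Establishing that last claim is the main step. For a large non-punctured region $Q$, Lemma \ref{side count} with $\ell_Q = 0$ would force $s_Q = 2$, contradicting largeness, so $\ell_Q \geq 1$. For a large region containing a puncture, one uses the convention that a puncture is counted as a singularity of $\cF$ (or, equivalently, a version of Lemma \ref{side count} adapted to once-punctured disks), again giving $\ell_Q \geq 1$. The inequality then follows, and the displayed expression vanishes precisely when $\ell_Q = 1$ on every large region and $\ell_Q = 0$ on every non-punctured quadrilateral, which is exactly the stated equality condition. The orientable case is identical since the $-1$ shift occurs on both sides of the desired inequality. The only step that requires real care is the treatment of punctured regions, since the tree-collapsing proof of Lemma \ref{side count} assumes $Q$ is a topological disk; the rest is bookkeeping.
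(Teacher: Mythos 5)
Your proof is correct and follows the same route the paper intends: the paper's entire proof is the one-line remark that one sums Lemma \ref{side count} over the complementary regions, which is exactly your computation. Your explicit bookkeeping of the terms $\sum(\ell_Q-1)$ over large regions and $\sum\ell_Q$ over non-punctured quadrilaterals, together with the observation that largeness (plus the convention that punctures are singularities of $\cF$) forces $\ell_Q\geq 1$ on every large region, correctly supplies the details the paper leaves to the reader.
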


We also need the following condition for $\cF$ to be orientable.
\begin{lemma}\label{induced orientation}
Assume that $\alpha$ and $\beta$ fill $\Sigma$ and that $\cF$ is a compatible foliation with exactly one singularity in each large component of $\Sigma\smallsetminus(\alpha\cup\beta)$. If $\alpha,\beta$ is an orientable pair then $\cF$ is orientable.
\end{lemma}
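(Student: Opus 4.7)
The plan is to explicitly construct a continuous, nowhere-vanishing tangent vector field to the leaves of $\cF$. First I would orient $\alpha$ and $\beta$ so that every intersection $\alpha\cap\beta$ is positive, which is possible by the orientable-pair hypothesis. Since $\alpha$ is a leaf of $\cF$, the orientation of $\alpha$ orients the tangent line of $\cF$ along $\alpha$. Since $\beta$ is transverse to $\cF$, at each $x\in\beta$ I orient the tangent to $\cF$ at $x$ by the rule that $(v,\dot\beta(x))$ is a positive basis of $T_x\Sigma$. These two prescriptions agree on $\alpha\cap\beta$ by positivity of the crossings.

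Next I would extend this candidate across each complementary region of $\alpha\cup\beta$ using the local product structure of $\cF$ away from its singularities. The only obstruction is the orientation monodromy around each singularity. By Lemma \ref{side count} with $\ell=1$, the unique singularity in a large region $P$ with $2s$ sides has exactly $s$ prongs, so the monodromy is $(-1)^s$. Thus I need $s$ even in every large region.

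Establishing this parity is the crux. My plan is to walk once around $\partial P$ counter-clockwise (interior on the left) and to track at each corner $v$ which of the four quadrants $Q_1,\ldots,Q_4$---labeled cyclically using the positive basis $(\dot\alpha(v),\dot\beta(v))$---contains $P$ near $v$. Each corner has interior angle $\pi/2$, so the CCW walk turns left by $\pi/2$ there. A straightforward case analysis (one case per quadrant), using the orientable-pair hypothesis to determine which outgoing and incoming sides bound $P$ and in which direction we traverse them, shows that going from $v_i$ to $v_{i+1}$ cyclically advances the quadrant by one step $Q_j\to Q_{j+1}$ (indices mod $4$). For $\partial P$ to close up after $2s$ steps we therefore need $4\mid 2s$, so $s$ is even.

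Given this parity, the monodromy around each singularity is $+1$, so the candidate orientation extends through the singularities. Global consistency follows because $\alpha\cup\beta$ fills $\Sigma$: any loop in $\Sigma\setminus\operatorname{Sing}(\cF)$ can be decomposed up to homotopy into arcs along $\alpha$ and $\beta$ and small loops around singularities, on all of which the orientation is preserved by construction. The main obstacle is the quadrant case analysis; once that is in place, orientability of $\cF$ follows.
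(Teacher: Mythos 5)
Your proposal is correct, and in outline it is the same as the paper's: orient the tangent line field of $\cF$ along $\alpha$ (and $\beta$) and extend over the complementary regions. The difference is one of completeness rather than of route. The paper's proof is two sentences long and simply asserts that an orientation of $\alpha$ induces an orientation of $Q\cap\cF$ for each complementary region $Q$ and that these patch; it never identifies, let alone verifies, the actual obstruction. You do: combining Lemma \ref{side count} with $\ell=1$, the unique singularity in a $2s$-sided large region has $s$ prongs, the local monodromy of the line field is $(-1)^s$, and so the whole content of the lemma is the parity claim that $s$ is even for every large region of an orientable pair. Your quadrant-walking argument for this checks out: with $(\dot\alpha,\dot\beta)$ a positive basis at every corner (here is where the orientable-pair hypothesis is used, and where the argument would break at a corner of the opposite sign), a counterclockwise traversal of $\partial P$ advances the quadrant containing $P$ by exactly one at each corner ($Q_1\to Q_2$ along $\alpha$ traversed forwards, $Q_2\to Q_3$ along $\beta$ forwards, $Q_3\to Q_4$ along $\alpha$ backwards, $Q_4\to Q_1$ along $\beta$ backwards), forcing $4\mid 2s$. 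Note that this parity argument also correctly rules out punctured bigons ($s=1$, a one-pronged singularity at the puncture) for orientable pairs, a case the paper's sketch silently passes over. The final patching step is fine since each region deformation retracts (minus its singularity) to its boundary in $\alpha\cup\beta$, so the extension over each region is unique once it exists and the extensions agree on overlaps. In short: same strategy, but your version actually proves the statement.
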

\begin{proof}
Assume that $\alpha$ is carried by $\cF$. As each region $Q$ of $\Sigma\smallsetminus (\alpha\cup \beta)$ contains at most one singularity and the pair $\alpha,\beta$ is orientable, an orientation of $\alpha$ induces an orientation of $Q\cap \cF$. These orientations patch together to give a consistent orientation on all of $\mathcal F$.
\end{proof}

A curve $\alpha$ \emph{hits $\tau$ efficiently} if there are no bigons between
$\alpha$ and any train path on $\tau$. If $\alpha$ hits $\tau$ efficently,
we write $\alpha\pitchfork \tau$.

Given a recurrent train track $\tau$, a curve $\alpha$ with $\alpha\pitchfork\tau$, and a
curve $\beta\in\S(\tau)$, there is a standard way to construct a foliation $\cF(\alpha;\beta, \tau)$ that is compatible with $\alpha$ and $\beta$.
First replace $\tau$ by a regular neighborhood. Equip it with a singular foliation such
that the singularities occur exactly at the corners of the regular neighborhood
and the non-singular leaves are all homotopic to $\beta$.
(If $\beta$ isn't fully carried then we need to add curves to get a multi-curve that is fully carried.)
If $P$ is a region in $\Sigma \smallsetminus \tau$, and $\alpha$ is a curve,
then $P\smallsetminus \alpha$ is a union of polygons whose sides are either
arcs of $\tau$, which we call $\tau$-sides, or arcs of $\alpha$, which we
call $\alpha$-sides.
We construct a
singular foliation in $P$ such that the components of $P\cap \alpha$ are leaves of
this foliation and each component of $P\smallsetminus \alpha$ that has at least three $\tau$-sides or has a puncture has exactly one singularity.
If there is a puncture we place the singularity at the puncture. We then collapse the leaves of the foliation in $P$ to points, for every complementary component $P$,
to obtain $\F(\alpha;\beta, \tau)$. Note that the singularities at punctures may have one or two prongs but all other singularities have at least three.

We also observe that the track $\tau$ can be embedded in $\Sigma$ so that the non-singular leaves of $\cF(\alpha;\beta,\tau)$ are homotopic to train paths in $\tau$ in the complement of the singularities.
\begin{lemma}\label{train track side count}
Let $\tau$ be a recurrent train track, $\alpha$ a curve with $\alpha \pitchfork \tau$, and $\beta \in \S(\tau)$. If $P$ is a component of $\Sigma \smallsetminus \tau$ with $s$ sides and $\cF(\alpha;\beta, \tau)$ has $\ell$ singularities in $P$ with $i_1, \dots, i_\ell$ prongs, then
$$s = 2-2\ell + \sum i_j.$$
\end{lemma}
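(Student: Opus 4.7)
The plan is to mirror the Euler-characteristic computation in the proof of Lemma \ref{side count}. By construction of $\cF(\alpha;\beta,\tau)$, the non-singular leaves in $P$ are parallel to train paths in $\tau$, so the $\tau$-sides on $\partial P$ are themselves leaves of $\cF(\alpha;\beta,\tau)|_P$; the $\alpha$-arcs in $P$ are interior leaves by construction; and together with the $\ell$ singularities this gives a singular foliation on the disk $P$.

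The main step is to collapse each leaf of $\cF(\alpha;\beta,\tau)|_P$ to a point. Since $P$ is simply connected and the foliation has only finitely many singularities, the leaf space $T$ is a finite simplicial tree. Each singularity becomes an interior vertex of $T$ of valence equal to its number of prongs, contributing vertices of valences $i_1, \ldots, i_\ell$. Each of the $s$ $\tau$-sides of $P$ is a single leaf on $\partial P$ and collapses to a single terminal vertex of $T$. The $\alpha$-arcs are regular interior leaves and collapse to interior points on edges of $T$, contributing no new vertices.

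A straightforward count then gives $s+\ell$ vertices and $\tfrac12\bigl(s + \sum_{j=1}^\ell i_j\bigr)$ edges in $T$. Setting $\chi(T)=1$ produces
$$s + \ell - \tfrac12\left(s + \sum_{j=1}^\ell i_j\right) = 1,$$
which rearranges to $s = 2 - 2\ell + \sum_{j=1}^\ell i_j$.

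The main subtlety is verifying that each $\tau$-side of $P$ is indeed a leaf of $\cF(\alpha;\beta,\tau)$, so that it collapses to a single terminal vertex of $T$ rather than embedding as an arc. This follows from the explicit construction of the foliation on a regular neighborhood of $\tau$, whose non-singular leaves are taken to be parallel to $\beta$ and hence parallel to the branches of $\tau$, extended into $P$ so that the $\tau$-sides remain leaves. Once this is in hand, the Euler-characteristic bookkeeping is the same as in Lemma \ref{side count}.
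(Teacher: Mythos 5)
Your overall strategy --- collapse the leaves of the auxiliary foliation in $P$ to a tree and compute its Euler characteristic --- is exactly the paper's, and your final counts ($s+\ell$ vertices and $\frac12(s+\sum i_j)$ edges) agree with the paper's. However, the geometric claim you single out as ``the main subtlety,'' namely that each $\tau$-side of $P$ is a leaf of the foliation being collapsed, is false, and it is incompatible with the rest of your own setup. The foliation constructed in $P$ is required to have the components of $P\cap\alpha$ as leaves; since $\alpha\pitchfork\tau$, these are properly embedded arcs whose endpoints lie in the interiors of the $\tau$-sides of $P$. If the $\tau$-sides were also leaves, two distinct leaves would meet at a regular point. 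In fact the foliation in $P$ is transverse to $\partial P$: within each component of $P\smallsetminus\alpha$ the $\alpha$-sides are leaves and the $\tau$-sides are transverse to the leaves, exactly as in Lemma \ref{side count}. (The $\tau$-sides are leaves of the \emph{other} foliation, the one on the regular neighborhood of $\tau$ whose nonsingular leaves are parallel to $\beta$; that is not the foliation being collapsed.)

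Consequently, under the collapse each side of $P$ maps \emph{onto a path} in the tree rather than to a terminal vertex. The $s$ terminal vertices are instead the images of the $s$ corners (cusps) of $P$: near a cusp the leaves are short arcs cutting across the corner, degenerating to the corner point itself, so each corner is a point-leaf whose image has valence one in the quotient. This is precisely what the paper's proof uses (``a vertex for each corner of $P$ along with one for each singularity''), and the handshake count $2E = s\cdot 1 + \sum i_j$ then gives the edge count. Your tally comes out numerically correct only because a polygon has the same number of sides as corners, so the formula survives; but as written the proof rests on a false identification of the leaves, and the paragraph justifying it should be replaced by the corner argument above.
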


\begin{proof} The proof is very similar to that of Lemma \ref{side count}.
In the construction of $\cF(\alpha; \beta, \tau)$, we constructed a foliation in $P$ that we collapsed to form $\cF(\alpha; \beta, \tau)$.
As in Lemma \ref{side count}, the quotient of $P$ after collapsing the leaves in $P$ is
a tree, but in this
case there is a vertex for each corner of $P$ along with one for each singularity.
That is, the tree has $s + \ell$ vertices and $\frac12\left(s+ \sum i_j\right)$ edges. The formula then follows as before.
\end{proof}

If $Q$ is a component of $\Sigma \smallsetminus \tau$, then a proper arc $\eta$ in
$Q$ \emph{cuts a corner} if the two points of $\partial \eta$ lie on two incident
sides of
$\partial Q$. The curve $\alpha$  {\em only cuts corners} if each arc of
$\alpha \smallsetminus \tau$ cuts a corner. Note that in this case,
$\alpha \pitchfork \tau$.

\begin{lemma}
Let $\tau$ be a recurrent train track and $\alpha$ a curve that only cuts corners of $\tau$. If $\beta$ is carried by $\tau$ then $\ind(\alpha,\beta) \ge \ind(\tau)$.
\end{lemma}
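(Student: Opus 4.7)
My plan is to apply Corollary \ref{one singularity} to the compatible foliation $\cF(\alpha;\beta,\tau)$ and then rewrite the resulting bound using Lemma \ref{train track side count} and Proposition \ref{numerology}, with the aim of showing that the bound is at least $\ind(\tau)$.

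If $\alpha \cup \beta$ does not fill $\Sigma$ then $\ind(\alpha,\beta) = \infty$ and there is nothing to prove, so I would assume $\alpha$ and $\beta$ fill. The key feature of the hypothesis that $\alpha$ only cuts corners is that in each region $P$ of $\Sigma \smallsetminus \tau$ the arcs of $\alpha \cap P$ are corner cutters, each separating a triangle (with two $\tau$-sides and one $\alpha$-side) from a central piece whose $\tau$-side count is still $s_P$. Consequently, in the construction of $\cF(\alpha;\beta,\tau)$, each triangle receives a singularity only when it contains a puncture, while the central piece receives exactly one singularity whenever $s_P \geq 3$. Writing $\ell_P$ for the total number of singularities of $\cF(\alpha;\beta,\tau)$ inside $P$, summing the identity of Lemma \ref{train track side count} over all $P$ and substituting into Proposition \ref{numerology} gives
\[
2p + \sum_j (i_j - 3) \;=\; \ind(\tau) + \epsilon + \sum_P (1 - \ell_P),
\]
where $\epsilon \in \{0,1\}$ records the orientability of $\tau$. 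Combining with Corollary \ref{one singularity} yields
\[
\ind(\alpha,\beta) \;\geq\; \ind(\tau) + (\epsilon - \eta) + \sum_P (1 - \ell_P),
\]
with $\eta \in \{0,1\}$ recording the orientability of the pair $(\alpha,\beta)$.

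The final step is to verify $(\epsilon - \eta) + \sum_P (1 - \ell_P) \geq 0$. The summands $1 - \ell_P$ vanish except in ``bad'' regions, i.e.\ punctured $P$ with $s_P \geq 3$ whose puncture ends up inside one of the corner-cut triangles, in which case $\ell_P = 2$ and the term contributes $-1$. The main obstacle, where I expect the argument to require the most care, is the orientability bookkeeping needed to absorb these deficits: one would show that an orientable pair $(\alpha,\beta)$ forces $\cF(\alpha;\beta,\tau)$ to be an orientable foliation (a variant of Lemma \ref{induced orientation}), and since the non-singular leaves of $\cF(\alpha;\beta,\tau)$ in a neighborhood of $\tau$ run parallel to $\beta$ along the branches of $\tau$, orientability of the foliation propagates back to orientability of $\tau$, giving $\eta = 1 \Rightarrow \epsilon = 1$. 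A local analysis at each bad region then should produce the additional units of $\epsilon - \eta$ required to cancel the corresponding $-1$ contribution, completing the inequality.
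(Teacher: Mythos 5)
Your setup is the same as the paper's: apply Corollary \ref{one singularity} to $\cF(\alpha;\beta,\tau)$ and compare with Proposition \ref{numerology} via Lemma \ref{train track side count}, and your displayed bound $\ind(\alpha,\beta)\ge \ind(\tau)+(\epsilon-\eta)+\sum_P(1-\ell_P)$ is correct bookkeeping. The gap is in the last step. The compensation mechanism you propose cannot work as stated: $\epsilon-\eta$ is a single global quantity lying in $\{-1,0,1\}$, so no ``local analysis at each bad region'' can manufacture one unit of $\epsilon-\eta$ per bad region; with two or more regions having $\ell_P=2$ your inequality would only give $\ind(\alpha,\beta)\ge\ind(\tau)-1$ or worse. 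The paper's proof avoids this entirely: it asserts that the corner-cutting hypothesis forces \emph{exactly one} singularity, with $s_P$ prongs, in each component $P$ of $\Sigma\smallsetminus\tau$ (each arc of $\alpha\cap P$ cuts off a piece with only two $\tau$-sides, which in the construction of $\cF(\alpha;\beta,\tau)$ receives no singularity), so that $2p+\sum(i_j-3)=\ind(\tau)+\epsilon$ exactly and only the orientability discrepancy remains. If you are worried that a puncture could sit inside a corner-cut piece, the natural place to look for the missing unit is not $\epsilon-\eta$ but the equality case of Corollary \ref{one singularity}: that inequality is strict unless every large region of $\Sigma\smallsetminus(\alpha\cup\beta)$ contains exactly one singularity, and extra singularities there are where slack would come from. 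As written, though, your argument does not close.

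Your orientability step is also incomplete. Lemma \ref{induced orientation} carries the hypothesis that each large component of $\Sigma\smallsetminus(\alpha\cup\beta)$ contains exactly one singularity, so it cannot be invoked unconditionally to conclude $\eta=1\Rightarrow\epsilon=1$. The paper runs a trichotomy: if the pair $\alpha,\beta$ is not orientable, the first case of Corollary \ref{one singularity} already suffices; if the pair is orientable but some component of $\Sigma\smallsetminus(\alpha\cup\beta)$ contains more than one singularity, then Corollary \ref{one singularity} is strict by at least one, which absorbs the $-1$ coming from $\eta$; and only in the remaining case does Lemma \ref{induced orientation} apply to show that $\cF$, and hence $\tau$, is orientable. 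Your sketch needs this case division (or an equivalent) to be a proof.
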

\begin{proof}
Let $\cF = \cF(\alpha; \beta, \tau)$.
Since $\alpha$ only cuts corners, for each $s$-sided region of $\Sigma\smallsetminus\tau$ the foliation $\cF$ will have a singularity with $s$ prongs and there will be no other singularities. In particular, if there are $k$ singularities with $i_1, \dots, i_k$ prongs then, by Proposition \ref{numerology} and Lemma \ref{train track side count},
$$\ind(\tau) = 2p + \sum(i_j - 3)$$
if $\tau$ is non-orientable and
$$\ind(\tau) = -1 +2p + \sum(i_j - 3)$$
if $\tau$ is orientable.

Then the lemma follows from Corollary \ref{one singularity} if $\alpha$ and $\beta$ are not an orientable pair or if there is a component of $\Sigma\smallsetminus (\alpha\cup\beta)$ that contains more than one singularity. If $\alpha$ and $\beta$ are an orientable pair and each component of $\Sigma\smallsetminus (\alpha\cup\beta)$ contains exactly one singularity then by Lemma \ref{induced orientation} we have that $\cF$, and hence $\tau$, is oriented. Then the lemma again follows from Corollary \ref{one singularity}.
\end{proof}

For the next proof, if $\eta$ is an arc on $\Sigma$ and $p,q\in \eta$, then
$\eta|[p,q]$ denotes the sub-arc of $\eta$ bounded by $p$ and $q$.

\begin{lemma}
  \label{cuttingcorners}
  Let $\tau$ be a filling train track. Then there is a curve $\gamma$ which  only cuts
  corners of $\tau$.
\end{lemma}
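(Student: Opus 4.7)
The plan is to build $\gamma$ by a combinatorial argument on a dual structure of $\tau$. Since $\tau$ is filling, every complementary region $P$ is a disk (possibly once-punctured), with at least one corner, and on each side of $P$ lies at least one branch of $\tau$. The goal is to produce a closed transverse curve whose arc in each region it visits has endpoints on two sides of the region that share a corner.

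First, I would define a finite directed graph $H$ whose vertices are pairs $(b,P)$, where $b$ is a branch of $\tau$ lying on $\partial P$ (interpreted as ``just crossed $\tau$ at $b$ into $P$''). Place a directed edge $(b,P) \to (b',P')$ exactly when $b$ and $b'$ lie on two sides of $P$ sharing a corner of $P$, and $P'$ is the complementary region on the far side of $b'$ from $P$. Each such edge corresponds geometrically to: enter $P$ through $b$, run an arc in $P$ whose endpoints lie on the two incident sides (so it cuts the shared corner), and exit $P$ through $b'$.

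Next I would show that $H$ has positive out-degree at every vertex. Indeed, the branch $b$ lies on some side $s$ of $P$, and $s$ has a corner at each of its two ends; at each such corner there is an incident side carrying at least one branch, which can serve as $b'$. Because $H$ is finite with positive out-degree everywhere it contains a directed cycle; taking such a cycle of minimal length ensures it visits each vertex at most once. This minimal cycle realizes in $\Sigma$ as a transverse closed curve $\gamma$, and after a small generic perturbation of the corner-cutting arcs within each visited region, $\gamma$ is embedded and every arc of $\gamma \setminus \tau$ cuts a corner of its containing region by construction.

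The main obstacle is establishing essentialness, i.e.\ that the realized curve is not null-homotopic. If $\gamma$ bounded an embedded disk $D \subset \Sigma$, then because $\tau$ fills, $\tau \cap D$ is a nonempty finite graph all of whose complementary pieces inside $D$ are among the regions of $\Sigma \setminus \tau$; an innermost region inside $D$ would provide a strictly shorter cycle in $H$ obtained by turning at one of its corners, contradicting minimality. Carrying out this innermost-disk argument carefully---in particular, handling once-punctured complementary regions and ensuring the ``shorter cycle'' really is a cycle in $H$---is the technical heart of the proof; everything else is bookkeeping guaranteed by the polygonal structure of the complement of a filling track.
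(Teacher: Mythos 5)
There is a genuine gap, and it is not where you located it. The claim that ``after a small generic perturbation of the corner-cutting arcs within each visited region, $\gamma$ is embedded'' does not follow from minimality of the cycle in $H$. Within a single complementary region $P$, two distinct vertices of your cycle can force crossing arcs: suppose one passage enters $P$ through a branch on a side $S_1$ and exits through a branch $b_v$ on the adjacent side $S_2$ (cutting the corner between $S_1$ and $S_2$), while another passage enters $P$ through a branch $b_w \subset S_2$ lying strictly between that corner and $b_v$, and exits through a branch on the next side $S_3$ (cutting the corner between $S_2$ and $S_3$). The four endpoints of these two arcs are then linked in the cyclic order on $\partial P$, so the arcs intersect inside $P$ no matter how they are drawn; no perturbation removes this, and minimality of the directed cycle (which only prevents repeated vertices) does not exclude such a pair of passages. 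So your construction produces, in general, an \emph{immersed} corner-cutting closed curve, and upgrading an immersed curve to a simple one while preserving the corner-cutting property is exactly the nontrivial content (compare Lemma \ref{homotope to simple}, which needs the extra hypothesis that the immersed curve is homotopic to a simple one). In addition, you yourself defer the essentialness argument, calling it the technical heart; as written, neither simplicity nor essentialness is actually established, so the proposal does not yet prove the lemma.

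By contrast, the paper keeps everything embedded from the start: it grows a \emph{simple arc} $\eta$ that cuts a corner at every step (running parallel to earlier strands of $\eta$ when it re-enters a region between them), waits until $\eta$ meets a single side $s$ of $\tau$ three times, and then closes up by an explicit surgery along $s$ carried out inside one complementary polygon (Figure \ref{surgery}), producing a simple closed curve that still only cuts corners. If you want to salvage the graph-theoretic approach, you would need either an argument that the cycle can be chosen so that no two of its passages through a region have linked endpoints on the boundary of that region, or a surgery step of the above kind to remove the forced self-intersections (and then a check that corner-cutting survives the surgery); without one of these the proof is incomplete.
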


\begin{proof}
  The curve $\gamma$ may be constructed in the following way. First we construct a
  (simple) arc $\eta$ as follows. Start in a complementary polygon to $\tau$ and then
  cross one of the sides of the polygon transversely to enter another polygon. In
  this new polygon, cross one of the sides incident to the side crossed by $\eta$, so
  that $\eta$ cuts a corner. Continue this process. In each polygon that $\eta$ enters,
  either an adjacent side is available and $\eta$ can be extended so that
  it cuts a corner, or else it enters between two arcs of $\eta$
  previously constructed. In the latter case extend $\eta$ to be parallel
  to one of these arcs.

  After sufficiently many steps of the construction, $\eta$ intersects a side $s$ of
  $\tau$ three times.
  If there are two points
  $p,q \in \eta \cap s$ which are consecutive along $s$ and such that the tangent vectors to $\eta$
  at $p$ and $q$ are parallel, then the curve
  $\eta|[p,q] \cup s|[p,q]$ may be isotoped to be transverse to $s$, and then it cuts
  corners of $\tau$. Otherwise, choose three points $p,q,r$ of $\eta\cap s$ which are
  consecutive along $\eta$. Up to reversing $s$ and/or $\eta$, there are two possibilities
  for the configuration of the points, and surgeries as in Figure \ref{surgery}
  yield the curve $\gamma$ cutting off corners of $\tau$. Note that we may perform
  the surgery inside of a complementary polygon of $\tau$, so that the curve
  $\gamma$ only cuts corners.
\end{proof}

\begin{figure}[h]

  \centering

  \def\svgwidth{0.9\textwidth}
  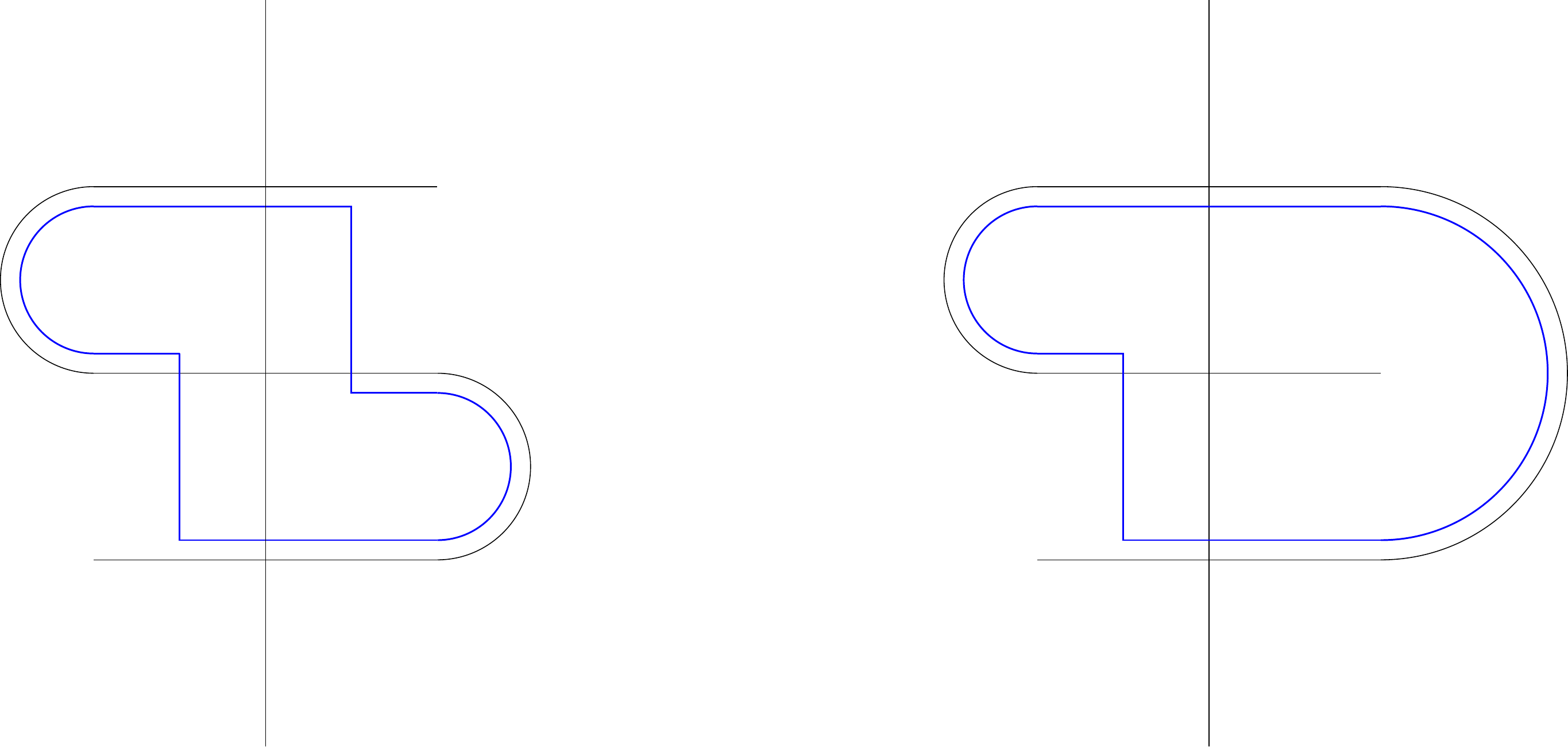

  \caption{Surgeries to obtain a curve cutting off corners. The vertical arc is
    the side $s$ of $\tau$ whereas the horizontal arc is the constructed arc $a$
    cutting off corners.}
  \label{surgery}
\end{figure}

\begin{proof}[Proof of Theorem \ref{ursuladef}]
If $d^H_k(\alpha, \beta) = 1$ then by definition $\ind(\alpha, \beta) > k$ so by Lemma \ref{track index} there is a train track $\tau$ with $\ind(\tau) = \ind(\alpha,\beta) > k$ and $\alpha, \beta \in \S(\tau)$. Therefore $d_k(\alpha,\beta) = 1$.

If $d_k(\alpha, \beta) = 1$ then there is a train track $\tau$ with $\ind(\tau) > k$ and $\alpha, \beta \in \S(\tau)$. By Lemma \ref{cuttingcorners} there is a curve $\gamma$ that only cuts corners of $\tau$. Therefore, by Lemma \ref{lower bound}, $\ind(\alpha, \gamma)$ and $\ind(\beta, \gamma)$ are both $\ge \ind(\tau) > k$. In particular, $d^H_k(\alpha, \gamma) = d^H_k(\beta,\gamma) = 1$ so $d^H_k(\alpha,\beta) \le 2$.
\end{proof}

\section{Elementary properties}

In this section we investigate the basic geometric properties of $\C_k$ and
some of its natural subspaces. We begin with a review of splitting sequences.

Let $\tau$ be a train track. Recall that a \emph{half-branch} is any closed
subsegment of a branch $b$ containing exactly one of the two switches of $b$.
Two half-branches are identified up to the equivalence relation of containing a
common half-branch. A half-branch $h$ of $\tau$ is adjacent to a unique switch $v$.
We say that $h$ is \emph{large} if every train path through $v$ traverses $h$.
Otherwise $h$ is \emph{small}. A branch is called \emph{large}
if both its half-branches are large, \emph{small} if both its half-branches are small,
and \emph{mixed} otherwise.

We say that  a train track $\sigma$ is obtained from $\tau$ by a
\emph{left, central, or right split} (respectively) if $\sigma$ is obtained from
$\tau$ by a move as shown on the left, middle, or right of Figure \ref{split}
(respectively). The split is on the large branch labeled 1 in the figure.
In any of these cases, there is a carrying map $\sigma \to \tau$ which sends switches
to switches.
We call the branch labeled ``1'' on the bottom of Figure \ref{split} the
small branch \emph{created by the split} in case of a left split or right split.
The inverse move to a split is called a \emph{fold} (over the small
branch labeled 1 on the bottom of Figure \ref{split}).
After a left or a right split $\sigma\to \tau$,
there is a natural bijection between the branches of $\sigma$ and those of $\tau$.
The numbers in Figure \ref{split}
indicate that the bijection identifies branches with the same numbers.
The bijection is the ``identity'' on any branch not lying among those five
which are involved in the split. Thus, we will sometimes identify
the branches of $\sigma$ with those of $\tau$ via these bijections.

If $\tau$ is a track with a
large branch $b$ and $\tau_L,\tau_C, \tau_R$ are, respectively, the tracks
obtained by left, central,
and right splits of $\tau$ at $b$, then we have $P(\tau)= P(\tau_L)\cup P(\tau_R)$
and $P(\tau_C)=P(\tau_L)\cap P(\tau_R)$.
By \cite[Lemma 2.1.3]{Penner}, if $\tau$ is recurrent then either $\tau_L,\tau_C,$
and $\tau_R$ are all recurrent or exactly one is. In case exactly one
$\sigma \in \{\tau_L,\tau_C,\tau_R\}$ is recurrent, we have $P(\tau)=P(\sigma)$.
We also observe that if $\tau$ is maximal then either $\tau_L$ or $\tau_R$ will
always be recurrent.
When $\tau$ is recurrent and $\tau_C$ is the only recurrent track in
$\{\tau_L,\tau_C,\tau_R\}$, then it follows from Proposition \ref{dimensions} that
$\tau$ is non-orientable and $\tau_C$ is orientable.

\begin{figure}[h]

  \centering

  \def\svgwidth{0.9\textwidth}
  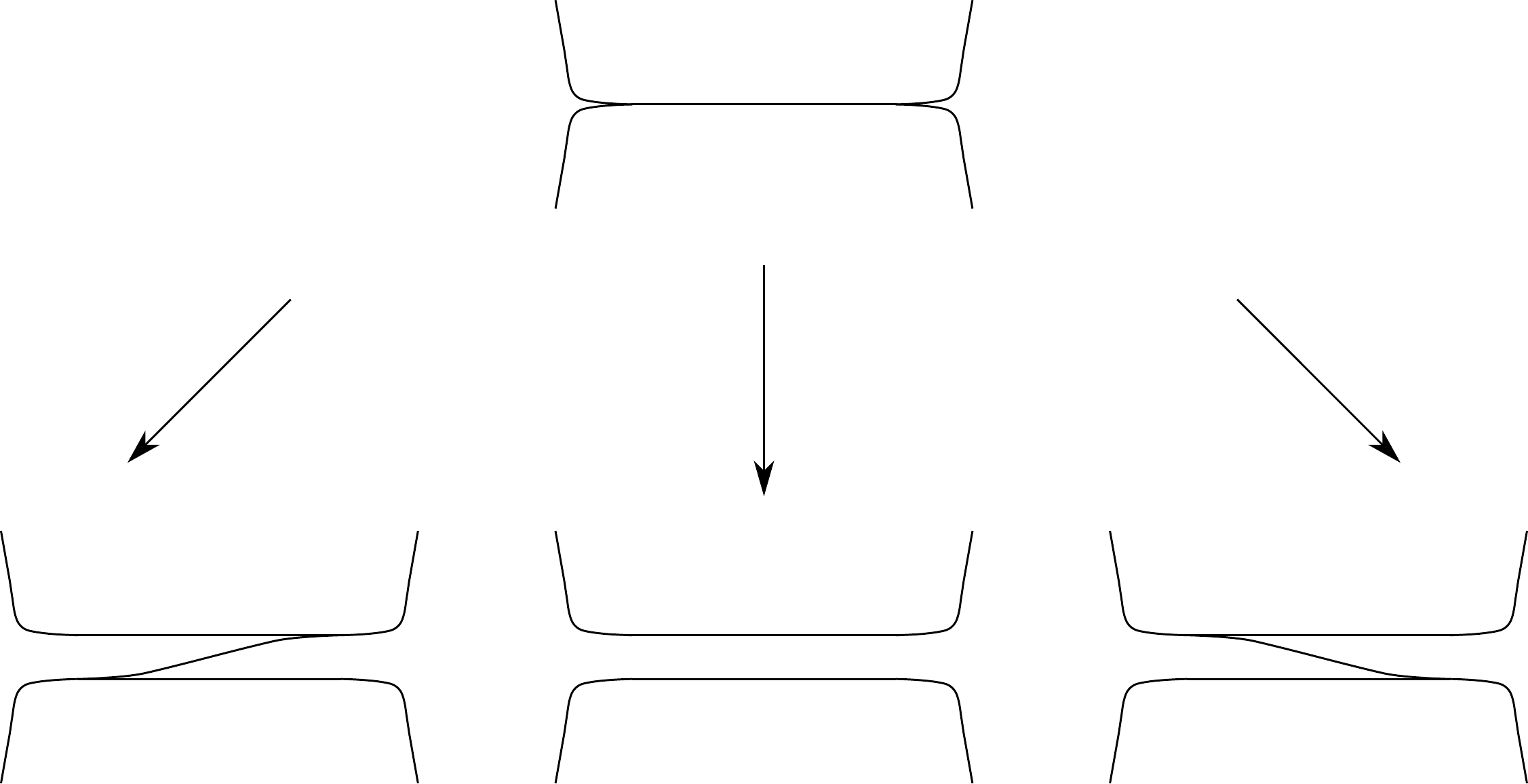

  \caption{The tracks on the bottom are obtained from the track on the top by a left,
    central, and right split, respectively.}
  \label{split}
\end{figure}

A sequence of carrying maps $\tau_0 \to \tau_1 \to \cdots \to \tau_n$ is called a
\emph{splitting sequence} if the carrying map $\tau_i \to \tau_{i+1}$ is induced
by a split on a large branch of $\tau_{i+1}$ for each $i$ and the split is the central split if and only if the central split is the only recurrent split.
If $\sigma \to \tau$ is the carrying map induced by a splitting sequence
(i.e. $\sigma=\tau_0$ and $\tau=\tau_n$ in the above sequence) then
we write $\sigma \ss \tau$. Note that in a splitting sequence at most one split can be a central split.
We will use the following theorem on splitting sequences repeatedly.

\begin{thm}[{\cite[Theorem 1.2]{MMQuasiconv}, \cite[Corollary 2.6]{HamenstadtTT}}]
  \label{quasigeodesics}
  There exists $Q=Q(\Sigma)>0$ and $E =E(\Sigma) >0$ such that if $\cdots \to \tau_2 \to \tau_1 \to \tau_0$ is a
  splitting sequence, then $V(\tau_0),V(\tau_1),V(\tau_2),\ldots$ forms a
  reparametrized $Q$-quasi-geodesic in $\C(\Sigma)$. In particular, $\S(\tau)$
  is $E$-quasi-convex in $\C(\Sigma)$.
\end{thm}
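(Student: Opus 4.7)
The plan is to establish three ingredients: (i) consecutive vertex cycle sets are uniformly close in $\C(\Sigma)$; (ii) the sequence makes uniform progress after a suitable reparametrization; and (iii) deduce quasi-convexity of $\S(\tau)$ from (i) and (ii).

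For (i), I would note that a single split $\tau_{i+1}\to\tau_i$ (fold $\tau_i\to\tau_{i+1}$) identifies branches of $\tau_i$ and $\tau_{i+1}$ outside the small neighborhood of the split, so any vertex cycle of $\tau_i$, which traverses each branch of $\tau_i$ at most twice, maps to a curve on $\tau_{i+1}$ that traverses each branch of $\tau_{i+1}$ a uniformly bounded number of times. In particular, this curve has bounded intersection with each vertex cycle of $\tau_{i+1}$, and combined with the Hamenst\"adt diameter bound on $V(\tau_{i+1})$ (\cite[Corollary 2.3]{HamenstadtTT}), this yields $\operatorname{diam}_\C(V(\tau_i)\cup V(\tau_{i+1})) \leq D=D(\Sigma)$.

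For (ii), which is the main obstacle, I would follow the strategy of Masur--Minsky: control subsurface projections along the splitting sequence and invoke the bounded geodesic image theorem together with the distance formula. The crucial lemma is that for any proper essential subsurface $Y\subsetneq\Sigma$, the diameter of $\pi_Y\!\left(\bigcup_i V(\tau_i)\right)$ in $\C(Y)$ is bounded in terms of $\Sigma$ alone: intuitively, splits that do not affect the portion of the track carried within $Y$ do not change the projection, while splits that do affect $Y$ are controlled because $\tau_i\cap Y$ changes by a single split at a time, so only finitely many can occur before $Y$ is ``consumed.'' Combined with (i), a Morse/bounded geodesic image argument then forces the bulk of the distance in $\C(\Sigma)$ to be accounted for by motion in $\C(\Sigma)$ itself, so that after reparametrizing to skip intervals on which $V(\tau_i)$ is coarsely constant, one obtains a $Q$-quasi-geodesic. (Alternatively, one could follow Hamenst\"adt's more direct path, realizing the sequence as tracking a Teichm\"uller geodesic and invoking the fact that Teichm\"uller geodesics project to reparametrized quasi-geodesics in $\C(\Sigma)$.)

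For (iii), given $c_1,c_2\in\S(\tau)$, I would use the fact that each carried curve $c_i$ can be realized as a vertex cycle of some splitting $\tau\ss\tau_i'$: by repeatedly splitting $\tau$ along large branches, the polytope $P(\tau)$ subdivides until $c_i$ becomes a vertex, i.e., $c_i\in V(\tau_i')$. By (i) and (ii), the concatenated sequence of vertex cycle sets from $V(\tau_1')$ through $V(\tau)$ to $V(\tau_2')$ is a broken reparametrized quasi-geodesic joining $c_1$ and $c_2$, and its vertices all lie in $\S(\tau)$. In the $\delta$-hyperbolic graph $\C(\Sigma)$, any such broken quasi-geodesic is at uniformly bounded Hausdorff distance from a true geodesic $[c_1,c_2]$, which yields the $E$-quasi-convexity of $\S(\tau)$ with $E$ depending only on $Q$, $D$, and $\delta$. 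The main difficulty by far is (ii); the contributions (i) and (iii) are essentially formal once the vertex-cycle bound and the quasi-geodesic property are in hand.
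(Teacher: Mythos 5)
First, a remark on the comparison: the paper does not prove Theorem \ref{quasigeodesics} at all --- it is quoted from Masur--Minsky \cite{MMQuasiconv} and Hamenst\"adt \cite{HamenstadtTT} --- so your sketch has to be measured against those proofs. Your steps (i) and (iii) are essentially fine. For (i), note that the carrying map goes from $\tau_{i+1}$ to $\tau_i$ (not the other way, as you wrote), so the correct statement is that vertex cycles of the more split track are carried by the less split one with weights at most $2$, whence bounded intersection number with its vertex cycles; this is standard. For (iii), splitting $\tau$ toward each given carried curve until it becomes a vertex cycle, and then applying thin triangles to the two resulting reparametrized quasi-geodesics meeting at $V(\tau)$, does give quasi-convexity (one does not need the concatenation to be a quasi-geodesic, only that the geodesic $[c_1,c_2]$ lies in a $2\delta$-neighborhood of $[c_1,v]\cup[v,c_2]$).

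The gap is in (ii), which is the entire content of the theorem, and the ``crucial lemma'' you propose there is false: the diameter of $\pi_Y\bigl(\bigcup_i V(\tau_i)\bigr)$ is \emph{not} bounded in terms of $\Sigma$ for every proper subsurface $Y$. For a maximal recurrent $\tau_0$ the set $P(\tau_0)$ is open in $\PML$, so it contains laminations $\Lambda$ with $d_Y(V(\tau_0),\Lambda)$ arbitrarily large for suitable $Y$ (e.g.\ annuli, by inserting Dehn twisting), and a splitting sequence resolving toward such a $\Lambda$ has vertex cycles whose $Y$-projections coarsely converge to $\pi_Y(\Lambda)$. Your heuristic that ``only finitely many splits can occur before $Y$ is consumed'' fails exactly here: the portion of the track inside $Y$ can be split indefinitely, which is how twisting accumulates. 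Moreover, even granting the lemma, bounded subsurface projections would not yield the conclusion --- being an unparametrized quasi-geodesic in $\C(\Sigma)$ is insensitive to subsurface projections (shadows of Teichm\"uller geodesics and hierarchy paths are unparametrized quasi-geodesics however large their projections), and the true theorem in this direction is the opposite-flavored, harder one: splitting sequences project to unparametrized quasi-geodesics in \emph{every} subsurface curve graph. The cited proofs instead run a nesting/contraction argument: after sufficiently many splits the polyhedra of (diagonal extensions of) carried laminations nest into the interiors of the earlier ones, curves not carried by diagonal extensions of a later track are uniformly far in $\C$ from curves carried by a much later one, and this produces a coarsely contracting projection onto the sequence, hence the quasi-geodesic property. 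That is precisely the mechanism the present paper generalizes to $\C_k$ in Section \ref{sec:progress} (Theorem \ref{progress2}), so your outline misses the idea that is actually load-bearing both in the cited sources and in this paper.
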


The following proposition of Kapovich-Rafi will play a central role in our work.
\begin{prop}[{Kapovich-Rafi \cite[Proposition 2.5]{KR}}]
\label{KR}
For any positive integers $\delta,L,D$, there exists $\delta'>0$
such that the following holds.
Let $X,Y$ be connected graphs such that $X$ is $\delta$-hyperbolic.
Let $f:X\to Y$ be an $L$-Lipschitz graph map, surjective when restricted
to a map of the vertex sets $\mathcal{V}(X)\to \mathcal{V}(Y)$.
Suppose that the following condition is
satisfied:
for any $x,y\in \mathcal{V}(X)$, if $d(f(x),f(y))\leq 1$ then for any geodesic $[x,y]$
in $X$ we have
\[
\diam_Y(f([x,y])) \leq D.
\]
Then $Y$ is $\delta'$-hyperbolic.
Furthermore
\begin{itemize}
\item The image in $Y$ of a geodesic in $X$ is a reparameterized quasi-geodesic.

\item For any $K$, there is $K' = K'(\delta, L, D, K)$ such that any
$K$-quasi-convex set in $X$ has $K'$-quasi-convex image in $Y$.

\item There exists $E=E(\delta,L,D)$ such that if $x,y \in X$ then
$$f([x,y]) \subset B_E([f(x), f(y)]).$$
\end{itemize}
\end{prop}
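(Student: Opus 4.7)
The plan is to verify Bowditch's ``guessing geodesics'' criterion for $Y$. Using surjectivity on vertices, fix once and for all a preimage $x_y \in \mathcal{V}(X)$ of every vertex $y\in Y$. For each pair $y,y'\in \mathcal{V}(Y)$ choose a geodesic $[x_y, x_{y'}]$ in $X$ and define the \emph{preferred path}
$$P(y,y') := f([x_y, x_{y'}]),$$
which is a path in $Y$ of length at most $L\cdot d_X(x_y, x_{y'})$ since $f$ is $L$-Lipschitz. Bowditch's criterion asks for uniform constants so that (a) $\diam_Y P(y,y')$ is bounded whenever $d_Y(y,y')\leq 1$, and (b) for every triple $y_1, y_2, y_3$, the path $P(y_1, y_2)$ lies in a bounded neighborhood of $P(y_1, y_3)\cup P(y_2, y_3)$.

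Condition (a) is precisely the hypothesis, with bound $D$. Condition (b) is immediate from $\delta$-hyperbolicity of $X$: the triangle $[x_{y_1}, x_{y_2}]\cup [x_{y_1}, x_{y_3}]\cup [x_{y_2}, x_{y_3}]$ is $\delta$-thin, so every vertex of $[x_{y_1}, x_{y_2}]$ lies within $\delta$ of $[x_{y_1}, x_{y_3}]\cup [x_{y_2}, x_{y_3}]$ in $X$; pushing forward by $f$ puts $P(y_1, y_2)$ in the $L\delta$-neighborhood of $P(y_1, y_3)\cup P(y_2, y_3)$. Bowditch's criterion then produces a constant $\delta' = \delta'(\delta, L, D)$ such that $Y$ is $\delta'$-hyperbolic, and simultaneously shows that each preferred path $P(y,y')$ is a reparameterized quasi-geodesic with uniform constants.

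To upgrade this to the image of an \emph{arbitrary} geodesic $[x,x']\subset X$: since $x$ and $x_{f(x)}$ both lie in $f^{-1}(f(x))$, applying the hypothesis to the pair $(f(x), f(x_{f(x)}))$ (whose $Y$-distance is $0\leq 1$) yields $\diam_Y f([x, x_{f(x)}])\leq D$, and similarly for $x'$. The geodesic quadrilateral in $X$ with vertices $x, x_{f(x)}, x_{f(x')}, x'$ is $2\delta$-thin, so $f([x,x'])$ lies in a uniformly bounded Hausdorff neighborhood of $P(f(x), f(x'))$ and is therefore itself a reparameterized quasi-geodesic. The fellow-traveling statement $f([x,y])\subset B_E([f(x), f(y)])$ is then an immediate consequence of stability of quasi-geodesics in the $\delta'$-hyperbolic space $Y$. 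The quasi-convex image statement is also standard: if $A\subset X$ is $K$-quasi-convex and $y,y'\in f(A)$, lift to $x,x'\in A$; then the geodesic $[y,y']$ in $Y$ is uniformly close to the reparameterized quasi-geodesic $f([x,x'])$, which lies in $B_{KL}(f(A))$, giving $K'$-quasi-convexity of $f(A)$ with $K' = K'(\delta, L, D, K)$.

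The main obstacle is the passage from the purely ``local'' hypothesis (short $Y$-distances imply bounded images) to the ``global'' conclusion of hyperbolicity and quasi-geodesicity. The preferred paths $P(y,y')$ could a priori meander considerably in $Y$ even though their endpoints are far apart, and ruling this out requires the non-trivial bootstrap content of Bowditch's criterion: the slim-triangle condition (b) combined with (a) forces genuine linear progress along each $P(y,y')$, and from this hyperbolicity of $Y$ follows.
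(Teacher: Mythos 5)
The paper does not actually prove this statement; it is quoted as an external result of Kapovich--Rafi \cite[Proposition 2.5]{KR}, so there is no internal proof to compare against. Your route---choosing a lift $x_y$ of each vertex of $Y$, taking the preferred paths to be $f$-images of geodesics between lifts, and verifying the two hypotheses of Bowditch's ``guessing geodesics'' criterion (local boundedness from the stated hypothesis, slimness from $\delta$-thinness of triangles in $X$ pushed forward by the $L$-Lipschitz map)---is essentially the strategy of the cited Kapovich--Rafi proof, and the quadrilateral argument you use to pass from the fixed lifts $x_{f(x)}$ to arbitrary geodesics $[x,x']$, as well as the deductions of the second and third bullets, are sound.

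The one step you should tighten is the passage to ``reparameterized quasi-geodesic.'' Bowditch's criterion yields hyperbolicity of $Y$ and (in its refined form, which you should cite or derive) that each preferred path lies within uniformly bounded Hausdorff distance of a geodesic between its endpoints; but Hausdorff-closeness to a geodesic does \emph{not} by itself make a path an unparametrized quasi-geodesic, since the path could backtrack arbitrarily far inside that neighborhood, and the same objection applies when you say $f([x,x'])$ is a reparameterized quasi-geodesic ``therefore'' it is Hausdorff-close to $P(f(x),f(x'))$. The standard repair is one more observation: every subpath of $f([x,x'])$ is itself the $f$-image of a geodesic of $X$ (a subsegment of $[x,x']$), hence by your own fellow-traveling estimate it lies in a uniformly bounded neighborhood of the $Y$-geodesic joining its endpoints; a path all of whose subpaths have this property cannot backtrack coarsely and is an unparametrized quasi-geodesic with constants depending only on $\delta'$ and the neighborhood constant. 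With that sentence added, the argument is complete and matches the approach of the source the paper relies on.
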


In \cite{DT} another formulation of the last bullet is used.
Namely, Dowdall-Taylor define $f\colon X\to Y$ to be {\em alignment-preserving}
if there exists $E$ such that if $a,b,c \in X$ with
$d_X(a,b) + d_X(b,c) = d_X(a,c)$ then
$d_Y(f(a), f(b)) + d_Y(f(b), f(c)) - d_Y(f(a), f(c))\leq E$. If
$X$ and $Y$ are hyperbolic then it is straightforward to check that this
condition is equivalent to the last bullet.

We now see that this proposition applies in our setting.

\begin{prop}\label{hyperbolic}
  There is $\delta>0$ with the following property.
  For every non-sporadic surface $\Sigma$ and every $k\geq 0$, $\mathcal
    C_{k}(\Sigma)$ is $\delta$-hyperbolic. Images of geodesics in $\mathcal
    C(\Sigma)$ are (re-parametrized) quasi-geodesics in $\mathcal
    C_{k}(\Sigma)$ with uniform constants. In particular, all maps
    $\C_{k+1}(\Sigma) \to \C_k(\Sigma)$ are alignment-preserving.
\end{prop}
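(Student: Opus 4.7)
The plan is to apply the Kapovich--Rafi criterion (Proposition \ref{KR}) to the natural $1$-Lipschitz vertex-surjective map $f\colon \C(\Sigma)\to\C_k(\Sigma)$. Since $\C(\Sigma)$ is $\delta$-hyperbolic (Masur--Minsky, uniformly in $\Sigma$), and $f$ is $1$-Lipschitz and the identity on vertices, the only hypothesis of Proposition \ref{KR} that requires checking is the uniform diameter bound: if $d_k(\alpha,\beta)\le 1$, then any $\C$-geodesic from $\alpha$ to $\beta$ must have uniformly bounded image in $\C_k(\Sigma)$. Once this is established, Proposition \ref{KR} immediately yields that $\C_k(\Sigma)$ is $\delta'$-hyperbolic, that images of $\C(\Sigma)$-geodesics are reparameterized quasi-geodesics in $\C_k(\Sigma)$ with uniform constants, and that $f$ is alignment-preserving. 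The alignment-preserving statement for $\C_{k+1}(\Sigma)\to \C_k(\Sigma)$ then follows by factoring through $\C(\Sigma)\to \C_k(\Sigma)$, using that alignment-preserving maps out of a hyperbolic space are characterized by the last bullet of Proposition \ref{KR}.

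The main step, then, is verifying the diameter bound. Suppose $d_k(\alpha,\beta)\le 1$. By Definition \ref{ckdefn}, there is a train track $\tau$ with $\ind(\tau)>k$ and $\alpha,\beta\in\S(\tau)$. By Theorem \ref{quasigeodesics}, $\S(\tau)$ is $E$-quasi-convex in $\C(\Sigma)$ for a constant $E$ depending only on $\Sigma$, so any geodesic $[\alpha,\beta]$ in $\C(\Sigma)$ is contained in $B_E(\S(\tau))$. For each vertex $x\in[\alpha,\beta]$, choose $\gamma_x\in\S(\tau)$ with $d(x,\gamma_x)\le E$. Since $f$ is $1$-Lipschitz, $d_k(x,\gamma_x)\le E$. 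Moreover, any two curves in $\S(\tau)$ are joined by an edge of $\C_k(\Sigma)$, so for any $x,y\in[\alpha,\beta]$,
\[
d_k(x,y)\le d_k(x,\gamma_x)+d_k(\gamma_x,\gamma_y)+d_k(\gamma_y,y)\le 2E+1.
\]
Thus $\operatorname{diam}_{\C_k}(f([\alpha,\beta]))\le 2E+1$, which is the required uniform bound $D$ with $L=1$.

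With this in hand, Proposition \ref{KR} directly furnishes $\delta'>0$ (depending only on the uniform $\delta$ for $\C(\Sigma)$, on $L=1$, and on $D=2E+1$) such that $\C_k(\Sigma)$ is $\delta'$-hyperbolic, and such that the image of any $\C(\Sigma)$-geodesic under $f$ is a uniformly reparameterized quasi-geodesic in $\C_k(\Sigma)$. For alignment preservation of the map $g\colon \C_{k+1}(\Sigma)\to\C_k(\Sigma)$, given a $0$-aligned triple $\alpha,\beta,\gamma$ in $\C_{k+1}(\Sigma)$ one pulls back to $\C(\Sigma)$ (where vertices are the same) and concatenates geodesics $[\alpha,\beta]\cup[\beta,\gamma]$ in $\C(\Sigma)$; the composition $f=g\circ(\C_{k+1}\to\C_{k+1})$ together with the last bullet of Proposition \ref{KR} applied to $\C(\Sigma)\to\C_{k+1}(\Sigma)$ and to $\C(\Sigma)\to\C_k(\Sigma)$ forces $g(\alpha),g(\beta),g(\gamma)$ to be $K$-aligned with $K$ uniform.

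The only potential obstacle is ensuring that the constants $E$ from Theorem \ref{quasigeodesics} and $\delta$ for $\C(\Sigma)$ can be taken independent of $\Sigma$, since the proposition claims a single $\delta$ works for all non-sporadic $\Sigma$. This follows from known uniform versions of these results in the literature (uniform hyperbolicity of $\C(\Sigma)$ and uniform quasi-convexity constants for $\S(\tau)$), but aside from this bookkeeping the argument is essentially a direct verification of the hypothesis of Proposition \ref{KR}.
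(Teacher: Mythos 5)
Your proposal is correct and follows essentially the same route as the paper: apply the Kapovich--Rafi criterion (Proposition \ref{KR}) to the $1$-Lipschitz vertex-bijective map $\C\to\C_k$, verifying the diameter hypothesis by noting that a $\C$-geodesic between curves carried by a common track $\tau$ of index $>k$ lies in a uniform neighborhood of $\S(\tau)$ (Theorem \ref{quasigeodesics}), which has diameter $1$ in $\C_k$. The only difference is that you spell out the alignment-preservation of $\C_{k+1}\to\C_k$ and the uniformity-over-$\Sigma$ bookkeeping slightly more explicitly than the paper does, which is harmless.
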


\begin{proof}
  The natural map $\pi_k:\mathcal C\to \mathcal C_k$ which is the ``identity'' on
  vertices is
  1-Lipschitz and bijective on the vertex sets.
  Moreover, there exists $\delta_0>0$ such that $\mathcal C$ is
  $\delta_0$-hyperbolic, where $\delta_0$ is independent of
  $\Sigma$ (\cite{Aougab}, \cite{Bowditch}, \cite{CRS}, \cite{HPW}).
  So according to
  Proposition \ref{KR}, it suffices to prove that there is $D>0$
  such that
  if $\alpha,\beta$ are adjacent vertices in $\mathcal C_k$ and $p$ is a
  geodesic in $\mathcal C$ from $\alpha$ to $\beta$ then $\diam \pi_k(p)\leq D$.

  Let $\tau$ be a train track of index $>k$ that carries both $\alpha$ and
  $\beta$. By Theorem \ref{quasigeodesics},
  any geodesic from $[\alpha,\beta]$ is contained in a $Q$-neighborhood of
  $\S(\tau)$ in $\C$, so the image of $[\alpha,\beta]$ in $\C_k$ is contained
  in the $Q$-neighborhood of $\S(\tau)$ in $\C_k$. But then $\S(\tau)$ has
  diameter 1 in $\C_k$, so the image of $[\alpha,\beta]$ has diameter
  bounded by $2Q+1$ in $\C_k$.
\end{proof}

Below, we will use the following fact, due to Masur-Minsky, which is a step towards
their Nesting Lemma \cite[Lemma 4.7]{MM}. We will prove this later in the paper.
For the statement, recall that an \emph{extension} of a train track $\tau$
is a train track $\sigma$ with $\tau \subset \sigma$. The extension $\sigma$ is
a \emph{diagonal extension} if every branch of $\sigma \smallsetminus \tau$ joins two switches of
$\tau$. As defined a diagonal extension is not generic as the underlying graphs will not be trivalent where branches have been added.  However, for any non-generic train track $\sigma$ there is a generic train track $\sigma'$ such that $\sigma$ carries $\sigma'$ and $\sigma'$ carries $\sigma$. Furthermore the composed carrying map of $\sigma'$ to itself will be homotopic to a homeomorphism. As we always want our train tracks to be generic we will abuse notation and refer to the track $\sigma'$ as a diagonal extension.

\begin{lemma}[{\cite[Lemma 4.4]{MM}}]\label{MMdiag}
  If $\tau$ is a filling track, $\alpha$ is a simple closed curve fully carried
  by $\tau$, and $\beta$ is a simple closed curve with $i(\alpha,\beta)=0$,
  then $\beta$ is carried by a diagonal extension of $\tau$.
\end{lemma}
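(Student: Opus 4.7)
The plan is to show that, after a careful isotopy, $\beta$ can be put in a position where all of its intersections with $\tau$ occur at the switches of $\tau$ and each arc of $\beta$ in a complementary polygon of $\tau$ runs from a switch to a switch. The added arcs will then form the diagonal branches of the desired extension.

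First I would fix a regular neighborhood $N(\tau)$ with its tie structure, and realize $\alpha$ as a simple closed curve inside $N(\tau)$ which follows $\tau$. Since $\alpha$ is fully carried, every branch $b$ of $\tau$ carries at least one strand of $\alpha$ in its tie neighborhood. I then isotope $\beta$ so that $\beta\cap\alpha=\emptyset$, which is possible since $i(\alpha,\beta)=0$, and put $\beta$ transverse to $\tau$ with the minimum number of intersection points in its isotopy class (subject to disjointness from $\alpha$).

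Next I would argue that every intersection point of $\beta$ with $\tau$ can be pushed to a switch. Consider such a crossing $p$ in the interior of a branch $b$. In a tie neighborhood of $b$, the strands of $\alpha$ are parallel arcs running along $b$, so $p$ must lie in a strip bounded by two consecutive $\alpha$-strands (or by an $\alpha$-strand and $\partial N(\tau)$). This strip is a long rectangle running the length of $b$, so I can slide $p$ along $b$ inside the strip all the way to one of the switches at the end of $b$ without ever meeting $\alpha$. Performing this slide for every interior crossing leaves $\beta$ with all crossings concentrated at switches of $\tau$.

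Now in each complementary polygon $P$ of $\tau$, every component of $\beta\cap P$ is a properly embedded arc whose two endpoints sit at switches (corners) of $P$. I would take the union of $\tau$ with all such arcs, across all complementary polygons, to form a non-generic extension $\sigma$ of $\tau$; by construction every branch of $\sigma\smallsetminus\tau$ joins two switches of $\tau$, so $\sigma$ is a diagonal extension in the sense of the paper. The curve $\beta$ is carried by $\sigma$, since $\beta\cap N(\tau)$ runs along $\tau$ and $\beta\cap P$ runs along the new diagonal branches. Finally, I would pass to a generic representative $\sigma'$ of $\sigma$, which carries $\beta$, completing the proof.

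The main obstacle I expect is justifying the pushing step cleanly near the switches, where the $\alpha$-strand pattern reconfigures according to the switch condition and one must verify that distinct pushed crossings (and the resulting arcs of $\beta$ in each polygon) remain disjoint and properly embedded; this is essentially a careful local analysis of the $\alpha$-strand complement in $N(\tau)$, but it should yield to a direct combinatorial argument using the fact that $\alpha$ is an embedded curve with positive weight on every branch.
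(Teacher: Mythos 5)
There is a genuine gap, and it is located exactly where the lemma's hypotheses ought to be doing work. Your sliding step only uses $\alpha$ as a potential obstruction to an isotopy of $\beta$ along a branch, and since the strands of $\alpha$ in a branch rectangle are parallel to that branch they never obstruct such a slide; the argument would go through verbatim (indeed more easily) if some branches carried no strand of $\alpha$ at all. So if the proposal were correct it would prove that \emph{every} simple closed curve is carried by a diagonal extension of \emph{every} filling track, which is false: curves carried by diagonal extensions of $\tau$ stay uniformly close to $\S(\tau)$ in $\C(\Sigma)$, while an arbitrary $\beta$ can be far from it. The real content of the lemma is that $\beta$ has \emph{no} transverse crossings of the track at all up to isotopy --- it never passes from one side of $N(\tau)$ to the other --- and this is precisely where full carriage enters: any such crossing would traverse a complete tie of some branch rectangle and hence cross a strand of $\alpha$. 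An isotopy that merely relocates a transverse crossing to a switch cannot remove it, so the crossing must be ruled out, not pushed around; your argument never confronts this.

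Relatedly, the terminal configuration you describe does not yield a diagonal extension carrying $\beta$. A diagonal branch must terminate tangentially in a corner (cusp) of a complementary region, whereas your arcs of $\beta$ end at switch points that they meet transversely; at a trivalent switch the cusp lies on only one side, so a transverse crossing at the switch joins the two regions on either side of the track, and for at least one of them the switch is a smooth boundary point rather than a corner. Thus $\tau$ together with these arcs is not a train track, and the claim that ``$\beta\cap N(\tau)$ runs along $\tau$'' contradicts your own setup in which $\beta$ is transverse to $\tau$. Compare with the proof in the paper: there one passes to the universal cover, uses that $\alpha$ is fully carried to produce bi-infinite train paths through every branch which are lifts of $\alpha$, notes that $i(\alpha,\beta)=0$ forces the endpoints of these paths not to separate those of $\tilde\beta$, and concludes that no branch crosses $\tilde\beta$; Proposition \ref{diagonal carry} (via the polygon analysis of Lemma \ref{polygons} and the construction of Lemma \ref{transverse extension}) then supplies the diagonal extension. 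Some step of this kind, eliminating crossings globally before any local isotopy bookkeeping, is what your outline is missing.
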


We note that in \cite[Lemma 4.4]{MM}, the track $\tau$ is assumed to be birecurrent.
We do not require this property for two reasons. First of all, the fact that $\alpha$
is fully carried by $\tau$ automatically implies that $\tau$ is recurrent. Secondly,
the proof of \cite[Lemma 4.4]{MM} only requires that train paths carried by $\tau$
are quasi-geodesics with constants depending only on $\tau$. Train paths are
quasi-geodesics
whether the track is transversely recurrent or not. See \cite[Lemma 5.8]{casson}
for the case of closed surfaces and \cite{Bonahon},
\cite[Lemma 20]{BZ} for the general case.

\section{The geometry of $\S(\tau)$}

The main result of this section is:

\begin{thm}\label{qtree}
  There exists $Q>0$ with the following property. If $\Sigma$ is any surface
  and $\tau$ is a recurrent track with $ind(\tau)=k$,
  then $S(\tau)$, considered as a subset of $\C_k$, is $Q$-quasi-convex and a
  quasi-tree.
\end{thm}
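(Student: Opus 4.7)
The plan is to prove quasi-convexity and the quasi-tree property separately, both with constants depending only on $\Sigma$.

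\emph{Quasi-convexity.} By Theorem \ref{quasigeodesics}, $\S(\tau)$ is uniformly $E$-quasi-convex in $\C$. The natural map $\pi_k\colon\C\to\C_k$ is $1$-Lipschitz, bijective on vertices, and alignment-preserving (Proposition \ref{hyperbolic}), so the second bullet of Proposition \ref{KR} gives uniform quasi-convexity of $\S(\tau)$ in $\C_k$.

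\emph{Quasi-tree, via the bottleneck condition} (Corollary \ref{geodesic_bottleneck}). For each $\alpha,\beta\in\S(\tau)$ I construct a \emph{central path} from a splitting sequence. Pick a transverse measure on $\tau$ fully weighting both $\alpha$ and $\beta$, and run a maximal splitting sequence $\tau=\tau_0\to\tau_1\to\cdots\to\tau_N$ along this measure until $\alpha$ and $\beta$ appear as vertex cycles of $\tau_N$ (possibly of disjoint sub-tracks thereof). Each $\tau_i$ is carried by $\tau$, so $V(\tau_i)\subset\S(\tau)$. By Theorem \ref{quasigeodesics} the concatenation $V(\tau_0),\ldots,V(\tau_N)$ is a reparametrized quasi-geodesic in $\C$, and Hamenst\"adt's uniform bound on $\diam V(\tau_i)$ thickens it into a uniformly $K$-connected path $c$ in $\S(\tau)$ joining $\alpha$ to $\beta$. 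Since $\pi_k$ is alignment-preserving, $c$ is also a reparametrized quasi-geodesic in $\C_k$, so any $\C_k$-geodesic $[\alpha,\beta]$ lies in a uniform neighborhood of $c$.

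To finish the bottleneck I must show that every $K$-connected $d_k$-path $p\colon \alpha=\gamma_0,\ldots,\gamma_m=\beta$ in $\S(\tau)$ has $[\alpha,\beta]_{\C_k}$ in its $\Delta$-neighborhood; by the previous paragraph it suffices to show $p$ stays uniformly close to $c$. Each adjacency $d_k(\gamma_i,\gamma_{i+1})=1$ is witnessed by a track $\sigma_i$ with $\ind(\sigma_i)>k$ carrying both $\gamma_i$ and $\gamma_{i+1}$, and both curves lie in $\S(\tau)$. Using Lemma \ref{MMdiag} and a diagonal-extension argument, I plan to compare $\sigma_i$ with the splitting history $\tau_0,\ldots,\tau_N$, replacing $\sigma_i$ when needed by a sub-track or diagonal extension of some $\tau_{j(i)}$ that still carries $\gamma_i$. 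This should pin each $\gamma_i$ to a uniform $\C_k$-neighborhood of $V(\tau_{j(i)})$, with $j(i)$ coarsely monotone in $i$, so that $p$ and $c$ fellow-travel.

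The main obstacle is this pinning step. The witness track $\sigma_i$ is \emph{a priori} external to the splitting history of $\tau$, and the bare inequality $\ind(\sigma_i)>k=\ind(\tau)$ only controls it coarsely. Making the comparison uniform in $\Sigma$ requires a genuine combinatorial progress criterion for splitting sequences in $\C_k$, of the Masur-Minsky Nesting Lemma flavor, which is exactly what the authors advertise as the workhorse of Section \ref{sec:progress}; I expect to invoke that criterion here.
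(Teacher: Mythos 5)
Your quasi-convexity argument is exactly the paper's (Theorem \ref{quasigeodesics} plus Propositions \ref{KR} and \ref{hyperbolic}), and your central path built by splitting $\tau$ toward $\alpha$ and $\beta$ is essentially the paper's tripod of splitting sequences. The problem is in how you verify the bottleneck condition: you reduce it to showing that an arbitrary $K$-connected path $p$ in $\S(\tau)$ from $\alpha$ to $\beta$ \emph{stays uniformly close to} the central path $c$ -- concretely, that each $\gamma_i$ can be pinned to a uniform $\C_k$-neighborhood of $V(\tau_{j(i)})$ with $j(i)$ coarsely monotone. That statement is simply false, and not because of any train-track subtlety: already in a simplicial tree a path between two vertices may make arbitrarily long excursions into other branches, so no bound of the form $p\subset B_D(c)$ can hold, and a curve $\gamma_i\in\S(\tau)$ obtained by splitting $\tau$ along a completely different sequence need not be near any track in the splitting history toward $\alpha,\beta$. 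The bottleneck condition of Corollary \ref{subspacebottleneck}/\ref{geodesic_bottleneck} requires the \emph{opposite} containment: the preferred path (equivalently the geodesic $[\alpha,\beta]$) must lie in a uniform neighborhood of every $K$-connected path, i.e.\ every path must pass uniformly close to each track occurring in the tripod. Your sufficient condition is strictly stronger than what is needed and cannot be established, so the plan as written does not close.

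What the paper actually does at this point is local rather than global: for a fixed track $\tau_\alpha$ on the leg toward $\alpha$, take the last curve $\gamma$ of the path carried by $\tau_\alpha$ and its successor $\delta$, interpolate a $\C$-geodesic $\gamma=\xi_0,\dots,\xi_m=\delta$ (all uniformly close in $\C_k$ by Proposition \ref{hyperbolic}), and locate the first $\xi_{i+1}$ not carried by any diagonal extension of $\tau_\alpha$ (using Lemma \ref{diagonal extension} to know $\delta$ itself is not). Then Lemma \ref{MMdiag} together with the distance estimate Corollary \ref{subtrack distance} (a refinement of \cite[Corollary 3.24]{BB}) produces a subtrack of index $>k$, hence a $\C_k$-diameter-one set, linking $\gamma$ to $V(\tau_\alpha)$; this shows the path passes boundedly close to $\tau_\alpha$. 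In particular, the nesting-type progress criterion of Section \ref{sec:progress} that you propose to invoke is not used for Theorem \ref{qtree} at all -- it is the workhorse for the coarse-fiber statement later -- and invoking it here would not repair the directional error above.
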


Throughout the rest of the paper,
for tracks $\tau,\sigma$ on $\Sigma$, we denote by
$d_k(\tau,\sigma)\vcentcolon=\mathrm{diam}_{\C_k}(V(\tau)\cup V(\sigma))$
the \emph{Hausdorff distance} from
$V(\tau)$ to $V(\sigma)$ in $\C_k$. Since the diameter of $V(\tau)$ is uniformly
bounded independently of $\tau$ and $\sigma$, this is coarsely equal to
$d_k(\alpha,\beta)$ for any $\alpha\in V(\tau)$ and $\beta\in V(\sigma)$. Similarly,
if $\alpha$ is a simple closed curve and $\tau$ is a track, we define
$d_k(\alpha,\tau)$ to be $\diam_{\C_k}(\{\alpha\}\cup V(\tau))$.

\begin{proof}[Proof of Theorem \ref{qtree}]
By Theorem \ref{quasigeodesics}, $\S(\tau)$ is quasi-convex in $\C$,
with quasi-convexity constant depending only on $\Sigma$.
It then follows from Propositions \ref{KR} and \ref{hyperbolic} that $\S(\tau)$ is
$Q$-quasi-convex in $\C_k$.

  We will show that $\S(\tau)$ is a quasi-tree using the version of
  Manning's Bottleneck Criterion in Corollary \ref{subspacebottleneck}.
   Let $\alpha,\beta\in \S(\tau)$.

  Split $\tau$ as much as possible while still carrying both
  $\alpha,\beta$; call the resulting track $\tau'$. Split $\tau'$ towards $\alpha$
  and towards $\beta$.
  Concatenating paths of vertex cycles of the split tracks
  after $\tau'$ gives a
  preferred path $p_{\alpha,\beta}$ between $\alpha$ and $\beta$ in $S(\tau)$.
  See Figure \ref{fig:tripod}.

  \begin{figure}[h]

  \centering

  \def\svgwidth{0.25\textwidth}
\begingroup%
  \makeatletter%
  \providecommand\color[2][]{%
    \errmessage{(Inkscape) Color is used for the text in Inkscape, but the package 'color.sty' is not loaded}%
    \renewcommand\color[2][]{}%
  }%
  \providecommand\transparent[1]{%
    \errmessage{(Inkscape) Transparency is used (non-zero) for the text in Inkscape, but the package 'transparent.sty' is not loaded}%
    \renewcommand\transparent[1]{}%
  }%
  \providecommand\rotatebox[2]{#2}%
  \newcommand*\fsize{\dimexpr\f@size pt\relax}%
  \newcommand*\lineheight[1]{\fontsize{\fsize}{#1\fsize}\selectfont}%
  \ifx\svgwidth\undefined%
    \setlength{\unitlength}{689.43058525bp}%
    \ifx\svgscale\undefined%
      \relax%
    \else%
      \setlength{\unitlength}{\unitlength * \real{\svgscale}}%
    \fi%
  \else%
    \setlength{\unitlength}{\svgwidth}%
  \fi%
  \global\let\svgwidth\undefined%
  \global\let\svgscale\undefined%
  \makeatother%
  \begin{picture}(1,1.37509666)%
    \lineheight{1}%
    \setlength\tabcolsep{0pt}%
    \put(0,0){\includegraphics[width=\unitlength,page=1]{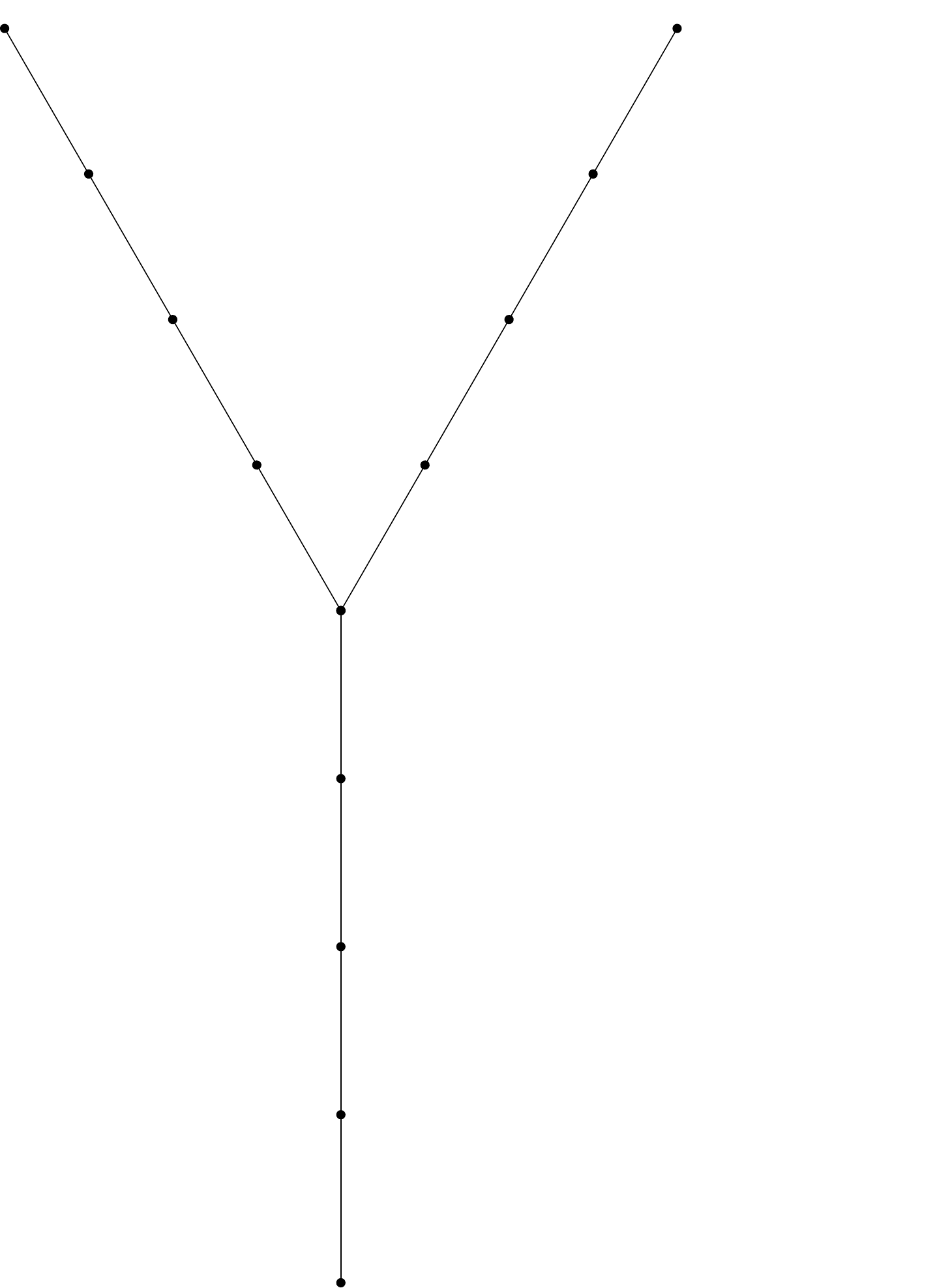}}%
    \put(0.14329351,1.34462826){\color[rgb]{0,0,0}\makebox(0,0)[lt]{\lineheight{1.25}\smash{\begin{tabular}[t]{l}\textit{$\tau''$}\end{tabular}}}}%
    \put(0.86127732,1.34462826){\color[rgb]{0,0,0}\makebox(0,0)[lt]{\lineheight{1.25}\smash{\begin{tabular}[t]{l}\textit{$\tau'''$}\end{tabular}}}}%
    \put(0.50228542,0.00485221){\color[rgb]{0,0,0}\makebox(0,0)[lt]{\lineheight{1.25}\smash{\begin{tabular}[t]{l}\textit{$\tau$}\end{tabular}}}}%
    \put(0.50228542,0.72283606){\color[rgb]{0,0,0}\makebox(0,0)[lt]{\lineheight{1.25}\smash{\begin{tabular}[t]{l}\textit{$\tau'$}\end{tabular}}}}%
    \put(0,0){\includegraphics[width=\unitlength,page=2]{tripod.pdf}}%
  \end{picture}%
\endgroup%

  \caption{Constructing a path of curves from $\alpha$ to $\beta$. The curve $\alpha$
  is a vertex cycle of $\tau''$ while $\beta$ is a vertex cycle of $\tau'''$. Each
  edge of the tripod represents a split and the tracks along the leg from $\tau'$ to
  $\tau''$ carry $\alpha$ but not $\beta$ while the tracks from $\tau'$ to $\tau'''$
  carry $\beta$ but not $\alpha$. The blue path is the path $p_{\alpha,\beta}$
  from $\alpha$ to $\beta$, obtained by taking vertex cycles of tracks along two
  legs of the tripod.}
  \label{fig:tripod}
\end{figure}

  Now let
  $\alpha,\ldots,\gamma,\delta,\ldots,\beta$ be a $(2Q+1)$-connected path in $\mathcal
    S(\tau)$,

  We need to argue that it comes within a uniformly bounded
  distance of any track $\tau_\alpha\neq\tau'$ between $\tau'$ and $\alpha$, and
  similarly for tracks $\tau_\beta$ between $\tau'$ and $\beta$.
  The case for $\tau_\beta$ will follow by symmetry. So
  choose $\gamma$ to be the
  last curve in the sequence carried by $\tau_\alpha$.

   Interpolate a geodesic path
  in $\mathcal C$ between $\gamma$ and $\delta$, say
  $\gamma=\xi_0,\ldots,\xi_m=\delta$. Thus, by Proposition \ref{hyperbolic}, all these curves
  are uniformly close in $\mathcal C_k$.

Since $\delta$ is
carried by $\tau$ but not carried by $\tau_\alpha\to \tau$, by Lemma \ref{diagonal extension} below, $\delta$ is 
not carried by any diagonal extension of $\tau_\alpha$. 
Therefore there is a $\xi_i$, with $i<
m$, such that $\xi_i$ is carried by a diagonal extension
$\tau'_\alpha$ of $\tau_\alpha$ but $\xi_{i+1}$ is not carried by any
diagonal extension of $\tau_\alpha$. We can further assume that $\tau'_\alpha$ is the smallest diagonal extension of $\tau_\alpha$ that carries $\xi_i$.

There are now two cases. First we assume that $\tau'_\alpha = \tau_\alpha$. (This will always happen if $\ind(\tau) =\ind(\tau_\alpha) = 0$.) Then by Lemma \ref{MMdiag}, $\xi_i$ is not fully carried by $\tau_\alpha$. That is, $\xi_i$ is carried on a proper subtrack $\sigma \subset \tau_\alpha$ and therefore $\ind(\sigma) > \ind(\tau_\alpha)=k$. As $\sigma$ carries a vertex cycle of $\tau_\alpha$ and the set of curves carried by $\sigma$ has diameter one in $\C_k$, we have that $d_k(\tau_\alpha, \xi_i)$ is bounded. Since $d_k(\gamma, \xi_i)$ is bounded, we have that $d_k(\gamma, \tau_\alpha)$ is bounded, completing the proof in this case.

Now assume that $\tau_\alpha$ is a proper subtrack of $\tau'_\alpha$.
By Lemma \ref{MMdiag}, $\xi_i$ is fully carried by $\tau'_\alpha$. Since $\xi_i$ traverses every branch in $\tau'_\alpha \smallsetminus \tau_\alpha$ (by the minimality assumption) we have that $\xi_i$ doesn't traverse every branch of $\tau_\alpha$. In particular, if $\sigma$ is the smallest subtrack of $\tau'_\alpha$ that carries $\xi_i$, then $\sigma$ is not a subtrack of $\tau_\alpha$ nor is $\tau_\alpha$ a subtrack of $\sigma$.

Finally apply Corollary \ref{subtrack
  distance} below to the tracks $\tau_\alpha$ and $\sigma$. As the tracks are not nested, there is a
subtrack $\sigma' \subset \tau'_\alpha$ of index $ \ge k+1$ and a curve
$\gamma'$ carried by $\sigma'$ such that the geodesic $\gamma=\xi_0,
\xi_1, \dots, \xi_i$ passes uniformly close to $\gamma'$. That is,
there is a $\xi_j$ such that $d(\gamma',\xi_j)$ (which is $\ge d_k(\gamma',
\xi_j)$) is bounded. 
Then
$$d_k(\gamma, \tau_\alpha) \le d_k(\gamma, \xi_j) + d_k(\xi_j, \gamma') + d_k(\gamma', \sigma') + d_k(\sigma', \tau_\alpha)$$
is bounded since all the terms on the right are bounded (the third
summand is bounded since the set of curves carried by $\sigma$ has diameter one in $\C_k$). This completes the proof in the general case modulo the missing corollary and lemma.
\end{proof}

We now state and prove the missing corollary and lemma.
Corollary \ref{subtrack distance} follows from a distance estimate from \cite{BB} which we state here:
\begin{prop}[{\cite[Corollary 3.24]{BB}}\label{3.24}]
  For every $C>0$ there exists $C'= C'(\Sigma, C)>0$ such that the following holds.
  Let $\tau$ be a filling train track on $\Sigma$ and let $\sigma_1$ and $\sigma_2$ be subtracks. If $\alpha \in \S(\sigma_1)$ and $\beta\in \S(\sigma_2)$ with $d(\alpha, \beta) \le C$ then either
  \begin{enumerate}[(1)]
    \item $d(\alpha, \tau), d(\beta,\tau) \le C'$ or
    \item there exists $\gamma \in \S(\sigma)$, where $\sigma = \sigma_1 \cap \sigma_2$, such that $d(\alpha, \gamma), d(\beta, \gamma) \le C'$.
  \end{enumerate}
\end{prop}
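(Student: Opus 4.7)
The plan is to analyze the coarse intersection of the two quasi-convex sets $\S(\sigma_1)$ and $\S(\sigma_2)$ inside the hyperbolic graph $\C(\Sigma)$. By Theorem \ref{quasigeodesics}, each $\S(\sigma_i)$ is uniformly $E$-quasi-convex in $\C$, and $V(\sigma_i)$ is uniformly dense in $\S(\sigma_i)$. The key structural fact is that $\S(\sigma) \subset \S(\sigma_1) \cap \S(\sigma_2)$, since a curve carried by the common subtrack $\sigma = \sigma_1 \cap \sigma_2$ is automatically carried by both $\sigma_1$ and $\sigma_2$. So the proposition is asking for a coarse converse: a curve carried by $\sigma_1$ and a curve carried by $\sigma_2$ that are within $C$ of each other in $\C$ either both lie close to $V(\tau)$ (the ``outer'' corner of the two subtracks) or both lie close to $\S(\sigma)$ (the ``inner'' intersection).

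First I would fix a geodesic $[\alpha,\beta]$ in $\C$ of length at most $C$. By quasi-convexity and thinness of triangles, each vertex of $[\alpha,\beta]$ lies within a uniformly bounded distance of both $\S(\sigma_1)$ and $\S(\sigma_2)$. Applying this to the midpoint $m$, I obtain curves $\alpha_0\in\S(\sigma_1)$ and $\beta_0\in\S(\sigma_2)$ with $d(\alpha_0,\beta_0)$ uniformly bounded. After this reduction, the proposition follows from the same statement applied to $\alpha_0,\beta_0$ in place of $\alpha,\beta$ (absorbing the midpoint error into $C'$), so it suffices to handle the case where $\alpha$ and $\beta$ themselves are already at uniformly bounded distance in $\C$.

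Next I would split into cases according to whether $\alpha,\beta$ can be ``amalgamated'' into a common carrying picture. Case A: there is a curve $\gamma\in\S(\sigma)$ of uniformly bounded distance from $\alpha$ (and therefore from $\beta$). Then case (2) of the proposition holds. Case B: no such $\gamma$ exists. In this case I want to conclude that both $\alpha$ and $\beta$ are uniformly close to $V(\tau)$, yielding case (1). The intuition is that if $\alpha$ is carried by $\sigma_1$ and $\beta$ by $\sigma_2$ but they cannot be related through a common curve on the intersection subtrack, then $\alpha$ must essentially ``use'' the branches of $\tau$ in $\sigma_1 \smallsetminus \sigma_2$ while $\beta$ essentially uses $\sigma_2\smallsetminus\sigma_1$; the fact that they are nevertheless close in $\C$ forces both to behave like vertex cycles of the full track $\tau$.

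The main obstacle is rigorously executing Case B. I expect the proof to go by induction on the number of branches of $\tau$ not contained in $\sigma$. The inductive step would split $\tau$ on a large branch towards $\alpha$ (or $\beta$), track how $\sigma_1$, $\sigma_2$, and $\sigma$ transform under the split, and either reduce the branch count in $\tau\smallsetminus\sigma$ to invoke the inductive hypothesis, or show directly that $\alpha$ is within uniform $\C$-distance of a vertex cycle of $\tau$. Keeping the hyperbolicity constants and the quasi-convexity constants of $\S(\sigma_1),\S(\sigma_2),\S(\sigma)$ uniformly bounded through these splits, and ensuring that recurrence of the relevant subtracks is preserved (so that Theorem \ref{quasigeodesics} continues to apply), will be the delicate part of the argument.
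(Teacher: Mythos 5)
There is no in-paper proof to compare against here: the paper imports this statement verbatim from \cite{BB} (Corollary 3.24 there), and its appendix only proves refinements of it (Propositions \ref{3.24b}, \ref{nested}, \ref{general}, \ref{curve version}) taking the cited result as a black box. Judged on its own, your proposal has a genuine gap: it never gets past the hypotheses. The ``midpoint reduction'' is vacuous, since $d(\alpha,\beta)\le C$ already says $\alpha$ and $\beta$ are at uniformly bounded distance, so passing to $\alpha_0,\beta_0$ changes nothing. The split into Case A and Case B is just a restatement of the dichotomy to be proved, and Case B---which is the entire content of the proposition---is conceded to be ``the main obstacle'' and replaced by the hope that an induction on the branches of $\tau\smallsetminus\sigma$ will work. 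You offer no mechanism for the key implication: why the absence of a curve of $\S(\sigma_1)\cap\S(\sigma_2)$ near $\alpha$ forces both $\alpha$ and $\beta$ to lie boundedly close to $V(\tau)$. This cannot come from hyperbolicity and quasi-convexity formalities alone: for arbitrary quasi-convex sets $A_1,A_2$, closeness of $\alpha\in A_1$ and $\beta\in A_2$ only places them in the coarse intersection $I_C(A_1,A_2)$, and the substance of the proposition is precisely the identification of that coarse intersection for $\S(\sigma_1),\S(\sigma_2)$ with a bounded neighborhood of $\S(\sigma_1\cap\sigma_2)\cup V(\tau)$ --- a train-track statement whose proof in \cite{BB} analyzes how a curve close to both carried sets interacts with the branches of $\tau$.

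Two further problems. Your claim that $V(\sigma_i)$ is ``uniformly dense'' in $\S(\sigma_i)$ is false: $V(\sigma_i)$ is a finite set of uniformly bounded diameter in $\C$, while $\S(\sigma_i)$ typically has infinite diameter; Theorem \ref{quasigeodesics} gives quasi-convexity of $\S(\sigma_i)$, not density of its vertex cycles. And the inductive scheme is not actually set up: ``splitting $\tau$ towards $\alpha$'' is not well defined when $\alpha$ is carried only on the proper subtrack $\sigma_1$ (and may not be fully carried even there); you do not say how $\sigma_1$, $\sigma_2$ and $\sigma_1\cap\sigma_2$ transform under such splits, why the number of branches of $\tau\smallsetminus\sigma$ decreases, or---crucially---where the hypothesis $d(\alpha,\beta)\le C$ enters the inductive step (controlling the distance from $\beta$ to the evolving split tracks is exactly the hard part, as the appendix's Lemmas \ref{nested_induction} and \ref{general_induction} illustrate in the refined setting). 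As written, the proposal records the easy inclusion $\S(\sigma)\subset\S(\sigma_1)\cap\S(\sigma_2)$ and the statement's structure, but supplies no argument for the hard direction.
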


We will need a more refined version of this proposition later.

\begin{cor}\label{subtrack distance}
 For every $C$ there exists $C'= C'(\Sigma, C)$ such that the
 following holds.  Let $\tau$ be a recurrent train track on $\Sigma$ and let
 $\sigma_1$ and $\sigma_2$ be recurrent subtracks with
 $\sigma_1\not\subseteq\sigma_2$ and $\sigma_2\not\subseteq\sigma_1$. If
 $\alpha \in \S(\sigma_1)$ and $\beta\in \S(\sigma_2)$ with $d(\alpha, \beta) \le C$ then every geodesic between $\alpha$ and $\beta$ contains $\gamma'$ such that
 either
 \begin{enumerate}[(1)]
 \item $d(\gamma', \tau) \le C'$ or
 \item there exists
 a subtrack $\sigma \subset \tau$ with $\ind(\sigma) > \max\{\ind(\sigma_1), \ind(\sigma_2)\}$
and a curve $\gamma \in
 \S(\sigma)$ such that $d(\gamma, \gamma') \le C'$.
\end{enumerate}
\end{cor}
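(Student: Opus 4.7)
The plan is to reduce to Proposition~\ref{3.24}, splitting on whether $\tau$ fills $\Sigma$, and then promote a statement about the endpoints to a statement about an arbitrary vertex on a geodesic using the trivial bound that every vertex $\gamma'$ of a geodesic $[\alpha,\beta]$ satisfies $d(\gamma',\alpha)\leq d(\alpha,\beta)\leq C$. Neither of these two steps is hard in itself; most of the content is already in Proposition~\ref{3.24}.

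\emph{The non-filling case.} Suppose $\tau$ does not fill $\Sigma$. Some complementary component of $\tau$ is then not a polygon with at most one puncture, and by our standing hypothesis on complementary components of a train track, it must contain an essential simple closed curve $\gamma_0$ disjoint from $\tau$. Any curve in $\S(\tau)$ has a representative in a small neighborhood of $\tau$, hence is disjoint from $\gamma_0$, so every element of $\S(\tau)$ is within curve-graph distance $1$ of $\gamma_0$. Since $V(\tau)\subset\S(\tau)$ and $\alpha\in\S(\sigma_1)\subset\S(\tau)$, this gives $d(\alpha,\tau)\leq 2$. For any vertex $\gamma'$ of the geodesic between $\alpha$ and $\beta$, the triangle inequality then yields $d(\gamma',\tau)\leq d(\gamma',\alpha)+d(\alpha,\tau)\leq C+2$, giving alternative (1) of the corollary.

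\emph{The filling case.} Apply Proposition~\ref{3.24} to $\tau,\sigma_1,\sigma_2$ and let $C_0'=C'(\Sigma,C)$ be the constant it produces. In alternative (1) of that proposition we have $d(\alpha,\tau)\leq C_0'$, and any vertex $\gamma'$ on the geodesic satisfies $d(\gamma',\tau)\leq d(\gamma',\alpha)+d(\alpha,\tau)\leq C+C_0'$, which gives alternative (1) of the corollary. In alternative (2) there is $\gamma\in\S(\sigma)$, where $\sigma=\sigma_1\cap\sigma_2$, with $d(\alpha,\gamma)\leq C_0'$; then any vertex $\gamma'$ of the geodesic satisfies $d(\gamma',\gamma)\leq d(\gamma',\alpha)+d(\alpha,\gamma)\leq C+C_0'$. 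The required index inequality comes for free from the non-nesting hypothesis: since $\sigma_1\not\subseteq\sigma_2$ and $\sigma_2\not\subseteq\sigma_1$, the track $\sigma=\sigma_1\cap\sigma_2$ is a proper subtrack of each recurrent $\sigma_i$, and by the fact recalled in Section~\ref{sec:2} that proper subtracks of a recurrent track have strictly larger index, $\ind(\sigma)>\ind(\sigma_i)$ for $i=1,2$, so $\ind(\sigma)>\max\{\ind(\sigma_1),\ind(\sigma_2)\}$.

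Taking $C'=C+\max\{2,C_0'\}$ works in both cases. The only conceptual obstacle is already absorbed into Proposition~\ref{3.24}; the work in this corollary amounts to a separate (easy) treatment of non-filling $\tau$, the trivial observation that a geodesic of length at most $C$ stays within $C$ of its endpoints, and the observation that the strengthened index bound is automatic under the non-nesting hypothesis.
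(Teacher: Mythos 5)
Your reduction has two problems: a gap in the case division, and a more structural issue about what the corollary is actually for.

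The case division is not exhaustive. In this paper ``filling'' means that the track carries a \emph{minimal filling lamination}, and Section \ref{sec:2} explicitly warns that having all complementary regions be polygons with at most one puncture is necessary but \emph{not} sufficient for this. So ``$\tau$ does not fill'' does not give you a complementary component containing an essential curve: a recurrent track can have only polygonal complementary regions and still fail to be filling. For such a track your first case produces no disjoint curve $\gamma_0$, and your second case invokes Proposition \ref{3.24} outside its stated hypothesis (it is stated for filling tracks), so this middle case falls through. It needs a separate argument (for instance, showing that $\S(\tau)$ has uniformly bounded diameter in $\C$ whenever $\tau$ carries no filling measured lamination), which you do not supply. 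To be fair, the paper's own proof applies Proposition \ref{3.24} to a merely recurrent $\tau$ without comment; but since you introduced the dichotomy precisely to address the hypothesis, the incorrect inference ``non-filling $\Rightarrow$ non-polygonal complementary region'' is a genuine gap in your write-up.

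More importantly, locating $\gamma'$ within $C$ of the endpoint $\alpha$ trivializes the statement: your constant $C'=C+\max\{2,C_0'\}$ blows up with $C$, and the conclusion says no more than that the endpoint $\alpha$ itself satisfies (1) or (2). The paper's proof is genuinely different and yields $C'$ independent of $C$ (the hypothesis $d(\alpha,\beta)\le C$ is never used): since $M(\sigma_i)$ is a face of $M(\tau)$, both $\S(\sigma_1)$ and $\S(\sigma_2)$ contain vertex cycles of $\tau$ and hence come within the uniformly bounded diameter of $V(\tau)$ of each other; combining this with $\delta$-hyperbolicity of $\C$ and the $E$-quasi-convexity of $\S(\sigma_1),\S(\sigma_2)$ from Theorem \ref{quasigeodesics}, \emph{every} geodesic $[\alpha,\beta]$ contains a point $\gamma'$ within a uniform distance $D'$ of curves $\alpha'\in\S(\sigma_1)$ and $\beta'\in\S(\sigma_2)$, and Proposition \ref{3.24} is then applied to $\alpha',\beta'$, whose distance is at most the uniform constant $2D'$. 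This uniformity is exactly what the application in the proof of Theorem \ref{qtree} requires: there the corollary is applied to $\gamma=\xi_0$ and $\xi_i$ lying on a geodesic in $\C$ whose length is not uniformly bounded (only the $\C_k$-distance $d_k(\gamma,\delta)\le 2Q+1$ is controlled), so a bound of the form $C+C_0'$ would be useless. Your argument does verify the statement under its weak literal reading, but it misses the content -- $\gamma'$ found near the coarse intersection with constants depending only on $\Sigma$ -- that makes the corollary usable.
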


\begin{proof}
If $(X,d_X)$ is a $\delta$-hyperbolic metric space and $A, B\subset X$ are $K$-quasi-convex sets with $\inf\{d(x,y):x\in A, y\in B\} \le D$, then for any geodesic in $X$ with one endpoint in $A$ and the other in $B$, there is a point $c$ on the geodesic with $d_X(c, A)$ and $c_X(c, B)$ both bounded by $D' = D'(\delta, K, D)$.
That is, there are points $a \in A$ and $b \in B$ with $d_X(a,c)$ and $d_X(b, c)$ both $\le D'$.

We now apply this to our situation. The sets $\S(\sigma_1)$ and $\S(\sigma_2)$  both intersect $V(\tau)$ so $\inf\{d(x,y): x\in \S(\sigma_1), y\in \S(\sigma_2)\} \le D$ where $D$ is the (uniform) bound on the diameter of $V(\tau)$.
Therefore any geodesic in $\C$ between $\alpha$ and $\beta$ contains a curve $\gamma'$ such that  there exist $\alpha' \in \S(\sigma_1)$ and $\beta' \in \S(\sigma_2)$ with $d(\gamma', \alpha'), d(\gamma', \beta')\le D'$. Then $d(\alpha', \beta') \le 2D'$. Let $D''$ be the $C'$ constant from Proposition \ref{3.24} (for $C = 2D'$).
 Then either $d(\alpha', \tau), d(\beta',\tau) \le D''$
or there is a track $\sigma$ that is a (necessarily proper) subtrack of both $\sigma_1$ and $\sigma_2$ and a curve $\gamma \in \S(\sigma)$ such that $d(\gamma, \alpha'), d(\gamma, \beta') \le D''$. Then $C' = D' + D''$ is our desired constant.
\end{proof}

\begin{lemma}\label{diagonal extension}
  Suppose $\tau\ss\sigma$ and both tracks are recurrent.
  If $\tau'$ is a diagonal
  extension of $\tau$ then there is a diagonal extension $\sigma'$ of
  $\sigma$ with $\tau'\ss\sigma'$.
  Conversely, if $\sigma$ is a subtrack of track $\sigma'$ then $\tau$ is a subtrack of a track $\tau'$ with $\tau'\ss\sigma'$
  Finally, if $\alpha$ is a curve carried by both $\tau'$
  and $\sigma$ then it is carried by $\tau$.
\end{lemma}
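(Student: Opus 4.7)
The plan is to prove each of the three assertions by induction on the length of the splitting sequence $\tau \ss \sigma$, with the base case being immediate. In the inductive step we factor the sequence as $\tau \to \tau^* \to \cdots \to \sigma$, where $\tau \to \tau^*$ is a single split at a large branch $b$ of $\tau^*$.

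For the first assertion, given a diagonal extension $\tau'$ of $\tau$, the task in the single-split step is to construct a diagonal extension $(\tau^*)'$ of $\tau^*$ such that $\tau' \ss (\tau^*)'$. Since a split is a local modification in a neighborhood of $b$ and preserves the set of switches (up to the central-split case), the diagonal branches of $\tau' \setminus \tau$ can be transported to $\tau^*$: those with endpoints away from $b$ are unchanged, and those with an endpoint at a switch of $b$ are straightened rel endpoints to lie in an appropriate complementary region of $\tau^*$. The key observation is that $b$ remains a large branch of $(\tau^*)'$, since diagonal branches attach to switches on the cusp (``small'') side and do not introduce new tangent directions that could destroy largeness. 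Performing the same split of $(\tau^*)'$ at $b$ then recovers $\tau'$, so that $\tau' \ss (\tau^*)'$, and the inductive hypothesis applied to $\tau^* \ss \sigma$ with extension $(\tau^*)'$ produces the desired $\sigma'$. The second assertion is handled symmetrically: given $\sigma \subset \sigma'$, apply the inductive hypothesis to $\tau^* \ss \sigma$ with the same extension to obtain $(\tau^*)''$ with $\tau^* \subset (\tau^*)''$ and $(\tau^*)'' \ss \sigma'$, then split $(\tau^*)''$ at $b$ (using the same largeness observation) to produce $\tau' \supset \tau$ with $\tau' \ss (\tau^*)'' \ss \sigma'$.

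The third assertion follows from the first. Apply it to obtain a diagonal extension $\sigma'$ of $\sigma$ with $\tau' \ss \sigma'$. The unique transverse measure on the recurrent track $\tau'$ representing $\alpha$ pushes forward under the carrying map to a transverse measure on $\sigma'$ representing $\alpha$; on the other hand, $\alpha$ being carried by $\sigma \subset \sigma'$ gives a measure on $\sigma'$ that vanishes on all branches of $\sigma' \setminus \sigma$. By uniqueness of the measure representing $\alpha$ on the recurrent track $\sigma'$, these two measures coincide, so in particular the pushed-forward measure vanishes on $\sigma' \setminus \sigma$. Since the construction in the first assertion sends each diagonal branch of $\tau' \setminus \tau$ bijectively to a diagonal branch of $\sigma' \setminus \sigma$, the weight on each diagonal branch of $\tau' \setminus \tau$ must also be zero, and hence $\alpha$ is already carried by $\tau$.

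I expect the main obstacle to be the careful justification that adding diagonal branches preserves the largeness of existing branches of the underlying track, which underlies the inductive step in both the first and second assertions. The central-split case, where the switch combinatorics change non-trivially, will require separate bookkeeping. An additional subtlety is that diagonal extensions are a priori non-generic, so one must check that any genericity resolution (occurring inside complementary regions of the underlying track) does not interfere with the splits being performed.
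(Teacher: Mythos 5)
The serious problem is in your treatment of the second (``converse'') assertion. There $\sigma'$ is an arbitrary track containing $\sigma$ as a subtrack, not a diagonal extension, and correspondingly the track $(\tau^*)''\supset\tau^*$ produced by your inductive hypothesis is only a subtrack extension. Branches of $(\tau^*)''\smallsetminus\tau^*$ need not end in cusps at switches of $\tau^*$: they may attach at interior points of the large branch $b$ of $\tau^*$ that is being split, subdividing it into several branches of $(\tau^*)''$. Then ``performing the same split at $b$'' is not an available single split of $(\tau^*)''$, and the largeness observation you invoke (valid only for diagonal branches entering cusps at switches) gives no help. This is exactly the difficulty the paper's proof confronts: it records the attachments along $b$ by the complexity $c(b;\sigma')$, shows that while $c>0$ one can perform an auxiliary split of the extended track along a large branch inside $b$, chosen so that the newly created small branch lies in $\sigma$ (which also preserves recurrence), strictly decreasing $c$, and only when $c=0$, so that $b$ survives as a single large branch of the extension, does it perform the split corresponding to $b$. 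Your proposal has no substitute for this step, so the inductive step of the second assertion does not go through; you also never address recurrence of the intermediate tracks, which matters because the paper's definition of a splitting sequence (hence of $\ss$) constrains when central splits are allowed.

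The first assertion is also too quick at its key step. Diagonal branches of $\tau'\smallsetminus\tau$ may end in the two cusps adjacent to the small branch of $\tau$ created by the split $\tau\to\tau^*$ (in a generic representative they are attached along that small branch). After transporting them to the corresponding cusps of $\tau^*$ and splitting $(\tau^*)'$ once at $b$, one recovers not the given $\tau'$ but a different generic resolution of the diagonal extension, with those diagonals attached to neighboring branches of $\tau$ rather than along the small branch; reaching the given $\tau'$ requires additional elementary moves. This is precisely why the paper's construction (run in the opposite direction) performs $i+j+i\cdot j$ folds to slide the diagonals off the small branch before folding it, and reading that fold sequence backwards is what produces the splitting sequence from $\sigma'$ to the given $\tau'$. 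Your ``straightened rel endpoints'' also cannot be taken literally, since the endpoints in question are switches of $\tau$ created by the split and are not switches of $\tau^*$, so one must say where they go; the paper's southwest/northeast prescription is exactly this bookkeeping. Your third assertion, via uniqueness of the weight vector representing $\alpha$ on $\sigma'$, is essentially the paper's polyhedral argument that $P(\sigma)\cap P(\tau')=P(\tau)$ and is fine in spirit, but it depends on a corrected version of the first construction (and on the fact that each diagonal of $\tau'$ maps over the corresponding diagonal of $\sigma'$, which should be checked from that construction).
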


\begin{proof}
  For the first part, it suffices to suppose $\tau$ was obtained from
  $\sigma$ by one split. Suppose the split is a left split. Let $b$
  be the small branch of $\tau$ created by the split. We define $\sigma'$ as
  follows. For any branches of $\tau' \smallsetminus \tau$ with endpoints
  which lie on $b$ and are associated to the southwest corner of $b$,
  we isotope them to have endpoints on the branch lying southwest from
  $b$. Similarly, for any branches of $\tau' \smallsetminus \tau$ with
  endpoints which lie on $b$ and are associated to the northeast corner
  of $b$, we isotope them to have endpoints on the branch northeast to
  $b$. If there are $i$ such branches for the southwest corner and $j$
  such branches for the northeast corner, then these isotopies
  correspond to $i + j + i\cdot j$ folds of $\tau'$.
  Call the resulting track $\tau''$. Finally, $b$ is a branch of
  $\tau''$ and we fold it to form $\sigma'$. The track $\sigma'$
  is a diagonal extension of $\sigma$. See Figure \ref{splittingdiagonal}.
\begin{figure}[h]

  \centering

  \def\svgwidth{0.9\textwidth}
  \includegraphics[width=\textwidth]{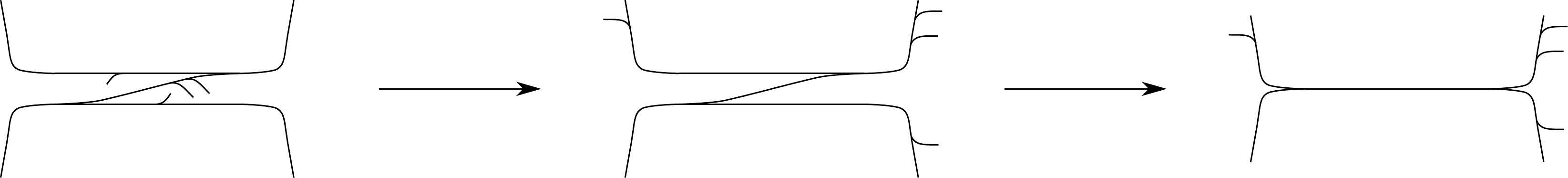}

  \caption{Diagonal extensions push forward under folds. The track on the left is $\tau$ and the track on the right is $\sigma$. The short branches are branches of the diagonal extensions.}
  \label{splittingdiagonal}
\end{figure}

  For the second statement, it again suffices to consider the case when $\tau\ss\sigma$ is a single left split on large branch $b$ in $\sigma$. The branch $b$ of $\sigma$ will be a union of branches of $\sigma'$ with at least one large branch $b'$. We set $c(b;\sigma')$ to be the number of train paths in $\sigma'$ that are the union of a small half branch in $\sigma'\smallsetminus\sigma$ and branches in $b$. We observe that $c(b;\sigma') =0$ if and only if $b= b'$ is a single branch in $\sigma'$. In this case we set $\tau' \ss \sigma'$ to be the left split on $b'$ and observe that $\tau$ is a subtrack of $\tau'$.
  
If $c(b; \sigma') > 0$ then there is a split $\sigma''\ss\sigma'$ such that $\sigma$ is a subtrack of $\sigma''$ and the small branch of $\sigma''$ that is created by the split is contained in $\sigma$. This implies that $\sigma''$ is recurrent. Furthermore we have that $c(b, \sigma'') < c(b, \sigma')$. We repeat this until the complexity is zero and then apply the first case.

  For the last part,
  $P(\tau)$ is a face of $P(\tau')$ and these are codimension 0 subsets of
  $P(\sigma)\subset P(\sigma')$, also a face inclusion. So
  $P(\sigma)\cap P(\tau')=P(\tau)$.
\end{proof}

\section{Progress in $\C_k$}
\label{sec:progress}

As we have seen, vertex cycles of splitting sequences of train tracks form
reparametrized quasi-geodesics in $\C_k$. However, since these
are reparametrized quasi-geodesics, they may not make linear progress in $\C_k$
or could even be bounded.
In this section we describe a criterion that guarantees that a splitting
sequence of train tracks of index $k$ makes progress in $\C_k$,
analogously to the Masur-Minsky Nesting Lemma. See Theorem
\ref{progress2}. 

\begin{definition}
  We write $\tau\twoheadrightarrow\tau'$ if $\tau\ss\tau'$ and
  further, for any branch $b$ of $\tau'$,
  the union of those branches of
  $\tau$ that don't traverse $b$ carries no curves.
\end{definition}

Equivalently, every curve carried by $\tau$ is fully carried by $\tau'$, or $P(\tau)$ is contained in the interior of $P(\tau')$.

\newcommand{\tac}{{(\textasteriskcentered)}}
\begin{lemma}\label{allsplit}
If $\tau\twoheadrightarrow\tau'$ then every large branch of $\tau'$ is split in any splitting sequence between $\tau$ and $\tau'$
\end{lemma}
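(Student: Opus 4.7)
The plan is to argue by contradiction: suppose $b$ is a large branch of $\tau'=\tau_n$ that is never split in some splitting sequence $\tau=\tau_0\to\tau_1\to\cdots\to\tau_n=\tau'$, and derive a violation of $\tau\twoheadrightarrow\tau'$.

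First I will show that $b$ persists as a large branch of every $\tau_i$, and in particular of $\tau$. The combinatorial input is that in a trivalent train track two large branches cannot share a switch: at each trivalent switch exactly one of the three incident half-branches is on the single side (and hence large). Every split in the sequence is at some large branch of $\tau_{i+1}$ other than $b$, hence at a branch non-adjacent to $b$; the two switches of $b$ and their incident half-branches are thus left unchanged in $\tau_i$. A further check shows that any branch rendered large by an earlier split is adjacent to the split branch, and if it is also adjacent to $b$ it retains a small half-branch at the switch it shares with $b$ and so fails to be large; hence every subsequent split in the sequence is again at a branch non-adjacent to $b$. The at-most-one central split in the sequence is handled identically.

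Next I will analyze the carrying map $\tau\to\tau'$. A split at a branch other than $b$ can extend the images of branches through the split branch, but never through $b$ itself; iterating across the sequence, the branch of $\tau$ canonically identified with $b$ is the unique branch of $\tau$ whose image in $\tau'$ traverses $b$. Therefore the subgraph of $\tau$ consisting of branches whose image does not traverse $b$ is precisely $\tau\smallsetminus\{b\}$, and the hypothesis $\tau\twoheadrightarrow\tau'$, applied at the branch $b$, reduces to the assertion that $\tau\smallsetminus\{b\}$ carries no curves.

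To close the argument I will produce a curve carried by $\tau$ that does not traverse $b$, contradicting the previous paragraph. Since $b$ is a large branch of $\tau$, any such curve must also avoid both switches of $b$ and all four incident branches. My approach is to split $\tau$ further at $b$: at least one of the left, right, and central splits of $\tau$ at $b$ is recurrent, and in any such split track new smooth train paths appear at each switch of $b$ that bypass $b$ entirely. A vertex cycle of the split track that follows these new paths gives a measure in $M(\tau)$ with weight zero on $b$, i.e.\ a carried curve of $\tau$ avoiding $b$, as required. Verifying the existence of such a vertex cycle is the main technical obstacle of the proof.
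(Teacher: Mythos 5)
Your steps (1) and (2) are sound and correspond to what the paper uses implicitly: since two large branches of a generic track cannot share a switch, a large branch $b'$ of $\tau'$ that is never split pulls back to a large branch $b$ of $\tau$ with the same four adjacent half-branches, and $b$ is the only branch of $\tau$ whose image traverses $b'$. The argument breaks at step (3). You have correctly reduced the hypothesis, applied at the single branch $b'$, to the assertion that the union of the branches of $\tau$ other than $b$ carries no curves, and you propose to contradict this by exhibiting a carried curve of $\tau$ with zero weight on $b$. No such curve need exist. Since $b$ is large, the switch condition at either end of $b$ says that $\mu(b)$ equals the sum of the weights of the two adjacent branches there, so zero weight on $b$ forces zero weight on all four adjacent branches. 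The paper isolates exactly this obstruction: its condition on $b$ (the existence of a carried curve missing one of the four adjacent half-branches) can fail, and Lemmas \ref{bad_branch} and \ref{bad_branch_stays} show that this bad case genuinely occurs and persists along the splitting sequence. Your construction does not escape it: a curve carried by any split of $\tau$ is already carried by $\tau$, so splitting at $b$ cannot manufacture a new carried curve; and concretely, a vertex cycle of the left or right split that avoids the new small branch still maps, under the fold, to a train path traversing $b$, because the branches adjacent to the new small branch are carried over $b$ (its induced weight on $b$ is $\mu(b_2)+\mu(b_3)$, which is positive unless the curve also avoids those branches).

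In the bad case the lemma is still true, but the contradiction to $\tau\twoheadrightarrow\tau'$ comes from a different part of the definition. The paper shows that when every carried curve of $\tau$ traverses all four half-branches adjacent to $b$, every vertex cycle of $\tau$ is a barbell through $b$ whose image in $\tau'$ is again a vertex cycle, so $P(\tau)$ meets the boundary of $P(\tau')$ rather than its interior; equivalently, that curve is not \emph{fully} carried by $\tau'$, the missed branches lying away from $b'$. Note also that in the complementary case the paper only needs the weaker property that some carried curve misses one of the four half-branches adjacent to $b$, and it then contradicts $\twoheadrightarrow$ at a branch \emph{adjacent} to $b'$ rather than at $b'$ itself; even there, your stronger requirement of a curve with zero weight on $b$ need not be satisfiable. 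The missing ingredients are thus the case division on this property and the barbell/vertex-cycle argument for the case in which it fails.
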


\begin{proof}
Let $b'$ a large branch of $\tau'$ that is not split in some splitting sequence $\tau\ss\tau'$. 
Then the pre-image of $b'$ and its adjacent half branches in $\tau$ is a large branch $b$ and its adjacent half branches. We will show that $\tau\not\twoheadrightarrow \tau'$.
We first assume that:
\begin{itemize}
\item[\tac] there is a curve $\alpha \in \S(\tau)$ that doesn't traverse all four half branches adjacent to $b$.
\end{itemize}
 Therefore the image of $\alpha$ in $\tau'$ is disjoint from one of the half branches adjacent to $b'$ and $\tau\not\twoheadrightarrow\tau'$.

The proof when $b$ does not satisfy $\tac$ is more involved and we prove it in the next sequence of lemmas.

\begin{lemma}\label{bad_branch}
Let $\tau$ be a recurrent train track and $b$ a large branch that doesn't satisfy $\tac$. 
\begin{enumerate}
\item Let $t$ be a train path that starts and ends at half branches adjacent to $b$ but is disjoint from $b$. Then
\begin{enumerate}
\item the two half branches are distinct but on the same side of $b$;
\item  $t$ is embedded in $\tau$.
\end{enumerate}

\item The track $\tau$ is non-orientable and the only recurrent split along $b$ is the central split which gives an orientable track.

\item If $\tau'\ss\tau$, but $b$ is not split in the splitting sequence, then $\tau'\not\twoheadrightarrow\tau$.
\end{enumerate}
\end{lemma}

\begin{proof}
Let $h_0^+, h_0^-, h_1^+, h_1^-$ be the half branches adjacent to $b$
with the $h_0^\pm$ adjacent to one half branch of $b$ and the
$h_1^\pm$ adjacent to the other half branch of $b$. Then $t$
starts and end at the $h$'s but does not intersect them anywhere
else. If $t$ starts at one of $h_0^\pm$ but ends at one of the
$h_1^\pm$ then we can add $b$ to $t$ to form a closed train path that
doesn't contain all of the $h$'s, contradicting that $b$ doesn't have
$\tac$. If $t$ starts and ends at, say, $h_0^+$ then we can join it
along $b$ to a train path that starts and ends in the $h^\pm_1$ again forming a closed train path not containing all of the $h$'s. This gives (1a)

If $t$ was not embedded than either it would contain a closed train path that didn't contain any of the $h$'s, contradicting that $b$ doesn't have $\tac$, or we could form a train path starting and ending at the the same half branch in the $h$'s, contradicting (1a).

Property (2) follows from (1a).

The pre-image of $b$ in $\tau'$ is a large branch $b'$ that doesn't have $\tac$. In particular, every vertex cycle $\gamma'$ of $\tau'$ is a barbell that is the union of $b'$ and train paths $t'_0$ and $t'_1$ that are disjoint from $b'$. The image of $t'_i$ in $\tau$ is a train path $\t_i$ that is disjoint from $b$ so by (1b) the $t_i$ are embedded. The image $\gamma$ of $\gamma'$ in $\tau$ is the union of $t_0$, $t_1$ and $b$ so $\gamma$ is also a vertex cycle and therefore $\tau'\not\twoheadrightarrow\tau$
\end{proof}

\begin{lemma}\label{bad_branch_stays}
Let $\tau$ be a train track that has a large branch $b$ without property $\tac$. If $\tau'$ is another track and $\tau\ss \tau'$ then $\tau'$ has a large branch $b'$ without property $\tac$ and the pre-image of $b'$ in $\tau$ is $b$.
\end{lemma}

\begin{proof}
It suffices to check this when $\tau\ss\tau'$ is a single split. Note that under a single split the image of a large branch in $\tau$ is a single branch in $\tau'$. Let $b'$ be the image of $b$. If $b'$ is not large then it must be adjacent to a large branch where the splitting occurs and after the splitting $b$ is adjacent to a small branch in $\tau$ that contains a small half branch $h$ that is not adjacent to $b$. Then there are two other half branches adjacent to $h$ - a small half branch $h_0$ and a large half branch $h_1$. Let $t$ be a train path, as in (1) of Lemma \ref{bad_branch}, that contains $h_0$. As $t$ must also contain $h$ we have that $t$ crosses $h_1$ at least twice, contradicting (1b) of Lemma \ref{bad_branch}. Therefore $b'$ is a large branch and the splitting $\tau\to\tau'$ must occur on some other large branch of $\tau'$. 
\end{proof}

We now finish the proof of Lemma \ref{allsplit}. We are left with the case when the branch $b$ doesn't satisfy $\tac$. Then by Lemma \ref{bad_branch_stays} the branch $b'$ also doesn't have property $\tac$ and by (3) of Lemma \ref{bad_branch} we have that $\tau\not\twoheadrightarrow\tau'$.
\end{proof}

\begin{lemma}\label{nodouble}
If $\ind(\tau) = \ind(\tau') = k$, and $\tau\ss\tau'$ but $\tau\not\twoheadrightarrow\tau'$, then there are proper subtracks $\sigma \subset \tau$ and $\sigma' \subset \tau'$ with $\sigma \to \sigma'$. In particular, $d_k(\tau,\tau')$ is bounded.
\end{lemma}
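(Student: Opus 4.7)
The plan is to extract proper recurrent subtracks $\sigma \subset \tau$ and $\sigma' \subset \tau'$ from the failure of the double-arrow condition, and then exploit the face relation $M(\sigma) \subset M(\tau)$ to collapse the $\C_k$-distance between $V(\tau)$ and $V(\tau')$ to essentially one edge.

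First I would unpack $\tau \not\twoheadrightarrow \tau'$: by definition this produces a branch $b$ of $\tau'$ together with a curve $\alpha$ carried by $\tau$ whose image in $\tau'$ avoids $b$. Let $\sigma \subset \tau$ and $\sigma' \subset \tau'$ be the minimal subtracks carrying $\alpha$; both are automatically recurrent since $\alpha$ traverses each of their branches. Clearly $\sigma' \subsetneq \tau'$ as $b \notin \sigma'$. To see $\sigma \subsetneq \tau$, I would invoke the fact that the composed carrying map $\tau \to \tau'$ arising from a splitting sequence is surjective on branches: if $\alpha$ traversed every branch of $\tau$, its image in $\tau'$ would have to traverse every branch of $\tau'$, contradicting its avoidance of $b$. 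The restriction of $\tau \to \tau'$ then furnishes the required carrying map $\sigma \to \sigma'$, proving the first assertion.

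For the distance bound, the key input is that whenever $\sigma \subset \tau$ is a recurrent proper subtrack, $M(\sigma)$ is a face of $M(\tau)$, so extreme points of $P(\sigma)$ are extreme points of $P(\tau)$; hence $V(\sigma) \subset V(\tau)$, and similarly $V(\sigma') \subset V(\tau')$. Picking any $\gamma \in V(\sigma)$ and $\gamma' \in V(\sigma')$, the carrying map $\sigma \to \sigma'$ places $\gamma \in \S(\sigma')$, so $\gamma, \gamma' \in \S(\sigma')$. Since $\sigma'$ is a proper recurrent subtrack of $\tau'$, $\ind(\sigma') > \ind(\tau') = k$, so the definition of $\C_k$ directly gives $d_k(\gamma, \gamma') \le 1$.

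Combining this with the uniform bound on the $\C$-diameter of $V(\tau)$ from \cite[Corollary 2.3]{HamenstadtTT} (which descends to $\C_k$ via the $1$-Lipschitz map $\C \to \C_k$), the triangle inequality yields a universal bound $\diam_{\C_k}(V(\tau) \cup V(\tau')) \le 2C_0 + 1$, which is exactly $d_k(\tau, \tau')$. The one point that genuinely requires care is the surjectivity argument forcing $\sigma$ to be a proper subtrack of $\tau$; once that is nailed down, everything else reduces to the face-of-polytope observation $V(\sigma) \subset V(\tau)$ plus triangle-inequality bookkeeping.
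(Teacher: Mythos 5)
Your proof is correct and follows essentially the same route as the paper: extract from $\tau\not\twoheadrightarrow\tau'$ a curve carried by $\tau$ whose image in $\tau'$ misses a branch, take minimal subtracks carrying it, and use $\ind(\sigma')>\ind(\tau')=k$ together with the uniform diameter bound on vertex-cycle sets. The only cosmetic difference is that the paper takes $\alpha$ to be a vertex cycle of $\tau$ (so $\sigma$ is visibly a curve or barbell, hence proper), whereas you deduce properness of $\sigma$ from surjectivity of the carrying map on branches; both are fine.
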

\begin{proof}
Since $\tau\not\twoheadrightarrow\tau'$, there is a vertex cycle $\alpha$ of $\tau$ whose image in $\tau'$ misses a branch $b$. Let $\sigma \subset \tau$ be the smallest subtrack of $\tau$ that carries $\alpha$. (The track $\sigma$ is either an embedded simple closed curve or a barbell.) The image of $\sigma$ in $\tau'$ is a proper subtrack $\sigma'$ as $\alpha$ doesn't traverse $b$. This proves the first statement.

Let $\alpha'$ be a vertex cycle of $\sigma'$ and hence a vertex cycle of $\tau'$. Since both $\alpha$ and $\alpha'$ are carried by $\sigma'$ and $\ind(\sigma') > \ind(\tau') = k$ we have $d_k(\alpha, \alpha') = 1$.
This bounds $d_k(\tau, \tau')$.
\end{proof}

We would like to extend the defining property of $\twoheadrightarrow$
to diagonal extensions. For this, we need long enough compositions.

\begin{lemma}\label{extension carry}
There exists $M = M(\Sigma)>0$ such that the following holds. Suppose
$$\tau_0 \twoheadrightarrow \tau_1 \twoheadrightarrow \cdots \twoheadrightarrow \tau_M$$
and that $\alpha$ is carried by a diagonal extension of $\tau_0$. Then $\alpha$ is fully carried by a diagonal extension of $\tau_M$.
\end{lemma}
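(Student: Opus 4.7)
My plan is to first promote the splitting sequence of base tracks to a splitting sequence of diagonal extensions, and then use the $\twoheadrightarrow$ hypothesis together with a pigeonhole argument to force $\alpha$ to fill a diagonal extension of $\tau_M$.

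More precisely, I will apply the first part of Lemma \ref{diagonal extension} inductively along $\tau_0\ss\tau_1\ss\cdots\ss\tau_M$: starting from the given diagonal extension $\tau_0^+\supset\tau_0$ carrying $\alpha$, produce diagonal extensions $\tau_i^+\supset\tau_i$ with $\tau_0^+\ss\tau_1^+\ss\cdots\ss\tau_M^+$. Since $\alpha$ is carried by $\tau_0^+$, it is carried by every $\tau_i^+$. The task is then to show that for $M$ large enough in terms of $\Sigma$, $\alpha$ is \emph{fully} carried by some diagonal extension of $\tau_M$.

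Let $\sigma_i\subseteq\tau_i^+$ be the smallest subtrack carrying $\alpha$, and let $\tau_i^\rho\subseteq\tau_i$ be the subtrack of $\tau_i$ generated by those branches of $\sigma_i$ that lie in $\tau_i$. Then $\sigma_i$ is a diagonal extension of $\tau_i^\rho$, and $\sigma_i$ fully carries $\alpha$ by construction. If $\tau_M^\rho=\tau_M$ then $\sigma_M$ is the desired diagonal extension of $\tau_M$ fully carrying $\alpha$, so I assume for contradiction that $\tau_i^\rho\subsetneq\tau_i$ for every $i\le M$. The splitting sequence $\tau_0^+\ss\tau_M^+$ should restrict to splitting sequences $\sigma_0\ss\sigma_M$ and $\tau_0^\rho\ss\tau_M^\rho$, producing a sequence of proper subtracks of $\tau_0,\ldots,\tau_M$. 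Since the number of combinatorial types of subtracks of trivalent train tracks on $\Sigma$ is bounded by some $N(\Sigma)$, for $M>N(\Sigma)$ the pigeonhole principle produces $i<j$ with $\tau_i^\rho$ and $\tau_j^\rho$ of the same combinatorial type. Choosing a vertex cycle $\gamma$ of $\tau_i^\rho$, the curve $\gamma$ is carried by $\tau_i$ but, once pushed to $\tau_j$, is fully carried only by the proper subtrack $\tau_j^\rho$, so is not fully carried by $\tau_j$; this contradicts the iterated $\twoheadrightarrow$ hypothesis $\tau_i\twoheadrightarrow\cdots\twoheadrightarrow\tau_j$.

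The main obstacle will be verifying that the splitting sequence $\tau_0^+\ss\tau_M^+$ genuinely restricts to splitting sequences on the subtracks $\sigma_i$ and $\tau_i^\rho$: splits at a large branch of $\tau_i^+$ can interact nontrivially with adjacent diagonal branches, and the ``induced'' moves on a subtrack may be carryings or trivial collapses rather than honest splits. Resolving this, likely by a case analysis on how the diagonal branches of $\tau_i^+\smallsetminus\tau_i$ sit relative to the large branch being split -- in the spirit of the proof of Lemma \ref{diagonal extension} -- is the technical crux. Once that is done, the quantitative bound $M(\Sigma)$ falls out of the pigeonhole step as (a polynomial function of) the number of combinatorial types of subtracks of diagonal extensions of trivalent train tracks on $\Sigma$.
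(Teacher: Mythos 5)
Your opening move---lifting the given splitting sequence to a splitting sequence of diagonal extensions via the first part of Lemma \ref{diagonal extension}---is exactly how the paper begins, but the contradiction step after that has a genuine gap. The set of branches of $\tau_i$ traversed by $\alpha$ (your $\tau_i^\rho$) is in general \emph{not} a subtrack: at every switch where the train path of $\alpha$ leaves $\tau_i$ along a branch of $\tau_i^+\smallsetminus\tau_i$, the traversed $\tau_i$-branches have a dead end, and there is no well-defined ``subtrack generated by'' such a collection. In particular $\tau_i^\rho$ need not carry any closed curve at all, so the vertex cycle $\gamma$ you want to test against $\tau_i\twoheadrightarrow\tau_j$ may not exist. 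This is not a removable technicality; it is the whole difficulty of the lemma: the hypothesis $\twoheadrightarrow$ only gives information about closed curves (equivalently, measured laminations) carried by the base tracks, while the part of $\alpha$ lying in $\tau_i$ is a priori just a union of arcs. Note also that your argument never actually uses the pigeonhole conclusion that $\tau_i^\rho$ and $\tau_j^\rho$ have the same combinatorial type: if the pushforward of a vertex cycle of $\tau_i^\rho$ really did land in the proper subcomplex $\tau_j^\rho$, you would already get a contradiction with $j=i+1$, i.e.\ a ``proof'' with $M=1$---a warning sign, since nothing in the hypotheses forces a curve carried by a diagonal extension of $\tau_0$ to be fully carried by a diagonal extension of $\tau_1$. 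Moreover that pushforward claim is itself unsubstantiated once $\tau_i^\rho$ is replaced by any honest subtrack containing the traversed branches.

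The paper's proof confronts precisely this point. After lifting to diagonal extensions $\tau_i'$ as you do, it decomposes the train path of $\alpha$ in $\tau_i'$ into maximal subpaths lying in $\tau_i$ alternating with diagonal branches, chooses compatible maximal subpaths $\gamma_i$ whose images include into $\gamma_{i+1}$, and shows that the number of branches $\gamma_i$ traverses strictly increases, using Lemma \ref{allsplit} (every large branch of $\tau_{i+1}$ is split in any splitting sequence realizing $\tau_i\twoheadrightarrow\tau_{i+1}$). Since that count is bounded in terms of $\Sigma$, after boundedly many double arrows some $\gamma_{M-1}$ contains a closed train path, i.e.\ an honest closed curve carried by the base track $\tau_{M-1}$; only at that moment can $\tau_{M-1}\twoheadrightarrow\tau_M$ be applied, and it forces $\gamma_M$ to traverse every branch of $\tau_M$, so $\alpha$ is fully carried by the diagonal extension of $\tau_M$ consisting of $\tau_M$ together with the diagonal branches $\alpha$ uses. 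This growth-until-closure argument is the missing ingredient in your outline, and it is also where the bound $M(\Sigma)$ genuinely comes from; a pigeonhole on combinatorial types of subtracks, or a verification that splitting sequences restrict to the $\sigma_i$ (the obstacle you flagged), would not substitute for it.
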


\begin{proof}
Assume that $\alpha$ is carried by a diagonal extension $\tau'_0$ of $\tau_0$. Note that if $\tau'_0 = \tau_0$ then $\alpha$ is fully carried by $\tau_1$ since $\tau_0\twoheadrightarrow\tau_1$. So we assume that $\tau_0$ is a proper subtrack of $\tau'_0$.
 Then by Lemma \ref{diagonal extension} there are diagonal extensions $\tau'_i$ of $\tau_i$ such that
$$\tau'_0\ss \tau'_1 \ss \cdots \ss \tau'_M.$$
The curve $\alpha$ determines a trainpath in each $\tau'_i$ and these
paths can be decomposed into subpaths that alternate between paths in
$\tau_i$ and branches in $\tau'_i \smallsetminus \tau_i$. For each $i$
we can choose a maximal subpath $\gamma_i$ that lies in $\tau_i$ and such that the composition of the train path $\gamma_i$ with the carrying map $\tau'_i \to \tau'_{i+1}$ factors through an inclusion of $\gamma_i$ in $\gamma_{i+1}$. We then observe that the number of branches that $\gamma_i$ traverses increases at each stage, since every large branch of $\tau_{i+1}$ is split in $\tau_i$ by Lemma \ref{allsplit}. Therefore, if $M$ is sufficiently large, the image of $\gamma_{M-1}$ in $\tau_{M-1}$ contains a closed curve. Since $\tau_{M-1}\twoheadrightarrow \tau_M$, $\gamma_M$ traverses every branch of $\tau_M$ so $\alpha$ is fully carried by a diagonal extension of $\tau_M$.
\end{proof}

\begin{thm}\label{progress2}
  There is $p=p(\Sigma)$ such that the following holds.
  Suppose
  $$\tau_0\twoheadrightarrow\tau_1\twoheadrightarrow\cdots\twoheadrightarrow\tau_{n\cdot p}$$
  and all these tracks are recurrent, filling, and have index $k$.
  Assume $\beta$ is carried by a diagonal extension of $\tau_0$ while $\alpha$
  is not carried by any diagonal extension of
  $\tau_{p\cdot n}$. Then $d_k^H(\alpha,\beta)> n$.
\end{thm}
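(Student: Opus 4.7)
The plan is to argue by contradiction. Suppose $d_k^H(\alpha,\beta)\le n$ and fix a chain $\beta=\gamma_0,\gamma_1,\dots,\gamma_n=\alpha$ of vertices of $\C_k^H$ with $\ind(\gamma_i,\gamma_{i+1})>k$ for every $i$. Attach to each curve $\gamma$ the invariant
\[
I(\gamma) \;=\; \min\bigl\{\,j\in\{0,1,\dots,np\} \;:\; \gamma \text{ is carried by a diagonal extension of } \tau_j\,\bigr\},
\]
setting $I(\gamma)=+\infty$ if the set is empty. The first assertion of Lemma \ref{diagonal extension}, applied across each arrow $\tau_j\twoheadrightarrow\tau_{j+1}$, shows that carrying by a diagonal extension is monotone along the tower, so the defining set is up-closed and $I$ is well defined. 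The hypotheses translate to $I(\beta)=0$ and $I(\alpha)>np$. The theorem will follow once I establish a constant $p=p(\Sigma)$ for which every Hamenst\"adt edge satisfies $I(\gamma_{i+1})\le I(\gamma_i)+p$; iterating this bound along the chain yields $I(\alpha)=I(\gamma_n)\le np$, the desired contradiction.

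To prove the per-edge bound, set $j=I(\gamma_i)$ and apply Lemma \ref{extension carry} to the subchain $\tau_{j}\twoheadrightarrow\cdots\twoheadrightarrow\tau_{j+M}$: since $\gamma_i$ is carried by a diagonal extension of $\tau_j$, it is \emph{fully} carried by a diagonal extension $\tilde{\tau}$ of $\tau_{j+M}$, and $\tilde{\tau}$ is filling because $\tau_{j+M}$ is. I split into three subcases according to whether $\gamma_i$ and $\gamma_{i+1}$ are disjoint, intersect without filling, or fill $\Sigma$.

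In the first subcase, Lemma \ref{MMdiag} gives directly that $\gamma_{i+1}$ is carried by a diagonal extension of $\tilde{\tau}$, hence of $\tau_{j+M}$, so $I(\gamma_{i+1})\le j+M$. In the second subcase the complement of $\gamma_i\cup\gamma_{i+1}$ has a component which is not a polygon with at most one puncture, and consequently contains an essential simple closed curve $\eta$ disjoint from both $\gamma_i$ and $\gamma_{i+1}$. Two applications of the disjoint argument, with a use of Lemma \ref{extension carry} in between to promote $\eta$ from being carried by a diagonal extension of $\tau_{I(\eta)}$ to being fully carried by a diagonal extension of $\tau_{I(\eta)+M}$, yield $I(\gamma_{i+1})\le j+2M$. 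Thus $p\ge 2M$ already handles both of these subcases.

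The filling subcase is the main obstacle. Here Lemma \ref{track index} supplies a filling train track $\sigma$ carrying both $\gamma_i$ and $\gamma_{i+1}$ with $\ind(\sigma)>k=\ind(\tilde{\tau})$. The intended route is to use that $\gamma_i$ lies in the interior of $P(\tilde{\tau})$ and simultaneously in $P(\sigma)$, while the strict inequality $\dim P(\sigma)<\dim P(\tilde{\tau})$ constrains the two tracks, to identify $\sigma$ with a subtrack of some diagonal extension of $\tilde{\tau}$. This identification, once extracted by a splitting-sequence argument in the spirit of the Masur--Minsky Nesting Lemma and using Propositions \ref{3.24} and \ref{numerology}, immediately places $\gamma_{i+1}\in\S(\sigma)$ inside a diagonal extension of $\tau_{j+p}$, completing the key bound after enlarging $p$ if necessary to absorb the uniform constant produced by this identification.
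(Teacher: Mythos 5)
Your overall skeleton is the same as the paper's: the paper proves the case $n=1$ (if $\beta$ is carried by a diagonal extension of $\tau_0$ and $\alpha$ by no diagonal extension of $\tau_p$, then $d^H_k(\alpha,\beta)\ge 2$) and then iterates this along an arbitrary path in $\C^H_k$, which is precisely your per-edge bound $I(\gamma_{i+1})\le I(\gamma_i)+p$; your monotonicity of $I$ via Lemma \ref{diagonal extension} and your handling of the two non-filling subcases via Lemma \ref{MMdiag} and Lemma \ref{extension carry} are correct (minor slip: a diagonal extension only satisfies $\ind(\tilde\tau)\le k$, not necessarily $=k$). The problem is the filling subcase, which is the entire content of the theorem, and there your argument stops at an unproved claim: that the track $\sigma=\tau(\gamma_i,\gamma_{i+1})$ produced by Lemma \ref{track index} can be ``identified with a subtrack of some diagonal extension of $\tilde\tau$.'' Knowing that $[\gamma_i]$ lies in $P(\sigma)$ and in the relative interior of $P(\tilde\tau)$, together with $\dim P(\sigma)<\dim P(\tilde\tau)$, gives no control whatsoever on the position of the cell $P(\sigma)$ relative to $P(\tilde\tau)$ inside $\PML$; there is no reason $P(\sigma)$ should be contained in $P(\tilde\tau')$ for any diagonal extension $\tilde\tau'$, and if it were, that would immediately yield $\gamma_{i+1}\in\S(\tilde\tau')$, i.e.\ exactly the conclusion you are trying to establish, so the proposed ``identification'' begs the question. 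Moreover, even granting some such carrying statement, you would still need it with a constant $p=p(\Sigma)$ uniform over the tower, and nothing in your sketch produces such a quantitative bound.

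The paper's proof of this key step runs through a genuinely different mechanism, which is why Section \ref{sec:progress} develops crossing/carried branches, efficient position, and the polygon classification. Arguing contrapositively for one edge: if $\gamma_{i+1}$ is carried by no diagonal extension of the track $p=N+M$ steps down, then by Proposition \ref{diagonal carry} it crosses some branch $b$; Lemma \ref{traverse} locates an intermediate track $\tau$ (within $N$ double arrows) all of whose large branches map into the preimage of $b$; Proposition \ref{transverse curve} (with Lemma \ref{homotope to simple}) produces a representative of $\gamma_{i+1}$ hitting $\tau$ efficiently and meeting every large branch; and Lemma \ref{lower bound} --- a singularity count for the foliation $\cF(\gamma_{i+1};\gamma_i,\tau)$ via Lemma \ref{train track side count}, Corollary \ref{one singularity}, and Proposition \ref{numerology} --- gives $\ind(\gamma_i,\gamma_{i+1})\le\ind(\tau)=k$, contradicting the Hamenst\"adt adjacency. (This bound is finite, so it subsumes your disjoint and non-filling subcases as well; Lemma \ref{extension carry} enters only to upgrade $\gamma_i$ to being fully carried, as in your sketch.) Without an argument of this kind, the filling case, and hence the theorem, remains unproved in your proposal.
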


If $n=1$ and if we replace $d^H_k$ with the usual curve complex distance then this is, essentially, the Masur-Minsky nesting lemma (Lemma \ref{MMdiag}). In fact, the general case of Theorem \ref{progress2} follows easily from the $n=1$ case. To prove this, we want to homotope a curve so that it intersects a train track {\em efficiently}. For example, if $\alpha$ is carried by $\tau$ then it can be homotoped to a train path while if $\alpha$ hits  $\tau$ efficiently it can be homotoped so that there are no bigons between $\alpha$ and any train path. We want a definition of efficient that includes both of these cases but also includes curves that are neither carried by $\tau$ nor hit $\tau$ efficiently. Note that in both examples a lift of $\alpha$ to the universal cover will intersect any train path in a connected set.
We can then take this as the definition of efficient position. That is, a simple closed curve $\alpha$ is in {\em efficient position} with $\tau$ if any lift $\alpha$ to the universal cover $\tilde\Sigma$ intersects the lift of any train path in $\tau$ in a connected set.

 In \cite{MMS} it is shown that every simple closed curve can be homotoped to one in efficient position.
 While we will not use this result directly, we use many of the ideas behind its proof which we develop now.

Let $\alpha$ be a simple closed curve in $\Sigma$ and
$\tau$ a recurrent, filling, generic train track.
We let $\tilde\tau$ be the preimage of
$\tau$ in the universal cover $\tilde\Sigma$ of $\Sigma$ and let
$\tilde\alpha$ be a lift of $\alpha$ to $\tilde \Sigma$.

Recall that bi-infinite train paths in $\tilde\tau$ are
quasi-geodesics and have well-defined and distinct endpoints on the Gromov boundary
$\partial \tilde \Sigma = S^1_\infty$.
Consider a complementary component of a bi-infinite train
path. The boundary at infinity of this component is an interval and
we suppose that this interval does not
contain either endpoint of $\tilde\alpha$ in its interior. Thus such an interval is
contained in one of the two closed complementary components $A,B$ of
$\partial\tilde\alpha$ in $S^1_\infty$. Let $\tilde{A}$ be the union of
those closed complementary components of train paths
whose boundaries are contained in $A$, and
similarly define $\tilde{B}$. Both $\tilde{A}$ and $\tilde{B}$ are invariant
under the deck transformations that preserve $\tilde\alpha$. The
intersection $\tilde{A}\cap\tilde{B}$ is either empty, or a collection
of finite train paths, or a bi-infinite train path. See Figure
\ref{fig:polygonal_region}.

\begin{figure}[h]

  \centering

  \def\svgwidth{0.6\textwidth}
\begingroup%
  \makeatletter%
  \providecommand\color[2][]{%
    \errmessage{(Inkscape) Color is used for the text in Inkscape, but the package 'color.sty' is not loaded}%
    \renewcommand\color[2][]{}%
  }%
  \providecommand\transparent[1]{%
    \errmessage{(Inkscape) Transparency is used (non-zero) for the text in Inkscape, but the package 'transparent.sty' is not loaded}%
    \renewcommand\transparent[1]{}%
  }%
  \providecommand\rotatebox[2]{#2}%
  \newcommand*\fsize{\dimexpr\f@size pt\relax}%
  \newcommand*\lineheight[1]{\fontsize{\fsize}{#1\fsize}\selectfont}%
  \ifx\svgwidth\undefined%
    \setlength{\unitlength}{6326.64885273bp}%
    \ifx\svgscale\undefined%
      \relax%
    \else%
      \setlength{\unitlength}{\unitlength * \real{\svgscale}}%
    \fi%
  \else%
    \setlength{\unitlength}{\svgwidth}%
  \fi%
  \global\let\svgwidth\undefined%
  \global\let\svgscale\undefined%
  \makeatother%
  \begin{picture}(1,1.00327865)%
    \lineheight{1}%
    \setlength\tabcolsep{0pt}%
    \put(0,0){\includegraphics[width=\unitlength,page=1]{polygonal_region.pdf}}%
    \put(0.51879396,0.96734798){\color[rgb]{0,0,0}\makebox(0,0)[lt]{\lineheight{1.25}\smash{\begin{tabular}[t]{l}\textit{$\widetilde{\alpha}^+$}\end{tabular}}}}%
    \put(0,0){\includegraphics[width=\unitlength,page=2]{polygonal_region.pdf}}%
    \put(0.50308534,0.01927435){\color[rgb]{0,0,0}\makebox(0,0)[lt]{\lineheight{1.25}\smash{\begin{tabular}[t]{l}\textit{$\widetilde{\alpha}^-$}\end{tabular}}}}%
  \end{picture}%
\endgroup%

  \caption{The intersection $\tilde{A} \cap \tilde{B}$ consists of the periodic collection
  of train paths connecting 4-sided and 5-sided polygons in the figure above.
  The branches inside the 4-sided and 5-sided polygons \emph{cross} $\alpha$
  in this case.}
  \label{fig:polygonal_region}
\end{figure}

A branch $\tilde b$ of $\tilde\tau$ {\em carries} $\tilde\alpha$ if it contained in $\tilde{A} \cap \tilde{B}$ and it {\em crosses} $\tilde\alpha$ if it is not in $\tilde{A}\cup \tilde{B}$. Equivalently, a branch $\tilde b$ crosses $\tilde\alpha$ if every bi-infinite train path through $\tilde b$ separates the endpoints of $\tilde \alpha$.
We then define a branch $b$ of $\tau$ to carry $\alpha$ if there is a branch $\tilde b$ in its pre-image that carries some lift of $\alpha$. We similarly define when a branch crosses $\alpha$.
Observe that if $\tilde b$ carries $\tilde\alpha$ but crosses some lift $\tilde\alpha_0$ then $\tilde\alpha$ and $\tilde\alpha_0$ will intersect, contradicting that $\alpha$ is simple.
Therefore a branch $b$ of $\tau$ cannot both carry $\alpha$ and cross it.

A train path is {\em carried} ({\em crossing}) if it is a union of carried (crossing) branches.

\begin{lemma}\label{traverse set}
If the set of branches that carry $\tilde\alpha$ is a bi-infinite train path then $\alpha$ is carried by $\tau$. Any finite maximal train path that carries $\tilde\alpha$ begins and ends at a large half branch. In particular, if the set of branches that carry $\tilde\alpha$ is non-empty then it contains a large branch.
\end{lemma}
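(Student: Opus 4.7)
The plan is to prove the three claims in sequence, using at each step the invariance of the carrying set under the deck transformation $g\in\pi_1(\Sigma)$ corresponding to $\alpha$, together with the train-path condition at trivalent switches.

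For the first claim, $g$ fixes $\tilde\alpha^\pm$ and hence preserves the arcs $A,B\subset S^1_\infty$, the sets $\tilde A,\tilde B$, and thus the carrying set $\tilde t$. When $\tilde t$ is a bi-infinite train path, $g(\tilde t)=\tilde t$ forces its ideal endpoints to be fixed by the hyperbolic isometry $g$, so they must equal $\tilde\alpha^\pm$. Hence $\tilde t$ is at bounded Hausdorff distance from $\tilde\alpha$, and its quotient $\tilde t/\langle g\rangle$ is a closed train path in $\tau$ homotopic to $\alpha$, so $\alpha$ is carried by $\tau$.

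For the second claim, suppose that a finite maximal carrying train path $\tilde t'$ has an endpoint at a switch $v$ along the small half branch of a carrying branch $b$, and let $b'$ be the unique branch whose large half branch at $v$ gives the sole train-path continuation past $v$. The aim is to show $b'\in\tilde A\cap\tilde B$, contradicting maximality. For the $\tilde A$ inclusion, pick $\tilde p$ and a closed complementary component $\overline H\subset\tilde A$ with $b\subset\overline H$. If $\tilde p$ contains $b$, or more generally passes through $v$, then since any train path at a trivalent switch must use the unique large half branch, $\tilde p$ contains $b'$ and so $b'\subset\tilde p\subset\overline H$. Otherwise $v$ is interior to $\overline H$, and $b'$ can exit $\overline H$ only by meeting $\tilde p$ at a switch, which can only be its other endpoint, so $b'\subset\overline H$ in this case as well. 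The symmetric argument for $\tilde B$ yields $b'\in\tilde A\cap\tilde B$.

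For the third claim, extend any carrying branch to a maximal carrying train path $\tilde t'$. In the finite case, the second claim forces both endpoints to lie at large half branches; tracking half-branch types along $\tilde t'$, the train-path condition forces every non-large branch to be large at its entry switch (shared with the previous branch) and small at its exit switch (shared with the next), and propagating this forced pattern from one endpoint contradicts the large-half-branch condition at the far endpoint unless some branch of $\tilde t'$ is itself a large branch. In the bi-infinite case, iterating the second-claim argument at each switch together with pigeonholing in the finite track $\tau$ produces a cycle projection; the argument of the first claim, applied to $\tilde t'\subset\tilde A\cap\tilde B$, forces the ideal endpoints of $\tilde t'$ to equal $\tilde\alpha^\pm$, so the cycle represents $\alpha$, and the analogous pattern analysis on the cycle (possibly extended by carrying branches at switches off $\tilde t'$, whose existence follows from $g$-invariance of the carrying set) yields a large branch in the carrying set. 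The main obstacle is completing this bi-infinite case rigorously: the forced alternation does not terminate of its own accord, so one must either exploit additional carrying branches at switches of $\tilde t'$ or appeal to the filling/recurrence of $\tau$ to rule out a carrying cycle composed entirely of mixed branches.
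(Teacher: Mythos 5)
Your treatment of the first two assertions is correct and is essentially the paper's argument. For the first assertion the paper likewise uses $\Gamma_\alpha$-invariance of the carrying set to see that a bi-infinite carrying path descends to a closed train path homotopic to $\alpha$. For the second, the paper's proof is the one-line observation that if a small half-branch of a carried branch is adjacent to a switch $v$, then any bi-infinite train path witnessing membership in $\tilde A$ (resp.\ $\tilde B$) passes through $v$ and hence traverses the large half-branch there, so the branch $b'$ containing that large half-branch is also carried; your longer case analysis with the closed complementary component $\overline H$ reaches the same conclusion and is valid (in the second case you should note that branches of the embedded track $\tilde\tau$ meet $\tilde p$ only at switches, which is why $b'$ cannot leave $\overline H$ except at its far endpoint). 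Your finite-case argument for the third assertion (the forced alternation of large/small half-branches along a path with no large branch, contradicting largeness at the far end) is exactly the paper's.

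The genuine gap is the one you flag yourself: the case where the maximal carrying train path $\tilde t'$ is infinite. Your proposed route --- ``the argument of the first claim, applied to $\tilde t'$, forces the ideal endpoints of $\tilde t'$ to equal $\tilde\alpha^{\pm}$'' --- does not work as stated, because only the full carrying set is $\Gamma_\alpha$-invariant; an individual maximal carrying path inside it need not satisfy $g(\tilde t')=\tilde t'$, so you cannot conclude that $\tilde t'$ closes up to a cycle representing $\alpha$, and even if it did, that would not by itself produce a large branch. The missing ingredient is the standard fact, which the paper itself invokes a few lemmas later (in the proof of Lemma \ref{connected}), that \emph{every bi-infinite train path contains a large branch} (this uses the standing exclusion of smooth annuli, bigons and monogons among complementary regions). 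With that fact the infinite case is immediate for a bi-infinite $\tilde t'$; for a semi-infinite $\tilde t'$, the finite end is at a large half-branch by the second assertion, and if $\tilde t'$ contained no large branch the forced small-to-large pattern would propagate indefinitely, so projecting to the compact track $\tau$ and pigeonholing an oriented branch produces a periodic (hence bi-infinite) train path with no large branch, again a contradiction. To be fair, the paper's own proof is silent on the infinite case as well, but it has this fact available; your write-up, as it stands, does not close the case and acknowledges as much.
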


\begin{proof}
The set of branches that carry $\tilde\alpha$ is invariant under the stabilizer of $\tilde\alpha$, so if it is a bi-infinite train path, it will descend to a curve homotopic to $\alpha$. Therefore $\alpha$ is carried by $\tau$.

If a small half branch of $\tilde\tau$ carries $\tilde\alpha$ then the adjacent large half branch (and therefore the branch that contains it) also carries $\tilde\alpha$. Therefore any finite maximal transverse train path begin and end at a large half branch. Any train path that begins and ends at a large half branch contains a large branch.
\end{proof}

Let $\Gamma_\alpha \cong \Z$ be the subgroup of the deck group that stabilizes $\tilde\alpha$.

\begin{lemma}\label{connected}
If $t$ is a bi-infinite train path in $\tilde\tau$ then the set of branches in $t$ that cross $\tilde\alpha$ is connected.

The number of $\Gamma_\alpha$-orbits of branches that cross $\tilde\alpha$ is finite.
\end{lemma}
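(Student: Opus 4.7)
The plan is to prove the two claims separately. The first uses a splicing argument for train paths, while the second uses $\Gamma_\alpha$-invariance together with a bounded neighborhood estimate around $\tilde\alpha$.

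For the first claim, I argue by contradiction: it suffices to show that if $\tilde b_1, \tilde b_2$ are crossing branches on $t$ and $\tilde b$ is a branch of $t$ strictly between them, then $\tilde b$ also crosses. Suppose not. By the equivalent characterization of crossing, there is a bi-infinite train path $s$ through $\tilde b$ whose endpoints $s^\pm$ both lie in a single closed arc of $S^1_\infty \setminus \{\tilde\alpha^+, \tilde\alpha^-\}$; WLOG $s^\pm \in A$. Because $t$ contains a crossing branch, $t$ separates $\partial\tilde\alpha$, so $t \neq s$; hence $t$ and $s$ agree on a maximal common sub-path $P$ containing $\tilde b$, diverging at switches $v^-$ and $v^+$ at the two endpoints of $P$. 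Neither $\tilde b_1$ nor $\tilde b_2$ can lie on $P$, for otherwise $s$ would be a non-separating bi-infinite train path through a crossing branch. Hence $\tilde b_1$ lies on the backward ray of $t$ beyond $v^-$ and $\tilde b_2$ on the forward ray beyond $v^+$.

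I then splice $t$ and $s$ at $v^\pm$ to produce new bi-infinite train paths. Let $t^*$ run along $t$ from $t^-$ through $\tilde b_1$ and across $P$ up to $v^+$, then along the forward continuation of $s$ out to $s^+$. Symmetrically, let $t^{**}$ run along $s$ backwards from $s^-$ to $v^-$, across $P$ to $v^+$, then along the forward continuation of $t$ through $\tilde b_2$ to $t^+$. At each switch $v^\pm$, the smoothness of $t$ and $s$ together with trivalence forces the $P$-side to be the large branch and the two continuations of $t$ and $s$ to be the two small branches, so the splices from $P$ into either continuation give smooth train paths. Thus $t^*$ and $t^{**}$ are bi-infinite train paths. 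Since they pass through the crossing branches $\tilde b_1$ and $\tilde b_2$ respectively, both must separate $\partial\tilde\alpha$. Since $t$ separates, we may assume $t^+$ is in the interior of $A$ and $t^-$ in the interior of $B$; the separation of $t^*$ then forces $s^+$ into the interior of $A$ and the separation of $t^{**}$ forces $s^-$ into the interior of $B$. But $s^\pm \in A$, contradicting $s^-$ being in the interior of $B$ since $A$ and the interior of $B$ are disjoint.

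For the second claim, the set of crossing branches is $\Gamma_\alpha$-invariant, because $\Gamma_\alpha$ fixes $\tilde\alpha$ setwise and hence preserves $A$, $B$, $\tilde A$, and $\tilde B$. It therefore suffices to show that crossing branches are contained in a bounded neighborhood of $\tilde\alpha$ in $\tilde\Sigma$, since $\Gamma_\alpha$ acts cocompactly on any such neighborhood. Working in the annular cover $\Sigma_\alpha = \tilde\Sigma / \Gamma_\alpha$, whose core curve is the image $\alpha_\alpha$ of $\tilde\alpha$, branches of the quotient track $\tau_\alpha$ sufficiently far from $\alpha_\alpha$ in a given end of $\Sigma_\alpha$ admit bi-infinite train path extensions that stay in that end; these lift to bi-infinite train paths in $\tilde\tau$ with both endpoints in a single arc $A$ or $B$, so the corresponding branches in $\tilde\tau$ are not crossing. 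Hence the crossing branches project into a bounded neighborhood of $\alpha_\alpha$ in $\Sigma_\alpha$, which contains only finitely many branches of $\tau_\alpha$.

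The main obstacle is the smoothness verification at the splicing switches $v^\pm$ in the first part, which hinges on correctly identifying the large branch among the three at each switch from the smoothness of $t$ and $s$. In the second part, the analogous subtlety is justifying the existence of single-ended extensions of train paths at branches far from $\alpha_\alpha$ in the annular cover, which can be addressed using efficient position of $\alpha$ with respect to $\tau$.
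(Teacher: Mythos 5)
Your first claim is correct, and it is essentially the paper's own splicing argument: the smoothness of the two splices $t^*$ and $t^{**}$ needs nothing more than the fact that $t$ and $s$ enter $v^{\pm}$ along the same branch of $P$ (no large/small analysis is required), and in the orientation case you do not treat ($t^-\in\operatorname{int}A$) the path $t^*$ alone already gives the contradiction. The paper argues directly rather than by contradiction (branches of $t$ lying in $\tilde{A}$ form an initial segment and those in $\tilde{B}$ a terminal segment, via one splice), but the idea is the same.

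The second claim is where there is a genuine gap. Your reduction is fine: since each $\Gamma_\alpha$-orbit stays at bounded distance from $\tilde\alpha$ and $\tilde\tau$ is locally finite, ``finitely many orbits'' is equivalent to ``crossing branches lie in a bounded neighborhood of $\tilde\alpha$''. But that means the statement you still owe is logically equivalent to the lemma and cannot simply be asserted. Worse, the mechanism you propose for it --- that a branch of $\hat\tau$ sufficiently far from the core admits a bi-infinite train path extension staying in that end --- is false in general. Let $\tau$ contain an embedded closed curve $c$ with $i(\alpha,c)>0$ and branches spiraling onto $c$ all in the same direction, and let $e$ be such an attaching branch; the forward extension of $e$ is unique (it enters $c$ and is forced to wind around it forever). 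In the annular cover, take a lift $\hat c_j$ of $c$ crossing the core and lifts $\hat e_n$ of $e$ attached to $\hat c_j$ arbitrarily far out one end, on the side from which the forced direction heads toward and across the core: every forward extension of $\hat e_n$ then crosses the core, no matter how far out $\hat e_n$ is. Such branches are in fact non-crossing, but only because some \emph{backward} extension also reaches the far side, giving a path with both ideal endpoints on the same boundary circle --- and showing that this always happens far from the core is precisely the content of the lemma. The appeal to efficient position is only a name-drop; no argument is given, and efficient position of one representative of $\alpha$ does not obviously control all train paths through a distant branch. The paper's route is different in substance: it fixes a curve $\beta$ fully carried by $\tau$, so every branch of $\tilde\tau$ lies on a lift of $\beta$'s train path; non-separating lifts contain no crossing branches; on each separating lift $t$ the connectivity statement plus the $\Z$-periodicity of $t$ (translates of an auxiliary path sharing a large branch with $t$, whose endpoints converge to the ends of $t$) trap the crossing branches in a compact subpath; and there are only $i(\alpha,\beta)$ many $\Gamma_\alpha$-orbits of separating lifts. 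You would need an argument of comparable substance in place of your bounded-neighborhood assertion.
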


\begin{proof} 
Orient $t$ so that its initial endpoint is in $A$ and terminal endpoint is in $B$.
Assume that the branch $\tilde b$ is in $t \cap\tilde{A}$. Then there is a bi-infinite train path $t'$ that contains $\tilde b$ and has both endpoints in $A$. Then we can make a new bi-infinite train path $t''$ by splicing the first half of $t$ with half of $t'$. Then $t'' \subset \tilde{A}$ so every branch of $t$ that occurs before $\tilde b$ is in $\tilde{A}$. A similar statement holds if $\tilde b \in t \cap \tilde{B}$. This proves connectivity of the set of branches of $t$ that cross $\tilde \alpha$.

Let $\beta \neq \alpha$ be a simple closed curve that is fully carried by $\tau$. The train path in $\tau$ representing $\beta$ lifts to a collection of bi-infinite train paths $\cP$ in $\tilde\tau$. Let $\cP' \subset \cP$ be the subset of these paths that separate the endpoints of $\tilde\alpha$. Since $\beta$ is fully carried by $\tau$, every branch $\tilde b$ in $\tilde\tau$ is contained in some $t \in \cP$ and if $t \in \cP\smallsetminus \cP'$ then $\tilde b \in \tilde{A} \cup\tilde{B}$ and hence $\tilde b$ does not cross $\tilde\alpha$. In particular, if a branch crosses $\tilde\alpha$ then it is contained in a train path in $\cP'$.

Now fix $t \in \cP'$ with initial endpoint in $A$ and terminal endpoint in $B$ and assume that $t$ contains a branch $\tilde b$ that crosses $\tilde\alpha$.
As every bi-infinite train path contains a large branch,
there is a large branch in $t$ and there will be another bi-infinite train path $t_0$ in $\tilde\tau$, whose endpoints are distinct from $t$, such that $t\cap t_0$ contains this large branch.
 There is a $\Z$ subgroup of the deck group that stabilizes $t$ and we let $\dots, t_{-1}, t_0, t_1, \dots$ be the translates of $t_0$ under this group.
As $i\to\infty$ the endpoints of $t_i$ limit to the terminal endpoint of $t$ while as $i\to-\infty$ they limit to the initial endpoint. In particular, there is $t_i \subset \tilde{A}$ with $t \cap t_i$ occurring before $\tilde b$ and a $t_j\subset \tilde{B}$ with $t\cap t_j$ occurring after $\tilde b$. Then connectivity implies that the compact set between these two intersections contains all branches of $t$ that  cross $\tilde\alpha$.

This shows that each $t \in \cP'$ contains at most finitely many branches that cross
$\tilde\alpha$.
The set $\cP'$ is $\Gamma_\alpha$-invariant and the quotient $\cP'/\Gamma_\alpha$ has $k$ elements where $k$ is the geometric intersection number of $\alpha$ and $\beta$. Therefore there are finitely many $\Gamma_\alpha$-orbits of crossing branches.
\end{proof}

Each complementary component $\tilde P$ of $\tilde\tau$ is either a finite-sided polygon, in which case the interior of $\tilde P$ descends homeomorphically to a polygon $P$ in $\Sigma \smallsetminus\tau$, or it is an infinite-sided polygon, stabilized by a parabolic subgroup. In this case, the interior of $\tilde P$ covers a punctured polygon in $\Sigma\smallsetminus\tau$.

Each side of $\tilde P$ is a union of branches in $\tilde\tau$.
These branches on a side of $\tilde P$ may be branches in $\tilde{A}$, branches in $\tilde{B}$, branches in $\tilde{A}\cap\tilde{B}$ (the carried branches) and branches in neither $\tilde{A}$ nor $\tilde{B}$ (the crossed branches).
The following lemma characterizes polygons of $\tilde \Sigma \setminus \tilde \tau$.

\begin{lemma}\label{polygons}
If $\tilde P$ is a polygon of $\tilde\Sigma\smallsetminus\tilde\tau$ then the closure, in $\tilde\Sigma \cup S^1_\infty$, of $\tilde{A} \cap \tilde P$ and  $\tilde{B}\cap \tilde P$  is connected.
A complementary polygon $\tilde P$ satisfies one of the following four mutually
exclusive conditions:
\begin{enumerate}
\item $\partial \tilde P$ contains a branch that carries $\tilde\alpha$. Then there is a single train path in $\partial\tilde P$ that carries $\tilde\alpha$ and the remaining branches are either entirely in $\tilde{A}$ or entirely in $\tilde{B}$.

\item $\partial \tilde P$ has no branches that carry $\tilde\alpha$ and either all
branches of $\tilde P$ lie in $\tilde{A}$ or all branches of $\tilde P$ lie in $\tilde{B}$.
 
\item $\partial \tilde P$ contains a branch that crosses $\tilde\alpha$. Then there are one or two train paths of $\partial \tilde P$ (maximal in $\partial\tilde P$) in three possible configurations.
\begin{enumerate}
\item There is one crossing train path $t$ in $\partial\tilde P$ and it is a maximal crossing train path in $\tilde\tau$. One endpoint of the train path is in $\tilde{A}$ and the other is $\tilde{B}$.

\item There are two crossing train paths in $\partial\tilde P$ that are maximal crossing train paths in $\tilde\tau$, each with one endpoint in $\tilde{A}$ and the other in $\tilde{B}$.

\item There are two crossing train paths in $\partial\tilde P$ that meet at a vertex of $\tilde P$. Then the vertex of $\tilde P$ where these two paths meet is not in $\tilde{A} \cup \tilde{B}$ while the other two endpoints are both in $\tilde{A}$ or both in $\tilde{B}$.
\end{enumerate}

\item No branches of $\partial\tilde P$ carry $\tilde\alpha$ and no branches of $\partial \tilde P$ cross $\tilde\alpha$. Each side of $\partial\tilde P$ is either entirely in $\tilde{A}$ or entirely in $\tilde{B}$ and there are at least two sides of each type.

\end{enumerate}
\end{lemma}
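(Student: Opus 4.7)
The plan is to classify the branches of $\partial \tilde P$ by their membership in $\tilde A$ and $\tilde B$, using the connectedness statement as the central combinatorial constraint.

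First I would prove the connectedness of the closures of $\tilde A \cap \tilde P$ and $\tilde B \cap \tilde P$ in $\tilde \Sigma \cup S^1_\infty$. The key observation is that every bi-infinite train path is disjoint from the interior of $\tilde P$, so each closed complementary region contributing to $\tilde A$ meets $\tilde P$ either not at all or in a sub-region of $\tilde P$ bounded by arcs of $\partial \tilde P$ together with train paths lying along $\partial \tilde P$. These sub-regions are then mutually connected through their shared limit arcs in $A \subset S^1_\infty$ (when $\tilde P$ is punctured and the puncture limits into $A$) or through the bounding train paths they share, yielding connectedness. A direct consequence is that $\tilde A \cap \partial \tilde P$ and $\tilde B \cap \partial \tilde P$ are each connected sub-arcs of $\partial \tilde P$.

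Next I would dispose of the case where a carrying branch is present. If $\tilde b_C \subset \partial \tilde P$ lies in $\tilde A \cap \tilde B$, then $\tilde b_C$ is contained in a closed $\tilde A$-region $R_A$. Since $\tilde P$ is a single complementary region of $\tilde \tau$ and lies on one side of the bi-infinite train path bounding $R_A$, we have $\tilde P \subseteq R_A$. Thus every branch of $\partial \tilde P$ lies in $\tilde A$, so non-carrying branches must lie in $\tilde A \setminus \tilde B$; this rules out crossing and $\tilde B$-only branches on $\partial \tilde P$. The carrying sub-arc is then exactly $\tilde B \cap \partial \tilde P$, which is a single arc by Step 1 and hence a single train path. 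This yields case~1; the symmetric alternative with $\tilde P \subset R_B$ gives case~1 with the roles of $\tilde A$ and $\tilde B$ swapped.

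In the absence of carrying branches, $\partial \tilde P$ partitions into $\tilde A$-only, $\tilde B$-only, and crossing sub-arcs, and Step 1 constrains each of $\tilde A \cap \partial \tilde P$ and $\tilde B \cap \partial \tilde P$ to be a single sub-arc. If $\partial \tilde P$ is covered by one of $\tilde A$ or $\tilde B$ we are in case~2; if they partition $\partial \tilde P$ non-trivially with no crossing branches we are in case~4; otherwise the crossing branches form at most two sub-arcs (the complement of two arcs on a circle), giving case~3. For case~4 I would further verify that each side of $\tilde P$ is monochromatic, since all branches of a single side lie on common bi-infinite extensions, so the class of each branch of a side is determined by the side itself; the ``at least two sides of each type'' statement would come from analyzing corner transitions, where a single $\tilde A$-side flanked by $\tilde B$-sides forces the third branch at each flanking corner into $\tilde A \cap \tilde B$, and then tracking smooth continuations of the resulting carrying extensions contradicts the absence of carrying branches on $\partial \tilde P$. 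For case~3, the three sub-configurations (a), (b), (c) are distinguished by whether a maximal crossing train path in $\partial \tilde P$ terminates at a corner by extending smoothly into a non-$\partial \tilde P$ crossing branch (sub-cases (a), (b)) or by meeting a second crossing train path at a corner of $\tilde P$ (sub-case (c)); the endpoint positions in $\tilde A$ or $\tilde B$ are then forced by the types of the neighboring branches of $\partial \tilde P$. I expect the connectedness claim (Step 1) to be the most delicate, as it drives the whole classification; the sub-case analysis for case~3, while intricate, is then largely a bookkeeping exercise in smooth continuations at corners.
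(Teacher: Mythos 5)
Your architecture is genuinely different from the paper's: you take the connectedness of the closures of $\tilde{A}\cap\tilde P$ and $\tilde{B}\cap\tilde P$ as the engine and then read the classification off a decomposition of $\partial\tilde P$ into $\tilde{A}$-arcs, $\tilde{B}$-arcs, carrying arcs and crossing arcs. The paper instead works entirely with the intervals $I_t\subset S^1_\infty$ cut off by bi-infinite train paths $t$ meeting $\partial\tilde P$, organizes them side by side via the maximal intervals $I_s=I_{t_s}$, and classifies $\tilde P$ according to which of these intervals contain the endpoints $\alpha_\pm$ of $\tilde\alpha$; both the connectedness statement and the four-case list fall out of that bookkeeping. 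The problem with your route is that the connectedness claim --- which you correctly identify as the linchpin --- is exactly the hard part, and your justification for it does not work as stated. Each region $R\subseteq\tilde{A}$ meets $\tilde P$ either in all of $\tilde P$ or in a single train-path arc lying in one side of $\partial\tilde P$ (this already uses the nontrivial observation that a bi-infinite train path meets the interior of exactly one side of $\tilde P$), but two such regions $R,R'$ contributing arcs to different sides need not intersect and need not share limit points in $S^1_\infty$ (their ideal boundaries are merely two subarcs of $A$), and when $\tilde P$ is finite-sided its closure is compact, so no ``connection at infinity'' is available at all. Showing that the contributed arcs, together with those of intermediate regions, fill out a single connected subarc of $\partial\tilde P$ requires comparing the cyclic order of the sides of $\tilde P$ with the cyclic order of the intervals $I_s$ in $S^1_\infty$ --- which is essentially the paper's argument. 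As written, Step 1 assumes what it needs to prove.

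There are secondary gaps as well. In your case-(1) analysis the inference ``$\tilde P$ lies on one side of the train path bounding $R_A$, hence $\tilde P\subseteq R_A$'' is a non sequitur: a priori $\tilde P$ could lie on the side of that train path away from $R_A$. The correct reason $\tilde P$ lands in an $\tilde{A}$-region or a $\tilde{B}$-region is that $\tilde{A}\cap\tilde{B}$ has empty interior (it is a union of train paths), so the $A$-side and $B$-side regions through a carrying branch are locally on opposite sides of that branch and $\tilde P$ is adjacent to one of them. Next, ``a single arc of $\partial\tilde P$, hence a single train path'' does not follow: a connected arc of $\partial\tilde P$ that turns a corner of $\tilde P$ is not smooth and hence not a train path, so you must separately rule out the carrying locus spanning a cusp. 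Finally, the endpoint placements distinguishing (3a)--(3c) and the ``at least two sides of each type'' count in case (4) are asserted rather than derived; in the paper these come for free from the locations of $\alpha_\pm$ relative to the $I_s$, whereas in your setup each needs its own corner-by-corner argument.
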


We will say that a polygon has type $(1)$ if it satisfies condition $(1)$ above, and
similarly for the other list items.

\begin{proof}[Proof of Lemma \ref{polygons}]
If $t$ is a bi-infinite train path that contains a branch on a side of $\tilde P$, we let $I_t \subset S^1_\infty$ be the interval such that the disk bounded by $t \cup I_t$ does not contain $\tilde P$. Observe that $t$ intersects the interior of exactly one side $s$ of $\tilde P$.
 We let $\cI_s$ be the set of all such intervals $I_t$ over all train paths $t$ meeting $s$.
 For each side $s$ there is an oriented train path $t_s$ that contains $s$ with $P$ on the right and such that as $t_s$ leaves $s$ going forward it always turns to the left and as it leaves $s$ going backwards it always turns to the right.  Set $I_s = I_{t_s}$ and then for any $I \in \cI_s$ we have $I \subset I_s = I_{t_s}$.
 The union of the intervals $I_s$ is $S^1_\infty$ if $\tilde P$ has finitely many sides.
 If $\tilde P$ has infinitely many sides then it contains all of $S^1_\infty$ except the fixed point of the stabilizer of $\tilde P$.
 In particular, if $\alpha_-$ and $\alpha_+$ are the endpoints of $\tilde \alpha$ then they are each contained in some interval $I_s$. We also note that if $I_s \cap I_{s'} \neq \emptyset$ then $s$ and $s'$ are adjacent sides (for otherwise $\tilde \tau$ would contain a complementary monogon).
 
 Now we enumerate the possibilities for $\tilde P$.

\begin{itemize}
\item If $\alpha_-, \alpha_+ \in I_s$ for some $s$ and there is $I \in \cI_s$ such that $\{\alpha^-,\alpha^+\} \cap I = \emptyset$, then $\tilde P$ has type (1).

\item If $\alpha_-, \alpha_+ \in I_s$ for some $s$ and there is no $I \in \cI_s$ such that $\{\alpha^-,\alpha^+\} \cap I = \emptyset$ then $\tilde P$ has type (2).

\end{itemize}
For the remaining cases we have that $\alpha_- \in I_{s_-}$ and $\alpha_+ \in I_{s_+}$ with $s_- \neq s_+$.
\begin{itemize}
\item There is $s'_- \neq s_-$ with $\alpha_- \in s'_-$ but $\alpha_+$ is contained only in $I_{s_+}$. This is type (3a). Similarly $\tilde P$ is type (3a) if the the plus and minus are swapped.

\item If $\alpha_-$ and $\alpha_+$ are only contained in $I_{s_-}$ and $I_{s_+}$ and $s_-$ and $s_+$ are disjoint then $\tilde P$ is type (3b). If they are adjacent then $\tilde P$ is type (3c).

\item If there are $s'_-\neq s_-$ and $s'_+ \neq s_+$ with $\alpha_- \in I_{s'_-}$ and $\alpha_+ \in I_{s'_+}$ then $\tilde P$ has type (4).
\end{itemize} 
See figures \ref{fig:threea} and \ref{fig:poly_types}. \end{proof}

\begin{figure}[h]

  \centering

  \def\svgwidth{0.6\textwidth}
  \includegraphics[scale=.5]{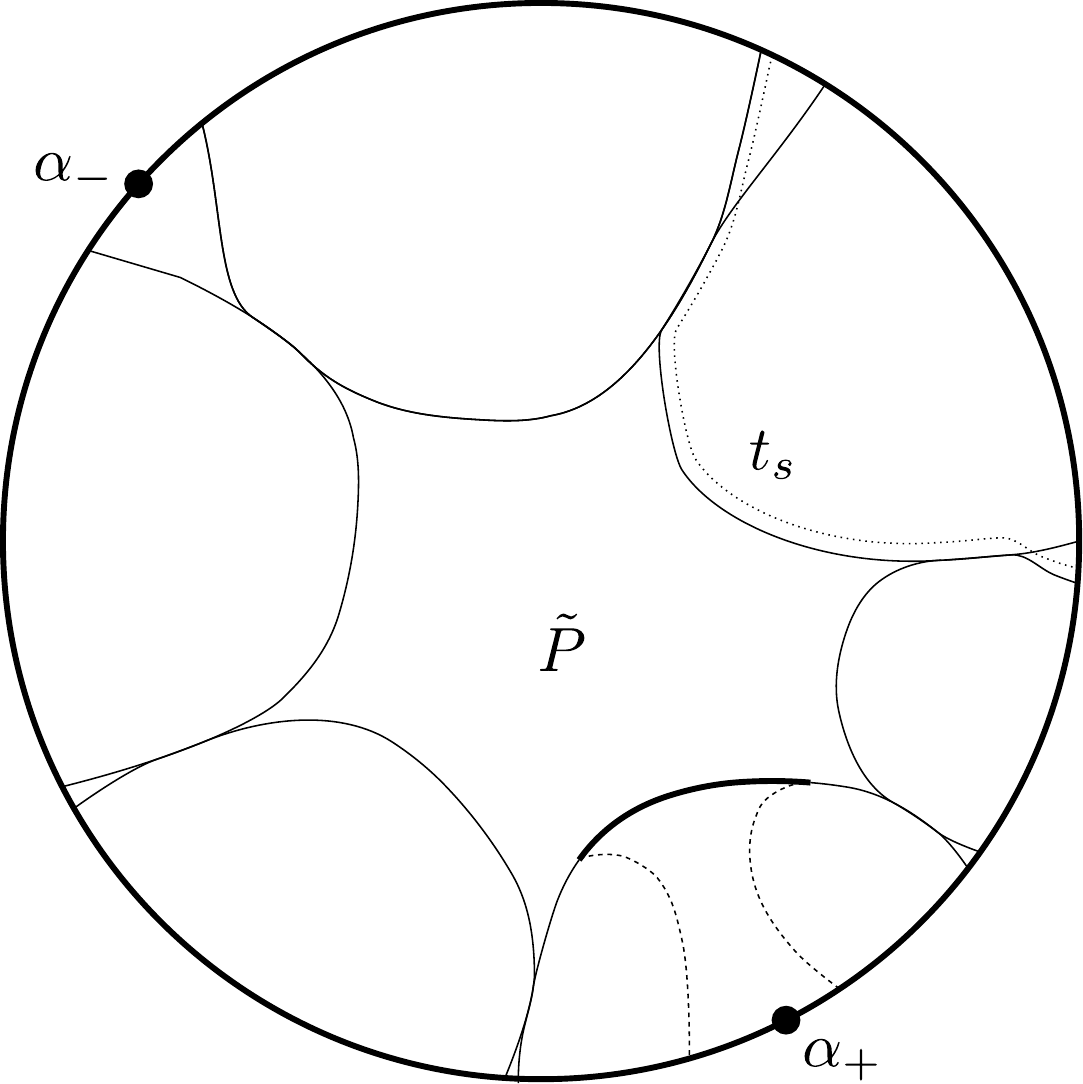}

  \caption{This example is a polygon of type (3a) along with the train paths $t_s$ for each side $s$. The crossing branches are shaded.}
  \label{fig:threea}
\end{figure}

\begin{figure}[h]

  \centering

  \def\svgwidth{0.6\textwidth}
  \includegraphics[scale=.5]{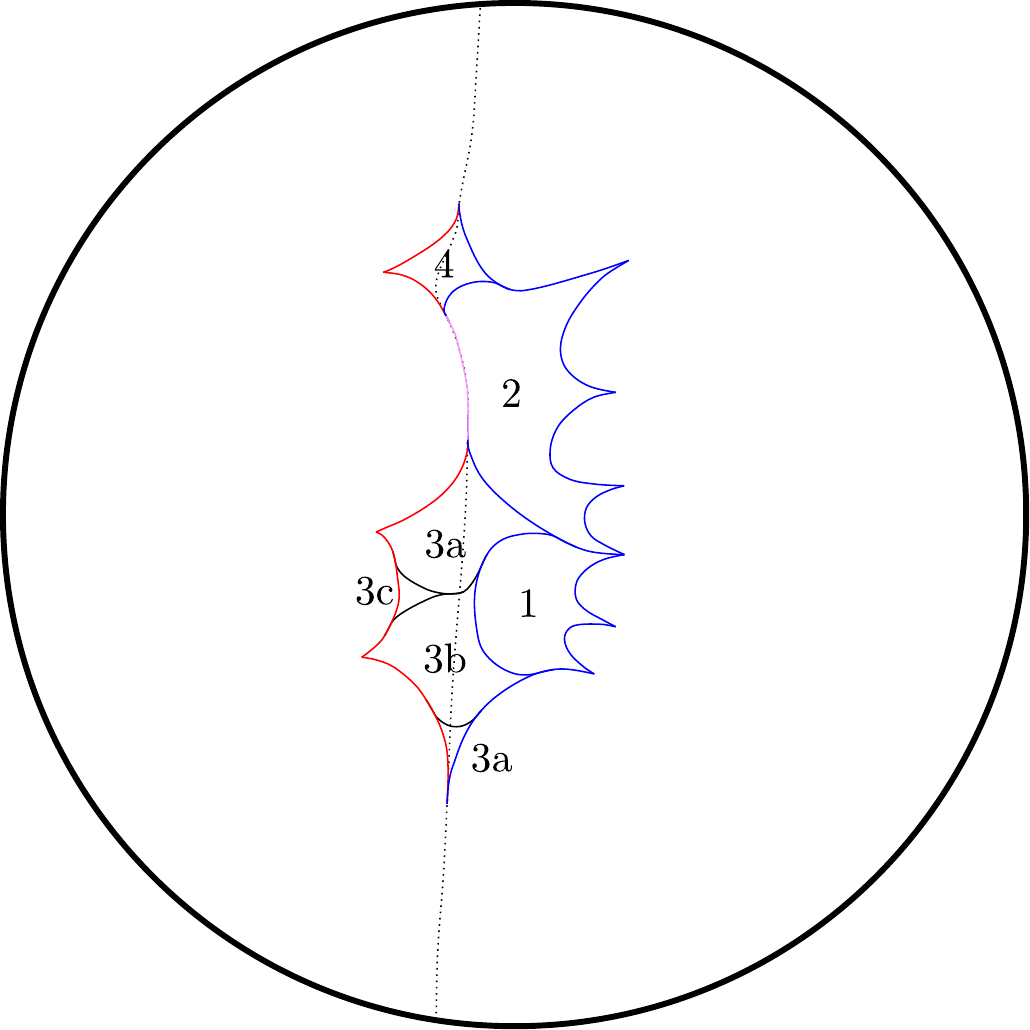}

  \caption{In the figure a portion of the train track $\tilde\tau$ is shown illustrating a polygon of each type. The red arcs are branches in $\tilde{A}$, the blues arcs are branches in $\tilde{B}$, the purple arc is a carried branch and the black arcs are crossed branches. The dashed line is the lift $\tilde\alpha$.}
  \label{fig:poly_types}

\end{figure}

\begin{lemma}\label{transverse extension}
Suppose $\tilde \tau$ has a complementary polygon $\tilde P$ of type (4).
Then there is a diagonal extension $\tau'$ of
$\tau$, consisting of $\tau$ plus a single diagonal branch, such that the unique branch
of $\tau' \smallsetminus \tau$ carries $\alpha$.
\end{lemma}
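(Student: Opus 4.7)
The plan is to locate two transition corners of $\tilde P$, connect them by a diagonal arc $d$, and verify that $d$ becomes a carrying branch of $\tilde\alpha$ in the smoothed extended track $\tilde\tau\cup\{d\}$. Since $\partial\tilde P$ has at least two A-sides and at least two B-sides, the cyclic sequence of side labels around $\partial\tilde P$ has at least two transitions, so we may pick distinct corners $c\ne c'$ at each of which an A-side meets a B-side. Let $d$ be an arc in the interior of $\tilde P$ joining $c$ to $c'$.

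The key observation to establish is that at each transition corner $c$, with adjacent A-side $s_A$ and B-side $s_B$, the third half-branch $s^*_c$ of $\tilde\tau$ at the switch $c$ (the unique branch at $c$ not on $\partial\tilde P$) lies in $\tilde A\cap\tilde B$. Indeed, the canonical train path $t_{s_A}$ contains $s_A$ and, at the switch $c$, smoothly continues into $s^*_c$ because $(s_A,s_B)$ is non-smooth at the corner $c$ of $\tilde P$ and $(s_A,s^*_c)$ is the unique smooth pair at $c$ containing $s_A$. Since $s_A\subset\tilde A$ by hypothesis, both endpoints of $t_{s_A}$ lie in $A$, which shows $s^*_c\subset\tilde A$. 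The symmetric argument applied to the canonical train path $t_{s_B}$ places $s^*_c$ in $\tilde B$, so $s^*_c\subset\tilde A\cap\tilde B$.

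Next one verifies that $d$ carries $\tilde\alpha$ in the smoothed extension $\tilde\tau'=\tilde\tau\cup\{d\}$. Analyzing the smoothing at the new $4$-valent switch created at $c$, which must be consistent with the original smooth pair structure of $\tau$, one finds that a train path in $\tilde\tau'$ passing through $d$ at $c$ can be extended to exit $\tilde P$ near $c$: either directly through $s^*_c$ if the smoothing pairs $d$ with $s^*_c$, or via a short detour along the adjacent side $s_A$ (respectively $s_B$) to the opposite corner $c''$ of that side and out through $s^*_{c''}$. In either case, continuing along the tail of $t_{s_A}$ (respectively $t_{s_B}$) takes the train path bi-infinitely to reach a point in $A$ (respectively $B$) at infinity. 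Performing the analogous construction at $c'$ and choosing $A$-extensions at both ends yields a bi-infinite train path through $d$ in $\tilde\tau'$ with both endpoints in $A$, so $d\subset\tilde A'$; choosing $B$-extensions at both ends similarly gives $d\subset\tilde B'$. Hence $d\subset\tilde A'\cap\tilde B'$, so $d$ is a carrying branch of $\tilde\alpha$. Projecting $d$ down to $\Sigma$ produces the required diagonal branch $\bar d$ of $\tau'=\tau\cup\{\bar d\}$.

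The main obstacle is sorting out the smoothing at the new $4$-valent switches at $c$ and $c'$ in a way that is simultaneously compatible with the existing train structure of $\tau$ and with the bi-infinite extensions described above; this reduces to a detailed combinatorial check of the cluster structure at the two switches. A secondary technical point arises when $\tilde P$ is a lift of a punctured polygon, in which case one must choose $c$ and $c'$ in distinct orbits of the parabolic stabilizer of $\tilde P$, so that their projection to the polygon $P\subset\Sigma$ gives a single embedded diagonal branch $\bar d$ rather than a loop.
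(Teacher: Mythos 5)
Your construction is the same as the paper's: the paper also takes two transition vertices $\tilde v_0,\tilde v_1$ of the type (4) polygon (asserting they lie in $\tilde A\cap\tilde B$, which is exactly what your canonical-path argument at the corners is meant to establish), joins them by a diagonal inside $\tilde P$, and projects to $\Sigma$. Two remarks. First, your ``main obstacle'' about the smoothing is not an obstacle: the new half-branch of $d$ at $c$ enters the cusp of $\tilde P$, so it is a small half-branch at the switch $c$, and every train path through $d$ is forced to exit through the unique large half-branch $s^*_c$ on the other side of the switch; the ``detour along $s_A$'' alternative cannot occur, and the bi-infinite extensions with both endpoints in $A$ (resp.\ $B$) are then exactly as you describe. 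Second, your treatment of the infinite-sided case is where the proposal genuinely falls short. You require $c$ and $c'$ to lie in distinct orbits of the parabolic stabilizer, which you neither show is achievable nor is it what is actually needed. The paper instead allows $v_0=v_1$ and pins down the correct isotopy class of the diagonal by requiring $\tilde b$ to be homotopic rel endpoints to the segment $s$ of $A$-sides of $\partial\tilde P$ between $\tilde v_0$ and $\tilde v_1$; simplicity of $\alpha$ guarantees that $s$ and its translates under the stabilizer have disjoint interiors, so the sides in $s$ descend to distinct sides of $P$ and the projected arc is an embedded, legitimate diagonal. Without specifying the homotopy class of the projected arc, ``projecting $d$ down'' does not by itself produce a well-defined diagonal branch whose relevant lift joins $\tilde v_0$ to $\tilde v_1$.
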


\begin{proof} 
 A type (4) polygon has distinct vertices $\tilde v_0$ and $\tilde v_0$ both in $\tilde{A}\cap \tilde{B}$. The image of $\tilde P$ in under the covering map $\tilde\Sigma\to\Sigma$ will be a polygon $P$ in $\Sigma\smallsetminus\tau$ and the image of $\tilde v_0$ and $\tilde v_1$ will be vertices $v_0$ and $v_1$ of $P$. If $\tilde P$ is finite sided then the restriction of the covering map to the interior of $\tilde P$ is a homeomorphism to the interior of $P$. In particular,  $v_0\neq v_1$ and they don't bound a side of $P$  so we can add a branch $b$ in $P$ between them to form $\tau'$. The pre-image of $b$ in $\tilde\Sigma$ contains a branch $\tilde b$ from $\tilde v_0$ to $\tilde v_1$ and $\tilde b$ is carried by $\tilde\alpha$. Therefore $b$ is carried by $\alpha$ as claimed.
 
If $\tilde P$ has infinitely many sides then $P$ is a punctured polygon and $\partial\tilde P$ is a topological line. The vertices $\tilde v_0$ and $\tilde v_1$ bound a segment $s$ which is the union of, say, the $A$-sides of $\partial \tilde P$. If $\gamma \in \pi_1(\Sigma)$ is a non-trivial element of the deck group that stabilizes $\tilde P$ and $s$ and $\gamma(s)$ intersect in their interiors then $\tilde\alpha$ and $\gamma(\tilde\alpha)$ intersect, contradicting that $\alpha$ is simple. While we may have that $v_0= v_1$, this implies that the distinct sides of $\tilde P$ in $s$ map to distinct sides of $P$. We can then  choose a diagonal branch $b$ to $\tau$ such that $\tilde b$ lifts to an arc $\tilde b$ with endpoints $\tilde v_0$ and $\tilde v_1$ and such that $\tilde b$ is homotopic to $s$ rel endpoints. See Figure \ref{fig:diagext}. Then $\tilde b$ is carried by $\tilde\alpha$ and $\tau' = \tau \cup b$. 
\end{proof}

\begin{figure}[h]

  \centering

  \def\svgwidth{0.6\textwidth}
  \includegraphics[scale=.7]{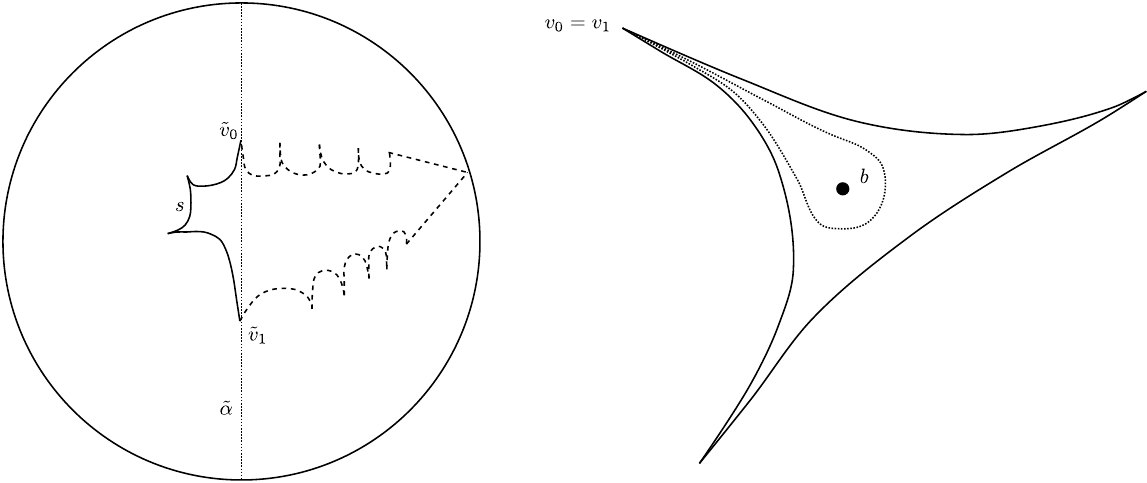}

  \caption{In the figure, the segment $s$ is a union of three $\tilde{A}$-sides of the infinite-sided polygon $\tilde P$. In $P$ we have that $v_0 = v_1$ but the image of $s$ is still three distinct sides.}
  \label{fig:diagext}
\end{figure}

\begin{prop}\label{diagonal carry}
If there are no branches of $\tau$ that cross $\alpha$ then $\alpha$ is carried by a diagonal extension of $\tau$.
\end{prop}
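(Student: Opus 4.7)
The plan is to add, for each type (4) complementary polygon of $\tilde\tau$, a diagonal branch produced by Lemma \ref{transverse extension}, chosen equivariantly with respect to the deck group action so as to descend to finitely many added branches in $\Sigma$. Call the resulting diagonal extension $\tau'$. I will then exhibit an explicit bi-infinite train path $\tilde t$ in $\tilde{\tau'}$ homotopic to $\tilde\alpha$; projecting to $\Sigma$, this shows that $\alpha$ is carried by $\tau'$.

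By hypothesis no branch of $\tilde\tau$ crosses $\tilde\alpha$, so Lemma \ref{polygons} classifies every complementary polygon of $\tilde\tau$ as type (1), (2), or (4). The train path $\tilde t$ is assembled by following $\tilde\alpha$ through $\tilde\Sigma$: in each type (1) polygon $\tilde P$ met by $\tilde\alpha$, $\tilde t$ follows the unique carrying train path on $\partial \tilde P$ guaranteed by Lemma \ref{polygons}; in each type (4) polygon met by $\tilde\alpha$, $\tilde t$ follows the added diagonal $b_{\tilde P}$; and type (2) polygons lie entirely on one side of $\tilde\alpha$ and are avoided. Since every local piece of $\tilde t$ lies in $\tilde A\cap\tilde B$, the two ends of $\tilde t$ on $S^1_\infty$ are forced to coincide with $\alpha^\pm$.

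The main obstacle is to verify that these local pieces assemble into a genuine smooth train path, i.e.\ that at each switch $v$ of $\tilde\tau$ lying on $\tilde t$ the two adjacent pieces use exactly one large and one small half-branch at $v$. Where two branches of $\tilde\tau$ meet along $\tilde t$, smoothness is inherited from the fact that the carrying train paths of Lemma \ref{polygons}(1) are already smooth in $\tilde\tau$. Where a branch of $\tilde\tau$ meets an added diagonal at $v$, smoothness is arranged via the standard sliding construction used to make a diagonal extension generic: we slide the diagonal's endpoint along the appropriate branch of $\tilde\tau$ so that its resulting half-branch at the new switch is small. The local polygon classification in Lemma \ref{polygons}, combined with the constraint that both pieces of $\tilde t$ at $v$ lie in $\tilde A\cap\tilde B$, forces the two pieces of $\tilde t$ to enter $v$ from adjacent sectors, so the sliding can be carried out consistently (and equivariantly).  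Once smoothness is established, $\tilde t$ is a bi-infinite train path in $\tilde{\tau'}$ with endpoints $\alpha^\pm$, hence homotopic to $\tilde\alpha$, and therefore $\alpha$ is carried by $\tau'$.
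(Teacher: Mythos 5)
There is a genuine gap at the heart of your assembly step. Your plan adds one diagonal per type (4) polygon of the \emph{original} track and then claims the pieces (carried train paths of $\tilde\tau$ and these diagonals) concatenate into a bi-infinite train path tracking $\tilde\alpha$. The problem is at the far transition corner $\tilde v_1$ of an added diagonal: since the diagonal is attached in the cusp of the polygon, a legal continuation at $\tilde v_1$ must run into the large half-branch of $\tilde\tau$ there, and nothing you have said guarantees that this branch is carried by $\tilde\alpha$ (i.e.\ lies in $\tilde A\cap\tilde B$). The no-crossing hypothesis only puts it in $\tilde A\cup\tilde B$, so it may lie in $\tilde A$ only; in that case your path cannot continue along a carried branch of $\tilde\tau$, and no other diagonal can rescue it, because a switch is the corner of exactly one complementary polygon and two diagonals meeting at a cusp would concatenate small-to-small, which is illegal. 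Your sentence ``the constraint that both pieces of $\tilde t$ at $v$ lie in $\tilde A\cap\tilde B$ forces the two pieces to enter $v$ from adjacent sectors'' assumes precisely the matching that has to be proved. Indeed the carried set is expected to grow only \emph{after} extending: a branch of $\tau$ that was in $\tilde A$ only can become carried once new train paths through an added diagonal exist, and further diagonals may then be needed in complementary polygons of the extended track rather than of $\tau$. So a single round of additions, justified only with the sets $\tilde A,\tilde B$ of the original track, does not suffice as written. (A smaller issue: your claim that the two ends of $\tilde t$ are $\alpha^{\pm}$ is also asserted rather than argued; the paper deduces ``homotopic to $\alpha$'' from $\Gamma_\alpha$-invariance as in Lemma \ref{traverse set}.)

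This is exactly why the paper's proof is iterative rather than one-shot: if $\alpha$ is not carried and no branch crosses it, the endpoint of a maximal carried train path exhibits a type (4) polygon, Lemma \ref{transverse extension} adds a single carried diagonal, the hypotheses (no crossing branches) persist for the extension, and one repeats; the process terminates because each step adds a branch and a maximal track has no type (4) complementary polygon, while a diagonal extension of a diagonal extension is again a diagonal extension, and the terminal case is handled by Lemma \ref{traverse set}. If you want to keep your simultaneous construction, you would need to prove the matching claim at every $\tilde v_1$ with respect to the \emph{extended} track, which essentially reproduces the paper's induction.
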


\begin{proof}
If the set of branches of $\tilde\tau$ that are carried by $\tilde\alpha$ is a bi-infinite path then we are done by Lemma \ref{traverse set}. If not, we will show that $\tilde\tau$ contains a polygon of type (4). By Lemma \ref{transverse extension} there is a diagonal extension $\tau'$ so that the single branch in $\tau' \smallsetminus \tau$ is carried by $\alpha$. If $\tau'$ carries $\alpha$ then the proof is complete. If not, then we repeat this process. The process will eventually terminate since a maximal train track cannot have a type (4) polygon. As any diagonal extension of a diagonal extension is a diagonal extension, this will complete the proof.

Now we need to find a type (4) polygon. If there are no branches of $\tilde \tau$ carried by $\tilde \alpha$ (i.e. $\tilde{A} \cap \tilde{B} = \emptyset$) then there are branches that cross $\tilde\alpha$, since $\tilde\tau$ is connected. This is a contradiction, so there
are in fact branches of $\tilde \tau$ that carry $\tilde \alpha$.
Since  the set of branches carried by $\tilde \alpha$ is not a bi-infinite path, there is a large half-branch whose endpoint $v$ is the endpoint of a carried train path. Note that one of the two small half-branches adjacent to $v$ is in $\tilde{A}$ and not in $\tilde{B}$ while the other is in $\tilde{B}$ but not $\tilde{A}$.  If $\tilde P$ is the polygon that has $v$ as a vertex then this implies that it is not type (1) or (2). Since there no crossing branches, $\tilde P$ is not type (3) and is therefore of type (4).
\end{proof}

We also observe that the Masur-Minsky nesting lemma follows easily from this proposition.
\begin{proof}[Proof of Lemma \ref{MMdiag}]
Let $\tilde \beta$ be a lift of $\beta$ to $\tilde\Sigma$. Since $\alpha$ is fully carried, it is represented by a train path that traverses every branch of $\tau$.
Consider a branch $\tilde b$ of $\tilde \tau$. We claim it does not cross $\tilde \beta$.
There is a lift $\tilde\alpha$ which passes through $\tilde\tau$.
Since $d(\alpha, \beta) = 1$, the endpoints of $\tilde\alpha$ do not separate the endpoints of $\tilde\beta$. Therefore $\tilde b$ is not crossed by $\tilde\beta$, as claimed.
The proposition then follows from Proposition \ref{diagonal carry}.
\end{proof}

The following lemma is very close to a version of the Masur-Minsky nesting lemma but for the spaces $\C^H_k$ instead of the curve graph. However, there are extra technical assumptions. Instead of just assuming that $\alpha$ is not carried by a diagonal extension of $\tau$ we need $\alpha$ to hit $\tau$ efficiently and to intersect every large branch. These technical assumptions account for much of the extra work needed in the proof of Theorem \ref{progress2}.

\begin{lemma}\label{lower bound}
Let $\alpha$ and $\beta$ be simple closed curves and let $\tau$ be a recurrent train track such that $\alpha \pitchfork \tau$, $\alpha$ intersects every large branch of $\tau$ and $\beta$ is fully carried by a diagonal extension of $\tau$.
Then $\ind(\alpha, \beta) \le \ind(\tau)$. In particular, if $k = \ind(\tau)$ then $d^H_k(\alpha, \beta) \ge 2$.
\end{lemma}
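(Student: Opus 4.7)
The plan is to construct an explicit compatible singular foliation for the pair $(\alpha,\beta)$ whose singularity data can be compared directly against $\ind(\tau)$ via Corollary \ref{one singularity}, Lemma \ref{train track side count}, and Proposition \ref{numerology}. Concretely, I would fix a diagonal extension $\tau'$ of $\tau$ that fully carries $\beta$ (which exists by hypothesis) and form the foliation $\cF = \cF(\alpha;\beta,\tau')$ described just before Lemma \ref{train track side count}. By construction, $\beta$ is a non-singular leaf of $\cF$ and each arc of $\alpha$ is also a leaf, so $\cF$ is compatible with $(\alpha,\beta)$.

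The central step is to check that $\cF$ places exactly one singularity in each large complementary region of $\Sigma \smallsetminus (\alpha \cup \beta)$, so that Corollary \ref{one singularity} attains equality and yields
\[
\ind(\alpha,\beta) \;=\; 2p + \sum_j (i_j - 3)
\]
(with the familiar $-1$ adjustment in the orientable case; orientability of $\tau'$ is matched to orientability of the pair by Lemma \ref{induced orientation}, so the two formulas align). Summing the identity of Lemma \ref{train track side count} over the complementary regions of $\tau'$ and invoking Proposition \ref{numerology} for $\tau'$, I would convert $\sum_j(i_j - 3)$ into an expression of the form $\ind(\tau') - 2p + N' - L$, where $N'$ is the number of complementary regions of $\tau'$ and $L$ is the total number of singularities of $\cF$. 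By the one-per-region claim $L$ equals the number $N$ of large regions of $\alpha \cup \beta$, giving the clean identity
\[
\ind(\alpha,\beta) \;=\; \ind(\tau') + N' - N.
\]

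The inclusion $\tau \subset \tau'$ gives $M(\tau) \subset M(\tau')$ and hence $\ind(\tau') \le \ind(\tau)$, and a direct count shows that both $\ind(\tau) - \ind(\tau')$ and $N' - N_\tau$ equal the number $D$ of diagonal branches of $\tau' \smallsetminus \tau$ (each diagonal subdivides one complementary region and drops the index by one). So $\ind(\alpha,\beta) \le \ind(\tau)$ reduces to the purely combinatorial inequality $N_\tau \le N$, which I would verify by an Euler-characteristic comparison together with the hypothesis that $\alpha$ crosses every large branch of $\tau$: this prevents any region of $\tau$ from being entirely absorbed into a non-large (unpunctured quadrilateral) region of $\Sigma \smallsetminus (\alpha \cup \beta)$. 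The ``in particular'' statement about $d^H_k$ is then immediate from the definition of $\C^H_k(\Sigma)$.

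The main obstacle is the singularity-counting claim, which is where all three hypotheses interact. Namely, $\alpha \pitchfork \tau$ rules out the spurious bigons that would otherwise let two singularity-bearing sub-regions of $\tau'$ collapse into a single large region of $\alpha \cup \beta$; $\alpha$ meeting every large branch of $\tau$ guarantees that no large region of $\alpha \cup \beta$ consists only of 2-sided sub-regions of $\tau'$ (which contribute no singularity to $\cF$ by construction); and $\beta$ being fully carried by $\tau'$ ensures that every branch of $\tau'$ is traversed by $\beta$, so the sub-region structure of $\Sigma \smallsetminus \tau'$ cut by $\alpha$ transfers cleanly to the large-region structure of $\Sigma \smallsetminus (\alpha \cup \beta)$. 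Establishing this correspondence rigorously will require a careful case analysis of how the pieces cut by $\alpha$ reassemble across the branches of $\tau'$ through which $\beta$ passes.
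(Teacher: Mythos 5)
Your overall strategy is the same as the paper's: form the foliation $\cF(\alpha;\beta,\cdot)$, show it has exactly one singularity in each large complementary region of $\alpha\cup\beta$ so that equality holds in Corollary \ref{one singularity}, and compare against $\ind(\tau)$ via Lemma \ref{train track side count} and Proposition \ref{numerology}. The problem is that the step you correctly identify as ``the central step'' and ``the main obstacle'' --- one singularity per large region of $\Sigma\smallsetminus(\alpha\cup\beta)$ --- is exactly the content of the lemma, and you do not prove it; you only gesture at ``a careful case analysis of how the pieces cut by $\alpha$ reassemble.'' This is not a routine local check. The paper's proof of this claim is a global argument: given an embedded arc $\eta$ joining two distinct singularities and disjoint from $\beta$, one punctures $\Sigma$ at the endpoints of $\eta$ if necessary, lifts to the universal cover, and invokes the carrying/crossing dichotomy of Section \ref{sec:progress} --- since $\eta$ misses the fully carried curve $\beta$, no branch of $\tilde\tau$ crosses $\tilde\eta$, so some large branch carries $\tilde\eta$; then $\alpha\pitchfork\tau$ forces the lift $\tilde\alpha$ through that branch to separate the endpoints of $\tilde\eta$, so $\alpha$ meets $\eta$. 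All three hypotheses enter precisely here, and no purely local ``reassembly'' analysis along the lines you sketch is likely to substitute for it. As written, the proposal has a genuine gap.

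Two smaller points. First, the bookkeeping with $N$, $N'$, $N_\tau$ and the diagonal count $D$ is unnecessary and slightly misleading: once the one-singularity claim is established, Lemma \ref{train track side count} and Proposition \ref{numerology} give $\ind(\tau') = 2p + \sum(i_j-3) + (L - N')$ where $L$ is the total number of singularities and $N'$ the number of complementary regions of $\tau'$, and $L \ge N'$ holds automatically because every complementary region receives at least one singularity by construction of $\cF$; so $\ind(\alpha,\beta) = 2p+\sum(i_j-3) \le \ind(\tau') \le \ind(\tau)$ with no Euler-characteristic argument and no appeal to the large-branch hypothesis at that stage. (The paper sidesteps the diagonal extension entirely by observing at the outset that $\alpha\pitchfork\tau$ persists under diagonal extension, so one may assume $\beta$ is fully carried by $\tau$ itself.) Second, your construction of $\cF(\alpha;\beta,\tau')$ tacitly requires $\alpha\pitchfork\tau'$; this is true but should be stated, since the efficiency hypothesis is only given for $\tau$.
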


\begin{proof}
If $\alpha$ hits $\tau$ efficiently then the same is true for any diagonal extension of $\tau$, so it suffices to assume that $\beta$ is fully carried by $\tau$.  Let $\cF = \cF(\alpha; \beta, \tau)$. By summing the expression in Lemma \ref{train track side count}
over each component of $\Sigma\smallsetminus \tau$ and applying Proposition
\ref{numerology}, we have
$$\ind(\tau) \ge 2p + \sum(i_j - 3)$$
if $\tau$ is non-orientable and
$$\ind(\tau) \ge -1 +2p + \sum(i_j - 3)$$ 
if $\tau$ is orientable. Here the inequality is strict if and only if there is a component of $\Sigma\smallsetminus \tau$ that contains more than one singularity
of $\cF$.

Now fix $\alpha$ and $\beta$ to be representatives of their isotopy classes such that $\alpha$ is transverse to $\cF$ and $\beta$ is a leaf of $\cF$. We can also assume that $\beta$ is homotopic into a train path in $\tau$ in the complement of the singularities of $\cF$ and there is at most one singularity in any complementary component of $\tau\cup\alpha$. After arranging $\alpha$ and $\beta$ in this way we claim that
each large component of $\Sigma\smallsetminus(\alpha\cup\beta)$ contains exactly one singularity of $\cF$.

Assuming this claim (which we prove below) we complete the proof.
We now note that, by Lemma \ref{induced orientation}, if $\alpha$ and $\beta$ are an oriented pair then $\cF$, and hence $\tau$, is oriented, so the second inequality above holds. If $\alpha$ and $\beta$ are not an oriented pair, we have the first inequality. In both cases, Corollary \ref{one singularity} implies that the right hand side of the inequality is $\ind(\alpha, \beta)$.
\end{proof}

\begin{proof} [Proof of Claim]
Let $\eta$ be an embedded arc between two distinct singularities $p$ and $q$. We assume that $\eta$ is disjoint from $\beta$ and then show it must intersect $\alpha$.

By construction, any punctures of $\Sigma$ are singularities of $\cF$ so if one or both of $p$ and $q$ are singularities then $\eta$ is asymptotic to this puncture. If not, puncture $\Sigma$ at the endpoint to obtain a new surface $\Sigma_0$ such that, in $\Sigma_0$, $\eta$ is asymptotic to punctures at both ends. As our choice of $\beta$ is homotopic to a train path in $\tau$ in the complement of the singularities of $\cF$, we have that $\beta$ is carried (and hence fully carried) by $\tau$ as a curve in $\Sigma_0$.

 Let $\tilde\Sigma_0$ be the universal cover of $\Sigma_0$ and $\tilde\tau_0$ the lift of $\tau$.  The arc $\eta$ lifts to a proper line $\tilde\eta$ that has two endpoints in $S^1_\infty$ and we can separate $S^1_\infty$ into two components $A$ and $B$ as before. We define branches of $\tilde\tau_0$ to be carried by $\tilde \eta$ or to cross $\tilde\eta$ analogously to the definitions given for lifts of simple closed curves.
 
 With the additional punctures it may be that a component of $\Sigma_0\smallsetminus \tau$ may have more than one puncture, in which case $\tilde\tau_0$ will be disconnected. However, since each complementary component $\tau \cup \alpha$ contains at most one singularity either $\eta$ intersects $\alpha$, and we are done, or $\eta$ intersects $\tau$.

In this latter case we proceed as in the proof of Proposition \ref{MMdiag}. Since $\eta$ intersects $\tau$ we have that $\tilde\eta$ intersects $\tilde\tau_0$ so there must either be a branch of $\tilde\tau_0$ that carries $\tilde\eta$ or a branch that  crosses $\tilde\eta$. However, since $\eta$ is disjoint from the fully carried curve $\beta$, we have that no branches of $\tilde\tau_0$ cross $\tilde\eta$. Therefore there is a branch of $\tilde\tau_0$ that carries $\tilde\eta$, and hence there is a large branch $\tilde b$ that carries $\tilde\eta$.

Let $\tilde\alpha$ be a lift of $\alpha$ that intersects $\tilde b$. Since $\alpha \pitchfork \tau$, $\tilde\alpha$ intersects some train path in $\tilde\tau_0$ at most once. As the branch $\tilde b$ is carried by $\tilde\eta$, there are bi-infinite train paths $t$ and $t'$, both containing $\tilde b$, with the endpoints of $t$ in $A$ and the endpoints of $t'$ in $B$. Together this implies that one endpoint of $\tilde\alpha$ is in $A$ and the other is in $B$. In particular, $\tilde\alpha$ separates the endpoints of $\tilde\eta$ so $\alpha$ must intersect $\eta$.
\end{proof}

Next we give a criterion for a curve $\alpha$ to be homotopic to a curve that hits $\tau$ efficiently and that intersects every large branch. At this stage it is convenient to work in the annular cover $\Sigma_\alpha\to \Sigma$ where $\Sigma_\alpha$ contains a homeomorphic lift $\hat\alpha$ of $\alpha$. We let $\hat\tau$ be the pre-image of $\tau$ in $\Sigma_\alpha$. The ideal boundary of $\Sigma_\alpha$ is two circles which we label $\partial_A$ and $\partial_B$ where $\partial_A$ ($\partial_B$) is the image of $A$ ($B$) under the (extension of the) covering map $\tilde\Sigma\to\Sigma_\alpha$. We then let $\hat A$ be the set of branches $b$ of $\hat\tau$ where there is a bi-infinite train path $t$, containing $b$, with both endpoints of $t$ in $\partial_A$. We similarly define $\hat B$. Equivalently, $\hat A$ and $\hat B$ are the images of $\tilde A$ and $\tilde B$ under the covering map $\tilde\Sigma\to \Sigma_\alpha$. A branch of $\hat\tau$ is {\em crosses} $\hat\alpha$ is it is in $\hat\tau\smallsetminus \hat A \cup \hat B$ and it {\em carries} $\hat\alpha$ if it is contained in $\hat A \cap \hat B$. Again, the crossing and carried branches in $\hat\tau$ are the images of the crossing and carried branches in $\tilde\tau$.

\begin{lemma}\label{transverse function}
Let $\alpha$ be a simple closed curve and assume that no branches of $\tau$ carry $\alpha$.
Then there is a continuous function $\phi\colon \hat\tau \to [0,1]$ such that $\phi(\hat{A}) =\{0\}$, $\phi(\hat{B}) = \{1\}$, $\phi$ is monotonic on any crossing train paths and $\phi^{-1}(1/2)$ is discrete.

Furthermore if $\tau \to \tau_0$ and $b_0$ is a branch of $\tau_0$ that is crossed by $\alpha$ then $\phi$ can be chosen such that for every branch $b$ of $\tau$ whose in $\tau_0$ contains $b_0$ we have that $1/2 \in \phi(b)$.
\end{lemma}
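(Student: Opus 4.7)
The plan is to define $\phi$ piecewise on $\hat\tau$, exploiting the trichotomy of branches (each branch is in $\hat A$, in $\hat B$, or crossing, since by hypothesis no branches carry $\alpha$). First I would establish the following local structure at every switch $v$ of $\hat\tau$: the type of the unique large branch at $v$ determines the possible types of the two small branches. If the large branch is in $\hat A$ then each small branch is in $\hat A$ or crossing; if the large branch is in $\hat B$ then each small branch is in $\hat B$ or crossing; and if the large branch is crossing, then both small branches are crossing. The reason is that any bi-infinite train path through a small branch at $v$ must traverse the large branch at $v$, so if a small branch is in $\hat A$ or $\hat B$, the large branch inherits that type. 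Combined with the no-carried-branches assumption, this partitions switches into $A$-switches, $B$-switches, and crossing switches.

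I would then set $\phi \equiv 0$ on every branch in $\hat A$ and $\phi \equiv 1$ on every branch in $\hat B$; this is well-defined and continuous on the closed subset $\hat A \cup \hat B$ of $\hat\tau$ because, by the switch classification, no switch is incident to branches of both types. For a crossing branch $c$ with endpoints $v, w$, the values $\phi(v), \phi(w)$ are forced to be $0$ or $1$ at $A$- or $B$-switches and are free in $[0,1]$ at crossing switches; I would interpolate $\phi$ on $c$ between these values. A key combinatorial observation is that every intermediate switch of a crossing train path $c_1 c_2 \cdots c_n$ is a crossing switch: at such a switch, one of $c_i, c_{i+1}$ is the large branch, so the large branch is crossing, forcing all three incident branches at that switch to be crossing.

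The main obstacle will be to assign the free values at crossing switches and the interpolation on each crossing branch so that $\phi$ is monotonic on every crossing train path while keeping $\phi^{-1}(1/2)$ discrete. To handle this I would use the annular topology of $\Sigma_\alpha$ to give each crossing branch a consistent orientation from an ``$A$-end'' to a ``$B$-end'', reflecting the fact that a crossing branch locally separates $\Sigma_\alpha$ into a side adjacent to $\hat A$ and a side adjacent to $\hat B$. I would verify using the structure at crossing switches (where train paths pair the large branch with each small branch) that these orientations fit together so that monotonicity along every crossing train path can be achieved. Then I would define $\phi$ on each crossing branch as a strictly monotonic linear function from a value at most $1/2$ at the $A$-end to a value at least $1/2$ at the $B$-end, ensuring that $1/2$ is achieved exactly once per crossing branch, so $\phi^{-1}(1/2)$ is discrete.

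For the ``furthermore'' clause, if $b$ is a branch of $\tau$ whose image in $\tau_0$ contains the crossed branch $b_0$, then any bi-infinite train path through $b$ in $\tau$ maps under the carrying map to a bi-infinite train path in $\tau_0$ through $b_0$, which is of type $AB$ in $\tau_0$; hence $b$ is itself crossing in $\tau$. Arranging in the construction above that $\phi$ attains $1/2$ in the interior of every crossing branch (by choosing the linear profile on each crossing branch to straddle $1/2$) then yields $1/2 \in \phi(b)$ automatically.
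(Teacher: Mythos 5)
Your setup is sound: the trichotomy of branches, the classification of switches by the type of their large branch, setting $\phi\equiv 0$ on $\hat A$ and $\phi\equiv 1$ on $\hat B$, and the observation that every intermediate switch of a crossing train path is a crossing switch all check out, as does the argument in the last paragraph that any branch of $\tau$ mapping over $b_0$ is itself crossing. The gap is in the step where you actually assign values: your prescription that on \emph{every} crossing branch $\phi$ runs from a value $\leq 1/2$ at the $A$-end to a value $\geq 1/2$ at the $B$-end is inconsistent with the rest of the lemma. Every crossing switch $v$ is the $B$-end of at least one incident crossing branch and the $A$-end of another (a train path through $v$ enters one branch and exits another), so your conditions force $\phi(v)=1/2$ at every crossing switch. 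A crossing train path can have combinatorial length $\geq 3$ (the paper's proof explicitly introduces $N$, the maximal number of vertices of a crossing path, precisely because these can be long), and its interior branches have crossing switches at both ends; on such a branch your linear $\phi$ would be constantly $1/2$, which destroys strict monotonicity and makes $\phi^{-1}(1/2)$ contain an entire branch, violating discreteness.

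What is needed instead is a value at each crossing switch that \emph{strictly increases} along every crossing path from $\hat A$ to $\hat B$; then $1/2$ is attained once on each maximal crossing path but not on every crossing branch. The paper does this with the combinatorial height $\sigma(v)=$ the maximal number of switches in a crossing path from $\hat A$ to $v$ (whose finiteness and monotonicity rest on Lemma \ref{connected}), setting $\phi(v)=x_{\sigma(v)}$ for $0<x_1<\cdots<x_N<1$ and interpolating linearly. This also shows why your treatment of the ``furthermore'' clause cannot work as stated: one cannot arrange $1/2\in\phi(b)$ for every crossing branch of $\hat\tau$ simultaneously. The paper instead builds $\phi_0$ on $\hat\tau_0$ so that $1/2$ lies in the image of the single branch $\hat b_0$ (one condition on the $x_i$), and then defines $\phi$ on $\hat\tau$ as the composition of $\phi_0$ with the lifted carrying map $\hat\tau\to\hat\tau_0$; any branch of $\hat\tau$ mapping over $\hat b_0$ then automatically has $1/2$ in its image. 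You would need to incorporate both of these ideas to close the argument.
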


\begin{proof}
We first set $\phi$ to be zero on $\hat{A}$ and one on $\hat{B}$.
Let $N$ be the maximal number of vertices in any crossing train path and let $v \not\in \hat{A}\cup\hat{B}$ be a switch in the union of crossing branches. Then there is a crossing path that starts in $\hat A$ and ends at $v$. Let $\sigma(v)$ be the maximal number of switches in any such path. If $t$ is a maximal train path with initial train path in $\hat A$ and terminal train path $\hat B$ and $v$ and $v'$ are switches in $t$ with $v$ occurring before $v'$ then $\sigma(v) < \sigma(v')$. 
 Next we choose real numbers $x_i$ such that
$$0< x_1< \cdots < x_N < 1.$$
We the define $\phi(v) = x_{\sigma(v)}$ and extend $\phi$ linearly along each crossing branch. This gives the first claim.

Now assume that $\tau\to \tau_0$ and $\phi_0 \colon \hat\tau_0 \to [0,1]$ has been chosen as above. We can also choose a branch $\hat b_0$ in $\hat\tau_0$, that is crossed by $\hat\alpha$, such that $\hat b_0$ maps to $b_0$ under the covering map $\hat\tau_0 \to \tau_0$ and we can also assume that $1/2 \in \phi_0(\hat b_0)$. The carrying map $\tau\to \tau_0$ lifts to a carrying map $\hat \tau \to \hat \tau_0$ that extends to the identity on the ideal boundary of $\Sigma_\alpha$ and we define $\phi$ to be the composition of $\phi_0$ with the carrying map.
\end{proof}

Observe that the deck action of $\Gamma_\alpha$ on $\tilde\Sigma$ will not fix any polygon $\tilde P$ in $\tilde\Sigma\smallsetminus\tilde\tau$ and it will preserve polygon types.

\begin{prop}\label{transverse curve}
Let $\tau$, $\tau_0$ be train tracks and $\alpha$ a curve such that
\begin{itemize}
\item $\tau\to\tau_0$;
\item no branch of $\tau_0$ is carried by $\alpha$;
\item the branch $b_0$ of $\tau_0$ is crossed by $\alpha$.
\end{itemize}
Then a representative of $\alpha$ can be chosen such $\alpha\pitchfork \tau$ and $\alpha$ intersects every branch of $\tau$ that contains the pre-image of $b_0$ under the carrying map $\tau\to\tau_0$.
\end{prop}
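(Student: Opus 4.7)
The plan is to construct the desired representative of $\alpha$ in the annular cover $\Sigma_\alpha$ as an essential component of a $1/2$-level set of a suitable continuous function. First I observe that the hypothesis ``no branch of $\tau_0$ is carried by $\alpha$'' passes to $\tau$: a carried branch of $\tau$ would map to a train path in $\tau_0$ all of whose branches are carried, contradicting the hypothesis. Hence Lemma \ref{transverse function} applies, and its ``furthermore'' clause (using $\tau\to\tau_0$ and the crossed branch $b_0$) produces a continuous $\phi\colon\hat\tau\to[0,1]$ vanishing on $\hat A$, equal to $1$ on $\hat B$, monotonic on each crossing train path, with $\phi^{-1}(1/2)$ discrete, and with $1/2\in\phi(b)$ for every branch $b$ of $\tau$ whose image in $\tau_0$ contains $b_0$.

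I would then extend $\phi$ continuously and $\Gamma_\alpha$-equivariantly to all of $\Sigma_\alpha$, with $\phi\equiv 0$ on $\partial_A$ and $\phi\equiv 1$ on $\partial_B$, treating each complementary polygon $\tilde P$ of $\hat\tau$ according to its type in Lemma \ref{polygons}. Type (1) polygons do not appear since no branch carries $\alpha$; on type (2) polygons the boundary lies entirely in $\hat A$ or entirely in $\hat B$, and $\phi$ is extended as the constant $0$ or $1$; for types (3) and (4), the boundary level-$1/2$ points lie on the crossing train paths in $\partial\tilde P$, and I extend $\phi$ over $\tilde P$ so that $\phi^{-1}(1/2)\cap\tilde P$ is a disjoint union of smoothly embedded arcs pairing those points up, with the pairing chosen so that $\{\phi<1/2\}$ in $\tilde P$ is connected to the $\hat A$-sides of $\partial\tilde P$ and $\{\phi>1/2\}$ to the $\hat B$-sides. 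Let $\hat\alpha'$ be the essential component of $\phi^{-1}(1/2)$ bounding the component of $\{\phi<1/2\}\subset\Sigma_\alpha$ containing $\partial_A$, and let $\alpha'\subset\Sigma$ be its projection.

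To check $\alpha'\pitchfork\tau$, a bigon between $\alpha'$ and a train path $t$ of $\tau$ would lift to two consecutive intersections of $\hat\alpha'$ with a lift of $t$ cobounding a disk; these are ruled out by monotonicity of $\phi$ along crossing train paths and by the fact that $\phi$ is constant on non-crossing train paths. For the intersection claim, any branch $b$ of $\tau$ whose image contains $b_0$ is itself crossing (else $b_0$ would lie inside a bi-infinite train path in $\hat A$ or $\hat B$, contradicting that $b_0$ is crossed by $\alpha$), and Lemma \ref{transverse function} supplies a point $p_b\in b$ with $\phi(p_b)=1/2$. The crossing train path containing $b$ terminates at a switch where it meets $\hat A$-branches that extend to bi-infinite $\hat A$-train paths reaching $\partial_A$ inside $\{\phi=0\}$; a neighborhood of these paths in $\Sigma_\alpha$ thus lies in $\{\phi<1/2\}$ and connects the $\hat A$-side of $b$ at $p_b$ back to $\partial_A$. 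Consequently the level curve through $p_b$ is $\hat\alpha'$, and $p_b$ projects to a point of $\alpha'\cap b$.

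The step I expect to require the most care is the coordinated choice of arc pairings inside type (3) and type (4) polygons so that $\{\phi<1/2\}$ joins all $\hat A$-adjacent regions into a single component reaching $\partial_A$ (and symmetrically for $\{\phi>1/2\}$). This is a combinatorial planar-matching problem on $\partial\tilde P$, which can be handled by pairing the level-$1/2$ points in a nested, non-crossing fashion consistent with the $\hat A/\hat B$ alternation pattern supplied by Lemma \ref{polygons}. Once this connectivity is achieved, any remaining null-homotopic components of $\phi^{-1}(1/2)$ can be removed by local modifications of $\phi$ supported in the interiors of polygons without disturbing $\phi|_{\hat\tau}$ or the points $p_b$, and the argument above goes through.
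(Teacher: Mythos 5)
Your construction is, at its core, the paper's: you invoke Lemma \ref{transverse function} to get $\phi$ on $\hat\tau$ with a $1/2$-point on every branch over $b_0$, and you build the curve in the annular cover out of the $1/2$-level, joined through complementary polygons. Two remarks on the packaging. First, the ``planar matching'' you flag as the delicate step is actually canonical: since no branch is carried, $\hat A\cap\hat B$ contains no branches, so type (3a) is excluded in Lemma \ref{polygons}, and every complementary polygon meeting a crossing branch is of type (3b) or (3c), with exactly two crossing train paths whose $\phi$-images agree; thus the boundary $1/2$-points come in forced pairs joined by one arc per polygon (this is exactly what the paper does), and by Lemma \ref{connected} there are only finitely many such arcs. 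Your decision to instead extend $\phi$ over all of $\Sigma_\alpha$ buys nothing and creates genuine trouble where the paper has none: a type (4) polygon has corners in $\hat A\cap\hat B$, so its boundary carries both prescribed values $0$ and $1$ meeting at a corner, and the regions limiting on the two ideal circles are infinite-sided, so a continuous global extension with $\phi\equiv 0$ ``on $\partial_A$'' and the uniqueness of the essential level component through all the points $p_b$ require more care than you give them. The clean route is the paper's: take the union of the pairing arcs, note each bi-infinite train path through a crossing branch joins the two ends of $\Sigma_\alpha$ and meets the union in at most one point, and conclude by a parity argument that the union is a single essential embedded curve containing every $1/2$-point.

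The genuine gap is at the very end: you write ``let $\alpha'\subset\Sigma$ be its projection'' and treat $\alpha'$ as the desired representative. The curve you build is embedded in the annular cover, but your construction is only $\Gamma_\alpha$-equivariant, so its image in $\Sigma$ is merely an immersed curve homotopic to $\alpha$ and may have self-intersections. Proposition \ref{transverse curve} needs an embedded representative: in the proof of Theorem \ref{progress2} the output is fed into Lemma \ref{lower bound}, where $\ind(\alpha,\beta)$ is computed from the complementary polygons of the filling pair $\alpha\cup\beta$, which only makes sense for a simple closed curve. Promoting the immersed curve to a simple one while preserving efficient position \emph{and} the intersection points on the branches over $b_0$ is not automatic; it is exactly the content of Lemma \ref{homotope to simple} (innermost-bigon surgeries that yield $\alpha'\pitchfork\tau$ with $\alpha\cap\tau\subset\alpha'\cap\tau$). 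This step is entirely absent from your proposal, and without it the statement you prove is strictly weaker than the proposition.
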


\begin{proof} 
As in Lemma \ref{transverse function} we will work in the cover $\Sigma_\alpha\to \Sigma$. We let $\phi\colon\hat\tau\to [0,1]$ be the function given by Lemma \ref{transverse function}. In particular, for every branch $b$ of $\tau$ whose image in $\tau_0$ contains $b_0$ we have a branch $\hat b$ of $\hat\tau$ such that $\hat b$ maps to $b$ under the covering map $\hat\tau\to\tau$ and $1/2 \in \phi(\hat b)$.

Assume that the polygon $\hat P$ in $\Sigma_\alpha\smallsetminus \hat\tau$ contains a branch that is crossed $\hat\alpha$. Therefore $\hat P$ has type (3). Note that it can't have type (3a) for if it did then the intersection $\hat{A} \cap \hat{B}$ would be non-empty. Therefore it is of type (3b) or (3c) and $\hat P$ contains exactly two train paths $t$ and $t'$ with either both maximal crossing paths or $t$ and $t'$ meet at a switch while their other endpoints are both in $\hat{A}$ or both in $\hat{B}$. In both case the $\phi$-image of $t$ and $t'$ are equal. In particular if there is a $p \in t$ with $\phi(p) = 1/2$ there is a $p' \in t'$ with $\phi(p') =1/2$ and we can fix an embedded arc in $\hat P$ with endpoints $p$ and $p'$. The union of these arcs is a collection of properly embedded lines and simple closed curves that we label $\hat \beta$. We will show that $\hat\beta$ is a single, essential simple closed curve in $\Sigma_\alpha$.

First we note that by Lemma \ref{connected} the number of crossing branches in $\hat\tau$ is finite. Therefore $\hat\beta$ is a union of finitely many arcs and cannot contain a line. Next we observe that any train path $t$ contains a most one point $p \in t$ with $\phi(p) = 1/2$ so $\hat\beta$ intersects $t$ at most once. However, if $\hat\beta$ contains a non-essential closed curve it will intersect every bi-infinite train path an even number of times, a contradiction. Therefore $\hat\beta$ is a union of essential curves. Every essential curve will intersect any bi-infinite train path with endpoints in distinct components of the ideal boundary of $\Sigma_\alpha$. Therefore $\hat\beta$ must be a single essential simple closed curve.
However, the image in $\Sigma$ may not be a simple curve. We resolve this last issue in  Lemma \ref{homotope to simple} that follows.
\end{proof}

\begin{lemma}\label{homotope to simple}
Let $\alpha$ be an immersed curve that hits $\tau$ efficiently. If $\alpha$ is homotopic to a simple closed curve then it is homotopic to a simple closed curve $\alpha'$ with $\alpha'\pitchfork \tau$ and $\alpha \cap \tau \subset \alpha' \cap \tau$.
\end{lemma}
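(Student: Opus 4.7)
The plan is to remove self-intersections of $\alpha$ by iterated bigon moves, chosen so as to preserve both the intersection set $\alpha\cap\tau$ and the efficient-position hypothesis. If $\alpha$ is already simple, take $\alpha'=\alpha$. Otherwise, since $\alpha$ is a non-simple immersed representative of the homotopy class of a simple closed curve, the classical Hass--Scott bigon lemma produces an embedded disk $B\subset\Sigma$ whose boundary $\partial B=a_1\cup a_2$ consists of two sub-arcs of $\alpha$ meeting at two self-intersection corners $p,q$ of $\alpha$. I would take $B$ to be innermost among such singular bigons of $\alpha$.

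The next step is to describe $\tau\cap B$ using the hypothesis $\alpha\pitchfork\tau$. I would argue that every arc of $\tau$ properly embedded in $B$ joins $a_1$ to $a_2$: if instead some train path had both endpoints on the same side $a_i$, then together with a sub-arc of $a_i$ it would bound a bigon between $\alpha$ and a train path, contradicting efficient position. After a small perturbation of $\alpha$ in a neighborhood of $\{p,q\}$, I can also assume that the two corner points lie off $\tau$.

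I would then perform the bigon move as a pair of local smoothings at $p$ and $q$, chosen so as to keep the underlying curve connected and to eliminate both corner self-intersections. Because these smoothings are supported in neighborhoods of $p,q$ disjoint from $\tau$, the resulting immersed curve $\alpha'$ coincides with $\alpha$ away from these neighborhoods. Consequently $\alpha\cap\tau\subset\alpha'\cap\tau$ and $\alpha'$ is freely homotopic to $\alpha$. To see $\alpha'\pitchfork\tau$, any hypothetical bigon between $\alpha'$ and some train path $t$ that is not already a bigon for $\alpha$ would have to meet the modified portion of $\alpha'$ near $B$; the structural description of $\tau\cap B$ above rules this out by a finite case analysis of how $t$ can pass through the smoothing regions.

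Iterating this procedure strictly reduces the self-intersection number at each step, so after finitely many steps $\alpha$ becomes simple, yielding the desired $\alpha'$. The main obstacle I anticipate is the efficiency-preservation step: the combinatorial verification that the local surgeries at $p$ and $q$ cannot create a new bigon with a train path requires both the innermost choice of $B$ and the fact that every arc of $\tau$ in $B$ crosses transversally from $a_1$ to $a_2$.
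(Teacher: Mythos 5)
Your overall strategy coincides with the paper's: find an innermost singular bigon $B$ with sides $a_1,a_2$ and corners $p,q$, arrange the corners off $\tau$, perform the bigon move (your ``pair of local smoothings at $p$ and $q$'' produces the same curve as the paper's ``swap $a_1$ and $a_2$ and perturb near the corners''), observe that the point set of the curve is unchanged away from the corner neighborhoods so that $\alpha\cap\tau\subset\alpha'\cap\tau$, and induct on the number of double points. Up to that point the two arguments match.

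The gap is in the step you yourself flag as the main obstacle: showing $\alpha'\pitchfork\tau$. Your plan is to rule out a new bigon between $\alpha'$ and a train path $t$ by ``a finite case analysis of how $t$ can pass through the smoothing regions,'' using the fact that every arc of $\tau\cap B$ runs from $a_1$ to $a_2$. But a new bigon need not be located anywhere near $B$. After the move, $\alpha'$ contains genuinely new sub-arcs: an arc of $\alpha'$ can enter a corner neighborhood along a complementary arc $c_1$ and exit along $a_2$, where $\alpha$ would have continued along $a_1$. Such an arc, together with an arc of a train path, could bound a large disk disjoint from $B$; the structure of $\tau\cap B$ says nothing about this configuration, and no analysis local to the smoothing regions can exclude it. What is needed is a global count, which is how the paper proceeds: pass to the universal cover, where efficient position says each bi-infinite train path meets a fixed lift $\tilde\alpha$ in at most one point; note that each swapped-in arc $x_i'$ cobounds an embedded bigon with the corresponding arc $x_i\subset\tilde\alpha$, so any train path meeting $x_i'$ must also meet $x_i$ (it enters the small bigon and must exit through $x_i$, since it can meet $x_i'$ at most once); and then conclude that a train path meeting $\tilde\alpha'$ twice would force two intersections with $\tilde\alpha$, a contradiction. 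Your local structural observation about $\tau\cap B$ is a special case of this, but without the universal-cover counting the efficiency of $\alpha'$ does not follow. Secondarily, you should note that the initial perturbation moving $p,q$ off $\tau$ must be taken small enough to preserve efficient position and to only enlarge $\alpha\cap\tau$, and that the corner resolutions must be chosen so the curve stays connected and homotopic (for one combinatorial type of bigon the naive resolution splits off a component); both points are routine but worth a sentence.
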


\begin{proof}
After performing a small homotopy we can assume that all of the self intersections of $\alpha$ are disjoint from $\tau$ and that the new curve has larger intersection with $\tau$. 
Let $x$ and $x'$ be arcs in $\alpha$ that form an innermost bigon. We swap $x$ and $x'$ and perform a small homotopy to eliminate the two points of intersection. This gives a new curve $\alpha'$ homotopic to $\alpha$  with $\alpha \cap \tau = \alpha' \cap \tau$ but the self intersections have been reduced by two. 
 Let $\tilde\alpha$ be a lift of $\alpha$ and label the pre-images of $x$ in $\tilde\alpha$ as $\dots, x_{-1}, x_0, x_1, \dots$. Each $x_i$ forms a bigon with an arc $x'_i$ that maps to $x'$ under the covering map. We obtain a lift of $\tilde\alpha'$ of $\alpha'$ by swapping the $x_i$ with the $x'_i$ (and equivariantly performing the small homotopy). 

Any train path can intersect each $x_i$ and $x'_i$ at most once so if a train path intersects $x'_i$ it will also intersect $x_i$. Let $t$ be a train path that intersects $\tilde\alpha'$. If $t$ doesn't intersect any the $x'_i$ then $t\cap \tilde\alpha' \subset \tilde\alpha'\cap \tilde\alpha$ so there is only one point of intersection. If $t$ intersects $x'_i$ then $t$ also intersects $x_i$. This also can happen only once since otherwise $t$ would intersect $\tilde\alpha$ two or more times. Therefore $t\cap\tilde\alpha'$ is a single point.
 Any train path that intersects $x_i$ will also intersect $x'_i$. Since any train path $t$ can intersect $\tilde\alpha$ at most once this implies that $t$ can intersect $\tilde\alpha'$ at most once and therefore $\alpha' \pitchfork \tau$. 
 We repeat this process until we have a simple curve.
\end{proof}

\begin{lemma}\label{commute}
Assume that $\tau'\ss\tau$ and that $b$ is a large branch in $\tau$ that is split in $\tau'$. Then the splitting sequence can be arranged such that the first split in the sequence occurs on $b$.
\end{lemma}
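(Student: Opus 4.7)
The plan is to exploit a combinatorial feature of generic train tracks: at each switch, exactly one of the three incident half-branches is large, so any two distinct large branches share no switches. Since the local combinatorial change effected by a split on a large branch $b'$ only involves $b'$ together with the four half-branches meeting its endpoints, splits on two distinct (hence disjoint) large branches commute, producing the same resulting track regardless of the order in which they are performed. With this observation in hand, the proof becomes an induction that moves the split on $b$ one position earlier in the sequence at a time.

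Write the given splitting sequence as $\tau_0 \to \tau_1 \to \cdots \to \tau_n$ with $\tau_0 = \tau'$ and $\tau_n = \tau$, so that $\tau_i$ is produced from $\tau_{i+1}$ by a split on a large branch of $\tau_{i+1}$. There is a unique index $j$ for which the split producing $\tau_j$ from $\tau_{j+1}$ is performed on (the image of) $b$, since the image of $b$ becomes a small branch immediately after being split. I will induct on $n - j - 1$, the number of splits performed before the split on $b$; the base case $n - j - 1 = 0$ is exactly the desired conclusion. For the inductive step, the split producing $\tau_{j+1}$ from $\tau_{j+2}$ is on a large branch $b''$ of $\tau_{j+2}$ with $b'' \neq b$, and by the key observation $b$ and $b''$ are disjoint in $\tau_{j+2}$. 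Let $\tilde\tau_{j+1}$ denote the track obtained from $\tau_{j+2}$ by splitting on $b$ using the same type (left, right, or central) as the original split producing $\tau_j$ from $\tau_{j+1}$; commutativity of disjoint splits tells us that applying the original $b''$-split to $\tilde\tau_{j+1}$ yields $\tau_j$. Substituting the two-step segment $\tau_j \to \tau_{j+1} \to \tau_{j+2}$ by $\tau_j \to \tilde\tau_{j+1} \to \tau_{j+2}$ thus produces a new splitting sequence in which the split on $b$ occurs one position earlier in time.

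The main obstacle is justifying that the rearranged sequence is a legitimate splitting sequence in the paper's sense, i.e., that $\tilde\tau_{j+1}$ is recurrent and that the split type on $b$ used at the new step satisfies ``central if and only if the central split is the only recurrent one.'' My plan for this is to argue that the recurrence classification of the three possible splits of a recurrent track at a large branch $b$ depends only on the local structure at $b$ together with the orientability of the ambient track: orientability is preserved under any split, the local structure at $b$ in $\tau_{j+2}$ is identical to that in $\tau_{j+1}$ (the split at $b''$ being disjoint from $b$), and the ``only central recurrent'' case is governed by the non-orientability of the ambient track being resolved by the central split at $b$, as recorded just after Proposition \ref{dimensions}. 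The same split type thus remains valid for $\tau_{j+2}$ on $b$ as it was for $\tau_{j+1}$ on $b$, and iterating the single-step swap brings the split on $b$ to the first temporal split of the sequence, completing the proof.
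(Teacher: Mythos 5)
Your proof takes essentially the same route as the paper's: identify the step at which (the preimage of) $b$ is split and commute that split past the earlier ones one swap at a time, using that a split only modifies the track near the large branch being split and that distinct large branches of a generic track are disjoint. The paper's version does exactly this via a length-two base case plus induction, and your observation that exactly one of the three half-branches at each switch is large is a clean way to justify the disjointness (and hence that $b$ stays large, and that the two splits commute), which the paper leaves as ``one can check directly.''

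One caveat on the final paragraph. The load-bearing claim there --- that the recurrence classification of the three splits at $b$ ``depends only on the local structure at $b$ together with the orientability of the ambient track'' --- is false as stated: whether $\tau_L$ is recurrent is a global property of the cone $M(\tau)$ (whether its interior meets the relevant open half-space cut out by the weight of the new small branch), not something read off near $b$. The conclusion you need is nevertheless true, and has a shorter correct proof. For recurrence of $\tilde\tau_{j+1}$, note that $\tau_j$ is obtained from $\tilde\tau_{j+1}$ by the $b''$-split, so $\tilde\tau_{j+1}$ carries the recurrent track $\tau_j$ via a carrying map surjective on branches and is therefore recurrent; when the $b$-split is left or right this already certifies that the central split of $\tau_{j+2}$ at $b$ is not the only recurrent one. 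When the $b$-split is central, one uses the equivalence (via Proposition \ref{dimensions} and Penner's trichotomy) of ``only the central split at $b$ is recurrent'' with ``$\tau$ is non-orientable while the central split is recurrent and orientable,'' together with the fact that orientability is preserved in both directions by left and right splits, to transfer the condition from $\tau_{j+1}$ to $\tau_{j+2}$ (and similarly for the $b''$-split at its new position). Since the paper's own proof does not address the legality of the rearranged sequence at all, this is a refinement rather than a regression, but the justification as written would not survive scrutiny.
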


\begin{proof}
Assume that $\tau' =\tau_0 \to \tau_1 \to \tau_2 = \tau$ is a sequence of single splits. If $\tau_1\to\tau_2$ is a split along $b$ we are done. If not there is another large branch $b'$ in $\tau_2$ where $\tau_1 \to \tau_2$ is a split along $b'$. Then the pre-image of $b$ in $\tau_1$ is a large branch and the split $\tau_0\to\tau_1$ occurs on this branch. One can then check directly that if we first split on $b$ and then on $b'$ the final track will still be $\tau_0$. The general case follows by induction.
\end{proof}

\begin{lemma}\label{traverse}
There exists $N>0$ such that if
$$\tau_0\twoheadrightarrow \cdots \twoheadrightarrow\tau_N$$
and $b$ is branch in $\tau_N$
then there is a track $\tau$ with $\tau_0 \ss \tau\ss \tau_N$ and every large branch of $\tau$ intersects the pre-image of $b$ under the carrying map $\tau\to \tau_N$.
\end{lemma}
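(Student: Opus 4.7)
The plan is to let the preimage of $b$ grow under successive $\twoheadrightarrow$ steps until it exhausts enough of the track. For each $m$, let $P_m \subseteq \tau_m$ denote the set of branches of $\tau_m$ whose image train path under the carrying map $\tau_m \to \tau_N$ contains $b$, so $P_N = \{b\}$, and we seek an intermediate $\tau$ with every large branch of $\tau$ contained in the corresponding $P$.

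The key observation is that $P$ strictly grows whenever a large branch lying in $P$ is split. Concretely, for a left or right split of a large branch $b' \in P_{m+1}$ in passing to $\tau_m$, the newly created small branch of $\tau_m$ corresponds to $b'$ and has image $b'$ in $\tau_{m+1}$; moreover, two branches of $\tau_m$ incident to this small branch (the ones slid diagonally across $b'$ by the split) have images of the form $(c, b')$ in $\tau_{m+1}$. In all these cases, composing with $\tau_{m+1} \to \tau_N$ yields an image containing $b$, so these new branches lie in $P_m$. By Lemma \ref{allsplit}, each $\twoheadrightarrow$ step splits every large branch of the later track, so as soon as a large branch lies in $P$, $P$ grows by at least two new branches, while splits on large branches adjacent to but not in $P$ restructure the neighborhood of $P$ and eventually make branches of $P$ themselves large. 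Iterating, and using that any train track on $\Sigma$ has at most a uniformly bounded number $B(\Sigma)$ of branches, after $N = O(B(\Sigma))$ many $\twoheadrightarrow$ steps the preimage $P_0$ must contain every branch of $\tau_0$, so we may take $\tau = \tau_0$.

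The main obstacle is the initial warm-up: when $b$ is a small branch in $\tau_N$, the preimage $P$ does not immediately grow because no branch of $P$ is large, and one must analyze how splits of adjacent large branches rearrange the switch structure until a branch of $P$ acquires two large half-branches. Using Lemma \ref{commute} to reorder splits and carefully tracking which branches become large at each stage, the number of warm-up $\twoheadrightarrow$ steps is bounded by a constant depending only on $\Sigma$; central splits, which produce non-generic tracks, are handled via the generic approximation described in Section \ref{sec:2}.
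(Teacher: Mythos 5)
There is a genuine gap, and it sits exactly where you flag the ``main obstacle.'' The entire content of Lemma \ref{traverse} is to bound how many $\twoheadrightarrow$-steps can elapse while some intermediate track still has a large branch \emph{disjoint} from the pre-image of $b$; your proposal asserts that this ``warm-up'' is bounded by a constant depending on $\Sigma$ but gives no mechanism for the bound. Your monovariant --- the number of branches in $P_m$ --- only increases when a large branch already lying in $P_{m+1}$ is split, and Lemma \ref{allsplit} gives you no such branch: it guarantees that every large branch of the later track is split, but all of those large branches may lie outside $P$. In that situation $P_m = P_{m+1}$ under the branch bijection and your count stalls; this can recur each time $P$ grows, so ``number of branches of $\tau_0$'' does not bound the number of steps. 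The paper's proof counts something else entirely: the number of \emph{$b$-train paths} (maximal train paths whose interior lies in the complement $\sigma^b$ of the pre-image), which strictly increases under every split performed on a large branch of $\sigma^b$, and which is uniformly bounded precisely because $\tau_{N-1}\twoheadrightarrow\tau_N$ forces $\sigma^b$ to carry no closed curves, so each such path traverses each branch at most twice. Your argument never invokes the fact that the branches missing $b$ carry no curve, and without it there is no a priori bound on how long the splitting can stay away from $P$. (The paper also needs Lemma \ref{commute} to reorder a splitting sequence so that all these ``bad'' splits come first, which is how the contradiction is assembled.)

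Two further problems. First, your target --- that $P_0$ contains \emph{every} branch of $\tau_0$, so that one may take $\tau=\tau_0$ --- is both stronger than needed and unjustified: a branch of $\tau_0$ whose image in $\tau_N$ is a short train path avoiding $b$ is perfectly consistent with $\tau_0\twoheadrightarrow\tau_N$ (which only forces every carried \emph{curve} to traverse $b$), and the conclusion of the lemma deliberately allows $\tau$ to be an intermediate track of a splitting sequence, not one of the $\tau_i$. Second, the claimed bound $N=O(B(\Sigma))$ does not follow from counting branches; the paper's constant $N'$ comes from bounding the number of $b$-train paths (each traversing each branch at most twice), which is a different quantity. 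To repair the proof you would need to replace the ``restructuring of the neighborhood of $P$'' heuristic with an explicit strictly monotone, uniformly bounded quantity attached to the complement of $P$, which is exactly the paper's $t_i$.
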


\begin{proof}
Let
$$\sigma_0 \ss\sigma_1 \ss\cdots \ss\sigma_n = \tau_{N-1}$$
be a sequence of single splittings and let $\sigma^b_i$ be the branches of $\sigma_i$ that don't intersect the pre-image of $b$ under the carrying map $\sigma_i \to \tau_N$.  Since $\tau_{N-1} \twoheadrightarrow \tau_N$ we have that $\sigma^b_n$ doesn't carry any closed curves.

Assume that each split $\sigma_i \ss\sigma_{i+1}$ is along a large branch in $\sigma^b_{i+1}$ and that $\sigma_0^b$ contains a large branch.  We will obtain a bound on $n$. A {\em $b$-train path} in $\sigma_i$ is a a train path whose initial and terminal half branches are in $\sigma_i \smallsetminus \sigma^b_i$ while the remainder of the train path is in $\sigma^b_i$. Let $t_i$ be the number of $b$-train paths in $\sigma_i$
 Every large branch in $\sigma^b_i$ is traversed by at least four distinct $b$-train paths so $t_i \ge 4$ for $i = 0, \dots, n$. Furthermore, the image in $\sigma_{i+1}$, of a $b$-train path in $\sigma_i$ is also a $b$-train path and there is at least one $b$-train path in $\sigma_{i+1}$ that is not the image of a $b$-train path in $\sigma_i$ so $t_i \le t_{i+1} -1$.
As $\sigma^b_n$ doesn't carry any closed curves, none of these train paths can contain a closed curve and, therefore, none of these paths train paths traverse any branch more than twice. As the number of branches of any train track on $\Sigma$ is uniformly bounded  there is a constant $N' = N'(\Sigma)$ such that $t_i \le N'+5$. Since $t_i < t_{i+1}$ and $t_i \ge 4$ this implies that $n < N'$.

Let $N = N'+1$ and assume that the lemma fails: for any train track $\tau$ with $\tau_0 \ss \tau \ss \tau_N$ there is a large branch of $\tau$ that is disjoint from the pre-image of $b$. 
We will show, via induction, that for all $m$ with $0\le m \le N'$ there is a sequence of single splits
$$\sigma_{N'-m} \ss \cdots \ss \sigma_{N'} =\tau_{N-1}$$
where all the splits occur on large branches in $\sigma^b_i$
and $\tau_{N'-(m+1)} \ss \sigma_{N'-m}$.
If $m=0$ this is clear. For the induction step we assume such a sequence exists for $m$ and prove there is a sequence for $m+1$.
 As $\tau_{N'-(m+1)} \twoheadrightarrow \tau_{N'-m}$ we have $\tau_{N'-(m+1)} \twoheadrightarrow \sigma_{N'+1-m}$.
  By our assumption $\sigma^b_{N'-m}$ contains a large branch and by Lemma \ref{allsplit} this branch is split in any splitting sequence $\tau_{N'-(m+1)} \twoheadrightarrow \sigma_{N'-m}$.
  By Lemma \ref{commute} we can arrange this splitting sequence so that it happens first.  This produces the new sequence for $m+1$. As $\tau_{N'-(m+2)} \twoheadrightarrow \tau_{N'-(m+1)}$ we have $\tau_{N'-(m+2)} \twoheadrightarrow \sigma_{N'-(m+1)}$.
  This completes the induction step.

When $m =N'$ we have sequence  
$$\sigma_0 \ss\sigma_1 \ss\cdots \ss\sigma_{N'} = \tau_{N-1}$$
where all the splits occur in $\sigma^b_i$ and $\sigma_0^b$ contains a large branch. This contradicts the choice of $N'$ finishing the proof.
\end{proof}

\begin{proof}[Proof of Theorem \ref{progress2}]
First assume $n=1$.
Let $p = N+M$ where $N$ is the constant from Lemma \ref{traverse} and $M$ is the constant from Lemma \ref{extension carry}. Recall that $\alpha$ isn't carried by any diagonal extension of $\tau_p$ so by Proposition \ref{diagonal carry} there is branch $b$ of $\tau_p$ that $\alpha$ crosses.
  By Lemma \ref{traverse} there is a track $\tau$ with $\tau_M \to \tau\to \tau_p$ such that every large branch of $\tau$ is in the pre-image of $b$ under the carrying map $\tau\to\tau_p$. Then Proposition \ref{transverse curve} implies that  there is a representative of $\alpha$ with
$\alpha \pitchfork \tau$ and such that $\alpha$ crosses every large branch of $\tau$.
By Lemma \ref{extension carry}, since $\beta$ is carried by a diagonal extension of $\tau_0$ it is fully carried by a diagonal extension of $\tau_M$  and hence is fully carried by a diagonal extension of $\tau$. By Lemma \ref{lower bound}, $d^H_k(\alpha, \beta) \ge 2$.

For the general case we let $\beta=\gamma_0, \gamma_1, \dots$ be an arbitrary path in $\C_k(\Sigma)$. By the previous case we inductively see that for $i\le n$ the curve $\gamma_i$ is carried by a diagonal extension $\tau_{i\cdot p}$. In particular, if $\alpha =\gamma_j$ then $j>n$ since $\alpha$ is not carried by a diagonal extension of $\tau_{n\cdot p}$ and, hence, not carried by a diagonal extension of any $\tau_{i\cdot p}$.  Therefore $d^H_k(\alpha, \beta)>n$.
\end{proof}

\begin{lemma}\label{no extension carry}
Let $\tau_0 \ss \tau_1$. If $\alpha$ is carried by $\tau_1$ but not by $\tau_0$ then $\alpha$ is not carried by any diagonal extension of $\tau_0$. In particular, if $\tau_0 \twoheadrightarrow \tau_1$ then any vertex cycle of $\tau_1$ is not carried by any diagonal extension of $\tau_0$.
\end{lemma}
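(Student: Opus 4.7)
The proof is essentially an immediate consequence of Lemma \ref{diagonal extension}, specifically its last assertion. The plan is to argue the contrapositive: if $\alpha$ were carried by some diagonal extension $\tau_0'$ of $\tau_0$, then by Lemma \ref{diagonal extension} (with $\tau = \tau_0$, $\sigma = \tau_1$, $\tau' = \tau_0'$), the fact that $\alpha$ is carried by both $\tau_0'$ and $\tau_1$ would force $\alpha$ to be carried by $\tau_0$, contradicting the hypothesis.

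For the ``in particular'' clause, let $\gamma$ be a vertex cycle of $\tau_1$, so $\gamma \in \S(\tau_1)$ and $[\gamma]$ lies on the boundary of the polyhedron $P(\tau_1)$ (being a vertex of $P(\tau_1)$). I would then show that $\gamma$ is not carried by $\tau_0$: if it were, then the hypothesis $\tau_0 \twoheadrightarrow \tau_1$ (which is equivalent to $P(\tau_0)$ being contained in the interior of $P(\tau_1)$) would place $[\gamma]$ in the interior of $P(\tau_1)$, contradicting that $[\gamma]$ is a vertex. Hence the first part of the lemma applies and $\gamma$ is not carried by any diagonal extension of $\tau_0$.

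There is really no main obstacle here; both parts follow formally from results already established. The only small point to verify carefully is the equivalence between the combinatorial definition of $\twoheadrightarrow$ (no branch of $\tau_1$ can be avoided by curves carried by $\tau_0$) and the geometric statement that $P(\tau_0) \subset \operatorname{int} P(\tau_1)$, which is the remark made immediately after the definition of $\twoheadrightarrow$.
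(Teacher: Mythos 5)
Your argument is correct, but it takes a genuinely different (and much shorter) route than the paper. You observe that the first assertion is just the contrapositive of the final clause of Lemma \ref{diagonal extension}: applying that clause with $\tau=\tau_0$, $\sigma=\tau_1$ and $\tau'$ an arbitrary diagonal extension of $\tau_0$, a curve carried by both $\tau'$ and $\tau_1$ is carried by $\tau_0$. This is a legitimate deduction from a result stated earlier, and indeed the paper uses exactly this deduction pattern in the proof of Theorem \ref{qtree}. The paper's own proof of Lemma \ref{no extension carry} is instead self-contained within the machinery of Section \ref{sec:progress}: it isolates the single split $\eta_0\ss\eta_1$ at which $\alpha$ stops being carried, shows in the universal cover that the small branch created by that split \emph{crosses} $\alpha$, and then concludes from the crossing-branch obstruction (via Proposition \ref{diagonal carry}) together with the push-forward clause of Lemma \ref{diagonal extension}. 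Your route buys brevity; the paper's route avoids resting the key step on the terse polyhedral identity $P(\sigma)\cap P(\tau')=P(\tau)$ (with its codimension-zero claims along splitting sequences) and produces the extra geometric information that $\eta_0$ has a branch crossing $\alpha$, which fits the crossing/carrying dichotomy developed in that section. For the vertex-cycle clause, your interior-of-$P(\tau_1)$ argument matches the equivalence stated after the definition of $\twoheadrightarrow$ and is fine; note only that it implicitly needs $\dim P(\tau_1)>0$ (so that a vertex cycle is not fully carried), a nondegeneracy that holds for the filling index-$k$ tracks to which the lemma is applied and that the paper leaves implicit as well.
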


\begin{proof}
By assumption there is a single split $\eta_0\ss \eta_1$ in the sequence $\tau_0 \ss\tau_1$ where $\eta_1$ carries $\alpha$ but $\eta_0$ does not.

Lift the carrying map $\eta_0 \to \eta_1$ to a carrying map $\tilde\eta_0 \to \tilde\eta_1$ in the universal cover that is the identity at infinity. Let $b_0$ be the small branch in $\eta_0$ that is created in the splitting and let $\tilde b_0$ be a lift of $b_0$ to $\tilde\eta_0$. We'll show that $b_0$ crosses $\alpha$. Let $t_0$ be a bi-infinite train path in $\tilde\eta_0$ that contains $\tilde b_0$ and let $t_1$ be the image of $t_0$ in $\tilde\eta_1$. The image of $\tilde b_0$ in $\tilde\eta_1$ is a large branch $\tilde b_1$ and $t_1$ traverses it - that is, $t_1$ enters and leaves on opposite sides. Since $\alpha$ is carried by $\eta_1$ but not $\eta_0$, there is a lift $\tilde\alpha$ that contains $\tilde b_1$ but enters and leaves $\tilde b_1$ at the small half branches that $t_1$ misses. In particular, $t_1$ separates the endpoints of $\tilde\alpha$. Since the carrying map is the identity at infinity the train path $t_0$ also separates the endpoints of $\tilde\alpha$. Hence, $\tilde b_0$ crosses $\tilde\alpha$ and $b_0$ crosses $\alpha$.

Since $\eta_0$ has a branch that crosses $\alpha$, by Proposition \ref{diagonal carry} no diagonal extension of $\eta_0$ carries $\alpha$. If a diagonal extension of $\tau_0$ carried $\alpha$ then, by Lemma \ref{diagonal extension}, a diagonal extension of $\eta_0$ would carry $\alpha$. Hence, no diagonal extension of $\tau_0$ carries $\alpha$.
\end{proof}

\begin{cor}\label{distance_lower}
There exist constants $A_0$ and $A_1$ such that following holds.
Let
$$\tau_0 \twoheadrightarrow \cdots \twoheadrightarrow \tau_n$$
be a sequence of index $k$ train tracks. If $\alpha$ is carried by a diagonal extension of $\tau_0$ and $\beta$ is not carried by a diagonal extension of $\tau_n$ then
$$d_k(\alpha, \beta) \ge A_0 n - A_1.$$
Furthermore
$$d_k(\tau_0, \tau_n) \ge A_0 n - A_1.$$
\end{cor}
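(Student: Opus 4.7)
The plan is to obtain the first inequality by chaining Theorem \ref{progress2}, which produces a lower bound in the Hamenst\"adt graph $\C_k^H$, with Theorem \ref{ursuladef}, which tells us that $d_k \le d_k^H \le 2d_k$ so that $d_k \ge d_k^H/2$. Theorem \ref{progress2} only makes progress in blocks of size $p=p(\Sigma)$, so I would apply it to a truncated subsequence of length a multiple of $p$. Concretely, take $m=\lfloor(n-M)/p\rfloor$, where $M$ is the constant from Lemma \ref{extension carry}, and apply Theorem \ref{progress2} to the chain $\tau_0\twoheadrightarrow\cdots\twoheadrightarrow\tau_{mp}$.

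The one point that needs care is that Theorem \ref{progress2} requires $\beta$ to be uncarried by any diagonal extension of the \emph{last} track of the chain, namely $\tau_{mp}$, while the hypothesis of the corollary only supplies this for $\tau_n$. Here I would use the contrapositive of Lemma \ref{extension carry}: the remaining sub-chain $\tau_{mp}\twoheadrightarrow\cdots\twoheadrightarrow\tau_n$ has length $n-mp\ge M$, so if $\beta$ were carried by a diagonal extension of $\tau_{mp}$ then it would be carried by a diagonal extension of $\tau_n$, contradicting the hypothesis. Thus Theorem \ref{progress2} applies and gives $d_k^H(\alpha,\beta)>m\ge (n-M)/p-1$, so that $d_k(\alpha,\beta)\ge d_k^H(\alpha,\beta)/2 > ((n-M)/p-1)/2$, which is of the desired form $A_0 n-A_1$ after choosing $A_0=1/(2p)$ and absorbing constants into $A_1$.

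For the second inequality, the plan is to produce explicit vertex cycles that satisfy the hypotheses of the first inequality. Let $\alpha\in V(\tau_0)$, which is carried by the trivial diagonal extension $\tau_0$ of itself, and let $\beta\in V(\tau_n)$. Applying Lemma \ref{no extension carry} to the single step $\tau_{n-1}\twoheadrightarrow\tau_n$ shows that $\beta$ is not carried by any diagonal extension of $\tau_{n-1}$. Now apply the first inequality to the shortened chain $\tau_0\twoheadrightarrow\cdots\twoheadrightarrow\tau_{n-1}$ of length $n-1$ to obtain $d_k(\alpha,\beta)\ge A_0(n-1)-A_1$. Since $\alpha\in V(\tau_0)$ and $\beta\in V(\tau_n)$, by definition $d_k(\tau_0,\tau_n)\ge d_k(\alpha,\beta)$, so the second inequality follows after replacing $A_1$ by $A_0+A_1$.

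The argument is essentially bookkeeping; the real content sits in Theorem \ref{progress2}, Lemma \ref{extension carry}, Lemma \ref{no extension carry}, and Theorem \ref{ursuladef}. The only mildly delicate step is the contrapositive use of Lemma \ref{extension carry} to transfer the ``uncarried by any diagonal extension'' hypothesis from $\tau_n$ back to the last track $\tau_{mp}$ of the truncated chain, which is what forces the linear loss of $M+p$ in the final additive constant.
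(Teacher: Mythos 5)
Your proof is correct and follows essentially the same route as the paper: Theorem \ref{progress2} combined with the comparison $d_k \ge d^H_k/2$ from Theorem \ref{ursuladef} gives the first inequality, and the second follows by applying it to the chain of length $n-1$ after using Lemma \ref{no extension carry} on the last step $\tau_{n-1}\twoheadrightarrow\tau_n$. The only remark is that your truncation bookkeeping is slightly more elaborate than needed: the hypothesis that $\beta$ is uncarried by diagonal extensions transfers from $\tau_n$ back to $\tau_{\lfloor n/p\rfloor p}$ directly by the push-forward statement of Lemma \ref{diagonal extension} (carried by a diagonal extension of an earlier track implies carried by one of any later track), so the constant $M$ from Lemma \ref{extension carry} is not needed and only worsens $A_1$.
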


\begin{proof} 
By Theorem \ref{progress2} we have
$$d^H_k(\alpha, \beta) \ge \left \lfloor \frac{n}p \right\rfloor \ge \frac1p\cdot n -1.$$
By Lemma \ref{no extension carry} the vertex cycles of $\tau_0$ are not carried by a diagonal extension of $\tau_1$ so this gives
$$d^H_k(\tau_0, \tau_n) \ge \frac1p\cdot(n-1) - 1 = \frac1p\cdot n -\left(\frac1p +1\right).$$
From Theorem \ref{ursuladef} we have $d^H_k \le 2d_k$ so letting $A_0 = \frac{1}{2p}$ and $A_1 = \frac12\cdot\left(\frac{1}p + 1\right)$ gives the inequalities.
\end{proof}

We define the \emph{index} of a
lamination $\Lambda$ by
\[
  \ind(\Lambda) = \sup \{\ind(\tau) : \tau \mbox{ carries }\Lambda\}.
\]

\begin{cor}\label{converge to boundary}
Assume that $\tau$ carries $\Lambda$ with $\ind(\Lambda) = \ind(\tau) =k$.
Then there exists an infinite sequence of double arrows
$$\cdots \twoheadrightarrow \tau_2 \twoheadrightarrow \tau_1 \twoheadrightarrow \tau_0 = \tau.$$
Furthermore, the vertex cycles of the tracks $\tau_i$ form a uniform quality quasi-geodesic in $\C_k$  and they converge to $\Lambda$ in the Hausdorff topology on $\Sigma$.
\end{cor}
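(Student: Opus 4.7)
The plan is to construct the tracks $\tau_i$ as a subsequence of an infinite splitting sequence of $\tau$ toward $\Lambda$, and then verify the three claims in turn. First, since $\ind(\Lambda)=\ind(\tau)=k$, the lamination $\Lambda$ must be fully carried by $\tau$: if $\Lambda$ were supported on a proper recurrent subtrack $\tau'\subsetneq\tau$, then $\ind(\tau')>\ind(\tau)=k$ would force $\ind(\Lambda)\ge\ind(\tau')>k$, contradicting the hypothesis. I would then build an infinite splitting sequence $\tau=\sigma_0\leftarrow\sigma_1\leftarrow\sigma_2\leftarrow\cdots$ by splitting at each stage along a large branch in the direction determined by the transverse measure on $\Lambda$, using the central split precisely when it is the unique recurrent one. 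Each $\sigma_j$ carries $\Lambda$, so $\ind(\sigma_j)\le\ind(\Lambda)=k$, while $P(\sigma_j)\subseteq P(\tau)$ forces $\ind(\sigma_j)\ge k$. Hence every $\sigma_j$ is recurrent, filling, and of index exactly $k$.

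Next, I would extract $\tau_i=\sigma_{m_i}$ inductively: set $m_0=0$, and let $m_i>m_{i-1}$ be minimal with $\sigma_{m_i}\twoheadrightarrow\tau_{i-1}$. Such $m_i$ exists because the double-arrow condition is equivalent to $P(\sigma_{m_i})\subset\operatorname{int}P(\tau_{i-1})$. The faces of $P(\tau_{i-1})$ are cones of proper recurrent subtracks, each of index $>k$, so $\Lambda$ lies in the relative interior of $P(\tau_{i-1})$ with a positive-distance open neighborhood $U$ disjoint from $\partial P(\tau_{i-1})$; the nested cones $P(\sigma_m)$ shrink to the simplex of transverse measures on the support of $\Lambda$, which is contained in $U$, so $P(\sigma_m)\subset U$ for all large $m$. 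The Hausdorff convergence of $V(\tau_i)$ to $\Lambda$ then follows by realizing the $\tau_i$ as a nested family of thin neighborhoods whose intersection is a regular neighborhood of the support of $\Lambda$: vertex cycles are train paths in $\tau_i$, so they lie inside this shrinking family, and as splittings accumulate each vertex cycle traverses more and more branches and hence approximates $\Lambda$ uniformly.

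For the quasi-geodesic property, the lower bound $d_k(V(\tau_i),V(\tau_j))\ge A_0|i-j|-A_1$ is immediate from Corollary \ref{distance_lower}. For the upper bound, Theorem \ref{quasigeodesics} tells us $V(\sigma_j)$ is a reparametrized $Q$-quasi-geodesic in $\C$, hence also in $\C_k$ via the $1$-Lipschitz projection. Minimality of $m_i$ means $\sigma_{m_i-1}$ fails $\twoheadrightarrow\tau_{i-1}$, so some vertex cycle of $\sigma_{m_i-1}$ is fully carried by a proper recurrent subtrack $\tau'\subsetneq\tau_{i-1}$ with $\ind(\tau')>k$. Since $\S(\tau')$ has diameter one in $\C_k$, this curve lies at $\C_k$-distance one from every curve carried by $\tau'$; combined with the uniform bound on $\operatorname{diam}_\C V(\tau_{i-1})$ from \cite{HamenstadtTT} and the alignment-preserving behavior of the map $\C\to\C_k$ (Proposition \ref{KR}), this bounds $d_k(V(\tau_{i-1}),V(\sigma_{m_i-1}))$ uniformly, and the single additional split from $\sigma_{m_i-1}$ to $\sigma_{m_i}=\tau_i$ contributes only a bounded amount.

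The hardest part is this uniform upper bound. A priori the minimal number of splits needed for $\twoheadrightarrow$ could drive vertex cycles of $\tau_i$ far inside the (possibly unbounded-diameter) quasi-tree $\S(\tau_{i-1})\subset\C_k$ guaranteed by Theorem \ref{qtree}. Converting the obstruction ``$\sigma_{m_i-1}$ has a vertex cycle in a proper subtrack of index $>k$'' into a uniform $\C_k$-distance bound between the \emph{full} vertex-cycle sets of $\tau_{i-1}$ and $\tau_i$ is the main technical step, and requires carefully combining the quasi-tree structure of $\S(\tau_{i-1})$ with alignment-preservation to keep the constants depending only on $\Sigma$ and $k$.
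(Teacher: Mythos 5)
Your proposal is correct and follows essentially the same route as the paper: extract the $\tau_i$ as the first tracks in a splitting sequence toward $\Lambda$ satisfying $\twoheadrightarrow$, obtain the lower bound from Corollary \ref{distance_lower}, and obtain the upper bound from the failure of $\twoheadrightarrow$ one step earlier. The worry in your final paragraph is unfounded: the argument you already give in the third paragraph is exactly the paper's Lemma \ref{nodouble} (a vertex cycle of the proper subtrack $\tau'$ is itself a vertex cycle of $\tau_{i-1}$, since $P(\tau')$ is a face of $P(\tau_{i-1})$), so no appeal to the quasi-tree structure of $\S(\tau_{i-1})$ is needed.
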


\begin{proof}
Let 
$$\cdots \to \tau'_2 \to \tau'_1 \to \tau'_0 = \tau$$
be a sequence of splittings that carries $\Lambda$. As $\ind(\Lambda) = \ind(\tau) = \ind(\tau'_i)$ we have that $\Lambda$ is fully carried by all $\tau_i$. In particular, $\Lambda$ is in the interior of $P(\tau'_i)$. Furthermore for any $j\ge 0$ if $\Lambda' = \displaystyle\bigcap_{i \ge j} P(\tau'_i)$ then $\Lambda$ and $\Lambda'$ are equal as geodesic laminations so the intersection is also contained in the interior of each $P(\lambda_i)$. (See Proposition 3.22 in \cite{BB}, for example.) Therefore for each $i$ there is a $j> i$ such that $P(\tau_j)$ is contained in the interior of $P(\tau'_i)$ or, equivalently, $\tau'_j \twoheadrightarrow \tau'_i$. We then inductively define the sequence of $\tau_i$ by choosing $\tau_{i+1}$ to be the first track in the sequence with $\tau_{i+1}\twoheadrightarrow \tau_i$.

By Lemma \ref{nodouble} the distance between $\tau_i$ and the track that appears right before $\tau_{i+1}$ (in the $\tau'_j$ sequence) is uniformly bounded so $d_k(\tau_i, \tau_{i+1})$ is uniformly bounded and $d_k(\tau_i, \tau_j)$ is bounded above by a linear function of $|i-j|$.
By Corollary \ref{distance_lower}, we have
$$d_k(\tau_i, \tau_j) \ge A_0 |i-j| - A_1.$$
Together these bounds imply that the vertex cycles of the $\tau_i$ are a quasi-geodesic in $\C_k$ and hence in $\C_\ell$ if $\ell \ge k$. 
\end{proof}

\section{Fibers}\label{sec:7}
A carrying map  $\tau\to \tau'$ between maximal tracks has {\em index $k$} if there is a vertex cycle $\alpha$ of $\tau$ whose image in $\tau'$ is carried by an index $k$ subtrack.
A sequence
$$\tau_0 \to \tau_1 \to \ldots \to \tau_n$$
of maximal train tracks has index $k$ if each individual carrying map $\tau_i\to \tau_{i+1}$ has index $k$. This can also be phrased in terms of the polyhedra. The polyhedron $P(\tau)$ is a subset of the polyhedron $P(\tau')$. Then $\tau\to \tau'$ has index $k$ if a vertex of $P(\tau)$ is contained in a co-dimension $k$ face of $P(\tau')$.

\begin{lemma}\label{upper bound}
  Let
  $$\tau_0 \to \tau_1 \to \cdots \to \tau_n$$
  be an index $k+1$ sequence. Then
  $$d_{k}(\tau_0, \tau_n) \le Cn$$
  for some constant $C$ depending on $\Sigma$.
\end{lemma}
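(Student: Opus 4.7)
The plan is to prove this directly from the definition of an index $k+1$ carrying map together with the uniform bound on $\diam_{\C}(V(\tau))$ (hence $\diam_{\C_k}(V(\tau))$) from \cite[Corollary 2.3]{HamenstadtTT}. Let $B$ denote this uniform diameter bound.

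First I would unpack the definition: for each $i$, since $\tau_i \to \tau_{i+1}$ has index $k+1$, there is a vertex cycle $\alpha_i \in V(\tau_i)$ whose image in $\tau_{i+1}$ is carried by a subtrack $\sigma_{i+1} \subseteq \tau_{i+1}$ with $\ind(\sigma_{i+1}) = k+1$. Pick any vertex cycle $\beta_{i+1}$ of $\sigma_{i+1}$; since $M(\sigma_{i+1})$ is a face of $M(\tau_{i+1})$, the curve $\beta_{i+1}$ is also a vertex cycle of $\tau_{i+1}$, so $\beta_{i+1} \in V(\tau_{i+1})$. Both $\alpha_i$ and $\beta_{i+1}$ are carried by $\sigma_{i+1}$, and $\ind(\sigma_{i+1}) = k+1 > k$, so by the definition of $\C_k$ we have $d_k(\alpha_i, \beta_{i+1}) \leq 1$.

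Next I would bound the Hausdorff-style distance $d_k(\tau_i,\tau_{i+1}) = \diam_{\C_k}(V(\tau_i) \cup V(\tau_{i+1}))$. For any $x \in V(\tau_i)$ and $y \in V(\tau_{i+1})$, the triangle inequality gives
\[
d_k(x,y) \leq d_k(x,\alpha_i) + d_k(\alpha_i,\beta_{i+1}) + d_k(\beta_{i+1},y) \leq B + 1 + B = 2B+1,
\]
and of course any two points in a single $V(\tau_i)$ are at $d_k$-distance at most $B$. Thus $d_k(\tau_i,\tau_{i+1}) \leq 2B+1$.

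Finally I would concatenate: choose any $x \in V(\tau_0)$ and $y \in V(\tau_n)$ and form a chain $x = \gamma_0, \gamma_1, \dots, \gamma_{n-1}, \gamma_n = y$ with $\gamma_i \in V(\tau_i)$. Consecutive steps satisfy $d_k(\gamma_i,\gamma_{i+1}) \leq 2B+1$ by the previous paragraph, so $d_k(x,y) \leq (2B+1)n$. Hence $d_k(\tau_0,\tau_n) \leq Cn$ with $C = 2B+1$, and $C$ depends only on $\Sigma$. There is no real obstacle here; the only thing to verify carefully is that a vertex cycle of a subtrack is a vertex cycle of the ambient track, which is immediate from the face relation $M(\sigma) \subset M(\tau)$.
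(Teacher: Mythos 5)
Your proposal is correct and follows essentially the same route as the paper: for each consecutive pair, use the index $k+1$ subtrack $\sigma_{i+1}$ to join a vertex cycle of $\tau_i$ to a vertex cycle of $\sigma_{i+1}$ (hence of $\tau_{i+1}$) at $d_k$-distance at most $1$, absorb the uniformly bounded diameter of vertex-cycle sets, and concatenate to get $Cn$ with $C=2B+1$. The only difference is that you spell out explicitly (via the face relation $M(\sigma)\subset M(\tau)$) why a vertex cycle of the subtrack is a vertex cycle of the ambient track, a fact the paper uses implicitly.
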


\begin{proof}
We just need to show that $d_{k}(\tau_i, \tau_{i+1}) \le C$. Let $\alpha$ be a vertex cycle of $\tau_i$ that is carried by an index $k+1$ subtrack $\sigma \subset \tau_{i+1}$ and let $\alpha'$ be a vertex cycle of $\sigma$. Then $d_k(\alpha, \alpha')\leq 1$. The diameter in $\C_k$ of the set of vertex cycles of any train track is uniformly bounded. We then choose $C$ to be twice this bound plus one to get $d_k(\tau_i, \tau_{i+1}) \le C$.
\end{proof}

\begin{lemma}\label{no_double_index}
Assume that $\tau\to \tau'$ are maximal tracks and $\sigma\subset \tau$ and $\sigma' \subset \tau'$ are subtracks of index $k$ with $\sigma \to \sigma'$ but $\sigma\not\twoheadrightarrow\sigma'$. Then $\tau\to \tau'$ has index $k+1$.
\end{lemma}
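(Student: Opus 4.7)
The plan is to exhibit a vertex cycle $\alpha$ of $\tau$ whose image in $\tau'$ under the carrying map is carried by a subtrack of $\tau'$ of index at least $k+1$; by the definition of the index of a carrying map, this is precisely what it means for $\tau \to \tau'$ to have index $k+1$.

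The candidate $\alpha$ is extracted directly from the failure of $\sigma \twoheadrightarrow \sigma'$. Unpacking the definition, $\sigma \twoheadrightarrow \sigma'$ is equivalent to $P(\sigma) \subset \operatorname{int} P(\sigma')$. Since $P(\sigma)$ is the convex hull of its vertex cycles and the interior of $P(\sigma')$ is convex, the failure of this inclusion forces some vertex cycle $\alpha$ of $\sigma$ to lie on the boundary of $P(\sigma')$; equivalently, $\alpha$ is not fully carried by $\sigma'$ and is therefore carried by some proper subtrack $\sigma'' \subsetneq \sigma'$. Invoking the fact recalled in Section \ref{sec:2} that the index strictly increases on proper subtracks of a recurrent track gives $\ind(\sigma'') > \ind(\sigma') = k$, so $\ind(\sigma'') \ge k+1$.

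It remains to upgrade $\alpha$ to a vertex cycle of $\tau$ and to verify that its image in $\tau'$ lies in $\sigma''$. For the first point, because $\sigma$ is a recurrent subtrack of $\tau$, $M(\sigma)$ is a face of $M(\tau)$, hence $P(\sigma)$ is a face of $P(\tau)$, and vertices of a face are vertices of the whole polytope; thus $\alpha$ is a vertex cycle of $\tau$. For the second, the carrying map $\tau \to \tau'$ restricts on the subtrack $\sigma$ to the given map $\sigma \to \sigma'$, so the image of $\alpha$ in $\tau'$ is carried by $\sigma'' \subset \tau'$. This exhibits a vertex cycle of $\tau$ whose image is carried by a subtrack of $\tau'$ of index at least $k+1$, so $\tau \to \tau'$ has index $k+1$. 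There is no real obstacle; the proof is a short polyhedral translation of the hypothesis, resting on the face-inclusion $P(\sigma) \subset P(\tau)$ (so vertex cycles of subtracks are vertex cycles of the ambient track) and the elementary observation that a polytope fails to sit inside the interior of a containing polytope precisely when one of its vertices does.
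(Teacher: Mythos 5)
Your proposal is correct and follows essentially the same route as the paper's proof: extract a vertex cycle $\alpha$ of $\sigma$ that is not fully carried by $\sigma'$, observe it is carried by a proper subtrack of $\sigma'$ whose index therefore exceeds $k$, and note that $\alpha$ is also a vertex cycle of $\tau$ via the face inclusion $P(\sigma)\subset P(\tau)$. The only difference is that you spell out the convexity and face-of-a-polytope justifications that the paper leaves implicit.
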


\begin{proof}
Since $\sigma\not\twoheadrightarrow\sigma'$, there is a vertex cycle $\alpha$ of $\sigma$ that is not fully carried by $\sigma'$. In particular, $\alpha$ is carried by a subtrack of $\tau'$ with index $> \ind(\sigma') = k$. Since $\alpha$ is also a vertex cycle of $\tau$, this implies that $\tau\to\tau'$ has index $k+1$.
\end{proof}

\begin{lemma}\label{full index}
Assume that 
$$\tau\ss \tau'$$
is an index $k$ carrying map but not index $k+1$. Then there is a subsequence
$$\tau\to \eta \to \tau'$$
such that $\tau\to\eta$ is the carrying map induced by
an index $k+1$ sequence of length two and $\eta$ contains an index $k$ subtrack whose image in $\tau'$ has index $k$.
\end{lemma}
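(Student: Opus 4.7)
The plan is to trace a distinguished vertex cycle of $\tau$ through the splitting sequence and locate the stage at which the smallest subtrack carrying it first achieves index $k$. Since $\tau \ss \tau'$ has index $k$, pick a vertex cycle $\alpha$ of $\tau$ witnessing this, and let $\sigma' \subset \tau'$ be the smallest subtrack carrying the image of $\alpha$; by hypothesis $\ind(\sigma') = k$. Decompose the carrying map into individual splits $\tau = \tau_0 \to \tau_1 \to \cdots \to \tau_n = \tau'$, and for each $i$ let $\sigma_i \subset \tau_i$ be the smallest subtrack carrying the image of $\alpha$ in $\tau_i$. Each $\sigma_i$ is recurrent (it carries the simple closed curve $\alpha$), the carrying maps $\sigma_i \to \sigma_{i+1}$ are inherited from the splits, and since $P(\sigma_i) \subseteq P(\sigma_{i+1})$ the indices $\ind(\sigma_i)$ form a non-increasing sequence beginning at the large index of the embedded curve or barbell representative of $\alpha$ in $\tau$ and terminating at $\ind(\sigma_n) = k$.

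Let $j$ denote the smallest index with $\ind(\sigma_j) = k$, so $\ind(\sigma_i) \geq k+1$ for $0 \leq i < j$, and set $\eta := \tau_j$, $\sigma := \sigma_j$. By minimality of $\sigma_j$, every branch of $\sigma_j$ is traversed by the image of $\alpha$, so the image of $\sigma$ under $\eta \to \tau'$ is exactly $\sigma'$, of index $k$; this gives the second condition of the lemma. For the first condition, I factor $\tau \to \eta$ as $\tau \to \tau_{j-1} \to \tau_j$. The first carrying map is index $k+1$ because $\alpha$ is a vertex cycle of $\tau$ whose image in $\tau_{j-1}$ is carried by $\sigma_{j-1}$, which has index $\geq k+1$. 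The second carrying map is index $k+1$ by an adaptation of the argument in Lemma \ref{no_double_index}: the strict index drop $\ind(\sigma_{j-1}) > \ind(\sigma_j)$ forces $P(\sigma_{j-1}) \subsetneq P(\sigma_j)$ with a jump in dimension, so $P(\sigma_{j-1})$ cannot be contained in the interior of $P(\sigma_j)$, hence $\sigma_{j-1} \not\twoheadrightarrow \sigma_j$. Consequently some vertex cycle of $\sigma_{j-1}$ is not fully carried by $\sigma_j$, and since vertex cycles of $\sigma_{j-1}$ are also vertex cycles of $\tau_{j-1}$, this produces a vertex cycle of $\tau_{j-1}$ whose image in $\tau_j$ lies in a proper subtrack of $\sigma_j$ of index $> k$.

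The main obstacle is the boundary case $j = 1$, in which the original splitting sequence admits no honest intermediate track strictly between $\tau$ and $\tau_j = \tau_1$ through which to factor. In that event one moves $\eta$ further along the sequence to some $\tau_m$ with $m \geq 2$ (the subtrack $\sigma_m$ retains index $k$ by monotonicity, so the second condition still holds) and produces the length-two factorization by exhibiting an auxiliary vertex cycle of $\tau$ whose tracking subtrack in $\tau_1$ still has index $\geq k+1$. Verifying existence of such a witness--or alternatively choosing $\alpha$ compatibly so that $j \geq 2$ from the outset, which should be generically automatic since a single split rarely collapses the tracking index of an embedded curve or barbell representative all the way down to $k$--is the only real subtlety; the remainder of the argument reduces to the bookkeeping above combined with the index-drop argument of Lemma \ref{no_double_index}.
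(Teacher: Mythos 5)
Your construction is the same as the paper's: you follow the minimal subtrack $\sigma_i\subset\tau_i$ carrying a witnessing vertex cycle $\alpha$ along the splitting sequence, take $\eta=\tau_j$ at the first moment $\ind(\sigma_j)=k$, note that the image of $\sigma_j$ in $\tau'$ is the index-$k$ track $\sigma'$, and factor $\tau\to\eta$ as $\tau\to\tau_{j-1}\to\tau_j$, with the first map of index $k+1$ because $\ind(\sigma_{j-1})\ge k+1$. The genuine gap is your justification that the single split $\tau_{j-1}\to\tau_j$ has index $k+1$. You argue that the strict drop $\ind(\sigma_{j-1})>\ind(\sigma_j)$ forces $P(\sigma_{j-1})$ out of the interior of $P(\sigma_j)$, hence $\sigma_{j-1}\not\twoheadrightarrow\sigma_j$. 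The relevant notion here is the \emph{relative} interior of $P(\sigma_j)$, and a polytope of strictly smaller dimension can perfectly well lie inside the relative interior of a larger one (a segment inside a square); likewise the defining property of $\twoheadrightarrow$ --- every carried curve is fully carried --- can hold across an arbitrarily large index drop (an embedded curve fully carried by a track is the basic example). So a dimension jump by itself does not produce a vertex cycle of $\sigma_{j-1}$ that fails to be fully carried by $\sigma_j$, and this step of your argument does not go through as written. The paper avoids the issue entirely: it only uses that $\tau_{j-1}\to\tau_j$ is a \emph{single split of maximal tracks}, which automatically has index $k+1$ --- for instance, some vertex cycle of $\tau_j$ is still carried by $\tau_{j-1}$ (for a left or right split, maximize the linear functional defining the split over $P(\tau_j)$; for a central split in a splitting sequence $P(\tau_{j-1})=P(\tau_j)$), so it is a vertex cycle of $\tau_{j-1}$ whose minimal carrying subtrack in $\tau_j$ is an embedded curve or barbell, of index at least $\dim\ML(\Sigma)-2>m(\Sigma)\ge k$.

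The ``boundary case $j=1$'' that you flag as the only real subtlety, and leave unresolved, is in fact a non-issue, so your proposed detour (moving $\eta$ further along and finding an auxiliary witness, or choosing $\alpha$ ``generically'') is both unproven and unnecessary. The length-two sequence is allowed to contain a degenerate carrying map: the paper explicitly permits $\eta=\tau'$ at one end and uses identity carrying maps in exactly such sequences in the proof of Lemma \ref{induction step}. When $j=1$ the factorization is $\tau\to\tau\to\tau_1$, and the identity $\tau\to\tau$ has index $k+1$ for the same reason as above (any vertex cycle of $\tau$ is carried by its curve-or-barbell minimal subtrack, whose index exceeds $k$); the paper's phrase ``$\tau_0\to\tau_{j-1}$ has index $k+1$ since $\ind(\sigma_{j-1})\ge k+1$'' covers this case verbatim, with no case division needed.
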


\begin{proof}
Since $\tau\to\tau'$ has index $k$ and not index $k+1$, there is an index $k$ subtrack $\sigma' \subset \tau'$ that fully carries a vertex cycle $\alpha$ of $\tau$.
Let
$$\tau= \tau_0 \to \tau_1 \to \cdots \to \tau_n = \tau'$$
be the splitting sequence and let $\sigma_i \subset \tau_i$ be the subtrack that fully carries $\alpha$. Note that $\sigma_i \to \sigma_{i+1}$ and that $\ind(\sigma_i)$ is non-increasing. Let $\eta = \tau_j$ be the first index with $\ind(\sigma_j) = k$. Then $\tau_0 \to \tau_{j-1}$ has index $k+1$ since $\ind(\sigma_{j-1}) \ge k+1$ and $\tau_{j-1}\to \tau_j$ has index $k+1$ since it is a single split. Therefore $\eta$ has the  the desired property.
\end{proof} 
Note that we allow the possibility that $\eta = \tau'$ in the above lemma.

\begin{lemma}\label{induction step}
There exist constants $A_0$ and $A_1$ such that if
$$\tau_0 \to \cdots\to \tau_n$$
is an index $k$ sequence then there is an index $k+1$ sequence
$$\tau_0= \tau'_0 \to \cdots\to \tau'_m =\tau_n$$
with 
$$m \le n\left(A_0d_{k+1}(\tau_0, \tau_n) + A_1\right).$$
\end{lemma}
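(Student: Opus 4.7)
My plan is to refactor the given index $k$ sequence step-by-step, producing, for each original step $\tau_i\to\tau_{i+1}$, an index $k+1$ subsequence of length $J_i$ whose size is controlled by $d_{k+1}(\tau_0,\tau_n)$; summing then yields the bound on $m$.

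For a single step $\tau_i\to\tau_{i+1}$: if it is already of index $k+1$, keep it. Otherwise, I iterate Lemma \ref{full index}. Starting with $\eta_0=\tau_i$, as long as $\eta_j\to\tau_{i+1}$ is of index $k$ but not $k+1$, I produce $\eta_{j+1}$ together with a distinguished index-$k$ subtrack $\sigma_{j+1}\subset\eta_{j+1}$ whose image $\sigma'_{j+1}\subset\tau_{i+1}$ also has index $k$, with $\eta_j\to\eta_{j+1}$ an index $k+1$ sequence of length two. The iteration terminates when the remaining carrying map becomes index $k+1$, producing an index $k+1$ factorization of $\tau_i\to\tau_{i+1}$ of length at most $2J_i+1$.

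The heart of the argument is bounding $J_i$. I arrange the choices in Lemma \ref{full index} so that the distinguished subtracks satisfy $\sigma_1\twoheadrightarrow\cdots\twoheadrightarrow\sigma_{J_i}$; any failure of the double-arrow relation corresponds, by Lemma \ref{no_double_index}, to a step where the ambient carrying map already has higher index, and can be absorbed into the construction. Then Corollary \ref{distance_lower} gives $d_k(\sigma_1,\sigma_{J_i})\ge A_0 J_i-A_1$, and since the natural map $\C_{k+1}\to\C_k$ is $1$-Lipschitz we obtain the same lower bound in $\C_{k+1}$. On the other hand, the concatenated sequence $\tau_0\to\cdots\to\tau_n$ is a splitting sequence, so by Theorem \ref{quasigeodesics} its vertex cycles form a reparametrized quasi-geodesic in $\C(\Sigma)$, hence also in $\C_{k+1}$ by Proposition \ref{hyperbolic}. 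This places all the $\eta_j$ uniformly close in $\C_{k+1}$ to a geodesic from $\tau_0$ to $\tau_n$, so $d_{k+1}(\sigma_1,\sigma_{J_i})\le d_{k+1}(\tau_0,\tau_n)+O(1)$. Combining produces a bound of the form $J_i\le A_0' d_{k+1}(\tau_0,\tau_n)+A_1'$, and summing over the $n$ original steps yields $m\le n(A_0 d_{k+1}(\tau_0,\tau_n)+A_1)$ for appropriate constants. The main obstacle I expect is ensuring the double-arrow relations between successive $\sigma_j$'s, which requires careful control over how the distinguished subtracks evolve through the iteration of Lemma \ref{full index}, together with verifying that the intermediate tracks $\eta_j$ stay uniformly close to the original splitting sequence in $\C_{k+1}$.
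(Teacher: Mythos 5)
The overall architecture of your reduction is fine (iterating Lemma \ref{full index} does terminate and does yield an index $k+1$ factorization of each step $\tau_i\to\tau_{i+1}$ of length $2J_i+1$), but the heart of the matter --- bounding $J_i$ --- has a genuine gap. You assert that the choices can be arranged so that the distinguished subtracks satisfy $\sigma_1\twoheadrightarrow\cdots\twoheadrightarrow\sigma_{J_i}$. However, each $\sigma_{j+1}$ is produced by a \emph{fresh} application of Lemma \ref{full index} to $\eta_j\to\tau_{i+1}$: by that lemma's proof it is the minimal subtrack of $\eta_{j+1}$ carrying some newly chosen vertex cycle of $\eta_j$, and this curve has nothing to do with the curve that produced $\sigma_j$. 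Consequently there is no reason for even the carrying relation $\sigma_j\to\sigma_{j+1}$ to hold, let alone $\sigma_j\twoheadrightarrow\sigma_{j+1}$, so Corollary \ref{distance_lower} cannot be applied to your chain. Your fallback --- that failures of $\twoheadrightarrow$ ``can be absorbed'' via Lemma \ref{no_double_index} --- does not rescue this, because that lemma takes the carrying relation $\sigma\to\sigma'$ between the subtracks as a \emph{hypothesis}, and that hypothesis is exactly what is missing for your $\sigma_j$'s.

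The paper's proof avoids the problem by applying Lemma \ref{full index} only \emph{once} per step: it fixes a single pair of index $k$ subtracks $\sigma\subset\eta$ and $\sigma'\subset\tau_{i+1}$ with $\sigma\ss\sigma'$ the splitting sequence \emph{induced} by $\eta\ss\tau_{i+1}$, so consecutive subtracks are automatically related by carrying maps. It then extracts a subsequence $\sigma=\sigma_0\to\cdots\to\sigma_{2\ell+1}=\sigma'$ in which the even-indexed terms form a $\twoheadrightarrow$-chain while $\sigma_j\not\twoheadrightarrow\sigma_{j+1}$ for each $j$; Lemma \ref{no_double_index} then shows the corresponding ambient carrying maps have index $k+1$, and Corollary \ref{distance_lower} applied to the double-arrow subsequence bounds $\ell$ linearly in $d_{k+1}(\sigma_0,\sigma_{2\ell})$, hence in $d_{k+1}(\tau_0,\tau_n)$ by the quasi-geodesic argument you also invoke (that last step of yours is correct). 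If you want to keep your iterative framing, the fix is to track one curve: choose the vertex cycle $\alpha$ of $\tau_i$ once and for all and let the distinguished subtracks be the minimal subtracks carrying $\alpha$ along the splitting sequence toward $\tau_{i+1}$ --- which is precisely the paper's induced sequence $\sigma\ss\sigma'$.
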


\begin{proof}
For $i=0, \dots, n-1$ we will show that $\tau_i \to \tau_{i+1}$ can be written as an index $(k+1)$-sequence of length $\le A_0d_{k+1}(\tau_0, \tau_n) + A_1$. Combining these sequences will give the lemma.

Let $\eta$ be the track given by applying Lemma \ref{full index} to the index $k$ carrying map $\tau_i \to \tau_{i+1}$. In particular there is a subtrack $\sigma \subset \eta$ whose image in $\tau_{i+1}$ is a subtrack $\sigma'$ with $\ind(\sigma) = \ind(\sigma') = k$.
The sequence $\eta \ss\tau_{i+1}$ induces a splitting sequence $\sigma \ss \sigma'$. 
We can construct a subsequence 
$$\sigma= \sigma_0 \to \sigma_1 \to \cdots \sigma_{2\ell} \to \sigma_{2\ell+1} = \sigma'$$
with $\sigma_{2j} \twoheadrightarrow \sigma_{2(j+1)}$ but $\sigma_j \not\twoheadrightarrow \sigma_{j+1}$ as follows. Assume $\sigma_j$ has been chosen and is even. Then let $\sigma_{j+1}$ be the last track in the sequence $\sigma_j\ss\sigma'$ such that  $\sigma_j \not\twoheadrightarrow\sigma_{j+1}$.\footnote{If $\sigma_j = \sigma'$ then $\sigma_{j+1} = \sigma'$ and $\sigma_j\to\sigma_{j+1}$ is the identity map.} If $\sigma_{j+1} = \sigma'$ we stop. If not we let $\sigma_{j+2}$ be the next track in the sequence. That is $\sigma_{j+1}\to \sigma_{j+2}$ is a single split.  By Corollary \ref{distance_lower}, the constant $\ell$ is bounded above by a linear function of $d_{k+1}(\sigma_0, \sigma_{2\ell})$. Theorem \ref{quasigeodesics} and Proposition \ref{hyperbolic} imply that the vertex cycles of $\tau_0, \sigma_0, \sigma_{2\ell}$ and $\tau_n$ lie on a uniform quality quasigeodesic in $\C_{k+1}$ (in that order). Therefore $d_{k+1}(\sigma_0, \sigma_{2\ell}) - d_{k+1}(\tau_0,\tau_n)$ is bounded above uniformly so $\ell$ is bounded above by a linear function of $d_{k+1}(\tau_0, \tau_n)$.

We then have a subsequence
$$\eta = \tau'_0 \to \cdots \tau'_{2\ell+1} = \tau_{i+1}$$
of $\eta\ss\tau_{i+1}$ with $\sigma_j$ a subtrack of $\tau'_j$. By Lemma \ref{no_double_index} this is an index $k+1$ sequence. By Lemma \ref{full index} we can write $\tau_i \to \eta$ as an index $k+1$ sequence of length two so $\tau_i \to \tau_{i+1}$ can be written as an index $k+1$ sequence of length $2\ell+3$. As $\ell$ is bounded by a linear function of $d_{k+1}(\tau_0, \tau_n)$ so is $2\ell+3$ and the lemma follows. \end{proof}

\begin{thm}\label{block bound}
There exist constants $A_0$ and $A_1$ such that the following holds.
  Assume that $\tau \ss \tau'$ are maximal tracks. Then there exists an index $(k+1)$-subsequence
  $$\tau=\tau_0 \to \tau_1 \to \ldots \to \tau_n=\tau'$$
with 
$$\frac1{A_0} \cdot d_k(\tau, \tau') \le n \le \left(A_0 d_{k}(\tau, \tau') + A_1\right)^{k+1}.$$
\end{thm}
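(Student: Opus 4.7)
My plan is to prove the two inequalities separately. The lower bound $n \ge d_k(\tau, \tau')/A_0$ is immediate from Lemma \ref{upper bound}: any index $(k+1)$-sequence of length $n$ from $\tau$ to $\tau'$ satisfies $d_k(\tau, \tau') \le C n$ for the constant $C$ of that lemma, so we take $A_0 \ge C$.

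For the upper bound I will induct on $k$ by iteratively applying Lemma \ref{induction step}. The base of the induction uses the single-step carrying map $\tau \to \tau'$ as a trivial index $0$-sequence of length $1$, since the whole track $\tau'$ is itself an index $0$ subtrack carrying every vertex-cycle image. For the inductive step, suppose we have already built an index $j$-subsequence of length $n_j \le (A_0 d_{j-1}(\tau, \tau') + A_1)^j$. One application of Lemma \ref{induction step} produces an index $(j+1)$-subsequence of length at most
$$n_{j+1} \le n_j \bigl(A_0 d_j(\tau, \tau') + A_1\bigr) \le \bigl(A_0 d_j(\tau, \tau') + A_1\bigr)^{j+1},$$
where the second inequality uses the monotonicity $d_{j-1}(\tau, \tau') \le d_j(\tau, \tau')$, which holds because $\C_{j-1}$ contains every edge of $\C_j$ (and more). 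Iterating from $j = 0$ up through $j = k+1$ yields an index $(k+1)$-subsequence of length at most $(A_0 d_k(\tau, \tau') + A_1)^{k+1}$, as required.

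The main obstacle is verifying that the length bound produced by Lemma \ref{induction step} at each stage is genuinely controlled by $d_j$ rather than by $d_{j+1}$; otherwise the final exponent would involve the larger distance $d_{k+1}$. The point is that the subtracks $\sigma$ and $\sigma'$ constructed in the proof of Lemma \ref{induction step} have index exactly $j$, so Corollary \ref{distance_lower} applied to the double-arrow subsequence $\sigma_0 \twoheadrightarrow \cdots \twoheadrightarrow \sigma_{2\ell}$ gives a lower bound for $\ell$ in terms of $d_j$, and the quasi-geodesic property provided by Theorem \ref{quasigeodesics} together with Proposition \ref{hyperbolic} lets us replace $d_j(\sigma_0, \sigma_{2\ell})$ by $d_j(\tau_0, \tau_n)$ up to an additive constant. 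The remaining bookkeeping is to choose the constants $A_0$ and $A_1$ in the theorem statement large enough to absorb the constants arising at all intermediate levels $0, 1, \ldots, k$ simultaneously.
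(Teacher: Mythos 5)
Your proof is correct and follows essentially the same route as the paper: induction on the index, with Lemma \ref{induction step} supplying the multiplicative factor at each level and Lemma \ref{upper bound} giving the lower bound on $n$. Your observation that the factor coming out of Lemma \ref{induction step} is genuinely controlled by $d_j$ (the index of the input sequence, via Corollary \ref{distance_lower} applied to the index-$j$ subtracks and the quasi-geodesic ordering in $\C_j$) rather than by $d_{j+1}$ is precisely the refinement needed to arrive at $\left(A_0 d_k(\tau,\tau')+A_1\right)^{k+1}$, and it is the point that the paper's own write-up passes over when it appeals to $d_k \le d_{k+1}$.
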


\begin{proof} We induct on $k$. Note that $\tau\to\tau'$ is an index $0$ sequence so if $k=0$ the result follows from Lemma \ref{induction step}. This is the base case.

Now assume that statement holds for $k-1$. In particular $\tau\to\tau'$ can be written as an index $k$ sequence of length $\le (A_0 d_k(\tau,\tau') +A_1)^{k-1}$. Then by Lemma \ref{induction step} we can write $\tau\to \tau'$ as a sequence of length
$$\le (A_0 d_k(\tau,\tau') +A_1)^{k-1} \cdot (A_0 d_{k+1}(\tau, \tau') + A_1).$$
As $d_k(\tau, \tau') \le d_{k+1}(\tau, \tau')$ this implies the theorem.

The left inequality is Lemma \ref{upper bound}.
\end{proof}

\section{Quasi-trees from train tracks}
Here we construct certain sets $Q^k_i(\tau)\subset \S(\tau)$ of curves
carried by a birecurrent
train track $\tau$. Viewed as subsets of $\C_{k-1}$, these sets will be
bounded and will give an exhaustion of $\C_{k-1}$, in the sense that
any bounded subset of $\C_{k-1}$ will be contained in some
$Q^k_i(\tau)$ for a suitable $\tau$ and $i$. As subsets of $\C_k$,
these sets will be quasi-convex and (with the subspace metric from
$\C_k$) they will be quasi-isometric to trees.

In the following theorem, $B^k_D(\tau)$ denotes the $D$-neighborhood of $V(\tau)$
in $\C_k$.

\begin{thm}\label{exhaustion}
Let $\tau$ be a maximal, birecurrent train track. Then there are sets of curves
$$Q^k_1(\tau) \subset Q^k_2(\tau) \subset \cdots \subset \S(\tau)$$
such that each $Q^k_i(\tau)\subset \C_k$ is a quasi-tree with constants depending only on $i$ and for all $D>0$ there exists $i = i(D)$ such that
$$B^k_D(\tau) \cap \S(\tau) \subset Q^{k+1}_i(\tau).$$
\end{thm}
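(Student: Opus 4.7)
The plan is to define $Q^{k+1}_i(\tau)$ as a carefully chosen union of sets $\S(\sigma)$ for recurrent index-$(k+1)$-subtracks $\sigma$ lying in tracks reachable from $\tau$ by an index-$(k+1)$-subsequence of length at most $i$. By Theorem~\ref{qtree}, each such $\S(\sigma)$ is a quasi-tree of uniform quality in $\C_{k+1}$, and I will assemble these into a single quasi-tree using the infinite-union criterion of Theorem~\ref{quasitreelem}. The exhaustion property will then follow from the upper bound in Theorem~\ref{block bound}.

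For the containment property I will proceed as follows. Given $\alpha \in B^k_D(\tau) \cap \S(\tau)$, split $\tau$ maximally while still carrying $\alpha$ to produce a track $\tau_\alpha$ with $\alpha \in V(\tau_\alpha)$. Since $V(\tau_\alpha)$ has uniformly bounded $\C_k$-diameter, $d_k(\tau,\tau_\alpha) \le D + O(1)$, so Theorem~\ref{block bound} furnishes an index-$(k+1)$-subsequence realizing $\tau \ss \tau_\alpha$ of length at most $i(D) := (A_0(D+O(1))+A_1)^{k+1}$. A final, controlled further split places $\alpha$ into $\S(\sigma)$ for some index-$(k+1)$-subtrack $\sigma$ along the resulting subsequence, yielding $\alpha \in Q^{k+1}_{i(D)}(\tau)$ after adjusting $i(D)$ by a uniform additive constant.

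The quasi-tree property I will establish by induction on $i$. The base case is a bounded subset of $\C_{k+1}$, trivially a quasi-tree. For the inductive step, decompose
\[
Q^{k+1}_i(\tau) = Q^{k+1}_{i-1}(\tau) \cup \bigcup_{\tau_1} Q^{k+1}_{i-1}(\tau_1),
\]
where $\tau_1$ ranges over tracks obtained from $\tau$ by a single index-$(k+1)$-step. By the inductive hypothesis each piece is a uniform-quality quasi-tree, and I will apply Theorem~\ref{quasitreelem} with $T_0 = Q^{k+1}_{i-1}(\tau)$ and the $T_j$ indexed by the children $\tau_1$. Quasi-convexity of each $T_j$ in $\C_{k+1}$ is inherited from $\C$ via the alignment-preserving map (Proposition~\ref{hyperbolic}); each $T_j$ coarsely intersects $T_0$ at the index-$(k+1)$-subtrack $\sigma$ witnessing the single step, since $\S(\sigma)$ lies in both.

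The main obstacle will be verifying the bounded $N$-fold coarse intersection hypothesis of Theorem~\ref{quasitreelem}: for some uniform $N$ and $D'$, any $N$ distinct children $\tau_{j_1},\dots,\tau_{j_N}$ must have $I_{Q+2\delta}\bigl(Q^{k+1}_{i-1}(\tau_{j_1}),\dots,Q^{k+1}_{i-1}(\tau_{j_N})\bigr)$ of diameter at most $D'$. Intuitively, distinct index-$(k+1)$-children of $\tau$ represent independent directions in the splitting tree, so a curve lying simultaneously in many $Q^{k+1}_{i-1}(\tau_{j_\ell})$'s must be carried by a common subtrack of index growing with $N$, eventually contradicting the linear progress estimate of Corollary~\ref{distance_lower} combined with the subtrack distance bound of Corollary~\ref{subtrack distance}. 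Executing this verification precisely, choosing $N$ uniformly independent of $i$, and ensuring that the quasi-tree constants propagate correctly through the inductive step will constitute the technical heart of the proof, and likely requires a nested induction on $k$ that reduces the $N$-fold intersection bound to a $2$-fold bound one level down the tower.
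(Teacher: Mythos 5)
Your skeleton matches the paper's (define the $Q$-sets as unions of $\S(\sigma)$ over index-$(k+1)$ subtracks at bounded ``index-depth'' from $\tau$, get each $\S(\sigma)$ from Theorem \ref{qtree}, assemble by induction on $i$ via Theorem \ref{quasitreelem}, and get the exhaustion from Theorem \ref{block bound}), but the step you defer to the end is exactly the heart of the matter, and the mechanism you sketch for it is not one that works. The uniform bound on $N$-fold coarse intersections is \emph{not} obtained by showing that a curve in many of the sets is carried by a subtrack of index growing with $N$ and then appealing to Corollary \ref{distance_lower} and Corollary \ref{subtrack distance}; those results control progress along $\twoheadrightarrow$-sequences and geodesics near nested subtracks, not multiple intersections. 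What the paper actually needs is (a) a fixed maximal rooted splitting tree $\T$ and the collection $\T_k$ of index-$k$ subtracks whose index drops at the next split, so that every subtrack has a well-defined position and nested pairs automatically force the common curves onto a proper subtrack of index $>k$; (b) the purely combinatorial finiteness statement (Proposition \ref{intersection_small}) that more than $N(\Sigma)$ pairwise non-nested tracks in $\T$ cannot all carry a filling collection of curves, proved by a complexity count $(b',p')$ along the minimal subtree; and (c) the entire Appendix (culminating in Proposition \ref{k alternative}) to compare \emph{coarse} intersections in $\C_k$ with actual intersections of the $\S(\sigma_j)$ --- without this, boundedness of $\bigcap\S(\sigma_j)$ says nothing about $I_{Q+2\delta}$. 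These three ingredients, combined in Corollary \ref{qtree_ii}, are what verify hypothesis (2) of Theorem \ref{quasitreelem}; none of them appear in your proposal.

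A second, related problem is your choice of decomposition. The paper takes $T_0=Q^k_{i-1}(\tau)$ and lets the other $T_j$ be the \emph{individual} sets $\S(\sigma)$ with $\sigma\in\T_k$ at depth exactly $i$, so the multi-intersection hypothesis is exactly Corollary \ref{qtree_ii}. You instead take $T_j=Q^{k+1}_{i-1}(\tau_1)$ for the index-$(k+1)$ children $\tau_1$ of $\tau$. Since splittings at disjoint branches commute, the same deep track (hence the same infinite-diameter set $\S(\sigma)$) can be reached through several distinct first steps, so distinct children's recursive unions can share entire $\S(\sigma)$'s; unless you first organize everything into a rooted tree as in the paper (so each subtrack has a unique ancestry) the $N$-fold coarse intersection bound you need is in danger of being simply false, not merely hard. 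The paper's introduction even flags this phenomenon: the sets being assembled typically have infinite-diameter intersections and one may be contained in another. So as written the proposal has a genuine gap at its central step, and closing it requires the tree $\T$/$\T_k$ bookkeeping, the complexity argument of Proposition \ref{intersection_small}, and the coarse-intersection results of the Appendix rather than the tools you cite.
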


Given the maximal train track $\tau$, we fix a directed rooted tree $\T$. The vertices of $\T$ are pairs $(\tau_0, b_0)$ where $\tau_0$ is a recurrent train track and $b_0$ is a large branch in $\tau_0$. The directed edges of $\T$ are carrying maps $(\tau_0, b_0) \longrightarrow (\tau_1, b_1)$ where $\tau_0$ is obtained by splitting $\tau_1$ along $b_1$. 
The basepoint of $\T$ is the initial track $\tau$. We assume that $\T$ has been chosen and that it is maximal. That is, for every for vertex $(\tau_1, b_1)$ there are two predecessors if both splittings along $b_1$ are recurrent or one predecessor if only one of the splittings is recurrent. We also assume that every large branch that occurs in some train track in $\T$ is eventually split. 

\begin{lemma}\label{split to vertex cycle}
Let
$$ \cdots \to \tau_i \to \cdots \to \tau_0 = \tau$$
be an infinite path in $\T$ such that $\gamma \in \S(\tau_i)$ for all $i$. Then for $i$ sufficiently large, $\gamma$ is a vertex cycle of $\tau_i$.
\end{lemma}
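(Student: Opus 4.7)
The plan is to measure the progress of $\gamma$ via a non-negative integer complexity that is non-increasing under each split in the path. A natural choice is the total weight $c_i := \sum_b w^i(b)$, where $w^i$ is the weight vector of $\gamma$ on $\tau_i$. First I would verify, using the standard transformation of weights under a split, that $c_{i+1} \le c_i$, with equality exactly when $\gamma$ has zero weight on the newly created small branch of $\tau_{i+1}$; such a split I will call \emph{trivial} for $\gamma$. Since $c_i$ is a non-increasing sequence of positive integers, it stabilizes at some value $c_\infty$, from some index $i_0$ onward.

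Next I would unpack what stabilization says geometrically. For $i \ge i_0$, every split in the path is trivial for $\gamma$, so $w^i$, and in particular the support subtrack $\sigma := \mathrm{supp}(w^{i_0}) \subset \tau_i$, is preserved under the canonical branch bijections induced by these splits. Thus $[\gamma]$ lies in the same face $P(\sigma)$ of $P(\tau_i)$ for all $i \ge i_0$, and it suffices to show that $\dim M(\sigma) = 1$; by Proposition \ref{dimensions} together with the classification of vertex cycles by Mosher quoted earlier in the paper, this means $\sigma$ is either an embedded simple closed curve or a barbell, and in either case $[\gamma]$ is the unique projective class in $P(\sigma)$ and hence a vertex cycle of $\tau_i$.

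To prove $\dim M(\sigma) = 1$, I would argue by contradiction. If $\dim M(\sigma) \ge 2$, then $\sigma$ contains a large branch $\beta$ in its own train-track structure. Since the ambient splits for $i \ge i_0$ never touch $\sigma$, the branch $\beta$ persists as a large branch of every $\tau_i$ and still carries positive $\gamma$-weight. By the maximality assumption on $\T$ (every large branch is eventually split), the infinite path must eventually split such a $\beta$; but any split of a branch $b_i$ with $w^i(b_i) > 0$ is non-trivial for $\gamma$ and strictly decreases $c_i$, contradicting stabilization at $i_0$.

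The hard part will be justifying the weight transformation cleanly enough to derive the precise decrement formula $c_{i+1} = c_i - 2 w^{i+1}(b_L)$ (carefully tracking which branch is large on which side in a left versus right split), and verifying that the maximality of $\T$ really does force the path to eventually split any persistent large branch inside $\sigma$ rather than indefinitely avoiding it. The remainder is a routine complexity-decrease argument combined with the structural classification of $1$-dimensional $M(\sigma)$.
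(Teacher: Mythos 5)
Your complexity is the right kind of idea, but the key bookkeeping claim is false, and the final contradiction rests on exactly the false direction of it. Write the split locally as in Figure \ref{split}: large branch $e$, outer branches $p,q$ at one end and $r,s$ at the other, and decompose the strands of $\gamma$ crossing $e$ by type, so $w^i(e)=x_{pr}+x_{ps}+x_{qr}+x_{qs}$. In the lateral split compatible with $\gamma$ (say the one forcing $x_{qr}=0$), the outer weights are unchanged and the new small branch receives weight $x_{ps}$, so the total weight drops by exactly $x_{pr}+x_{qs}=w^i(e)-w^{i+1}(e')$. Thus $c_{i+1}=c_i$ if and only if \emph{all} strands of $\gamma$ through $e$ are of the diagonal type --- in which case the new small branch carries the full positive weight $w^i(e)$ --- and conversely a split whose new small branch has zero weight can strictly decrease $c_i$. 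Consequently your endgame fails: after $c_i$ stabilizes, the path can keep splitting branches of positive $\gamma$-weight without ever decreasing $c_i$ (precisely when the strands through the split branch are all diagonal-type), so maximality of $\T$ yields no contradiction. A second problem feeds into this: a large branch $\beta$ of the support $\sigma$ is in general only a train path in $\tau_i$, a union of several $\tau_i$-branches, not itself a (large) branch of $\tau_i$; so ``every large branch is eventually split'' does not apply to $\beta$ directly, and even when an ambient large branch inside $\beta$ is split, $\sigma$ and $c_i$ may be unchanged.

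What is missing is the step that ambient splits eventually induce genuine splits of the supporting subtrack, and a complexity that strictly drops under those. The paper works with the minimal subtracks $\sigma_i\subset\tau_i$ carrying $\gamma$, notes that $\sigma_{i+1}\to\sigma_i$ is either a homeomorphism or a single split of $\sigma_i$ along a large branch, and invokes the argument at the end of the proof of Lemma \ref{diagonal extension} --- this is where the maximality of $\T$ really enters --- to conclude that every large branch of $\sigma_i$ is eventually split in this induced sequence. It then uses the lexicographic complexity $(w_i,k_i)$, the maximal weight of $\gamma$ on $\sigma_i$ and the number of branches realizing it. The point is that inside the \emph{minimal} subtrack every branch, in particular every outer branch at a split, has positive weight, so both ``side'' strand types are forced to be positive and the diagonal-only degeneracy that kills your total-weight argument cannot occur: any split of a maximal-weight (hence large) branch of $\sigma_i$ strictly decreases $(w_i,k_i)$, and eventually $\sigma_i$ has no large branch, so $\gamma$ is a vertex cycle. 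If you replace your stabilization step by this induced-splitting argument (or otherwise prove that a positive-weight branch adjacent to a genuine switch of $\sigma$ is eventually split), your outline can be repaired; as written, the assertion ``any split of a branch with $w^i(b_i)>0$ strictly decreases $c_i$'' is false and the lemma does not follow.
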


\begin{proof}
Let $\sigma_i \subset \tau_i$ be the smallest subtrack that carries $\gamma$. Note that the fact that the tracks $\sigma_i$ are the subtracks that minimally carry $\gamma$ implies that $\sigma_{i+1}\to\sigma_i$ and furthermore each carrying map is either homotopic to a homeomorphism or is a single splitting along a large branch. In this last case it is possible that the split is a central split. By the argument at the end of the proof of Lemma \ref{diagonal extension} every large branch that appears in any $\sigma_i$ is split.

Let $w_i$ be the maximal weight of $\gamma$ on $\sigma_i$ and $k_i$ the number of branches where the maximum is realized. Any branch realizing the maximal weight is a large branch of $\sigma_i$ and, as noted above, this branch will eventually be split.  When this happens the complexity of the pair $(w_i, k_i)$ (with the lexographic order) will strictly decrease. Therefore we must eventually have that $\sigma_i$ doesn't contain any large branches which implies that $\sigma_i$ is a curve and that $\gamma$ is a vertex cycle in $\tau_i$.
\end{proof}

 Next we define a collection $\T_k$ of index $k$ subtracks of tracks in $\T$.  Let $\tau_0 \to \tau_1$ be a directed edge in $\T$ and $\sigma_0 \subset \tau_0$ an index $k$ subtrack. Then $\sigma_0 \in \T_k$ if the image of $\sigma_0$ in $\tau_1$ has index $< k$. In particular, if $\sigma_0 \in \T_k$ and $\tau' \neq \tau_0$ is a vertex in $\T$ with $\tau_0\to \tau'$ then the image of $\sigma_0$ in $\tau'$ has index $< k$. Note that it is possible that $\sigma_0 \to \sigma_1$ with both tracks in $\T_k$ but in this case there are tracks $\tau_0$ and $\tau_1$ in $\T$ with $\sigma_i \subset \tau_i$ but $\tau_0 \not\to\tau_1$. On the other hand, if $\sigma \not\in\T_k$ is an index $k$ subtrack of some track in $\T$ then there is $\sigma_0 \in \T_k$ with $\sigma \to \sigma_0$.
  Furthermore, by Lemma \ref{split to vertex cycle}, for every $\gamma \in \S(\tau)$ there is $\sigma \in \T_k$ with $\gamma \in \S(\sigma)$.

\begin{prop}\label{intersection_small}
There exists $N = N(\Sigma)$ such that the following holds. Assume that $\tau_1, \dots, \tau_m \in \T$ with $\tau_i \not\to \tau_j$ for all $i\neq j$. If $m > N$ then
$$\bS = \S(\tau_1) \cap \cdots \cap\S(\tau_m)$$
does not fill $\Sigma$.
\end{prop}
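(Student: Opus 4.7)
My plan is to argue by contradiction: suppose $\bS = \bigcap_i \S(\tau_i)$ fills $\Sigma$. Since $\bS$ is a set of simple closed curves filling $\Sigma$, there is a finite subcollection $\gamma_1, \dots, \gamma_r \in \bS$ whose union fills. Consider
\[ \mu = \sum_{j=1}^r \gamma_j \in \ML(\Sigma). \]
Each $\gamma_j$ is carried by every $\tau_i$, so $\mu$ is carried by each $\tau_i$. The subtrack $\sigma_i \subset \tau_i$ whose branches are exactly those traversed by some $\gamma_j$ is recurrent and fills $\Sigma$: its complementary components arise by fusing the polygonal components of $\Sigma\smallsetminus\tau_i$ across the branches of $\tau_i\smallsetminus\sigma_i$, which remain polygons. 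By Proposition \ref{ms}, $\ind(\sigma_i) \le m(\Sigma)$, and since $i(\mu,\alpha) = \sum_j i(\gamma_j,\alpha) > 0$ for every essential simple closed curve $\alpha$, the measured lamination $\mu$ is filling.

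I would then pass to the polyhedral decomposition of $\PML(\Sigma)$ by cells $\{P(\sigma')\}$ over recurrent train tracks $\sigma'$, in which face relations come from carrying maps. The class $[\mu]$ lies in the relative interior of a unique minimal cell $F = P(\sigma_\mu)$, where $\sigma_\mu$ is a recurrent filling track of index at most $m(\Sigma)$. Since $[\mu]$ lies in each $P(\tau_i)$ and in the relative interior of $F$, the cell $F$ is a common face of every $P(\tau_i)$; equivalently, each $\tau_i$ lies in the closed star of $F$ in the train-track complex.

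The heart of the argument is then a uniform star-count: the number of recurrent train tracks $\tau'$ admitting $F$ as a face of $P(\tau')$ is bounded by a constant $N_0 = N_0(\Sigma)$. Each such $\tau'$ is specified by a carrying map $\sigma_\mu \to \tau'$ along with finite combinatorial data --- triangulations of the complementary polygons of $\sigma_\mu$ (with once-punctured monogons at punctures) and related local choices. Since $\sigma_\mu$ has bounded index, Propositions \ref{dimensions} and \ref{numerology} bound both its number of branches and the total sides of its complementary polygons in terms of $\Sigma$; the count of admissible extensions is then a bounded product of Catalan numbers. Since each underlying track corresponds to at most boundedly many vertices of $\T$ (one per large branch), we obtain $m \le N(\Sigma)$, contradicting $m > N$.

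The main obstacle is step 2: verifying that $F$ is a genuine common face of each $P(\tau_i)$ in the polyhedral decomposition, so the star-count applies uniformly. This is the standard fact that the cells $\{P(\sigma')\}$ form a polyhedral decomposition of $\PML(\Sigma)$ with face structure compatible with carrying maps, but it requires care with faces arising from central splits (which are not subtrack inclusions). Once this framework is in place, the combinatorial bound of the third paragraph yields the required $N = N(\Sigma)$.
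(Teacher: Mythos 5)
Your argument has genuine gaps at three points, and the most serious one is that it never uses the hypothesis $\tau_i\not\to\tau_j$ where the finiteness must actually come from. First, a collection of simple closed curves that fills a non-sporadic $\Sigma$ cannot be pairwise disjoint, so your $\gamma_1,\dots,\gamma_r$ intersect one another and the formal sum $\sum_j\gamma_j$ is \emph{not} an element of $\ML(\Sigma)$; the sum of the weight vectors does define a point of $M(\tau_i)$, but the resulting measured lamination depends on the chart $\tau_i$, so there is no single class $[\mu]$ lying in every $P(\tau_i)$. Second, the cells $P(\sigma')$ over all recurrent tracks do not form a polyhedral complex with a face poset: if $\sigma'\ss\sigma''$ and both fully carry a given lamination, then $P(\sigma')$ is a codimension-zero subpolytope of $P(\sigma'')$, not a face of it, so ``the unique minimal cell containing $[\mu]$ in its relative interior'' does not exist, and ``$F$ is a face of $P(\tau_i)$'' is not a well-posed relation between the different tracks $\tau_i$. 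Third, even granting a common face, the star-count is false as stated: the directed path in $\T$ from the root down to $\tau_1$ can be arbitrarily long, every track on it carries $\S(\tau_1)\supseteq\bS$, and (when a fixed curve of $\bS$ has become a vertex cycle, as in Lemma \ref{split to vertex cycle}) all of these tracks admit the corresponding vertex of their polytopes as a common face. So no bound on the number of tracks sharing a face can hold without invoking pairwise incomparability, which your count does not do; the remark ``one per large branch'' only multiplies tracks by a constant and does not address this.

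The paper's proof is a tree argument of a different flavor. Let $\T_0$ be the minimal subtree of $\T$ spanned by the $\tau_i$; incomparability implies $m\le 2^{N'}$, where $N'$ bounds the number of vertices of $\T_0$ with two incoming edges along any directed path to the terminal vertex. To bound $N'$, one assigns to each $\tau'\in\T_0$ the lexicographically ordered pair $(b',p')$, where $\sigma'\subset\tau'$ is the smallest subtrack with $\bS\subset\S(\sigma')$, $b'$ is the number of branches of $\tau'$ contained in $\sigma'$, and $p'$ is the number of train paths of $\tau'$ missing $\sigma'$. Both quantities are uniformly bounded precisely because $\sigma'$ fills (a legal path in $\tau'\smallsetminus\sigma'$ traverses each branch at most twice), and the pair strictly decreases at every vertex with two incoming edges, since at such a vertex $\bS$ is carried by the central split. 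If you want to rescue a cell-counting strategy, you would have to replace the ill-defined $[\mu]$ and $F$ by the subtracks $\sigma'$ and prove a monotonicity statement of exactly this kind along $\T_0$.
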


\begin{proof}
 We'll assume that $\bS$ fills and obtain a bound on $m$.
 
Let $\T_0$ be the minimal subtree of the directed tree $\T$ that contains all the $\tau_i$. Let $\tau_0$ be the terminal vertex in $\T_0$ so that for each $\tau_i$ there is a unique directed path in $\T_0$ from $\tau_i$ to $\tau_0$. The assumption that $\tau_i \not\to \tau_j$ implies that $\tau_j$ does not appear in the path from $\tau_i$ to $\tau_0$. We will bound the total number of $\tau_i$ by showing that there is $N'>0$ such that the number of vertices in this path that have two forward entering edges is $\le N'$. Then we will set $N = 2^{N'}$.

   For each vertex $\tau' \in \T_0$ we let $\sigma'$ be the smallest subtrack of $\tau'$ such that $\bS\subset \S(\sigma')$. We will define a complexity for the pair $(\tau', \sigma')$ and show that it decreases when we pass through a vertex with two entering edges.

    The complexity for $(\tau', \sigma')$ will be a pair $(b(\tau', \sigma'),p(\tau', \sigma')) = (b', p')$ where $b'$ is the number of branches of $\tau'$
  that belong to $\sigma'$, and $p'$ is the number of
  train paths in $\tau'$ that do not traverse any branches of
  $\sigma'$. Note that branches of $\sigma'$ may be unions of branches in $\tau'$  
  and $b'$ counts all the branches of $\tau'$ contained in $\sigma'$. Since the total number of branches of $\tau'$ is bounded (by a constant depending on $\Sigma$) $b'$ is bounded. Any legal train path in $\tau'\smallsetminus \sigma'$ traverses a branch at most twice, for if it traversed a branch three times $\tau'\smallsetminus \sigma'$ would contain an essential curve, contradicting that $\sigma'$ is filling. Therefore there is also a uniform bound on $p'$, again depending only on $\Sigma$. The total number of possible complexities is therefore bounded and this bound will be our $N'$. We order the pairs $(b',p')$ lexicographically.

  Assume that $\tau'$ has two entering edges in $\T_0$. Then there is a split of $\tau'$ along a large branch yielding two tracks $\tau'_1$ and $\tau'_2$ that are both vertices in $\T_0$. That is $\bS\subset \S(\tau'_1) \cap \S(\tau'_2)$ which implies that $\bS \subset \S(\hat\tau)$ where  $\hat\tau$ is the central split. In particular, the new small branch created by the split in both $\tau'_1$ and $\tau'_2$ is not be contained in $\sigma'_1$ or $\sigma'_2$.
We claim the complexity $(b', p')$ is strictly greater than both $(b'_1,p'_1)$ and $(b'_2, p'_2)$. Note that the image of $\sigma'_i$ in $\tau'$ under the carrying map $\tau'_i \to \tau'$ is $\sigma'$ but the pre-image of $\sigma'$ in the $\tau'_i$ may be strictly smaller than the $\sigma'_i$.

There are two cases:
  
{\bf Case 1:} The large branch of $\tau'$ being split is contained in $\sigma'$. Then either $\sigma'_1$ and $\sigma'_2$ are a central split of $\sigma'$ and $b'_i = b'-2$ or $\sigma'_i \to \sigma'$ is a homeomorphism and $b'_i = b' - 1$.

{\bf Case 2:}  The branch of $\tau'$ being split is not in $\sigma$. Then the carrying map restricts to a (branch preserving) homeomorphism from $\sigma'_i$ to $\sigma'$ so $b_i' = b'$. The carrying map also takes $\tau'_i \smallsetminus\sigma'_i$ to $\tau'\smallsetminus\sigma'$ and distinct legal paths in $\tau'_i \smallsetminus\sigma'_i$ map to distinct legal paths in $\tau'\smallsetminus\sigma'$. On the other hand, there are legal paths in $\tau'\smallsetminus \sigma'$ that are not the image of legal paths in $\tau'_i\smallsetminus\sigma'_i$. This implies that $p'_i < p'$.
\end{proof}

As a corollary of this proposition and Proposition \ref{k alternative} in the Appendix we have:
\begin{cor}\label{qtree_ii}
There exists $N = N(\Sigma)$ such that if $\sigma_1, \dots, \sigma_m \in \T_k$ and $m> N$ then the diameter of
$$I_C(\S(\sigma_1), \dots, \S(\sigma_m))$$
is uniformly bounded.
\end{cor}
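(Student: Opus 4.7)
The proof combines Propositions \ref{intersection_small} and \ref{k alternative} via a reduction from the index-$k$ subtracks $\sigma_i$ to their ambient tracks in $\T$. For each $\sigma_i\in\T_k$, fix a witnessing track $\tau_i\in\T$, so that $\S(\sigma_i)\subset\S(\tau_i)$. Since coarse intersections only shrink as sets are added, a diameter bound on $I_C(\S(\tau_{i_1}),\ldots,\S(\tau_{i_{m'}}))$ for any sub-family automatically gives one on
$$I_C(\S(\sigma_1),\ldots,\S(\sigma_m))\subset I_C(\S(\sigma_{i_1}),\ldots,\S(\sigma_{i_{m'}})).$$

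The first step is to extract from the $\tau_i$'s a subcollection of size a fixed fraction of $m$ that is pairwise incomparable in $\T$, so that Proposition \ref{intersection_small} applies. The idea is to combine two facts: each vertex of $\T$ carries only a bounded number (in terms of $\Sigma$) of index-$k$ subtracks, and the defining property of $\T_k$---that the image of $\sigma_i$ in the immediate successor of $\tau_i$ already has index $<k$---persists under arbitrary further carrying in $\T$. This rigidity should bound the length of any directed chain among the $\tau_i$'s by a constant $M=M(\Sigma)$; stratifying by height in $\T$ and pigeonholing then produces a pairwise incomparable sub-family of size at least $m/M$.

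With $N$ chosen as $MN_0$, where $N_0$ is the constant from Proposition \ref{intersection_small}, the assumption $m>N$ yields an antichain $\tau_{i_1},\ldots,\tau_{i_{m'}}$ of size $m'>N_0$. Proposition \ref{intersection_small} then forces $\bS=\S(\tau_{i_1})\cap\cdots\cap\S(\tau_{i_{m'}})$ not to fill $\Sigma$, and consequently the subset $\S(\sigma_{i_1})\cap\cdots\cap\S(\sigma_{i_{m'}})$ also fails to fill. Proposition \ref{k alternative} is tailored precisely to convert this non-filling condition into a uniform bound, depending only on $C$ and $\Sigma$, on the diameter of the coarse intersection $I_C(\S(\sigma_{i_1}),\ldots,\S(\sigma_{i_{m'}}))$ in $\C_k$. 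The inclusion above then transfers the bound to the original collection.

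The main obstacle I expect is the rigorous control on chain length in the sub-family $\{\tau_i\}$. The subtlety is that different $\sigma_i$'s, when pushed forward to a common descendant track, may in principle become identified as subtracks; what one needs is either that the forward images retain enough distinguishing data (say, distinct underlying branch sets in the terminal track $\tau_{i_\ell}$), or that any such collisions can only happen in boundedly many ways per level of $\T$. Once this combinatorial step is in place, the pigeonhole argument and the two cited propositions combine cleanly to finish the proof.
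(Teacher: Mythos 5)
Your overall architecture---pass to the ambient tracks $\tau_i\in\T$, apply Proposition \ref{intersection_small} to get a non-filling intersection, and convert that into a diameter bound via Proposition \ref{k alternative}---matches the paper's endgame, but the reduction you propose (extracting a large antichain among the $\tau_i$ by bounding directed chain lengths) does not work, and the obstacle you flag at the end is genuine rather than a technicality. There is no constant $M(\Sigma)$ bounding chains among the ambient tracks of elements of $\T_k$: along a single long splitting sequence in $\T$, each track can contribute an index-$k$ subtrack whose image in its immediate successor already has index $<k$, so the witnessing tracks $\tau_1,\dots,\tau_m$ may perfectly well form a chain of length $m$. (Indeed the paper emphasizes that $Q^k_i(\tau)\smallsetminus Q^k_{i-1}(\tau)$ contains infinitely many tracks of $\T_k$, spread arbitrarily deep in $\T$.) Stratifying by height and pigeonholing therefore need not produce an antichain of size comparable to $m$, and your first step collapses.

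The paper's proof turns the comparable case into an immediate win instead of an obstacle. It first applies Proposition \ref{k alternative}: either the coarse intersection is already uniformly bounded, or it lies in a $C'$-neighborhood of the exact intersection $\S(\sigma_1)\cap\cdots\cap\S(\sigma_m)$, so it suffices to bound the latter in $\C_k$. Now if $\tau_i\to\tau_j$ for some $i\neq j$, the defining property of $\T_k$ forces $\sigma_i\not\to\sigma_j$; since the exact intersection is non-empty it equals $\S(\sigma_i')$ for a subtrack $\sigma_i'\subset\sigma_i$, which must then be a \emph{proper} subtrack, hence $\ind(\sigma_i')>k$, so the exact intersection has diameter $1$ in $\C_k$ and the corollary follows with no appeal to $N$. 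Only in the remaining case---all $\tau_i$ pairwise incomparable---does Proposition \ref{intersection_small} enter, exactly as in your final step. So the correct fix is not to control chain lengths but to observe that a single comparable pair already produces the bound; with that replacement your argument becomes the paper's.
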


\begin{proof}
If $\S(\sigma_1) \cap \cdots \cap \S(\sigma_m) = \emptyset$ then the corollary is (1) of Proposition \ref{k alternative}.
If $\S(\sigma_1) \cap \cdots \cap \S(\sigma_m) \neq \emptyset$ then by (2) of Proposition \ref{k alternative} we have 
$$I_{C}(\S(\sigma_1), \dots, \S(\sigma_m)) \subset B_{C'}(\S(\tau_1) \cap\cdots \cap\S(\tau_m))$$
for some $C'$ depending on $C$. To finish the proof we need to bound the diameter of the intersection on the right.

For each $\sigma_i$ we have a $\tau_i \in \T$ with $\sigma_i$ a
subtrack. If $\tau_i \to \tau_j$ for $i \neq j$ then, by the construction of $\T_k$,  $\sigma_i
\not\to\sigma_j$.  Since $\S(\sigma_i) \cap \S(\sigma_j) \neq \emptyset$ there is a subtrack $\sigma'_i \subset \sigma_i$ with $\S(\sigma'_i) = \S(\sigma_i) \cap \S(\sigma_j)$. As $\sigma_i \not\to\sigma_j$ the track $\sigma'_i$ must be a proper subtrack and hence $\ind(\sigma'_i) > k$. Therefore
$$B_{C'}(\S(\sigma_1)\cap \cdots \cap \S(\sigma_m)) \subset B_{C'}(\sigma'_i)$$
and as $\ind(\sigma'_i) > k$ the diameter of this set in $\C_k$ is bounded by $2C'+1$.

Now we can assume that $\tau_i \not\to \tau_j$ for all $i\neq j$. Then by Proposition \ref{intersection_small} if $m>N$ we have that the curves in 
$$\bS = \S(\tau_1) \cap\cdots \cap\S(\tau_m)$$
don't fill $\Sigma$ so $\S$ has diameter 1 in $\C_k$. 
\end{proof}

Define $\psi_k\colon \T \to \N$ such that $\psi_k(\tau') \le m$ if there is an index $k$ subsequence
$$\tau'=\tau_0 \to \tau_1 \to \cdots \to\tau_m = \tau$$
in $\T$ and $\psi_k$ is the largest function with this property. We extend the domain
of $\psi_k$ to subtracks of tracks in $\T$ as follows.
If $\sigma$ is a subtrack of $\tau'$ then we set $\psi_k(\sigma) = \psi_k(\tau')$.

\begin{lemma}\label{large distance}
Given $\alpha\in \S(\tau)$ there exists $\sigma \in \T_{k+1}$ with $\alpha \in \S(\sigma)$ and $\psi_{k+1}(\sigma) \le (A_0 d_k(\alpha, \tau) + A_1)^{k+1}$.
\end{lemma}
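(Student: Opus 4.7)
The plan is to produce $\sigma$ at the ``index-$(k+1)$ transition'' along a path in $\T$ from $\tau$ that always carries $\alpha$, and then bound $\psi_{k+1}(\sigma)$ by applying Theorem \ref{block bound} to the induced splitting sequence. Starting at $\xi_0 = \tau$, I would inductively pick edges $\xi_{i+1} \to \xi_i$ in $\T$ so that $\alpha \in \S(\xi_{i+1})$ at every stage, giving an infinite path in $\T$ on which $\alpha$ is always carried. By Lemma \ref{split to vertex cycle}, some $\xi_N$ on this path has $\alpha \in V(\xi_N)$.

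Let $\sigma_i \subset \xi_i$ denote the smallest subtrack carrying $\alpha$. Since $\sigma_N$ is either $\alpha$ itself or a barbell with core $\alpha$, $\ind(\sigma_N)$ is much larger than $k+1$, so there is a smallest index $i^*$ with $\ind(\sigma_{i^*}) \ge k+1$. I would then take $\sigma \subset \xi_{i^*}$ to be a subtrack of index exactly $k+1$ containing $\sigma_{i^*}$, obtained by adjoining branches of $\xi_{i^*}$ to $\sigma_{i^*}$ so as to lower the index from $\ind(\sigma_{i^*})$ down to $k+1$ while preserving recurrence. To verify $\sigma \in \T_{k+1}$, consider the edge $\xi_{i^*} \to \xi_{i^*-1}$ of $\T$: its carrying map sends $\sigma$ to a subtrack $\hat\sigma \subset \xi_{i^*-1}$ that still carries $\alpha$, hence contains $\sigma_{i^*-1}$; by minimality of $i^*$ we have $\ind(\sigma_{i^*-1}) \le k$, so $\ind(\hat\sigma) \le \ind(\sigma_{i^*-1}) \le k < k+1$, as required.

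To bound $\psi_{k+1}(\sigma) = \psi_{k+1}(\xi_{i^*})$, I would apply Theorem \ref{block bound} with $k+1$ in place of $k$ to the splitting sequence $\xi_{i^*} \ss \tau$, a sub-sequence of $\xi_N \ss \tau$ lying in $\T$. This yields an index $(k+1)$-subsequence in $\T$ from $\xi_{i^*}$ to $\tau$ of length at most $(A_0\, d_k(\xi_{i^*}, \tau) + A_1)^{k+1}$. By Theorem \ref{quasigeodesics} and Proposition \ref{hyperbolic}, the vertex cycles of $\xi_0, \xi_1, \ldots$ trace out a reparametrized quasi-geodesic in $\C_k$; since $\alpha \in V(\xi_N)$ and vertex-cycle sets have uniformly bounded $\C_k$-diameter, we get $d_k(\xi_{i^*}, \tau) \le d_k(\xi_N, \tau) + O(1) \le d_k(\alpha, \tau) + O(1)$. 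Absorbing the additive constants into $A_1$ gives the desired bound.

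The main obstacle is constructing $\sigma$ at the transition step. The function $i \mapsto \ind(\sigma_i)$ need not be monotone, and in a single splitting step it can jump by more than one (for instance when orientability of the minimal carrying subtrack changes), so $\ind(\sigma_{i^*})$ may overshoot $k+1$. One must then choose which branches of $\xi_{i^*}$ to adjoin to $\sigma_{i^*}$ in order to hit index $k+1$ exactly without breaking recurrence, or, failing that, locate a suitable $\sigma$ in a nearby $\xi_j$ on the path. Executing this step rigorously will require a careful combinatorial analysis of how splittings change minimal carrying subtracks, leaning on Proposition \ref{dimensions} to track index changes branch by branch.
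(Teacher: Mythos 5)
Your proposal is correct and follows essentially the same route as the paper: use Lemma \ref{split to vertex cycle} to find where along a path in $\T$ the curve $\alpha$ becomes a vertex cycle, extract from this an index-$(k+1)$ subtrack in $\T_{k+1}$ carrying $\alpha$, and then combine Theorem \ref{block bound} with the quasi-geodesic property of vertex cycles (Theorem \ref{quasigeodesics}, Proposition \ref{hyperbolic}) to bound $\psi_{k+1}$. The ``main obstacle'' you flag is not a genuine one: the function $i\mapsto\ind(\sigma_i)$ is automatically non-decreasing since the image of $\sigma_{i+1}$ in $\xi_i$ is $\sigma_i$, and because the faces of the polyhedral cone $M(\xi_{i^*})$ correspond to recurrent subtracks and form a graded lattice, there is a recurrent subtrack of every index between $0$ and $\ind(\sigma_{i^*})$ containing $\sigma_{i^*}$, in particular one of index exactly $k+1$. (Also note that Theorem \ref{block bound} is applied with the same $k$, not $k+1$; your stated conclusion already reflects the correct application.)
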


\begin{proof}
By Lemma \ref{split to vertex cycle} the curve $\alpha$ is a vertex cycle for some track in $\T$.
In particular, $\alpha$ is carried by an index $k+1$ subtrack of a track in $\T$ and therefore $\alpha$ is carried by a track $\sigma \in \T_{k+1}$ which is a subtrack of a track $\tau_0 \in \T$.
By Proposition \ref{hyperbolic} the vertex cycles of $\tau_0$ lie on a uniform quality quasigeodesic from $V(\tau)$ to $\alpha$ and therefore $d_k(\tau, \tau_0) - d_k(\tau, \alpha)$ is uniformly bounded. Applying this and Theorem \ref{block bound} gives the result.
\end{proof}

Set $Q^k_i(\tau)$ to be the union of the sets  $\S(\sigma)$ such that $\sigma \in \T_k$ has the property that $\psi_{k+1}(\sigma) \le i$.
Note that $Q^k_0(\tau)$ corresponds to the index $k+1$ subtracks of $\tau$ and this is a finite set of bounded size. However, when $i \ge 1$ there are infinitely many train tracks in $Q^k_i(\tau)$ and in fact $Q^k_i(\tau) \smallsetminus Q^{k}_{i-1}(\tau)$ has infinitely many tracks.

\begin{proof}[Proof of Theorem \ref{exhaustion}]
We show that the sets $Q^k_i(\tau)$ are quasi-trees by applying Theorem \ref{quasitreelem} inductively. As noted above $Q^{k}_0(\tau)$ is the index $k+1$ subtracks of $\tau$ and has uniformly bounded size and hence is a quasi-tree. This is the base case.

For the induction step we assume that $T_0 = Q_{i-1}^{k}(\tau)$ is a
quasi-tree. For $j>0$ we let $T_j$ be the collection of $\S(\sigma)$
with $\sigma \in \T_k$ and $\psi_{k+1}(\sigma) = i$. In particular, $\sigma$ is a subtrack of $\tau_0 \in \T$ with $\psi_k(\tau_0)  = i$ so there is $\tau_1 \in \T$ with $\tau_0 \to \tau_1$ an index $k+1$ carrying map and $\psi_k(\tau_1) = i-1$. Therefore all the curves carried in index $k+1$ subtracks of $\tau_1$ are in $T_0$ and $d_k(\sigma, T_0)$ is uniformly bounded verifying (i) of Theorem \ref{quasitreelem}. Property (ii) of Theorem \ref{quasitreelem} follows from Corollary \ref{qtree_ii}.

 The last sentence follows from Theorem \ref{large distance}.
\end{proof}

We need one more lemma in the proof of Theorem \ref{mainthm}.

\begin{lemma}\label{tau}
  Let $B$ be a set in $\C_k$ of finite diameter. Then there is a
  maximal, birecurrent train track $\tau$ such that every curve in $B$
  is carried by $\tau$. Furthermore, $d_k(\tau, B)$ is bounded by a
  constant only depending on $\Sigma$ and  $\diam B$.
\end{lemma}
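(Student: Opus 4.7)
Plan: We construct $\tau$ with the extra property that some fixed $\alpha_0\in B$ is itself a vertex cycle of $\tau$. Granting this, the distance bound is immediate from the triangle inequality in $\C_k$: letting $C_0=C_0(\Sigma)$ denote a uniform upper bound on the $\C$-diameter (hence $\C_k$-diameter, since $\C\to\C_k$ is $1$-Lipschitz) of the vertex cycles of any train track on $\Sigma$ \cite[Corollary 2.3]{HamenstadtTT}, we have for every $v\in V(\tau)$ and $\beta\in B$
\[
d_k(v,\beta)\leq d_k(v,\alpha_0)+d_k(\alpha_0,\beta)\leq C_0+\diam_{\C_k} B,
\]
while pairs in $V(\tau)$ or in $B$ already lie within $C_0$ or $\diam_{\C_k} B$ of one another. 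Thus $d_k(\tau,B)=\diam_{\C_k}(V(\tau)\cup B)\leq C_0+\diam_{\C_k}B$, which is bounded only in terms of $\Sigma$ and $\diam B$.

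To build such a $\tau$ when $B$ is finite, fix $\alpha_0\in B$ and begin with $\eta_0$ equal to the curve $\alpha_0$ itself, viewed as a single-circle (non-generic) train track whose unique vertex cycle is $\alpha_0$. Enumerate $B\setminus\{\alpha_0\}=\{\beta_1,\ldots,\beta_m\}$ and inductively define $\eta_i$ from $\eta_{i-1}$ by placing $\beta_i$ in minimal position with $\eta_{i-1}$, taking a regular neighborhood of $\eta_{i-1}\cup\beta_i$, resolving transverse intersections into switches, and collapsing any resulting non-punctured bigons via the compatible quadratic-differential foliation construction developed in the proof of Lemma \ref{track index}. This construction ensures that $\eta_{i-1}$ survives as a subtrack of $\eta_i$ after subdivision at the new switches, so $P(\eta_{i-1})$ is a face of $P(\eta_i)$; consequently $V(\eta_{i-1})\subset V(\eta_i)$ and $\S(\eta_{i-1})\cup\{\beta_i\}\subset\S(\eta_i)$, and so $\alpha_0$ persists as a vertex cycle throughout. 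Finally, complete $\eta=\eta_m$ to a maximal birecurrent generic train track $\tau\supseteq\eta$ by subdividing high-valence switches into trivalent ones and adding diagonal branches until maximal, arranging for birecurrence via standard moves that preserve $\eta$ as a subtrack. Then $\alpha_0\in V(\eta)\subset V(\tau)$ and $B\subset\S(\eta)\subset\S(\tau)$, as required.

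For arbitrary (possibly infinite) $B$, apply the finite case to an exhaustion $\{\alpha_0\}\subset B_1\subset B_2\subset\cdots$ of $B$ by finite subsets, obtaining tracks $\tau_j$ with $\alpha_0\in V(\tau_j)$ and $B_j\subset\S(\tau_j)$; then extract a subsequence on which the $\tau_j$ stabilize by a compactness argument (generic maximal birecurrent train tracks on $\Sigma$ have uniformly bounded combinatorial complexity). The main technical obstacle is the careful verification that each regular-neighborhood resolution and the final completion to maximal birecurrent genuinely preserve the subtrack inclusion that keeps $\alpha_0$ a vertex cycle; this is delicate but standard, building on the foliation-based bigon collapse of Lemma \ref{track index} together with classical train-track completion techniques.
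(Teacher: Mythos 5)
There is a genuine gap: the heart of your argument --- that any finite set of curves can be amalgamated one curve at a time into a single maximal birecurrent track carrying all of them, with $\alpha_0$ surviving as a vertex cycle --- is asserted rather than proved, and the individual steps do not work as described. When you resolve the transverse intersections of $\beta_i$ with the track $\eta_{i-1}$, the branches of $\eta_{i-1}$ through those points are cut and rerouted through the new switches, so $\eta_{i-1}$ is at best \emph{carried by} the resulting object, not a subtrack of it; consequently the claim that $P(\eta_{i-1})$ is a face of $P(\eta_i)$ (which is exactly what you would need for $\alpha_0$ to remain a vertex cycle) has no justification. Moreover the resolved object need not be a train track at all (forbidden bigons, monogons or annular regions can appear), the bigon-collapsing argument of Lemma \ref{track index} is specific to a pair of filling curves, whose complement consists of even-sided polygons and which support the square/quadratic-differential structure used there, and it does not transfer to a curve together with an arbitrary track; recurrence and transverse recurrence of the result, and the existence of a maximal birecurrent completion, are also not addressed. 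Finally, the passage to infinite $B$ is incorrect: there are infinitely many isotopy classes of maximal birecurrent tracks (only finitely many up to the action of $\Mod(\Sigma)$), so ``uniformly bounded combinatorial complexity'' gives no stabilization of the sequence $\tau_j$. Note also that your argument never uses the hypothesis that $B$ has finite diameter in $\C_k$ except in the trivial triangle-inequality estimate at the end; that hypothesis is precisely what makes the carrying statement true and must enter the proof in an essential way.

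For comparison, the paper's route is entirely different and shows where the finite-diameter hypothesis is used. By finiteness of curves up to $\Mod(\Sigma)$ one reduces to showing that the radius-$R$ ball in $\C_k$ about a fixed curve $\gamma$ lies in $\S(\tau)$ for a fixed maximal birecurrent track $\tau$. Since $\tau$ carries an index-zero lamination, Corollary \ref{converge to boundary} produces an infinite sequence $\cdots\twoheadrightarrow\tau_2\twoheadrightarrow\tau_1\twoheadrightarrow\tau_0=\tau$, and Corollary \ref{distance_lower} shows that any curve \emph{not} carried by $\tau$ has $d_k$-distance at least $A_0\, i-A_1$ from $\S(\tau_i)$; choosing $i$ large compared with $R$, and arranging (again by cocompactness) that a vertex cycle of $\tau_i$ is uniformly close to $\gamma$, one concludes that the entire ball is carried by $\tau$, with the distance bound following from the quasi-geodesic property of the splitting sequence. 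This progress-of-splitting-sequences mechanism is the missing ingredient in your construction.
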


\begin{proof}
As there are only finitely many curves up to the action of the mapping class group, we can fix a curve $\gamma$ such that $\phi(B)$ is contained in the ball of radius $R$ around $\gamma$ where $\phi$ is element of the mapping class group and $R- \diam B$ is uniformly bounded. In particular, the theorem will follow if we can show that the radius $R$ neighborhood $\gamma$ is carried by some maximal train track $\tau$.

  Fix a maximal train track $\tau$. As $P(\tau)$ contains an open set in $\PML$ and index zero laminations are dense in $\PML$ we have that $\tau$ carries an index zero lamination. Then by Corollary \ref{converge to boundary} there is a sequence
  $$\cdots \twoheadrightarrow \tau_2 \twoheadrightarrow \tau_1 \twoheadrightarrow \tau_0 = \tau.$$
  Then, by Corollary \ref{distance_lower}, if $\alpha \in \S(\tau_i)$ and $\beta\not\in\S(\tau)$ we have
  $$d_k(\alpha, \beta) \ge d_0(\alpha, \beta)\ge A_0\cdot i - A_1.$$
  Again applying that there are only finite many curves up to the action of the mapping class group we can assume that $d_k(\alpha, \gamma)\le D$ where $D$ only depends on $\Sigma$ and $\alpha$ is a vertex cycle of $\tau_i$. We then choose $i$ to be the first integer larger than $\frac1{A_0}\left(R+A_1 +D\right)$ and we have that $B \subset \S(\tau)$.
\end{proof}  

\begin{proof}[Proof of Theorem \ref{mainthm}]
  Consider the projection $\C_{k+1}(\Sigma)\to\C_{k}(\Sigma)$ and a
  bounded set $B\subset \C_{k}(\Sigma)$. By Lemma \ref{tau} there is
  a birecurrent maximal track $\tau$ that carries every curve in
  $B$. By Theorem \ref{exhaustion} there is some $i$ so that $B\subset
  Q^{k+1}_i(\tau)$ and $Q^{k+1}_i(\tau)$ is bounded in $\C_k$, but its
  preimage in $\C_{k+1}$ (which equals $Q^{k+1}_i(\tau)$) is a
  quasi-tree.
\end{proof}

\subsection{Bounds on asymptotic dimension}

\begin{thm}\label{maincor}
  For every $k$ we have $\asdim \C_{k+1}\leq\asdim\C_k +1$
  and $\asdim \C_k\leq k+1$. In
  particular, $\asdim\C=\asdim\C_{m(\Sigma)}\leq m(\Sigma)+1$.
\end{thm}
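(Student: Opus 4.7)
The plan is to prove both inequalities at once by induction on $k$, using the Bell--Dranishnikov Hurewicz-type theorem \cite{Hurewicz} applied to each map in the tower, and then transport the bound along the top quasi-isometry.

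First I would establish the base case. By Theorem~\ref{mainthm}, the principal curve graph $\C_0(\Sigma)$ is a quasi-tree; since any quasi-tree is quasi-isometric to a simplicial tree and trees have asymptotic dimension at most $1$, we get $\asdim \C_0 \leq 1 = 0+1$.

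Next, for the inductive step, I would invoke the following form of the Bell--Dranishnikov theorem: if $f\colon X\to Y$ is a Lipschitz map of metric spaces such that for every $R>0$ the family $\{f^{-1}(B_R(y))\}_{y\in Y}$ has uniformly bounded asymptotic dimension $\leq n$, then $\asdim X \leq \asdim Y + n$. I would apply this to the $1$-Lipschitz map $\C_{k+1}(\Sigma)\to \C_k(\Sigma)$. By Theorem~\ref{mainthm}, each ball $B_R(y)$ in $\C_k(\Sigma)$ is contained in a bounded set whose preimage in $\C_{k+1}(\Sigma)$ is a quasi-convex quasi-tree, with quasi-isometry constants depending only on $R$ (and $\Sigma$). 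Hence the preimages of $R$-balls are contained in a uniform family of quasi-trees, all of which have asymptotic dimension $\leq 1$ with the uniform constant coming from the uniform quasi-tree constants. Applying Bell--Dranishnikov yields $\asdim \C_{k+1}(\Sigma)\leq \asdim \C_k(\Sigma)+1$, and combined with the inductive hypothesis $\asdim \C_k(\Sigma)\leq k+1$ this gives $\asdim \C_{k+1}(\Sigma)\leq k+2$.

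Finally, Proposition~\ref{hyperbolic} (and Section~\ref{sec:2}) gives that $\C(\Sigma)$ is quasi-isometric to $\C_{m(\Sigma)}(\Sigma)$; since asymptotic dimension is a quasi-isometry invariant, the inequality $\asdim \C_{m(\Sigma)}(\Sigma)\leq m(\Sigma)+1$ transports to $\asdim \C(\Sigma)\leq m(\Sigma)+1$. The main subtlety to check carefully is the uniformity in the Bell--Dranishnikov hypothesis: one must verify that for a fixed $R$, the quasi-tree constants of the preimages of $R$-balls are bounded independently of $y\in \C_k(\Sigma)$. This follows from $\Mod(\Sigma)$-equivariance of the maps in the tower together with the fact that Theorem~\ref{mainthm} provides quasi-tree constants that depend only on the diameter of the bounded set being considered and on $\Sigma$.
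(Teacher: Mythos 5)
Your proposal is correct and follows essentially the same route as the paper: apply the Bell--Dranishnikov Hurewicz-type theorem to each map $\C_{k+1}\to\C_k$, use Theorem~\ref{mainthm} to see that preimages of $R$-balls lie in quasi-trees (hence have $\asdim\leq 1$), obtain uniformity from the $\Mod(\Sigma)$-action permuting these preimages by isometries, and induct with base case $\C_0$ a quasi-tree, finishing by quasi-isometry invariance for $\C\simeq\C_{m(\Sigma)}$.
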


To compute bounds on the asymptotic dimensions of the graphs $\C_k(\Sigma)$,
we apply the following Hurewicz-type theorem for asymptotic dimension due to
Bell-Dranishnikov. We refer the reader to \cite{Hurewicz} for the definition of
a family of subsets $\{X_\alpha\}$ of a metric space $X$ satisfying the
inequality $\asdim X_\alpha \leq n$ \emph{uniformly}.

\begin{thm}[\cite{Hurewicz}]
  Let $f:X\to Y$ be a Lipschitz map of geodesic metric spaces. Suppose
  that for every $R>0$, the family of sets $\{f^{-1}(B_R(y))\}_{y\in Y}$
  satisfies the inequality $\asdim f^{-1}(B_R(y))\leq n$ uniformly.
  Then $\asdim X\leq \asdim Y + n$.
\end{thm}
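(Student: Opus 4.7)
The plan is to derive both inequalities from the structural results already in hand, using the Hurewicz-type theorem stated at the end of the excerpt. The base of the induction will come from the fact that $\C_0$ is itself a quasi-tree (Theorem \ref{mainthm}), hence has asymptotic dimension $\leq 1$. The inductive step will come from applying the Hurewicz-type theorem to the Lipschitz projection $\C_{k+1}(\Sigma)\to\C_k(\Sigma)$, whose coarse fibers are quasi-trees.

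More precisely, I would first record that every quasi-tree has asymptotic dimension $\leq 1$, with a bound on the $R$-multiplicity of covers that depends only on the quasi-isometry constants to a simplicial tree (since simplicial trees have $\asdim=1$ uniformly and $\asdim$ is a quasi-isometry invariant with controlled constants). Next I would verify the hypothesis of the Bell--Dranishnikov theorem for the map $f=\pi_k\colon \C_{k+1}(\Sigma)\to\C_k(\Sigma)$. This map is $1$-Lipschitz (Section \ref{sec:2}), and $\C_{k+1}$ and $\C_k$ are geodesic metric spaces since they are graphs. Fix $R>0$ and $y\in\C_k(\Sigma)$, and let $B=B_R(y)$. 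By Theorem \ref{mainthm}, there is a bounded set $B'\supset B$ whose preimage $f^{-1}(B')$ in $\C_{k+1}(\Sigma)$ is a quasi-convex quasi-tree, with quasi-isometry constants depending only on $R$ (and on $\Sigma$). In particular $f^{-1}(B)\subset f^{-1}(B')$, and since $\asdim$ is monotone under inclusion of subsets,
\[
\asdim f^{-1}(B_R(y)) \leq \asdim f^{-1}(B') \leq 1
\]
with the covering constants uniform in $y$. Applying the Hurewicz-type theorem then gives $\asdim\C_{k+1}(\Sigma)\leq \asdim\C_k(\Sigma)+1$.

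For the second inequality I induct on $k$. The base case $k=0$ is immediate from Theorem \ref{mainthm}: $\C_0(\Sigma)$ is a quasi-tree, so $\asdim\C_0(\Sigma)\leq 1=0+1$. The inductive step is just the inequality proved in the previous paragraph: assuming $\asdim\C_k(\Sigma)\leq k+1$, we obtain $\asdim\C_{k+1}(\Sigma)\leq (k+1)+1=(k+1)+1$. Finally, since $\C(\Sigma)$ is quasi-isometric to $\C_{m(\Sigma)}(\Sigma)$ (Section \ref{sec:2}) and asymptotic dimension is a quasi-isometry invariant, the ``in particular'' clause follows: $\asdim\C(\Sigma)=\asdim\C_{m(\Sigma)}(\Sigma)\leq m(\Sigma)+1$.

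The only nontrivial point is ensuring that the quasi-tree constants for $f^{-1}(B_R(y))$ are uniform in $y$ for fixed $R$; this is where I expect a careful reader to pause. Theorem \ref{mainthm} produces $B'$ and its preimage with constants depending on $\diam(B)$, but these depend on $\diam(B)=2R$ alone, not on the location $y$, since the construction (via Theorem \ref{exhaustion} and Lemma \ref{tau}) takes any bounded set and produces an enclosing quasi-tree with constants depending only on the diameter and on $\Sigma$. This is precisely the uniformity required by the Bell--Dranishnikov Hurewicz theorem, so the argument goes through cleanly.
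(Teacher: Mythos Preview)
There is a mismatch between the statement you were given and the argument you wrote. The displayed statement is the Bell--Dranishnikov Hurewicz-type theorem, which the paper \emph{cites} from \cite{Hurewicz} and does not prove; your write-up is not a proof of that result at all, but rather a proof of Theorem~\ref{maincor} (the bounds $\asdim \C_{k+1}\leq\asdim\C_k+1$ and $\asdim\C_k\leq k+1$). If you were genuinely asked to prove the Bell--Dranishnikov theorem, your proposal does not address it.

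Assuming the intended target was Theorem~\ref{maincor}, your argument is correct and follows the same route as the paper: apply the Hurewicz theorem to $\pi_k\colon\C_{k+1}\to\C_k$, using that preimages of balls sit inside quasi-trees and hence have $\asdim\leq 1$, then induct. The one difference is in how uniformity is justified. You argue that the quasi-tree constants produced by Theorem~\ref{mainthm} (via Theorem~\ref{exhaustion} and Lemma~\ref{tau}) depend only on $\diam(B)=2R$ and on $\Sigma$, not on the center $y$. The paper instead observes that $\Mod(\Sigma)$ acts on $\C_k$ with finitely many orbits of balls $B_R(y)$ for each fixed $R$, and permutes the sets $\pi^{-1}(B_R(y))$ by isometries, so uniformity is automatic. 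Both arguments are valid; the paper's is shorter and avoids tracking constants through the construction, while yours is more self-contained but requires verifying that the dependence in Theorem~\ref{exhaustion} really is only on the diameter.
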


\begin{proof}[Proof of Theorem \ref{maincor}]
  Let $\pi:\C_{k+1}\to \C_k$ be the natural map. Then for each
  $y\in \C_k$, $\pi^{-1}(B_R(y))$ is contained in a quasi-tree, hence it
  has asymptotic dimension $\leq 1$, as a subspace of a space with asymptotic
  dimension $\leq 1$. On the other hand, there are finitely many sets $B_R(y)$
  up to the action of $\Mod(\Sigma)$, and $\Mod(\Sigma)$ also permutes the
  sets $\pi^{-1}(B_R(y))$ by isometries. This shows that
  $\asdim \pi^{-1}(B_R(y))\leq 1$ \emph{uniformly} in $y$. Thus
  $\asdim \C_{k+1} \leq \asdim C_k + 1$ for each $k$ and the theorem
  follows by induction on $k$.
\end{proof}

\begin{remark} With the value of $m(\Sigma)$ from Proposition \ref{ms}
  this recovers the bound on $\asdim\C$ from \cite{BB}.
\end{remark}

It would be interesting to know whether $\asdim\C_k=k+1$, i.e. whether
equality holds in $\asdim \C_{k+1} \leq \asdim \C_k + 1$.

\section{Boundary}

In this section we characterize the boundaries and loxodromics of the graphs
$\mathcal C_k(\Sigma)$. The main tools are Corollary \ref{converge to boundary} and the following theorem of Dowdall-Taylor.
The setting is that $X,Y$ are hyperbolic metric spaces and $p:X\to Y$ is
alignment-preserving. Then the \emph{$Y$-subboundary of $X$}, denoted $\partial_Y X$, is defined to be the
set of equivalence classes of quasi-geodesic rays $\gamma:\R_+\to X$ with
$\diam_Y p(\gamma) = \infty$.

\begin{thm}[{\cite[Theorem 3.2]{DT}}]
  \label{DT}
  Suppose that $p:X\to Y$ is a coarsely surjective, alignment-preserving map
  between hyperbolic spaces. Then $p$ extends to a homeomorphism
  $\partial_p : \partial_Y X \to \partial Y$.
\end{thm}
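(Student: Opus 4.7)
The plan is to define $\partial_p$ explicitly, prove that it is a well-defined bijection, and then check continuity in both directions. The workhorse throughout is that an alignment-preserving Lipschitz map between hyperbolic spaces sends (reparametrized) quasi-geodesics to reparametrized quasi-geodesics with uniform constants, which is essentially the last bullet of Proposition \ref{KR}.

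First I would define $\partial_p$ as follows. Let $[\gamma] \in \partial_Y X$ be represented by a quasi-geodesic ray $\gamma: \R_+ \to X$ with $\diam_Y(p \circ \gamma) = \infty$. Then $p \circ \gamma$ is a reparametrized quasi-geodesic in the hyperbolic space $Y$, and since it has infinite diameter it accumulates on a single point $\xi \in \partial Y$; set $\partial_p[\gamma] := \xi$. The natural equivalence relation on $\partial_Y X$ is set up so that $[\gamma_1] = [\gamma_2]$ iff $p(\gamma_1)$ and $p(\gamma_2)$ fellow-travel at infinity (equivalently, converge to the same point of $\partial Y$), which gives well-definedness and injectivity simultaneously.

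Next, for surjectivity, given $\xi \in \partial Y$ I would fix a geodesic ray $\eta$ in $Y$ limiting to $\xi$, and use coarse surjectivity of $p$ to pick $x_n \in X$ with $p(x_n)$ uniformly close to $\eta(n)$. The geodesic segments $[x_0, x_n]_X$ then have $p$-images that are reparametrized quasi-geodesics shadowing $\eta|_{[0,n]}$. Using alignment-preservation of $p$ (so geodesics in $X$ become quasi-geodesics in $Y$) together with hyperbolicity of $Y$, one checks that the sequence $(x_n)$ is Gromov-product-Cauchy in $X$, defining a class in $\partial_Y X$ whose image under $\partial_p$ is $\xi$; one can also reparametrize a sequence of geodesics $[x_0, x_n]_X$ to produce an actual quasi-geodesic ray representative. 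Continuity in both directions then follows from the Gromov-product formulation of boundary topologies, using the Lipschitz bound for one direction and the alignment-preservation control for the other.

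The main obstacle is surjectivity, specifically the fact that we cannot assume $X$ is proper (in our applications the graphs $\C_k$ are not locally finite), so we cannot directly extract a convergent subsequence of the geodesics $[x_0, x_n]_X$. Instead one has to work with Gromov products: the point is to pass convergence of $\{p(x_n)\}$ in $\partial Y$ back through the alignment-preserving inequality to conclude that Gromov products $(x_m \mid x_n)_{x_0}$ in $X$ also tend to infinity. This is precisely the content that alignment-preservation (beyond mere Lipschitz control) is designed to provide, and it is the core of the Dowdall--Taylor argument.
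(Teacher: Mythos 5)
First, a remark on the comparison itself: the paper does not prove Theorem \ref{DT} at all --- it is imported verbatim from Dowdall--Taylor \cite{DT} --- so your sketch can only be measured against their argument. Measured that way, the serious problem is your treatment of well-definedness and injectivity. In the paper's definition (and in \cite{DT}), $\partial_Y X$ is a \emph{subset of} $\partial X$: its points are equivalence classes of quasi-geodesic rays in $X$ under the usual relation (finite Hausdorff distance in $X$), restricted to the rays $\gamma$ with $\diam_Y p(\gamma)=\infty$, and it carries the subspace topology from $\partial X$. You instead declare $[\gamma_1]=[\gamma_2]$ exactly when $p(\gamma_1)$ and $p(\gamma_2)$ converge to the same point of $\partial Y$, and conclude that well-definedness and injectivity come for free. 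With that convention you are describing a quotient of $\partial_Y X$, not $\partial_Y X$, so one of the two main assertions of the theorem --- that non-asymptotic rays in $X$, both with unbounded image, have distinct limits in $\partial Y$, and that the $\partial X$-subspace topology agrees with that of $\partial Y$ --- has been defined away rather than proved. Injectivity genuinely uses alignment preservation: for non-asymptotic rays the geodesics $[\gamma_1(s),\gamma_2(s)]$ pass uniformly close to a fixed point $m$, and the aligned triple $(\gamma_1(s),m',\gamma_2(s))$ gives $d_Y(p(\gamma_1(s)),p(\gamma_2(s)))\geq d_Y(p(\gamma_1(s)),p(m'))+d_Y(p(m'),p(\gamma_2(s)))-K$, which together with unboundedness of the two image rays keeps the Gromov product of the image points bounded and separates the limits. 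For a map that is merely Lipschitz and coarsely surjective this fails (fold a line onto a half-line by $x\mapsto |x|$), so the step cannot be absorbed into the definition.

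There is also a quieter gap in your surjectivity step. The Gromov-product-Cauchy argument for $(x_n)$ is fine (a bounded product puts a point $w\in[x_m,x_n]$ near $x_0$, and the aligned triple $(x_m,w,x_n)$ contradicts $p(x_m),p(x_n)$ lying far out along $\eta$), and in a geodesic hyperbolic space one can indeed build a quasi-geodesic ray representative without properness. But you still owe the argument that the resulting class lies in $\partial_Y X$, i.e.\ that this ray has unbounded $p$-image: alignment along $[x_0,x_n]$ only says the image is an efficient path from $p(x_0)$ to (coarsely) $\eta(n)$, which does not by itself prevent $p$ from collapsing arbitrarily long initial segments of these geodesics. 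Ruling that out requires a further argument --- for instance, thin triangles in $X$ force $[x_m,x_n]$ to come close to the limit ray at large parameters, while $p([x_m,x_n])\subset B_E([p(x_m),p(x_n)])$ keeps its image at distance roughly $m$ from $p(x_0)$, contradicting a bounded image of the limit ray. The same unbounded-image input is what makes your continuity claim for $\partial_p$ work (it converts long fellow-traveling in $X$ into long fellow-traveling in $Y$), so these points are where the content of the Dowdall--Taylor proof actually sits; your sketch names the right tools but passes over exactly these steps.
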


Denote by $\mathcal{EL}(\Sigma)$ the space of ending laminations on $\Sigma$
with the \emph{coarse Hausdorff topology}. This is the quotient topology
induced by considering the subspace of $\mathcal{PML}(\Sigma)$ consisting
of projectivized measured ending laminations, and identifying two measured
laminations with the same underlying lamination. 
We consider the subspace $\mathcal{EL}_k(\Sigma)$ consisting of ending laminations
of index $\leq k$.

\begin{thm}
  \label{boundary}
  The Gromov boundary of $\C_k$ is equivariantly homeomorphic to
  $\mathcal{EL}_k$.
\end{thm}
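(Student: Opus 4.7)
The plan is to combine Klarreich's identification $\partial\C(\Sigma) \cong \mathcal{EL}(\Sigma)$ with Theorem \ref{DT} applied to the natural map $\pi_k \colon \C \to \C_k$. By Proposition \ref{hyperbolic}, $\pi_k$ is $1$-Lipschitz, coarsely surjective, alignment-preserving, and $\Mod(\Sigma)$-equivariant, and both $\C$ and $\C_k$ are hyperbolic. Theorem \ref{DT} therefore yields an equivariant homeomorphism $\partial_{\C_k}\C \to \partial\C_k$, where $\partial_{\C_k}\C \subseteq \partial\C$ is the subset consisting of boundary points represented by quasi-geodesic rays with unbounded $\pi_k$-image. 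Using Klarreich to identify $\partial\C$ with $\mathcal{EL}(\Sigma)$, the theorem reduces to showing that $\partial_{\C_k}\C$ corresponds exactly to $\mathcal{EL}_k(\Sigma)$.

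For the inclusion $\mathcal{EL}_k \subseteq \partial_{\C_k}\C$, let $\Lambda \in \mathcal{EL}$ with $\ind(\Lambda) = j \le k$, and pick a track $\tau$ carrying $\Lambda$ with $\ind(\tau) = j$. Corollary \ref{converge to boundary} produces an infinite $\twoheadrightarrow$-sequence whose vertex cycles $\alpha_i$ converge to $\Lambda$ in the Hausdorff topology and form a uniform quality quasi-geodesic in $\C_j$. Klarreich's theorem then implies $\alpha_i \to \Lambda$ in $\partial\C$. Since $d_k \ge d_j$, the sequence $\{\alpha_i\}$ remains unbounded in $\C_k$. A quasi-geodesic ray $\gamma$ in $\C$ to $\Lambda$ stays within bounded $\C$-distance of $\{\alpha_i\}$ by hyperbolicity, so $\pi_k(\gamma)$ stays within bounded $\C_k$-distance of $\{\alpha_i\}$ and is therefore unbounded, giving $\Lambda \in \partial_{\C_k}\C$.

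For the reverse inclusion I argue the contrapositive. Suppose $\ind(\Lambda) > k$, and choose $\tau$ carrying $\Lambda$ with $\ind(\tau) = \ind(\Lambda) > k$. The set $\S(\tau)$ is quasi-convex in $\C$ by Theorem \ref{quasigeodesics}, and Corollary \ref{converge to boundary} produces a sequence in $\S(\tau)$ converging to $\Lambda$ in $\partial\C$, so $\Lambda$ is a boundary point of the quasi-convex set $\S(\tau)$. Consequently any quasi-geodesic ray $\gamma$ in $\C$ to $\Lambda$ stays in a bounded $\C$-neighborhood of $\S(\tau)$. Since $\ind(\tau) > k$, the set $\S(\tau)$ has $\C_k$-diameter at most one by definition of $\C_k$, so $\pi_k(\gamma)$ is bounded in $\C_k$ and $\Lambda \notin \partial_{\C_k}\C$.

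Equivariance of the final identification is automatic because each of $\pi_k$, Klarreich's and Dowdall-Taylor's homeomorphisms, and the invariant $\ind$ is $\Mod(\Sigma)$-equivariant, and the topology on $\partial_{\C_k}\C$ inherited from $\partial\C$ matches the coarse Hausdorff topology on $\mathcal{EL}_k$. The main obstacle is the translation between Hausdorff convergence of the $\alpha_i$ provided by Corollary \ref{converge to boundary} and convergence in $\partial\C$: this relies on Klarreich's identification together with the minimality and filling properties of ending laminations to ensure that any coarse Hausdorff accumulation point of $\{\alpha_i\}$ is exactly $\Lambda$.
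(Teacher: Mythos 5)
Your proposal is correct and follows essentially the same route as the paper: identify $\partial\C$ with $\mathcal{EL}$ (Klarreich/Hamenst\"adt), apply the Dowdall--Taylor Theorem \ref{DT} to the alignment-preserving map $\C\to\C_k$, and use Corollary \ref{converge to boundary} together with the fact that $\S(\tau)$ has $d_k$-diameter $1$ when $\ind(\tau)>k$ to show $\partial_{\C_k}\C=\mathcal{EL}_k$. Your write-up is in fact more careful than the paper's terse proof (which states the index dichotomy with the inequalities apparently reversed), and you correctly observe that only the lower bound $d_k\ge d_j$ is needed for unboundedness when $\ind(\Lambda)=j\le k$.
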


\begin{proof}
  Consider the natural map
  $p:\mathcal C \to \mathcal C_k$. By \cite{HamenstadtTT},
  the boundary of
  $\C$ is identified with $\mathcal{EL}$ where a lamination $\Lambda \in \mathcal{EL}$ is identified with a quasi-geodesic $c_0, c_1, \dots$ in $\C$ that converges to $\Lambda$ in the coarse Hausdorff topology. By Corollary \ref{converge to boundary} we can choose the $c_i$ to lie in $\S(\tau)$ where $\ind(\tau) = \ind(\Lambda)$. If $\ind(\Lambda) = \ind(\tau) \ge k$ then Corollary \ref{converge to boundary} implies that the $c_i$ are a quasi-geodesic in $\C_k$. If $\ind(\Lambda) < k$ then the $d_k(c_i, c_j) = 1$  and the quasi-geodesic has finite diameter in $\C_k$. The result then follows from Theorem \ref{DT}.
  \end{proof}

\section{Acylindricity}

Our aim in this section is to prove the following theorem:
\begin{thm}
  \label{allacyl}
  The action of $\MCG(\Sigma)$ on $\C_k(\Sigma)$ is acylindrical for every $k$.
\end{thm}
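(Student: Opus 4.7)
The plan is to induct downward on $k$ starting from $k = m(\Sigma)$. The base case holds because $\C_{m(\Sigma)}$ is $\MCG(\Sigma)$-equivariantly quasi-isometric to the curve graph $\C(\Sigma)$ (Section \ref{sec:2}), acylindricity is preserved under such equivariant quasi-isometries, and Bowditch's theorem supplies acylindricity of the $\MCG(\Sigma)$-action on $\C(\Sigma)$. So suppose the action on $\C_{k+1}$ is acylindrical, and fix $\epsilon > 0$; we seek constants $R, N$ witnessing acylindricity at parameter $\epsilon$ for $\C_k$.

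The key tools are the $1$-Lipschitz, $\MCG$-equivariant, alignment-preserving projection $\pi \colon \C_{k+1} \to \C_k$ (Proposition \ref{hyperbolic}), whose coarse fibers are quasi-convex quasi-trees in $\C_{k+1}$ (Theorem \ref{mainthm}). Let $x, y \in \C_k$ with $d_k(x, y) \geq R$ and let $g \in \MCG(\Sigma)$ satisfy $d_k(gx, x), d_k(gy, y) \leq \epsilon$. A $\C_{k+1}$-geodesic $\gamma$ from $x$ to $y$ projects under $\pi$ to a reparametrized $\C_k$-quasi-geodesic of $\C_k$-length at least $R$; similarly $g\gamma$ projects to one whose endpoints are $\epsilon$-close to those of $\pi(\gamma)$. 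Stability of quasi-geodesics in the hyperbolic graph $\C_k$ forces $\pi(\gamma)$ and $\pi(g\gamma)$ to $D$-fellow-travel in $\C_k$, for some $D = D(\epsilon)$. I would then choose a vertex $p \in \gamma$ whose $\C_k$-projection lies at $\C_k$-distance at least $R/3$ from each of $\pi(x), \pi(y)$; fellow-traveling and $\MCG$-equivariance of $\pi$ place $\pi(gp)$ within some $D' = D'(\epsilon)$ of $\pi(p)$, so that $gp \in \pi^{-1}(B_{D'}(\pi(p)))$, which by Theorem \ref{mainthm} lies in a quasi-convex quasi-tree $T_p \subset \C_{k+1}$ of controlled quality depending only on $\epsilon$.

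The heart of the argument is to use $p$, together with similarly constructed auxiliary points near $x$ and $y$, to produce a pair of $\C_{k+1}$-vertices at $\C_{k+1}$-distance growing linearly with $R$ whose $g$-translates are at bounded $\C_{k+1}$-distance depending only on $\epsilon$; the inductive acylindricity on $\C_{k+1}$ then yields the desired uniform bound $N$ on the number of admissible $g$. The principal obstacle is that the inequality $d_k(\pi(gp), \pi(p)) \leq D'$ does not automatically upgrade to a $\C_{k+1}$-bound on $d_{k+1}(gp, p)$, because the quasi-tree $T_p$ can have infinite $\C_{k+1}$-diameter. The way to handle this is to work inside the quasi-tree $T_p$ itself: the standard bounded-orbits versus axis dichotomy for coarse isometries of a quasi-tree lets us split the admissible $g$ according to their behavior on $T_p$. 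Loxodromic behavior on such a $T_p$ (viewed as a subtree in $\C_{k+1}$) forces $g$ to be pseudo-Anosov whose invariant laminations have index exactly $k+1$, by Theorem \ref{loxclassification} applied at the level $k+1$ but not $k$, severely constraining that case. Uniformity of all constants across $x, y, p$ follows from the finiteness of train tracks on $\Sigma$ modulo the $\MCG(\Sigma)$-action, completing the induction.
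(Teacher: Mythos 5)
Your overall strategy (inducting down the tower and pushing the problem up to $\C_{k+1}$ via the quasi-tree fibers) is genuinely different from the paper's, which never uses Theorem \ref{mainthm} here: the paper instead proves a general transfer principle (Theorem \ref{generalacyl}) for a Lipschitz, alignment-preserving, equivariant map $X\to Y$, applied directly with $X=\C$ and $Y=\C_k$, whose essential extra hypothesis is a uniform positive lower bound on the asymptotic translation lengths of loxodromics on the target. Unfortunately, your route has a genuine gap precisely at the step you flag as the "principal obstacle," and the proposed fix does not work. The element $g$ does not act on the quasi-tree $T_p$: it carries $T_p$ to the different quasi-tree $gT_p$ over $B_{D'}(\pi(gp))$, and no power of $g$ beyond the first need return to $T_p$. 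So there is no "bounded-orbit versus axis dichotomy" available for $g$ on $T_p$, and the subsequent claim that loxodromic behavior there forces the invariant laminations of $g$ to have index exactly $k+1$ is also not justified (an element elliptic on both $\C_k$ and $\C_{k+1}$ --- e.g.\ a reducible class or a pseudo-Anosov of index $>k+1$ --- can still move $p$ arbitrarily far inside a fiber). Consequently you never produce the pair of $\C_{k+1}$-vertices moved a bounded amount by $g$, which is the whole point of the reduction.

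There is a second, structural gap: acylindricity does not descend along alignment-preserving maps without an additional input, and your argument omits it. Concretely, if $g$ is loxodromic on $\C_k$ with very small $\ATL_{\C_k}(g)$, then roughly $\epsilon/\ATL_{\C_k}(g)$ powers of $g$ move two far-apart points on its axis by at most $\epsilon$; these powers are typically \emph{more} loxodromic on $\C_{k+1}$ (the projection is $1$-Lipschitz, so $\ATL_{\C_{k+1}}\ge\ATL_{\C_k}$), so they move every pair of far-apart points of $\C_{k+1}$ a long way and cannot be detected by acylindricity upstairs. Ruling this out requires exactly the uniform lower bound $\ATL_{\C_k}(g)\ge E(\Sigma)$ for loxodromics, which the paper establishes in Theorem \ref{loxclassification} using the $\twoheadrightarrow$ machinery and Corollary \ref{distance_lower}, and which your proposal neither proves nor invokes. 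Similarly, elements elliptic on $\C_k$ but loxodromic on $\C_{k+1}$ (laminations of index exactly $k+1$) must be handled by a separate argument (in the paper, via the bounded diameter of the image of the upstairs axis and fellow-traveling of $[x,gx]$ and $[y,gy]$), not by counting. To repair your approach you would essentially have to reprove the three-case analysis of Theorem \ref{generalacyl}, at which point the fiber structure buys you nothing over the paper's direct argument from $\C$.
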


Recall that for an element $g$ of a group $G$ acting on the hyperbolic space
$X$, the \emph{asymptotic translation length}
\[
  \ATL(g)\vcentcolon=\lim_{n\to \infty} \frac{d_X(g^nx, x)}{n}
\]
is well-defined, independently of the chosen basepoint $x$.

We start by proving that there is a positive lower bound on asymptotic
translation lengths of loxodromic elements in $\C_k$, which is a necessary condition for acylindricity.

\begin{thm}
  \label{loxclassification}
  The mapping class $\phi \in \MCG(\Sigma)$ is loxodromic in the action on $\C_k(\Sigma)$
  if and only if $\phi$ is pseudo-Anosov with fixed laminations lying in
  the Gromov boundary $\partial \C_k$. Equivalently, $\phi$ is loxodromic if and
  only if all its invariant train tracks have index $\leq k$.
  Moreover, there is a constant $E=E(\Sigma)$ depending
  only on $\Sigma$ such that whenever $\phi$ is loxodromic on $\C_k$, we have $\ATL(\phi)\geq E$ on $\C_k$.
\end{thm}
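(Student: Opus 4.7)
The plan has three parts: classify loxodromic elements using the boundary identification of Theorem \ref{boundary}, translate the loxodromic condition into the invariant-train-track criterion, and establish the uniform translation length bound.

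For the classification, I first dispose of non-pseudo-Anosov elements. If $\phi$ is reducible or periodic, then some power of $\phi$ fixes a simple closed curve $\alpha$, so the $\phi$-orbit of the vertex $\alpha$ in $\C_k$ is finite and $\phi$ acts elliptically. Now let $\phi$ be pseudo-Anosov with stable/unstable laminations $\Lambda^\pm$. By Theorem \ref{boundary}, $\partial\C_k=\mathcal{EL}_k$, so if $\ind(\Lambda^+)>k$ then $\Lambda^\pm\notin\partial\C_k$. The Masur--Minsky quasi-axis of $\phi$ in $\C$ is a quasi-geodesic converging to $\Lambda^\pm$; by alignment-preservation of $\pi\colon\C\to\C_k$ from Proposition \ref{hyperbolic} together with Theorem \ref{DT}, its image in $\C_k$ has bounded diameter, and since $\phi$ is a $\C_k$-isometry every orbit in $\C_k$ is bounded, so $\phi$ is elliptic. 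If instead $\ind(\Lambda^+)\leq k$, then $\Lambda^+$ and $\Lambda^-$ are two distinct fixed points of $\phi$ on $\partial\C_k$, which rules out elliptic behavior (no boundary fixed points) and parabolic behavior (one boundary fixed point); hence $\phi$ is loxodromic.

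For the equivalence with the invariant-train-track criterion, I use that $\ind(\Lambda^+)=\sup\{\ind(\tau):\tau\text{ carries }\Lambda^+\}$ by definition, together with the classical construction that every pseudo-Anosov admits a birecurrent train track $\tau$, invariant under some power of $\phi$, which fully carries $\Lambda^+$ and realizes $\ind(\tau)=\ind(\Lambda^+)$. Thus ``every invariant train track has index $\leq k$'' is equivalent to $\ind(\Lambda^+)\leq k$.

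For the uniform translation length bound, let $\phi$ be loxodromic on $\C_k$ and fix a $\phi$-invariant birecurrent track $\tau$ carrying $\Lambda^+$ with $\ind(\tau)=\ind(\Lambda^+)\leq k$. A $\phi$-invariant splitting sequence $\cdots\to\sigma_1\to\sigma_0=\tau$ carrying $\Lambda^+$ arises from the standard splitting description of pseudo-Anosov dynamics, with $\phi$ shifting it as $\phi\sigma_i=\sigma_{i+L}$ for some $L\geq 1$. Applying the canonical extraction procedure from Corollary \ref{converge to boundary} to this $\phi$-equivariant input yields an infinite double-arrow subsequence $\cdots\twoheadrightarrow\tau_2\twoheadrightarrow\tau_1\twoheadrightarrow\tau_0=\tau$ on which $\phi$ also acts as a shift $\phi\tau_i=\tau_{i+N}$ for some $N\geq 1$. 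Corollary \ref{distance_lower} then gives
\[
d_k(\tau_0,\phi^j\tau_0)=d_k(\tau_0,\tau_{jN})\geq A_0\cdot jN-A_1,
\]
so $\ATL_{\C_k}(\phi)\geq A_0 N\geq A_0$, where $A_0=A_0(\Sigma)$ is the universal constant from Corollary \ref{distance_lower}. The main obstacle is verifying that the extraction of the double-arrow subsequence is sufficiently canonical to inherit $\phi$-equivariance from the underlying invariant splitting sequence. This step can alternatively be bypassed by appealing to Theorem \ref{allacyl}: acylindrical actions on hyperbolic spaces automatically have a uniform positive lower bound on translation lengths of loxodromic elements, with the constant determined by the acylindricity parameters and the hyperbolicity constant, which are uniform in $\Sigma$ and $k$.
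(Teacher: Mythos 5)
Your classification of loxodromic elements follows the paper's route (non-pseudo-Anosovs are elliptic; for a pseudo-Anosov, push the quasi-axis from $\C$ into $\C_k$ and decide boundedness by whether $\ind(\Lambda^\pm)\le k$), but one inference is wrong as stated: elliptic isometries of a hyperbolic space can fix boundary points (the identity fixes all of them), so the existence of two distinct fixed points of $\phi$ on $\partial\C_k$ does not rule out elliptic behavior. What you actually need is that the \emph{orbit} of a vertex accumulates on $\Lambda^+$ and $\Lambda^-$ in $\partial\C_k$, i.e.\ that the image of the quasi-axis is unbounded in both directions and that $\phi$ translates along it; this is exactly what Corollary \ref{converge to boundary} (an unbounded quasi-geodesic of vertex cycles in $\C_k$ converging to $\Lambda^+$) is for, and it is also where the translation-length bound below re-enters. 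The reformulation in terms of invariant train tracks is fine.

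The genuine gap is in the uniform lower bound on $\ATL$. Your primary argument hinges on the double-arrow subsequence extracted in Corollary \ref{converge to boundary} being $\phi$-periodic, which you acknowledge you cannot verify; indeed the extraction rule (``take the first $j$ with $\tau_j\twoheadrightarrow\tau_i$'') applied to a shift-periodic splitting sequence need not return a shift-periodic subsequence, since the shift amount $L$ need not itself be an extracted index, so there is no reason to have $\phi\tau_i=\tau_{i+N}$. Worse, the proposed fallback is circular: Theorem \ref{allacyl} is deduced in the paper from Bowditch's acylindricity on $\C$ together with Theorem \ref{generalacyl}, and the hypotheses of Theorem \ref{generalacyl} include precisely a uniform positive lower bound $\ATL_Y(g)\ge\omega$ for loxodromics on $Y=\C_k$ --- the very statement you are trying to prove. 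The translation-length bound is an \emph{input} to acylindricity here, not a consequence of it. The paper closes the gap differently: by Agol's splitting result one chooses $\tau$ carrying $\Lambda^+$ with $\ind(\tau)=\ind(\Lambda^+)$ and $\phi(\tau)\ss\tau$; since every $\phi^n(\tau)$ fully carries $\Lambda^+$, some $P(\phi^m(\tau))$ lies in the interior of $P(\tau)$; and a pigeonhole argument on the proper faces of $P(\tau)$ (if $\phi^i(\gamma)$ and $\phi^j(\gamma)$ lie in the interior of the same proper face $F$, then $\phi^{n(j-i)}(F)\subset F$ for all $n$, contradicting the previous sentence) shows $\phi^N(\tau)\twoheadrightarrow\tau$ with $N$ bounded by the number of proper faces of $P(\tau)$, hence by a constant depending only on $\Sigma$. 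Corollary \ref{distance_lower} then yields $\ATL(\phi)\ge A_0/N$. Some argument of this kind, producing a \emph{uniformly bounded} power of $\phi$ that nests $P(\tau)$ into its interior, is the missing ingredient.
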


\begin{proof}
  If $\phi$ is not pseudo-Anosov then it acts elliptically on $\C$ and hence
  also on $\C_k$.
If $\phi$ is pseudo-Anosov then for any $\alpha$, the sequence $\{\alpha, \phi(\alpha), \phi^2(\alpha), \dots\}$ is a quasi-geodesic in $\C$ that converges to the stable lamination $\Lambda^+$.
Then $\{\alpha, \phi(\alpha), \phi^2(\alpha), \dots\}$ is a quasi-geodesic in $\C_k$ if $\ind(\Lambda) \le k$ and has bounded diameter if $\ind(\Lambda) > k$. In particular, $\phi$ is loxodromic if $\ind(\Lambda) \le k$ and is elliptic if $\ind(\Lambda) > k$. Note that $\phi$ is elliptic if and only $\phi^{-1}$ is elliptic so this argument implies that $\Lambda^-$, the unstable lamination, has the same index as $\Lambda^+$.

When $\phi$ is loxodromic we need a more subtle argument to get a uniform lower bound on $\ATL(\phi)$.
Let $\tau'$ be a train track carrying $\Lambda$ with $\ind(\tau') =\ind(\Lambda)$. Then by
  \cite[Section 3]{Agol} we may split $\tau'$ to a track $\tau$
  such that $\phi(\tau) \ss \tau$. As the tracks $\tau_n=\phi^n(\tau)$ all have index equal to $\ind(\Lambda^+)$, we have that $\Lambda^+$ is fully carried by every $\tau_n$. Therefore there is an $m>0$ such that $P(\tau_m)$ is contained in the interior of $P(\tau)$.
   
The number of proper faces of $P(\tau)$ is $\le N$ where $N$ only depends on $\Sigma$. We claim that if $\gamma$ is a vertex cycle of $\tau$ then $\phi^N(\gamma)$ is fully carried by $\tau$. If not there are $0\le i<j\le N$ such that $\phi^i(\gamma)$ and $\phi^j(\gamma)$ are both contained in the interior of the same proper face $F$ of $P(\tau)$. This implies that $\phi^{j-i}(F) \subset F$, i.e. $\phi^{n(j-i)}(F) \subset F$ for all $n\ge 0$. On the other hand since $P(\phi^m(\tau))$ is contained in the interior of $P(\tau)$ if $n(j-1) \ge m$ then $P(\phi^{n(j-i)}(\tau))$ is contained in the interior of $P(\tau)$, a contradiction. Therefore $\phi^N(\tau)$ is fully carried by $\tau$ or, equivalently, $\tau_N \twoheadrightarrow \tau$. Periodicity implies that
$$\cdots \twoheadrightarrow \tau_{2N} \twoheadrightarrow \tau_N \twoheadrightarrow \tau_0 = \tau$$
and by Corollary \ref{distance_lower} we have $d_k(\gamma, \phi^{nN}(\gamma))\ge d_k(\tau, \tau_{nN}) \ge A_0 n - A_1$ whenever $k \ge \ind(\Lambda^+)$ and the constants $A_0$ and $A_1$ only depend on $\Sigma$. Therefore $\ATL(\phi) \ge A_0/N$ is uniformly bounded below. 
\end{proof}

Theorem \ref{allacyl} now follows immediately from the theorem of
Bowditch \cite{bowditch:tight} that the action on $\C$ is acylindrical and the following more general theorem.
\begin{thm}
  \label{generalacyl}
  Let $X$ and $Y$ be hyperbolic graphs with actions of the virtually torsion-free group $G$.
  Let $f:X\to Y$ be a Lipschitz alignment-preserving $G$-equivariant
  map which is surjective on vertices.
  Finally, suppose there exists $\omega>0$ such that for any
  $g\in G$ which acts loxodromically on $Y$, its asymptotic
  translation length $\ATL_Y(g)$ on
  $Y$ is at least $\omega$. If the action of $G$ on $X$ is acylindrical, so is the action
  on $Y$.
\end{thm}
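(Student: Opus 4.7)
The plan is to reduce acylindricity on $Y$ to acylindricity on $X$ via the alignment-preserving map $f$. After replacing $G$ by a finite-index torsion-free subgroup—which changes the acylindricity count by at most the index—I assume $G$ is torsion-free, so every non-trivial element acts loxodromically on $X$. Fix $\epsilon>0$, and for $x,y\in Y$ with $R=d_Y(x,y)$ to be taken large set
$$S=\{g\in G:\ d_Y(gx,x),\ d_Y(gy,y)\le \epsilon\}.$$
The goal is to produce two points $\tilde p_1,\tilde p_2\in X$, depending only on $x,y$, with $d_X(\tilde p_1,\tilde p_2)\gtrsim R$ and $d_X(\tilde p_i,g\tilde p_i)$ bounded uniformly in $g\in S$; acylindricity of $G$ on $X$ then bounds $|S|$.

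\textbf{Midpoint argument in $X$.} Choose vertex lifts $\tilde x,\tilde y\in X$ and let $\tilde \gamma=[\tilde x,\tilde y]_X$. By Proposition \ref{KR}, $f(\tilde \gamma)$ is a reparametrized quasi-geodesic uniformly close to $[x,y]_Y$, and the closest-point projection from $f(\tilde \gamma)$ to $[x,y]_Y$ is coarsely surjective. Choose $\tilde p_1,\tilde p_2\in \tilde \gamma$ so that $f(\tilde p_i)$ lies within a uniform constant of the $1/3$ and $2/3$ points of $[x,y]_Y$. For $g\in S$, form the geodesic quadrilateral $\tilde x,g\tilde x,g\tilde y,\tilde y$ in $X$; by $\delta_X$-hyperbolicity each $\tilde p_i$ lies within $2\delta_X$ of one of the other three sides. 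The $f$-images of the ``short'' sides $[\tilde x,g\tilde x]$ and $[g\tilde y,\tilde y]$ are trapped in uniform $O(\epsilon)$-neighborhoods of $\{x,gx\}$ and $\{y,gy\}$ respectively (alignment-preserving applied to a geodesic with $\epsilon$-close endpoints), whereas $d_Y(f(\tilde p_i),x)$ and $d_Y(f(\tilde p_i),y)$ are both $\gtrsim R$. So for $R$ large, $\tilde p_i$ must be $2\delta_X$-close to $g\tilde \gamma$: there exists $\tilde p_i'\in \tilde \gamma$ with $d_X(\tilde p_i,g\tilde p_i')\le 2\delta_X$.

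\textbf{Transferring back and conclusion.} Because $[gx,gy]_Y$ fellow-travels $[x,y]_Y$ ($Y$ hyperbolic, endpoints $\epsilon$-close), $g$ moves each point of $[x,y]_Y$ by at most $C_1=C_1(\epsilon,\delta_Y)$; combined with Lipschitzness and the preceding this gives $d_Y(f(\tilde p_i),f(\tilde p_i'))\le D_3$. The technical heart of the argument is to conclude from this, using alignment-preserving, that $d_X(\tilde p_i,\tilde p_i')\le D_4$, hence $d_X(\tilde p_i,g\tilde p_i)\le D_5$ for a constant depending only on $\epsilon,L,K,\delta_X,\delta_Y$. Granting this, $d_X(\tilde p_1,\tilde p_2)\gtrsim R/L$ and each $\tilde p_i$ is moved by at most $D_5$ by every $g\in S$, so taking $R$ larger than $L$ times the acylindricity radius of the $X$-action at parameter $D_5$ bounds $|S|$ by the corresponding constant $N$.

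\textbf{Main obstacle and role of $\omega$.} The delicate step is the final ``transfer'' from $Y$-proximity on $f(\tilde \gamma)$ back to $X$-proximity on $\tilde \gamma$, since $f|_{\tilde \gamma}$ need not be a genuine quasi-isometric embedding. Alignment-preserving yields that $t\mapsto d_Y(x,f(\tilde \gamma(t)))$ is $L$-Lipschitz and coarsely monotonic (nondecreasing up to additive fluctuation $\le K$), but this by itself allows long ``plateaus'' that would obstruct the transfer. The hypothesis $\ATL_Y\ge \omega$ on loxodromics is used here in two ways: for $\epsilon<\omega-O(\delta_Y)$ every $g\in S$ is elliptic on $Y$, so $\{g^np\}$ is bounded for every $p\in [x,y]_Y$ and the fellow-traveling estimates are uniform, ruling out the plateau obstruction via a pigeonhole on the finitely many coarse positions of $\tilde p_i'$ on $\tilde \gamma$; for larger $\epsilon$, the loxodromic-on-$Y$ elements in $S$ have axes fellow-traveling $[x,y]_Y$ and translation length in the bounded interval $[\omega,\epsilon+O(\delta_Y)]$, contributing only finitely many elements via a separate direct counting argument along $[x,y]_Y$. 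Combining the two cases yields the desired uniform bound on $|S|$.
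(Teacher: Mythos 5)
Your reduction of acylindricity on $Y$ to bounding the $X$-displacement of two points $\tilde p_1,\tilde p_2$ on a lifted geodesic cannot work as stated, because the estimate you need --- $d_X(\tilde p_i,g\tilde p_i)\le D_5(\epsilon,\dots)$ for every $g\in S$ --- is false for the elements that are loxodromic on both $X$ and $Y$. For such $g$ one only knows $\ATL_Y(g)\le d_Y(x,gx)\le\epsilon$; since $f$ may collapse long segments of $X$ (its restriction to $\tilde\gamma$ is merely a \emph{reparametrized} quasi-geodesic), $\ATL_X(g)$, and hence $d_X(\tilde p_i,g\tilde p_i)\ge\ATL_X(g)$, admits no bound in terms of $\epsilon$. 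This same collapsing is exactly why your ``transfer'' step fails: a bound on $d_Y(f(\tilde p_i),f(\tilde p_i'))$ says nothing about $d_X(\tilde p_i,\tilde p_i')$ when $\tilde p_i'$ sits on a long plateau of $f|_{\tilde\gamma}$, and neither ellipticity of $g$ on $Y$ nor a ``pigeonhole on finitely many coarse positions'' (a plateau has unboundedly many coarse positions in $X$) repairs this. The loxodromic-on-$Y$ elements, which you defer to ``a separate direct counting argument along $[x,y]_Y$,'' are in fact the crux of the theorem: the paper handles them by lifting their axes to $X$, observing that any two of them fellow-travel $[\tilde x,\tilde y]$ over many fundamental domains, invoking acylindricity of the $X$-action to place them all in a single cyclic subgroup $\langle h\rangle$, and only then counting via $|n|\,\omega\le\ATL_Y(h^n)\le d_Y(x,h^nx)\le\epsilon$, giving at most about $2\epsilon/\omega+1$ elements. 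Without the common-cyclic-subgroup step there is no counting ``along $[x,y]_Y$'': a priori infinitely many loxodromics could have axes fellow-traveling $[x,y]_Y$ with translation length in $[\omega,\epsilon]$.

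A secondary error: torsion-freeness of $G$ does not make every nontrivial element loxodromic on $X$ (infinite-order elliptics exist, e.g.\ Dehn twists acting on the curve graph), so you have silently discarded the elliptic-on-$X$ case. For those elements, and for elements loxodromic on $X$ but elliptic on $Y$, the correct conclusion is not a displacement bound for $\tilde p_i$ but non-membership in $S$: acylindricity on $X$ bounds the coarse fixed set $\mathcal F_{4\delta+1}(g)\subset X$ (resp.\ gives an axis $\ell$ with $f(\ell)$ bounded in $Y$), which forces $[x,gx]$ and $[y,gy]$ to pass near a common bounded subset of $Y$, impossible once $R\gg\epsilon$. Your architecture would need to be replaced by this three-case analysis (elliptic on $X$; loxodromic on $X$ but elliptic on $Y$; loxodromic on both), which is precisely the paper's proof.
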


We use the following general facts about a $\delta$-hyperbolic space $Z$.
First of all, for any elliptic isometry $g$ of $Z$,
there is a point $z\in Z$ with $d(z,g^nz)\leq 4\delta + 1$
for all $n\in \Z$ (this follows from \cite[Lemma 7.3]{Hull}).
Denote by
$\mathcal{F}_D(g)$ the set of points whose orbit under $g$ has diameter
bounded by $D$. Thus $\mathcal{F}_{4\delta+1}(g)\neq \emptyset$. Moreover,
any geodesic $[y,gy]$ passes within $4\delta$ of $\mathcal{F}_{4\delta+1}(g)$.
Consequently if $y$ is at distance $C$ from $\mathcal{F}_{4\delta+1}(g)$ then
$d(y, gy)\geq 2C-8\delta$.

If $g$ is a loxodromic isometry of $Z$ with $\ATL(g)>2\delta$ then $g$
has a quasi-geodesic axis $\ell$ with uniform constants. Simply take a vertex
$v$ that minimizes $d(v,gv)$ and let $\ell=\cup_{n\in \Z}
g^n([v,gv])$. Without the assumption on $\ATL$, in general
quasi-isometric axes with uniform constants do not exist.
In addition, if $\ATL(g)>2\delta$ then for any $x\in X$ a geodesic
segment $[x,gx]$ comes within a uniform distance of $\ell$.

If the action of $G$ on $Z$ is acylindrical,
then for any $D>0$ there is a constant $S=S(G,D)>0$ such that if
$g\in G$ with
$\diam(\mathcal{F}_D(g))>S$ then $g$ has finite order. Finally, if the
action of $G$ on $Z$ is acylindrical, there is a uniform positive
lower bound to $\ATL(g)$ for all loxodromic $g$.

\begin{proof}[Proof of Theorem \ref{generalacyl}]
  Let $\delta$ be a constant of hyperbolicity for both $X$ and $Y$. First observe that
  if a finite-index subgroup of $G$ acts acylindrically on $Y$,
  then so does $G$ itself. Hence we may assume without loss of generality that $G$
  itself is torsion-free and for any $g\in G$ acting elliptically on $X$, the set
  $\mathcal{F}_D(g)$ has diameter $\leq S(D)$.

  Having assumed $G$ to be torsion-free, for any
  $\epsilon>0$, we will determine a constant $R$ depending on $\epsilon$ such that
  if $x,y\in Y$ with $d(x,y)>R$ then there is a bounded number of elements of $G$ moving
  both $x$ and $y$ a distance $\leq \epsilon$. The constant $R$ will be determined during the proof.

  Suppose that $d(x,y)>R$ and that $g\in G$ satisfies $d(x,gx)\leq
  \epsilon$ and $d(y,gy)\leq
  \epsilon$. We consider
  three cases depending on the actions of $g$ on $X$ and $Y$ and show
  that the number of such $g$ is bounded if $R$ is large.

  First suppose that
  $g\neq 1$ acts elliptically on $X$, and hence on $Y$.
  Choose two points $\widetilde x, \widetilde y \in X$
  with $f(\widetilde x)=x$ and $f(\widetilde y)=y$. Then
  $\mathcal F = \mathcal F_{4\delta+1}(g)\subseteq X$ is non-empty and has diameter bounded
  by $S=S(G,4\delta+1)$. Moreover, $[\widetilde x,g\widetilde x]$ and
  $[\widetilde y,g \widetilde y]$ both pass within $4\delta$ of $\mathcal F$ and
  hence they pass within $S+8\delta$ of each other.
  Hence, $[x,gx]$ and $[y,gy]$
  pass within $D$ of each other where $D$ depends only on $\delta$ and
  the Lipschitz and 
  alignment-preserving constants for $f$ and the acylindricity constants for the action of $G$ on $X$. If $R$ is sufficiently large compared
  to $\epsilon$, this is impossible, so there are no such elements
  $g\neq 1$.

  Now suppose that $g$ acts loxodromically on $X$ but elliptically on
  $Y$. Since the action of $G$ on $X$ is assumed to be acylindrical,
  there is a uniform positive lower bound to the asymptotic
  translation length of $g$ on $X$. Thus, there is a uniformly bounded power
  $g^n$ of $g$ whose asymptotic translation length is
  $>2\delta$. Note that since $g^n$ is loxodromic, any of its roots
  are loxodromic with the same fixed points in the Gromov boundary, hence lying in the same
  cyclic group, and this implies that $g$ is the only 
 $n^{\text{th}}$ root for $g^n$. 
  Thus, replacing $\epsilon$ by $n\epsilon$, we may assume that
  $g$ itself has asymptotic translation length $>2\delta$ and it
  suffices to bound the number of such $g$'s.
  
  Choose $\ell$ to be an invariant quasi-geodesic axis for $g$ in $X$
  with uniform quasi-geodesic constants. Since $\ATL(g)>2\delta$ the geodesics
  $[\widetilde x,g\widetilde x]$ and $[\widetilde y,g\widetilde y]$
  pass uniformly closely to
  $\ell$.
  It then follows from the alignment-preserving
  properties that geodesics between points in $X$ and their images
  under $g$ pass uniformly close to $\mathcal F_{4\delta+1}(g)$ that
  $f(\ell)$ has uniformly bounded diameter.
  By the Lipschitz property, $[x,gx]$ and $[y,gy]$ pass
  uniformly close to $f(\ell)$ and thus uniformly close to each other.
  Again, if $R$ is sufficiently
  large compared to $\epsilon$, then this is impossible.

  Note that if $g$ acts loxodromically on $X$ then it cannot act parabolically
  on $Y$ by the alignment-preserving assumption.
  So the final case is that $g$ is loxodromic in both $X$ and $Y$. Again,
  thanks to our assumption that $\ATL(g)\geq\omega>0$, by passing to
  a bounded power we may assume that $g$ has asymptotic translation
  length $>2\delta$ in both $X$ and $Y$. Let $\ell$ be a
  quasi-geodesic axis of $g$ in $X$ with uniform constants. Then
  $f(\ell)$ is a quasi-geodesic axis of $g$ in $Y$; it is
  reparametrized but with uniform constants, because $f$ is alignment-preserving.
  The distance between $x$ and $f(\ell)$ is uniformly
  bounded in terms of $\epsilon$ and constants coming from $X,Y,$ and
  $f$, because a geodesic $[x,g(x)]$ comes uniformly close to
  $f(\ell)$ by our assumption that $\ATL(g)>2\delta$. For the same
  reason, $y$ is at a uniformly bounded distance from $f(\ell)$.

  Since $\ATL(g)\leq\epsilon$,
  by making $R$ large compared to $\epsilon$
  we can ensure that $[x,y]$ fellow travels
  $f(\ell)$ along a segment that contains as many fundamental domains
  of $g$ acting on $f(\ell)$ as we like. In particular, $[x,y]$ fellow
  travels $f(\ell)$ along a segment which is large compared to the
  asymptotic translation length of $g$.
  Lift $x,y$ to $\widetilde
  x,\widetilde y\in X$ and note that a geodesic $[\widetilde
    x,\widetilde y]$ also must fellow travel a segment in $\ell$ that
  contains as many fundamental domains of $g$ acting on $\ell$. In
  fact, $[\widetilde
    x,\widetilde y]$ can be written as a concatenation of three
  segments where the middle segment fellow travels $\ell$ and the
  other two map to bounded segments joining $x$ and $y$ to points uniformly near $f(\ell)$.

  To
  summarize, given any $g$ as above, its axis in $X$ fellow
  travels a long segment of $[\widetilde x,\widetilde y]$ and
  involves many fundamental domains. If $g'$ is another such element,
  the two long segments of $[\widetilde x,\widetilde y]$ fellow
  traveled by the axes of $g$ and $g'$ have to overlap in a
  segment that fellow travels both axes along many fundamental domains,
  by the alignment-preserving property of $f$ and the fact that the
  images coarsely contain $[x,y]$ except for bounded segments at each
  end.

  Acylindricity of the action of $G$ on $X$ then forces the
  set of such $g$ to be contained in a virtually cyclic group,
  which is cyclic since $G$ is assumed torsion-free
  (see e.g. \cite[Proposition 6]{BF}). Now
  it follows that the set of such $g$ is
  uniformly bounded by $2\left(\frac\epsilon{2\delta}\right)+1$.
\end{proof}

\setcounter{section}{0}
\setcounter{thm}{0}
\renewcommand{\thesection}{\Alph{section}}

\section{Appendix: Distance bounds}
\label{sec:appendix}
Let $\tau_1$ and $\tau_2$ be obtained from $\tau$ by splitting and let $\sigma_1\subset \tau_1$ and $\sigma_2\subset \tau_2$ be subtracks. We want to show that the coarse intersection of $\S(\sigma_1)$ and $\S(\sigma_2)$ is contained in a uniform neighborhood of $\S(\sigma_1) \cap \S(\sigma_2)$ (if this intersection is non-empty) or is uniformly bounded or empty (if the intersection is empty). When $\tau_1 = \tau_2 = \tau$ this is essentially Proposition \ref{3.24}.

The proof of the more general statement will be an inductive argument. The inductive step will come from the case when $\tau_1$ and $\tau_2$ are the two splittings of $\tau$ along some large branch.
In particular, the following was part of Corollary 3.24 in \cite{BB}:
\begin{prop}\label{3.24b}
Given $C>0$ there exists $C'>0$ such that the following holds.
Let $\tau_1$ and $\tau_2$ be the two splittings of a train track $\tau$ along some large branch. Assume that $\alpha_1 \in \S(\tau_1)$ and $\alpha_2 \in \S(\tau_2)$ with $d(\alpha_1, \alpha_2) \le C$. Then either
\begin{enumerate}[(1)]
\item $d(\alpha_1, \tau), d(\alpha_2, \tau) \le C'$, or

\item there exists $\gamma \in \S(\tau_1) \cap \S(\tau_2)$ with $d(\gamma, \alpha_1),  d(\gamma, \alpha_2) \le C'$.
\end{enumerate}
\end{prop}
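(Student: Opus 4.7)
The strategy is to reduce this statement to Proposition~\ref{3.24} applied to the parent track $\tau$, exploiting that $\S(\tau_1), \S(\tau_2) \subset \S(\tau)$ and $\S(\tau_1) \cap \S(\tau_2) = \S(\tau_C)$, where $\tau_C$ denotes the central split of $\tau$ at the same large branch. First I would use the hyperbolicity of $\C$ together with the uniform quasi-convexity of $\S(\tau_1)$ and $\S(\tau_2)$ in $\C$ provided by Theorem~\ref{quasigeodesics} to locate a point $\gamma^*$ on a geodesic $[\alpha_1,\alpha_2]$ within a uniform distance $D$ of both $\S(\tau_1)$ and $\S(\tau_2)$, via the standard argument behind Lemma~\ref{fellow travel}. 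Choosing $\gamma_i \in \S(\tau_i)$ with $d(\gamma_i,\gamma^*) \leq D$ yields $d(\gamma_1,\gamma_2) \leq 2D$, and it suffices to prove the conclusion for the pair $(\gamma_1,\gamma_2)$ at the cost of absorbing $D$ into $C'$.

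Next, view each $\gamma_i$ as a curve carried by $\tau$. Let $\widehat{\eta}_i \subset \tau_i$ be the smallest subtrack of $\tau_i$ carrying $\gamma_i$, and let $\eta_i \subset \tau$ be the smallest subtrack of $\tau$ containing the image of $\widehat{\eta}_i$ under the carrying map $\tau_i \to \tau$. Then $\gamma_i \in \S(\eta_i)$. Applying Proposition~\ref{3.24} to $\tau$ with subtracks $\eta_1,\eta_2$ yields either $d(\gamma_i,\tau) \leq C''$ for both $i$ (giving conclusion~(1)), or else a curve $\gamma \in \S(\eta_1 \cap \eta_2)$ with $d(\gamma,\gamma_i) \leq C''$ for both $i$.

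What remains, and is the crux of the argument, is to upgrade $\gamma$ in the second case to a curve lying in $\S(\tau_1) \cap \S(\tau_2) = \S(\tau_C)$. If the large branch $b$ along which $\tau_1$ and $\tau_2$ differ does not belong to $\eta_1 \cap \eta_2$, then $\eta_1 \cap \eta_2$ is already a common subtrack of $\tau_C$ and $\gamma \in \S(\tau_C)$ for free. The harder case is $b \in \eta_1 \cap \eta_2$: here the idea is that the constraints imposed by $\gamma_1 \in \S(\widehat{\eta}_1)$ and $\gamma_2 \in \S(\widehat{\eta}_2)$ correspond, respectively, to the left-split and right-split triangle inequalities on the transverse weights at $b$, and combining these produces the equality condition that defines $P(\tau_C) = P(\tau_1) \cap P(\tau_2)$ inside $P(\tau)$. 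I expect this final weight-theoretic verification to be the main obstacle, requiring a careful tracking of how the carrying maps $\tau_i \to \tau$ encode the splits and how the weights at the half-branches surrounding $b$ transform. Once this is in place, one may choose $\gamma$ as a vertex cycle of an appropriate further-split version of $\tau_C$ to obtain a representative that provably lies in $\S(\tau_C)$.
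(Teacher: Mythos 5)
You should first note that the paper itself does not prove Proposition \ref{3.24b}: like Proposition \ref{3.24}, it is quoted from \cite[Corollary 3.24]{BB} and then used as a base case in the appendix, so there is no in-paper argument to match; the question is whether your reduction to Proposition \ref{3.24} works on its own, and it has a genuine gap exactly at the step you call the crux. The reduction collapses in the main case. If $\alpha_1$ is fully carried by $\tau_1$ and $\alpha_2$ by $\tau_2$ (the typical situation), then the image in $\tau$ of the minimal supporting subtrack of $\alpha_i$ is all of $\tau$ (a fully carried curve of a split track is fully carried by the unsplit track), so $\eta_1=\eta_2=\eta_1\cap\eta_2=\tau$, and Proposition \ref{3.24} only returns some curve $\gamma\in\S(\tau)$ close to both $\alpha_1$ and $\alpha_2$ --- which is vacuous (take $\gamma=\alpha_1$) and says nothing about proximity to $\S(\tau_1)\cap\S(\tau_2)=\S(\tau_C)$. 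More generally, whenever the split branch $b$ lies in $\eta_1\cap\eta_2$, one has $\S(\eta_1\cap\eta_2)\not\subset\S(\tau_C)$, because curves carried by $\eta_1\cap\eta_2$ may put weight at $b$ strictly on either side of the central locus. (Your easy case, $b\notin\eta_1\cap\eta_2$, is fine, but it is not where the difficulty lives; also, the preliminary step producing $\gamma^*$ is unnecessary, since $d(\alpha_1,\alpha_2)\le C$ already.)

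The proposed repair conflates constraints on different curves. The left-split inequality at $b$ is satisfied by the weights of $\alpha_1$ and the right-split inequality by the weights of $\alpha_2$; these are conditions on two distinct points of $P(\tau)$ and impose no condition on the curve $\gamma$ produced by Proposition \ref{3.24}, nor on any single curve ``between'' $\alpha_1$ and $\alpha_2$. Closeness in $\C$ does not translate into weight conditions at a single branch, so nothing forces any curve near the pair to lie in, or near, $P(\tau_C)$. Indeed, the assertion that a curve which is $\C$-close to both $\S(\tau_1)$ and $\S(\tau_2)$ must be close to $\S(\tau_C)\cup V(\tau)$ is precisely the content of the proposition; in \cite{BB} it is established with the splitting-sequence and nesting-type machinery behind Theorem \ref{quasigeodesics}, not by polyhedral bookkeeping at the split branch, and no purely formal reduction to the subtrack statement (Proposition \ref{3.24}, which moreover assumes $\tau$ filling --- a hypothesis you never arrange) is available. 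To fix the argument you would need an actual coarse-geometric input about the split, for example an analogue of the nesting lemma comparing splitting sequences of $\tau_1$ toward $\alpha_1$ and of $\tau_2$ toward $\alpha_2$.
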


If $\alpha \in \S(\tau)$ we let $\tau(\alpha) \subset \tau$ be the smallest subtrack that carries $\alpha$. One very useful observation about this definition is that if $\tau'\to\tau$ then for any curve $\alpha \in \S(\tau')$, $\tau'(\alpha)\to\tau(\alpha)$ and thus $\S(\tau'(\alpha)) \subset \S(\tau(\alpha))$.

 The following lemma is derived from Propositions \ref{3.24} and \ref{3.24b}.

\begin{lemma}\label{nested_induction}
For all $C>0$ there exists $C'>0$ such that the following holds. Let $\tau$ be a train track and $\tau'\ss\tau$ a single split. Assume that $\alpha \in \S(\tau)$ and $\alpha' \in \S(\tau')$ with $d(\alpha, \alpha') \le C$. Then either
\begin{enumerate}[(1)]
\item $d(\alpha, \tau), d(\alpha', \tau) \le C'$ or;
\item $\alpha \in \S(\tau'(\alpha'))$ or;
\item there exists $\gamma \in \S(\tau(\alpha)) \cap \S(\tau'(\alpha'))$ with $d(\alpha, \gamma), d(\alpha',\gamma) \le C'$ and $\ind{\tau'}(\gamma)> \ind{\tau}(\alpha)$.
\end{enumerate}
\end{lemma}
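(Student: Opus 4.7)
The plan is to dichotomize on whether $\alpha$ is itself carried by the more-split track $\tau'$. In the diagonal case $\alpha \in \S(\tau')$, both curves lie in $\S(\tau')$ and Proposition \ref{3.24} applied inside $\tau'$ to the subtracks $\tau'(\alpha)$ and $\tau'(\alpha')$ completes the argument directly. In the off-diagonal case, a preliminary application of Proposition \ref{3.24b} produces an intermediary curve that places us in the diagonal situation, after which the Case 1 analysis applies.

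For Case 1 ($\alpha \in \S(\tau')$): the carrying map $\tau' \to \tau$ restricts to $\tau'(\alpha) \to \tau(\alpha)$, so $\S(\tau'(\alpha)) \subseteq \S(\tau(\alpha))$ and $\ind(\tau'(\alpha)) \geq \ind(\tau(\alpha))$. If $\tau'(\alpha) \subseteq \tau'(\alpha')$ then alternative (2) holds. Otherwise apply Proposition \ref{3.24} to $\tau'$ (enlarged to a filling track if necessary) with $\sigma_1 = \tau'(\alpha)$, $\sigma_2 = \tau'(\alpha')$: the output is either alternative (1), or a curve $\gamma \in \S(\sigma_1 \cap \sigma_2)$ close to both $\alpha$ and $\alpha'$. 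Since $\sigma_1 \cap \sigma_2$ is a proper subtrack of $\sigma_1$, we have $\ind(\tau'(\gamma)) \geq \ind(\sigma_1 \cap \sigma_2) > \ind(\sigma_1) \geq \ind(\tau(\alpha))$, while $\gamma \in \S(\sigma_1) \subseteq \S(\tau(\alpha))$ and $\gamma \in \S(\sigma_2) = \S(\tau'(\alpha'))$, giving alternative (3).

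For Case 2 ($\alpha \notin \S(\tau')$): the split $\tau \to \tau'$ cannot be central, for then $P(\tau) = P(\tau')$ and $\alpha$ would be carried by $\tau'$; hence it is a left or right split, and $\alpha$ is carried by the other recurrent non-central split $\tau''$ of $\tau$ along the same large branch. Applying Proposition \ref{3.24b} to $\tau$ with $\alpha' \in \S(\tau')$ and $\alpha \in \S(\tau'')$ yields either alternative (1), or a curve $\gamma_0 \in \S(\tau') \cap \S(\tau'')$ with $d(\gamma_0, \alpha),\, d(\gamma_0, \alpha') \leq C_1$. The Case 1 analysis applied to the pair $(\gamma_0, \alpha')$ then produces either one of the desired alternatives, or an intermediate $\gamma \in \S(\tau'(\alpha'))$ close to $\alpha'$ (and hence to $\alpha$ via $\gamma_0$). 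A final invocation of Proposition \ref{3.24} in $\tau$ with the subtracks $\tau(\alpha)$ and $\tau(\gamma)$ either gives alternative (1) or refines $\gamma$ to a curve $\gamma^\star \in \S(\tau(\alpha)) \cap \S(\tau'(\alpha'))$ while preserving the strict index inequality.

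The main obstacle is the index bookkeeping through the successive subtrack refinements in Case 2, together with ensuring that the final refined curve $\gamma^\star$ continues to sit in $\tau'(\alpha')$ and not merely in the image of $\tau'(\alpha')$ in $\tau$. This is arranged by performing the last refinement within the face $P(\tau'(\alpha')) \cap P(\tau(\alpha))$ of $P(\tau')$, rather than within $P(\tau)$ alone, so that the output remains carried by $\tau'(\alpha')$. The filling hypothesis of Proposition \ref{3.24} is always met by enlarging the relevant track to a maximal filling one when necessary.
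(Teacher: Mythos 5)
Your overall strategy -- dichotomize on whether $\alpha\in\S(\tau')$, handle the diagonal case with Proposition \ref{3.24} inside $\tau'$, and use Proposition \ref{3.24b} to produce a curve on the central split in the off-diagonal case -- is the same as the paper's, and your Case 1 (including the index bookkeeping $\ind\tau'(\gamma)\geq\ind(\sigma_1\cap\sigma_2)>\ind\tau'(\alpha)\geq\ind\tau(\alpha)$) is correct. The gap is in the last step of Case 2. After producing $\gamma\in\S(\tau'(\alpha'))$ close to $\alpha$, you invoke Proposition \ref{3.24} \emph{in $\tau$} with the subtracks $\tau(\alpha)$ and $\tau(\gamma)$. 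The output $\gamma^\star$ lies in $\S(\tau(\alpha)\cap\tau(\gamma))$, and this set is not contained in $\S(\tau')$: the subtrack $\tau(\alpha)\cap\tau(\gamma)$ of $\tau$ may contain the large branch $b$ being split, with all four connections through $b$ still legal, so $\gamma^\star$ may use the connection forbidden by the right split and fail to be carried by $\tau'$ at all, let alone by $\tau'(\alpha')$. You flag exactly this issue, but the proposed repair -- ``performing the last refinement within the face $P(\tau'(\alpha'))\cap P(\tau(\alpha))$ of $P(\tau')$'' -- is not an application of Proposition \ref{3.24}: that proposition needs two subtracks of a single ambient track carrying both curves, and since $\alpha\notin\S(\tau')$ in Case 2, the curve $\alpha$ does not lie in any face of $P(\tau')$, so there is no such ambient track in which to perform the refinement. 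The index inequality in Case 2 is also only asserted, not proved.

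The fix is to reverse the order of the two refinements, which is what the paper does. Starting from the central-split curve $\gamma'$ given by Proposition \ref{3.24b}, first apply Proposition \ref{3.24} inside $\tau''$ (the \emph{other} split, which carries $\alpha$) to the subtracks $\tau''(\alpha)$ and $\tau''(\gamma')$. Because $\tau''(\gamma')$ is contained in the central split, the resulting $\gamma''\in\S(\tau''(\alpha))\cap\S(\tau''(\gamma'))$ is still carried by the central split and hence by $\tau'$; moreover $\ind\tau''(\gamma'')>\ind\tau''(\alpha)\geq\ind\tau(\alpha)$, since equality would force $\alpha$ onto the central split and hence into $\S(\tau')$, a contradiction. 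Only then does one apply Proposition \ref{3.24} inside $\tau'$ to $\tau'(\gamma'')$ and $\tau'(\alpha')$; membership of the final curve in $\S(\tau(\alpha))$ is now automatic because it is inherited by subtracks of $\tau'(\gamma'')$, whereas membership in $\S(\tau'(\alpha'))$ is \emph{not} inherited by subtracks of $\tau(\gamma)$ in your ordering. (Your $\gamma$ does in fact lie on the central split, so your argument could be salvaged by performing your final refinement in $\tau''$ against $\tau''(\alpha)$ and $\tau''(\gamma)$ rather than in $\tau$; but as written the step fails.)
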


\begin{proof}
If $\alpha \in \S(\tau')$ the lemma follows from Proposition \ref{3.24} with the additional observation that if (1) and (2) don't hold then $\tau'(\alpha)$ is a carried by proper subtrack of $\tau(\alpha)$ so $\ind \tau'(\alpha) > \ind \tau(\alpha)$.

Now assume that $\alpha \not\in \S(\tau')$. We can also assume that $\tau'$ is the right split and let $\tau''$ be left split. Then $\alpha \in \S(\tau'')$.
 We then apply Proposition \ref{3.24b} and see that we are in case (1) or there is $\gamma'$ carried by the central split with both $d(\alpha, \gamma')$ and $d(\alpha', \gamma')$ bounded. Next we apply Proposition \ref{3.24} to $\alpha$, $\gamma'$ and $\tau''$. Again we are either in case (1) or there is a $\gamma'' \in \S(\tau''(\alpha)) \cap \S(\tau''(\gamma'))$ with $d(\alpha, \gamma'')$ and $d(\gamma', \gamma'')$ bounded. We also note that $\ind\tau''(\gamma'') > \ind \tau''(\alpha)$ for otherwise $\alpha$ would be carried by the central splits, and hence by $\tau'$, a contradiction.  As $\gamma''$ is carried by the central split  we have $\gamma'' \in \S(\tau')$. The proof is then completed with another application of Proposition \ref{3.24}.
\end{proof}

The following lemma follows quickly from Theorem \ref{quasigeodesics}, by extending
a splitting sequence $\tau_2 \ss \tau_1$:

\begin{lemma}\label{split_to_quasi}
There exists $C = C(\Sigma)>0$ such that the following holds. Let $\tau_1$ and $\tau_2$ be train tracks with $\tau_2 \ss \tau_1$. If $\alpha \in \S(\tau_2)$ then $d(\alpha, \tau_2) \le d(\alpha, \tau_1) + C$. In particular if $\tau_3$ is another train track with $\tau_3 \ss\tau_2$ then $d(\tau_2, \tau_3) \le d(\tau_1,\tau_3) + C$.
\end{lemma}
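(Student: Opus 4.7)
The plan is to extend the given splitting sequence so that $\alpha$ itself appears as a vertex cycle of a further-split track, and then exploit the quasi-geodesicity of vertex cycles along a splitting sequence. Concretely: since $\alpha \in \S(\tau_2)$, we may continue to split $\tau_2$ (at each stage choosing a split compatible with $\alpha$ remaining carried) to obtain a longer splitting sequence
\[
\tau_0 \ss \tau_2 \ss \tau_1
\]
with $\alpha \in V(\tau_0)$. Termination of this process is the weight-reduction argument used in the proof of Lemma \ref{split to vertex cycle}: the maximum of the weights $\alpha$ assigns to branches of the smallest subtrack carrying $\alpha$ strictly decreases under splitting a large branch of maximal weight, so eventually this subtrack is a simple closed curve and $\alpha$ is a vertex cycle. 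By Theorem \ref{quasigeodesics}, the vertex cycles along this extended sequence trace out a reparametrized $Q$-quasi-geodesic in $\C(\Sigma)$ which passes through $V(\tau_1)$, then through $V(\tau_2)$, and terminates at $\alpha \in V(\tau_0)$.

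By $\delta$-hyperbolicity of $\C(\Sigma)$ and stability of quasi-geodesics there is a constant $K = K(\delta,Q)$ such that whenever $A,B,C$ appear in that order along a $Q$-quasi-geodesic, $d(A,B) + d(B,C) \le d(A,C) + K$. Applied with $A = \beta \in V(\tau_1)$, $B = \gamma \in V(\tau_2)$, and $C = \alpha$, this yields $d(\alpha,\gamma) \le d(\alpha,\beta) + K$. Combined with the uniform bound on the diameters of $V(\tau_1)$ and $V(\tau_2)$ in $\C(\Sigma)$ from \cite[Corollary 2.3]{HamenstadtTT}, which allows us to pass between $d(\alpha,\tau_i) = \diam_\C(\{\alpha\} \cup V(\tau_i))$ and any single distance $d(\alpha,\beta)$ for $\beta \in V(\tau_i)$ at the cost of a uniform additive error, we conclude
\[
d(\alpha,\tau_2) \le d(\alpha,\tau_1) + C
\]
for some constant $C = C(\Sigma)$.

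For the ``in particular'' statement, suppose $\tau_3 \ss \tau_2$ and pick any $\alpha \in V(\tau_3) \subset \S(\tau_2)$; then each of $d(\tau_2,\tau_3)$ and $d(\tau_1,\tau_3)$ differs from $d(\alpha,\tau_2)$ and $d(\alpha,\tau_1)$, respectively, by at most the uniformly bounded $\diam_\C V(\tau_3)$, so the desired inequality $d(\tau_2,\tau_3) \le d(\tau_1,\tau_3) + C'$ follows with a slightly enlarged constant. There is no real obstacle here: the argument is essentially quasi-geodesic monotonicity, and the only ingredient requiring care is confirming that the splitting sequence can be extended so that $\alpha$ becomes a vertex cycle and that Theorem \ref{quasigeodesics} then applies to the combined sequence with uniform constants depending only on $\Sigma$.
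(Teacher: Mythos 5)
Your proof is correct and matches the paper's intended argument: the paper disposes of this lemma in one line, noting it "follows quickly from Theorem \ref{quasigeodesics} by extending a splitting sequence $\tau_2 \ss \tau_1$," which is exactly your construction of splitting $\tau_2$ further towards $\alpha$ until it is a vertex cycle and then invoking quasi-geodesic stability together with the uniform bound on $\diam V(\tau_i)$. The only detail you gloss over (choosing recurrent splits compatible with $\alpha$, and termination of the splitting-towards-$\alpha$ process) is standard and is used implicitly elsewhere in the paper, e.g.\ in the proof of Theorem \ref{qtree}.
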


The following proposition is a combination of Proposition 3.19 in \cite{BB} and Lemma \ref{diagonal extension}.
\begin{prop}\label{intersection}
Assume that $\tau_1 \ss \tau$ and $\tau_2 \ss \tau$ and let $\sigma_1 \subset \tau_1$ and $\sigma_2\subset \tau_2$ be subtracks with $\S(\sigma_1)\cap \S(\sigma_2) \neq \emptyset$. Then there exist tracks $\eta_1$ and $\eta_2$ with a common subtrack $\eta$ such that $\eta_1 \ss \tau_1$, $\eta_2\ss\tau_2$, and $\S(\eta) = \S(\sigma_1) \cap \S(\sigma_2)$. Furthermore, $\min\{d(\tau_1, \eta), d(\tau_2, \eta)\}$
is uniformly bounded.
\end{prop}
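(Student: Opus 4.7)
The plan is to use the face-subtrack dictionary together with the common-splitting construction of \cite[Proposition 3.19]{BB}.

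First, I would observe that $P(\sigma_1)\cap P(\sigma_2)$ is a face of the rational polyhedron $P(\tau)$: each $P(\sigma_i)$ is a face of $P(\tau_i)$, which is itself a face of $P(\tau)$, so the intersection is a face of $P(\tau)$ and corresponds to a unique subtrack $\eta_0\subset\tau$ with $\S(\eta_0)=\S(\sigma_1)\cap\S(\sigma_2)$. By density of rational points in a rational face, I can then choose a simple closed curve $\alpha\in\S(\sigma_1)\cap\S(\sigma_2)$ whose weight vector lies in the relative interior of $P(\eta_0)$. For such a generic $\alpha$, the smallest subtrack of $\tau_i$ fully carrying $\alpha$ is the unique subtrack of $\tau_i$ whose polyhedron equals $P(\eta_0)$.

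Next, I would apply \cite[Proposition 3.19]{BB} to the splittings $\tau_1\ss\tau$ and $\tau_2\ss\tau$ with common carried curve $\alpha$. This furnishes tracks $\eta_1\ss\tau_1$ and $\eta_2\ss\tau_2$ sharing a common subtrack $\tilde\eta$ that carries $\alpha$, together with the uniform bound $\min\{d(\tau_1,\tilde\eta),d(\tau_2,\tilde\eta)\}\le C(\Sigma)$. I would then take $\eta\subset\tilde\eta$ to be the smallest subtrack fully carrying $\alpha$; since $\tilde\eta\subset\eta_i$, this $\eta$ is automatically the smallest subtrack of each $\eta_i$ fully carrying $\alpha$ as well.

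To conclude $\S(\eta)=\S(\sigma_1)\cap\S(\sigma_2)$, I would run a face-uniqueness argument. The polyhedron $P(\eta)$ is a face of $P(\tau_i)$ (since $\eta\subset\eta_i$ and $P(\eta_i)$ is a face of $P(\tau_i)$), and it contains $\alpha$ in its relative interior; by uniqueness of the face so containing $\alpha$, one obtains $P(\eta)=P(\eta_0)$ and hence $\S(\eta)=\S(\sigma_1)\cap\S(\sigma_2)$. Lemma \ref{diagonal extension} is the bookkeeping tool that makes these face identifications legitimate along the inclusions $\tilde\eta\subset\eta_i\ss\tau_i$: it guarantees that the subtrack-face correspondence is preserved under splittings, so the polyhedral identification is realized by an actual subtrack of $\eta_i$. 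The main obstacle is the content of \cite[Proposition 3.19]{BB} itself, which produces the common splitting together with the uniform distance bound; that is a genuinely combinatorial input resting on the quasi-geodesic behavior of splitting sequences (Theorem \ref{quasigeodesics}). Once the common splitting is available, the face-identification step completing the proof is essentially formal.
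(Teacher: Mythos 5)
The paper gives no argument for Proposition \ref{intersection} beyond the citation: it is stated as a combination of \cite[Proposition 3.19]{BB} with Lemma \ref{diagonal extension} (the latter being what lets one pass between splitting sequences of the subtracks $\sigma_i$ and of the ambient tracks $\tau_i$). You also route the statement through \cite[Proposition 3.19]{BB}, but the polyhedral scaffolding you wrap around it contains a genuine error: you repeatedly treat a splitting $\tau_i\ss\tau$ as if it produced a face of $P(\tau)$. It does not. A split subdivides the polyhedron: $P(\tau)=P(\tau_L)\cup P(\tau_R)$ with $P(\tau_L)\cap P(\tau_R)=P(\tau_C)$, and none of these is a face of $P(\tau)$; faces of $P(\tau)$ correspond to subtracks ($\subset$), not to splittings ($\ss$). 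So your first step fails: $P(\sigma_1)\cap P(\sigma_2)$ need not be a face of $P(\tau)$, and the subtrack $\eta_0\subset\tau$ with $\S(\eta_0)=\S(\sigma_1)\cap\S(\sigma_2)$ need not exist. The simplest counterexample is $\sigma_1=\tau_1=\tau_L$ and $\sigma_2=\tau_2=\tau_R$, the two one-sided splits of a maximal recurrent $\tau$ at a large branch (chosen so both are recurrent): the common carried curves are exactly $\S(\tau_C)$, and $P(\tau_C)$ is a codimension-one slice through the interior of $P(\tau)$ containing fully carried curves, hence is not $P(\eta_0)$ for any subtrack $\eta_0\subset\tau$. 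This false claim is load-bearing: it is where you pick the ``generic'' curve $\alpha$, and your closing face-uniqueness step rests on the same confusion (``$P(\eta_i)$ is a face of $P(\tau_i)$'' is again false, since $\eta_i\ss\tau_i$ is a splitting). Indeed, if the intersection were always realized by a subtrack of $\tau$ the proposition would be essentially trivial; the whole point of \cite[Proposition 3.19]{BB} is that one must split $\tau_1$ and $\tau_2$ further before a common subtrack realizing the intersection appears. Lemma \ref{diagonal extension} also does not say what you attribute to it (preservation of a ``subtrack--face correspondence under splittings''); its relevant clause here is that splitting sequences of subtracks extend to splitting sequences of the ambient tracks.

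Two further gaps, more easily repaired but still present. First, a single simple closed curve in the relative interior of the relevant cone need not exist (e.g.\ a two-dimensional face spanned by two disjoint curves carries only those two curves); you should take a rational point, i.e.\ a weighted multicurve of full support, for which the minimal-subtrack formalism works equally well. Second, you obtain the uniform bound only for $\min\{d(\tau_1,\tilde\eta),d(\tau_2,\tilde\eta)\}$ and then silently replace $\tilde\eta$ by a smaller subtrack $\eta\subset\tilde\eta$; the ``Furthermore'' clause of the proposition concerns $\eta$ itself, and a bound on the distance to $\tilde\eta$ does not pass to a subtrack, since the vertex cycles of a subtrack can be arbitrarily far in $\C$ from those of the ambient track. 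As written, your argument neither establishes $\S(\eta)=\S(\sigma_1)\cap\S(\sigma_2)$ (the identification rests on the nonexistent $\eta_0$) nor the distance bound for $\eta$, so the proposal does not yet constitute a proof.
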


Before proving our main estimate we need the following special case:
\begin{prop}\label{nested}
Given $C>0$ there exists $C'>0$ such that the following holds. Assume that $\tau' \ss\tau$ and let $\sigma \subset \tau$ and $\sigma' \subset \tau'$ be subtracks. If $\alpha \in \S(\sigma)$ and $\alpha'\in\S(\sigma')$ with $d(\alpha, \alpha') \le C$ then either
\begin{enumerate}[(1)]
\item $d(\alpha, \sigma'), d(\alpha', \sigma') \le C'$, or
\item there exists $\gamma \in \S(\sigma) \cap \S(\sigma')$ with $d(\alpha, \gamma), d(\alpha', \gamma) \le C'$.
\end{enumerate}
\end{prop}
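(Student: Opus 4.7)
The plan is to iterate Lemma \ref{nested_induction} along the splitting sequence connecting $\tau$ and $\tau'$, producing a sequence of curves $\alpha = \alpha_0, \alpha_1, \ldots, \alpha_n$ that remain in $\S(\sigma)$ and become progressively carried by the finer tracks, and then close the argument at $\tau'$ using Proposition \ref{3.24}. Write the splitting sequence as $\tau = \tau_0, \tau_1, \ldots, \tau_n = \tau'$, with each $\tau_{i+1}$ obtained from $\tau_i$ by a single split. Since $\alpha' \in \S(\tau') \subset \S(\tau_{i+1})$ for every $i$, at step $i$ I would apply Lemma \ref{nested_induction} to the single split $\tau_{i+1} \ss \tau_i$ with input $\alpha_i \in \S(\tau_i)$ and $\alpha' \in \S(\tau_{i+1})$. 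If case (1) of the lemma occurs, I terminate and deduce conclusion (1) of Proposition \ref{nested}. If case (2) occurs, so that $\alpha_i \in \S(\tau_{i+1}(\alpha'))$, I set $\alpha_{i+1} = \alpha_i$. If case (3) occurs, I take $\alpha_{i+1}$ to be the common curve $\gamma \in \S(\tau_i(\alpha_i)) \cap \S(\tau_{i+1}(\alpha'))$ given by the lemma. Throughout I maintain the invariants that $\alpha_i \in \S(\tau_i)$, that $\tau_i(\alpha_i)$ carries into $\sigma$ under $\tau_i \to \tau_0 = \tau$ (so $\alpha_i \in \S(\sigma)$), and that $d(\alpha_i, \alpha')$ is controlled.

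The main obstacle will be controlling the constants uniformly in the length $n$ of the splitting sequence. The key observation is the strict inequality $\ind \tau_{i+1}(\alpha_{i+1}) > \ind \tau_i(\alpha_i)$ guaranteed by case (3) of Lemma \ref{nested_induction}; case (2) leaves $\alpha_i$ unchanged and preserves the index of the minimal carrying subtrack, so the sequence $\ind \tau_i(\alpha_i)$ is nondecreasing and strictly grows only at case (3) events. Because the index is bounded above by a quantity depending only on $\Sigma$ (Proposition \ref{numerology}), case (3) can occur at most $M = M(\Sigma)$ times. Distances and constants only worsen at case (3) events, so composing at most $M$ applications of Lemma \ref{nested_induction} yields accumulated constants depending only on $C$ and $\Sigma$.

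Once the iteration terminates at step $n$ without triggering case (1), we have $\alpha_n \in \S(\sigma) \cap \S(\tau')$ with $d(\alpha, \alpha_n)$ and $d(\alpha_n, \alpha')$ both bounded. To finish, I would apply Proposition \ref{3.24} to $\tau'$ (passing to a filling extension if $\tau'$ is not already filling) with subtracks $\tau'(\alpha_n)$ and $\sigma'$ and the curves $\alpha_n$ and $\alpha'$. In its case (1), the triangle inequality delivers conclusion (1) of Proposition \ref{nested}. In its case (2), I obtain a common curve $\gamma \in \S(\tau'(\alpha_n)) \cap \S(\sigma')$; since $\tau'(\alpha_n)$ carries to $\tau(\alpha_n) \subset \sigma$ under $\tau' \to \tau$, the curve $\gamma$ also lies in $\S(\sigma)$, so $\gamma \in \S(\sigma) \cap \S(\sigma')$ and together with the bounded distances this is exactly conclusion (2).
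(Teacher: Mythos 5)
Your proposal is correct and follows essentially the same route as the paper: iterate Lemma \ref{nested_induction} along the splitting sequence, use the strict increase of $\ind\tau_i(\alpha_i)$ at each curve-change to bound the number of changes (hence the accumulated constants) by the maximal index of a track on $\Sigma$, and maintain the invariant keeping each $\alpha_i$ in $\S(\sigma)$. The only differences are cosmetic: at termination the paper explicitly invokes Lemma \ref{split_to_quasi} to convert proximity to $\tau_i$ into proximity to $\tau'$, and in the endgame the paper reads $\alpha_n\in\S(\tau'(\alpha'))\subset\S(\sigma')$ directly off the maintained invariant rather than making your extra (harmless, but unnecessary) final application of Proposition \ref{3.24}.
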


\begin{proof}

Let
$$\tau' = \tau_m \to \tau_{m-1} \to \cdots \to \tau_0 = \tau$$
be the splitting sequence. 

Let $C_0 = C$ and inductively define $C_{i+1}$ to be the constant $C'$ yielded by Lemma \ref{nested_induction} for $C_i$. Assume we have constructed a sequence of curves $\alpha_0= \alpha, \alpha_1, \dots, \alpha_i$ with the following properties:
\begin{itemize}
\item $\alpha_{j} \in \S(\tau_{j-1}(\alpha_{j-1})) \cap \S(\tau_{j}(\alpha'))$;

\item if $\alpha_{j-1} \neq \alpha_{j}$ then  $\ind{\tau_j}(\alpha_j) > \ind{\tau_{j-1}}(\alpha_{j-1})$;

\item $d(\alpha_j, \alpha') \le C_{\#(j)}$ where $\#(j)$ is the number of times that $\alpha_{\ell-1} \ne \alpha_\ell$ for $1\le \ell \le j$.
\end{itemize}
We then apply Lemma \ref{nested_induction} to the curves $\alpha_i$ and $\alpha'$ and the tracks $\tau_i$ and $\tau_{i+1}$. We see that either $d(\alpha', \tau_i) \le C_{\#(i+1)}$ or there exists a curve $\alpha_{i+1}$ satisfying all the above bullets, so that the sequence may be extended.

Let $d$ be the maximal index of a train track on $\Sigma$. Then $\#(i) \le d$. If the sequence terminates at $i$ we have that $d(\alpha', \tau_i) \le C_{\#(i)} \le C_d$ and Lemma \ref{split_to_quasi} implies that $d(\alpha', \tau')$ is bounded since $\tau' \ss \tau_i$ and $\alpha' \in \S(\tau')$. This also yields a bound on $d(\alpha,\tau')$ since $d(\alpha, \tau') \le d(\alpha, \alpha') + d(\alpha', \tau')$.

Now assume that the sequence continues until $\alpha_m$. Since $\alpha_i \in \S(\tau_{i-1}(\alpha_{i-1}))$ we have that $\tau_i(\alpha_i) \to \tau_{i-1}(\alpha_{i-1})$ giving a sequence
$$\tau_m(\alpha_m) \to \tau_{m-1}(\alpha_{m-1}) \to \cdots \to \tau_0(\alpha_0) = \tau(\alpha)\subset \sigma.$$
In particular $\alpha_m \in \S(\sigma)$. We also have that $\tau'(\alpha') \subset \sigma'$ so $\alpha_m \in \S(\sigma')$. We then let $\gamma = \alpha_m$ and observe that we have shown that $\gamma \in \S(\sigma) \cap \S(\sigma')$ with $d(\gamma, \alpha') \le C_d$ and $d(\gamma, \alpha) \le C + C_d$ so that we may choose $C'$ to be the maximum of $C_d + C$ and the bound on $d(\alpha,\tau'),d(\alpha',\tau')$ from the previous paragraph.
\end{proof}

\begin{lemma}\label{general_induction}
Given $C>0$ there exists $C'>0$ such that the following holds. Let $\tau, \tau'$, and $\tilde\tau$ be train tracks with $\tau'$ obtained by a single split from $\tau$ and $\tilde\tau\ss \tau$. Assume that $\alpha' \in \S(\tau')$ and $\tilde\alpha \in \S(\tilde\tau)$ with $d(\alpha', \tilde\alpha) \le C$. Then either
\begin{enumerate}[(1)]
\item $d(\alpha', \tilde\tau), d(\tilde\alpha, \tilde\tau) \le C'$, or

\item there exists $\gamma \in \S(\tau'(\alpha')) \cap \S(\tilde\tau(\tilde\alpha))$ with $d(\alpha', \gamma), d(\tilde\alpha, \gamma) \le C'$ and if $\gamma \neq \tilde\alpha$ then $\ind \tau'(\gamma) > \ind{\tau}(\tilde\alpha)$.
\end{enumerate}
\end{lemma}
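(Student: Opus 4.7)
The plan is to combine Lemma \ref{nested_induction} (handling the single split $\tau\to\tau'$) with Proposition \ref{nested} (handling the splitting sequence $\tilde\tau\ss\tau$), iterating as necessary. First I apply Lemma \ref{nested_induction} to the close pair $\tilde\alpha\in\S(\tau)$ (automatic since $\tilde\tau\ss\tau$) and $\alpha'\in\S(\tau')$. Its Alternative~(1) gives $d(\tilde\alpha,\tau),d(\alpha',\tau)\le C_1$; combined with Lemma \ref{split_to_quasi} applied to $\tilde\alpha\in\S(\tilde\tau)$ with $\tilde\tau\ss\tau$, this promotes to bounded distances to $\tilde\tau$, yielding conclusion~(1). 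Its Alternative~(2) gives $\tilde\alpha\in\S(\tau'(\alpha'))$, so $\gamma=\tilde\alpha$ lies in $\S(\tau'(\alpha'))\cap\S(\tilde\tau(\tilde\alpha))$ at distance $0$ from $\tilde\alpha$ and at distance $\le C$ from $\alpha'$, with the index condition rendered vacuous.

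The main work is Alternative~(3), which produces a curve $\gamma_1\in\S(\tau(\tilde\alpha))\cap\S(\tau'(\alpha'))$ with $d(\gamma_1,\tilde\alpha),d(\gamma_1,\alpha')\le C_1$ and $\ind\tau'(\gamma_1)>\ind\tau(\tilde\alpha)$. To push $\gamma_1$ down from $\tau(\tilde\alpha)$ to $\tilde\tau(\tilde\alpha)$, I then apply Proposition \ref{nested} to the splitting sequence $\tilde\tau\ss\tau$ with subtracks $\sigma=\tau(\tilde\alpha)\subset\tau$ and $\sigma'=\tilde\tau(\tilde\alpha)\subset\tilde\tau$ and curves $\gamma_1$ and $\tilde\alpha$. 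In its bounded-distance alternative the bounds $d(\gamma_1,\tilde\tau(\tilde\alpha))$ and $d(\tilde\alpha,\tilde\tau(\tilde\alpha))$ promote to bounds on $d(\cdot,\tilde\tau)$ via the inclusion $V(\tilde\tau(\tilde\alpha))\subset V(\tilde\tau)$ --- valid because $M(\tilde\tau(\tilde\alpha))$ is a face of $M(\tilde\tau)$, so its extreme rays are extreme rays of $M(\tilde\tau)$ --- combined with the uniform bound on $\diam V(\tilde\tau)$; this yields conclusion~(1). Otherwise we obtain $\gamma_2\in\S(\tau(\tilde\alpha))\cap\S(\tilde\tau(\tilde\alpha))$ close to both $\gamma_1$ and $\tilde\alpha$.

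The main obstacle is that $\gamma_2$ need not belong to $\S(\tau'(\alpha'))$. I would resolve this by iterating the two results: alternately apply Lemma \ref{nested_induction} to $(\gamma_{2k},\alpha')$ in $(\tau,\tau')$ to produce $\gamma_{2k+1}\in\S(\tau(\gamma_{2k}))\cap\S(\tau'(\alpha'))$ with $\ind\tau'(\gamma_{2k+1})>\ind\tau(\gamma_{2k})$, and Proposition \ref{nested} to push it back into $\S(\tilde\tau(\tilde\alpha))$ as $\gamma_{2k+2}\in\S(\tau(\gamma_{2k+1}))\cap\S(\tilde\tau(\tilde\alpha))$. The resulting chain $\tau(\gamma_{2k})\subset\tau(\gamma_{2k-2})\subset\cdots\subset\tau(\tilde\alpha)$ gives a non-decreasing sequence of $\tau$-indices bounded by $\dim\ML(\Sigma)$. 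The key technical point, in analogy with the termination argument in Proposition \ref{nested}'s proof, is to show that the iteration must terminate in a uniformly bounded number of steps --- either in a bounded-distance alternative (giving conclusion~(1)) or when $\gamma_{2k+1}=\gamma_{2k}$, which forces $\gamma_{2k}\in\S(\tau'(\alpha'))\cap\S(\tilde\tau(\tilde\alpha))$ and yields conclusion~(2). When the iteration terminates with a common curve $\gamma$, the index condition $\ind\tau'(\gamma)>\ind\tau(\tilde\alpha)$ is inherited from the telescoping inequality $\ind\tau'(\gamma)>\ind\tau(\gamma_{\text{last}})\ge\ind\tau(\tilde\alpha)$, and the distance constants are controlled by accumulating through the bounded iteration as in the proof of Proposition \ref{nested}.
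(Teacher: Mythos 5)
Your opening moves are fine: applying Lemma \ref{nested_induction} to the pair $(\tilde\alpha,\alpha')$ across the single split and disposing of its alternatives (1) and (2) as you describe is correct, as is the observation that a bound on $d(\cdot,\tau)$ promotes to a bound on $d(\cdot,\tilde\tau)$ via Lemma \ref{split_to_quasi} and the triangle inequality. The gap is in the core of the argument: the termination of your alternating iteration. The strict inequality supplied by Lemma \ref{nested_induction}(3) compares $\ind\tau'(\gamma_{2k+1})$ with $\ind\tau(\gamma_{2k})$, and for a fixed curve one only has $\ind\tau'(\gamma)\ge\ind\tau(\gamma)$ (since $M(\tau'(\gamma))\subseteq M(\tau(\gamma))$) --- the inequality points the wrong way for telescoping. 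Concretely, from $\ind\tau'(\gamma_{2k+1})>\ind\tau(\gamma_{2k})$ and $\ind\tau(\gamma_{2k+2})\ge\ind\tau(\gamma_{2k+1})$ you cannot deduce $\ind\tau'(\gamma_{2k+3})>\ind\tau'(\gamma_{2k+1})$, nor $\ind\tau(\gamma_{2k+2})>\ind\tau(\gamma_{2k})$. Your chain of minimal subtracks $\tau(\tilde\alpha)\supseteq\tau(\gamma_1)\supseteq\tau(\gamma_2)\supseteq\cdots$ does stabilize after boundedly many proper inclusions, but stabilization does not force the terminating condition $\gamma_{2k}\in\S(\tau'(\alpha'))$: the subsequent curves could all be fully carried by the same subtrack of $\tau$ while alternating indefinitely between $\S(\tau'(\alpha'))$ and $\S(\tilde\tau(\tilde\alpha))$ without ever landing in both, so the number of rounds --- and hence the accumulated constant --- is not controlled. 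The analogy with the termination argument in Proposition \ref{nested} does not transfer, because there the iteration runs along a fixed splitting sequence $\tau_m\to\cdots\to\tau_0$ and the tracked quantity $\ind\tau_j(\alpha_j)$ is automatically non-decreasing when the curve is unchanged and strictly increasing when it changes, consecutive tracks being related by a single split. The final index claim has the same defect: if termination occurs via alternative (2) at some step $k\ge1$, the inequality $\ind\tau'(\gamma_{2k})>\ind\tau(\tilde\alpha)$ is not produced at that step and does not follow from the non-strict chain.

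The paper's proof avoids the iteration entirely. After splitting into the cases $\tilde\alpha\in\S(\tau')$ and $\tilde\alpha\in\S(\tau'')$ (the other split, handled by one preliminary application of Lemma \ref{nested_induction}), it invokes Proposition \ref{intersection} to produce a single track $\eta\ss\tau'$ (resp.\ $\eta\ss\tau''$) with a subtrack $\eta'$ satisfying $\S(\eta')=\S(\tau')\cap\S(\tilde\tau)$ and with $\min\{d(\eta,\tau'),d(\eta,\tilde\tau)\}$ bounded; one application of Proposition \ref{nested} to the nested pair $\eta\ss\tau'$ then yields a curve in $\S(\tau'(\alpha'))\cap\S(\eta'(\tilde\alpha))\subset\S(\tau'(\alpha'))\cap\S(\tilde\tau(\tilde\alpha))$ directly. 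If you want to keep your architecture, you need either such an aligning track or a genuinely strictly monotone complexity for the alternation; as written the proof does not close.
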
 
Note that if $\tilde\tau= \tau$ then this lemma reduces to Lemma \ref{nested_induction}. However the proof is considerably more involved and we will use Proposition \ref{nested} (and hence Lemma \ref{nested_induction}) to prove Lemma \ref{general_induction}.
\begin{proof} We first prove the lemma ignoring the statement about index in (2). We will address this at the end of the proof.

First assume that $\tilde\alpha \in \S(\tau')$. By Proposition \ref{intersection} there is a train track $\eta$ with $\eta \ss \tau'$ and subtrack $\eta' \subset \eta$ such that $\S(\eta') = \S(\tau') \cap \S(\tilde\tau)$ and at least one of $d(\eta, \tau')$ and $d(\eta, \tilde\tau)$ is bounded.
Then, by Proposition \ref{nested}, either $d(\alpha', \eta)$ and $d(\tilde\alpha, \eta)$ are bounded or there exists $\gamma \in \S(\tau'(\alpha'))\cap \S(\eta'(\tilde\alpha))$ with $d(\alpha', \gamma)$ and $d(\tilde\alpha, \gamma)$ bounded. In the latter case we note that $\S(\eta'(\tilde\alpha)) \subset \S(\tilde\tau(\tilde\alpha))$ so we are in case (2) and we are done.

In the former case if $d(\eta, \tilde\tau)$ is bounded we are in case (1) and also done. If not $d(\eta, \tau')$ is bounded and, since $\tau'$ is a single splitting of $\tau$, $d(\eta,\tau)$ is also bounded. As there is a splitting sequence from $\eta'$ to a subtrack of $\tilde\tau$ and that subtrack has a splitting sequence to a subtrack of $\tau$ we can then apply Lemma \ref{split_to_quasi} to get a bound on $d(\alpha', \tilde\tau)$ and $d(\tilde\alpha, \tilde\tau)$.

Now assume $\tilde\alpha \not\in \S(\tau')$. Then $\tilde\alpha \in \S(\tau'')$ where $\tau''$ is the other splitting of $\tau$ along the large branch that we split to form $\tau'$. Then by Lemma \ref{nested_induction} either  $d(\tilde\alpha, \tau)$ is bounded (and hence by Lemma \ref{split_to_quasi} $d(\tilde\alpha,\tilde\tau)$ is bounded) or there is a $\gamma' \in \S(\tau'(\alpha')) \cap \S(\tau''(\tilde\alpha))$ with $d(\gamma', \alpha')$ and $d(\gamma', \tilde\alpha)$ bounded. In the former case, (1) holds.

If the latter holds, we are done if $\gamma' \in \S(\tilde\tau(\tilde\alpha))$. If this doesn't hold, Proposition \ref{intersection} yields a track $\eta \ss \tau''$ with a subtrack $\eta' \subset \eta$ such that $\S(\eta') = \S(\tilde\tau) \cap \S(\tau'')$. Proposition \ref{nested} then shows that either $d(\tilde\alpha, \tilde\tau)$ is bounded or there is $\gamma \in \S(\tau''(\gamma')) \cap \S(\eta'(\tilde\alpha))$ with $d(\gamma, \gamma')$ and $d(\gamma, \tilde\alpha)$ bounded.
As $\gamma'$ is carried by both $\tau'$ and $\tau''$ we have that $\tau'(\gamma') = \tau''(\gamma')$. As $\tilde\tau$ carries $\eta'$ we also have $\S(\eta'(\tilde\alpha)) \subset \S(\tilde\tau(\tilde\alpha))$ and therefore $\gamma \in \S(\tau'(\gamma')) \subset \S(\tau'(\alpha'))$ and case (2) holds.

We now show that if were are in case (2) and $\gamma \neq \tilde\alpha$ then $\ind \tau'(\gamma) > \ind{\tau}(\tilde\alpha)$. There are two cases. We first assume that $\tilde\alpha \in \S(\tau')$. If $\S(\tau'(\tilde\alpha)) \subset \S(\tau'(\alpha'))$ then we can choose $\gamma = \tilde\alpha$ and there is nothing to show. If not, $\tau'(\gamma)$ is a proper subtrack of $\tau'(\tilde\alpha)$ so $\ind \tau'(\gamma) > \ind \tau'(\tilde\alpha) \ge \ind\tau(\tilde\alpha)$. If $\tilde\alpha \not\in \S(\tau')$ then $\tilde\alpha \in \S(\tau'')$ with $\tau''(\tilde\alpha)$ not a subtrack of $\tau'$. Since $\tau''(\gamma) = \tau'(\gamma)$ is a subtrack of $\tau'$ this implies that $\tau'(\gamma)$ is a proper subtrack of $\tau''(\tilde\alpha)$ and $\ind \tau'(\gamma) > \ind \tau''(\tilde\alpha) \ge \ind\tau(\tilde\alpha)$.
\end{proof} 

\begin{prop}\label{general}
Given $C>0$ there exists $C'>0$ such that the following holds. Assume that $\tau^1 \ss \tau$ and $\tau^2 \ss \tau$ and let $\sigma^1 \subset \tau^1$ and $\sigma^2\subset \tau^2$ be subtracks. If $\alpha^1 \in \S(\sigma^1)$ and $\alpha^2\in \S(\sigma^2)$ with $d(\alpha^1, \alpha^2) \le C$ then either
\begin{enumerate}[(1)]
\item $\min\{d(\alpha^1, \sigma^1), d(\alpha^2, \sigma^2)\} \le C'$, or
\item there exists $\gamma \in \S(\sigma^1) \cap \S(\sigma^2)$ with $d(\alpha^1, \gamma), d(\alpha^2, \gamma) \le C'$.
\end{enumerate}
\end{prop}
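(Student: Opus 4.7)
The plan is to induct on the length of the splitting sequence $\tau^2 \ss \tau$, using Lemma~\ref{general_induction} as the one-step engine. This mirrors the way Proposition~\ref{nested} iterates Lemma~\ref{nested_induction}.

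In the base case $\tau^2 = \tau$, Proposition~\ref{nested} applies directly with roles $(\tau,\tau',\sigma,\sigma',\alpha,\alpha') \mapsto (\tau^2, \tau^1, \sigma^2, \sigma^1, \alpha^2, \alpha^1)$, giving the two alternatives.

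For the inductive step, I would factor $\tau^2 \to \hat\tau^2 \ss \tau$ through a single split and let $\hat\sigma^2 \subset \hat\tau^2$ be the image of $\sigma^2$ under the carrying map. Applying the inductive hypothesis to $\tau^1, \hat\tau^2, \sigma^1, \hat\sigma^2, \alpha^1, \alpha^2$, using $\alpha^2 \in \S(\sigma^2) \subset \S(\hat\sigma^2)$, yields either conclusion~(1) (which propagates with the same constants via Lemma~\ref{split_to_quasi}) or a common carrier $\hat\gamma \in \S(\sigma^1) \cap \S(\hat\sigma^2)$ with $d(\alpha^i, \hat\gamma) \le C_1$. In the latter case, I apply Lemma~\ref{general_induction} with $\tau \mapsto \hat\tau^2$, $\tau' \mapsto \tau^2$, $\tilde\tau \mapsto \hat\sigma^2$, $\alpha' \mapsto \alpha^2$, $\tilde\alpha \mapsto \hat\gamma$: its output is either a bound on $d(\alpha^2, \hat\sigma^2)$ (again yielding conclusion~(1) via Lemma~\ref{split_to_quasi} since $\hat\sigma^2 \ss \tau$) or a new curve $\gamma \in \S(\tau^2(\alpha^2)) \cap \S(\hat\sigma^2(\hat\gamma))$ whose carrier index in $\tau^2$ strictly exceeds $\ind \hat\tau^2(\hat\gamma)$.

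The main obstacle I anticipate is that this new $\gamma$ lies in $\S(\sigma^2)$ but need not lie in $\S(\sigma^1)$: Lemma~\ref{general_induction} descends one splitting sequence without preserving membership in a subtrack of the other. To resolve this I expect an alternating iteration between the $\tau^1$-side and the $\tau^2$-side, at each step pushing the current common carrier through one more split while strictly raising the carrier index. Since the index is uniformly bounded in $\Sigma$ by Proposition~\ref{dimensions}, the process terminates in boundedly many steps in either conclusion~(1) or a common carrier in $\S(\sigma^1) \cap \S(\sigma^2)$. In parallel, the case $\S(\sigma^1) \cap \S(\sigma^2) \neq \emptyset$ can be dispatched at the outset by Proposition~\ref{intersection}, which provides tracks $\eta^1 \ss \tau^1$, $\eta^2 \ss \tau^2$ with common subtrack $\eta$ satisfying $\S(\eta) = \S(\sigma^1) \cap \S(\sigma^2)$ and $\min\{d(\tau^1, \eta), d(\tau^2, \eta)\}$ uniformly bounded; any $\gamma \in V(\eta)$ is then a common carrier uniformly close to both $\alpha^i$ by the triangle inequality and uniform bounds on vertex-cycle diameters.
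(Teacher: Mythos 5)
There are genuine gaps, and they sit exactly where the real work of this proposition lies. First, your one-step application of Lemma \ref{general_induction} with $\tilde\tau \mapsto \hat\sigma^2$, $\tau \mapsto \hat\tau^2$ is not legitimate: that lemma requires $\tilde\tau \ss \tau$, i.e.\ a carrying map coming from a splitting sequence, whereas $\hat\sigma^2$ is a \emph{subtrack} of $\hat\tau^2$; this is precisely why the paper first manufactures genuine splitting relations $\tau^2_i \ss \tau^1_i$ (via Proposition \ref{intersection}, with subtracks $\eta_i$ carrying $\S(\tau^1_i)\cap\S(\tau^2)$) before ever invoking Lemma \ref{general_induction}. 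Second, and more seriously, your induction is on the length of the splitting sequence $\tau^2\ss\tau$, and each inductive step composes the constant from the inductive hypothesis with the constant from Lemma \ref{general_induction}; since that length is unbounded, the resulting $C'$ depends on the sequence rather than only on $C$ and $\Sigma$. The device that makes the paper's argument uniform is the index bookkeeping: along its iteration the curve is allowed to change only when the index of its minimal carrying subtrack strictly increases (the clause ``if $\gamma\neq\tilde\alpha$ then $\ind\tau'(\gamma)>\ind\tau(\tilde\alpha)$''), so the constant degrades at most $d$ times, $d$ being the maximal index of a track on $\Sigma$. You invoke the index bound only to terminate an unspecified ``alternating iteration,'' not to decouple the constants from the length of $\tau^2\ss\tau$; together with the admitted problem that your $\gamma$ lies in $\S(\sigma^2)$ but perhaps not in $\S(\sigma^1)$, this leaves the heart of the proof unproved. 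The paper resolves both issues at once by iterating in one direction only, along $\tau^1=\tau^1_m\to\cdots\to\tau^1_0=\tau$, feeding Lemma \ref{general_induction} the auxiliary tracks $\tau^2_i\ss\tau^1_i$ so that the curves $\alpha^2_j$ stay carried compatibly with both sides and the final curve lands in $\S(\sigma^1)\cap\S(\sigma^2)$, with the $\#(j)$ counter controlling the constants.

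Your closing shortcut for the case $\S(\sigma^1)\cap\S(\sigma^2)\neq\emptyset$ is also false. Proposition \ref{intersection} bounds $\min\{d(\tau^1,\eta),d(\tau^2,\eta)\}$, but $\alpha^1$ and $\alpha^2$ may be arbitrarily far from $V(\tau^1)\cup V(\tau^2)$, and hence from $V(\eta)$, while still satisfying $d(\alpha^1,\alpha^2)\le C$ (for instance when both lie deep along splittings toward a common lamination carried by $\sigma^1$ and $\sigma^2$). In that situation the required $\gamma$ is a curve deep in $\S(\eta)$, not a vertex cycle of $\eta$, and producing it uniformly close to the $\alpha^i$ is the entire content of alternative (2); it cannot be obtained from the triangle inequality and vertex-cycle diameter bounds.
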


\begin{proof} By Proposition \ref{intersection} there are tracks $\eta^1$ and $\eta^2$ with $\eta^i \ss \tau^i$ and the $\eta^i$ contain a common subtrack $\eta$ with $\S(\eta) = \S(\tau^1) \cap \S(\tau^2)$. Furthermore, we can assume that $d(\eta, \tau^2)$ is uniformly bounded.

The proof is similar to the proof of Proposition \ref{nested}.
Let
$$\tau^1 = \tau^1_m \to \cdots \to \tau^1_0 = \tau$$
be a splitting sequence for $\tau^1 \ss \tau$. This will play the role of the splitting sequence from $\tau'$ to $\tau$ in Proposition \ref{nested}. We also choose, using Proposition \ref{intersection}, train tracks $\tau^2_i \ss \tau^1_i$ such that there is a subtrack $\eta_i \subset \tau^2_i$ with $\S(\eta_i) = \S(\tau^1_i) \cap \S(\tau^2)\supset \S(\eta)$.

As before set $C_0 = C$ and let $C_{i+1}$ be the constant $C'$ yielded by Lemma \ref{general_induction} for $C_i$. Now assume we have constructed a sequence of curves $\alpha^2_0 = \alpha^2, \alpha^2_1, \dots, \alpha_i^2$ with the following properties:
\begin{itemize}
\item $\alpha^2_j \in \S(\tau^2_{j-1}(\alpha^2_{j-1})) \cap \S(\tau^1_j(\alpha^1))$;

\item if $\alpha^2_{j-1} \neq \alpha^2_j$ then $\ind \tau^1_j(\alpha^2_j) > \ind \tau^1_{j-1}(\alpha^2_{j-1})$;

\item $d(\alpha^1, \alpha^2_j) \le C_{\#(j)}$ where, as before, $\#(j)$ is the number of times that $\alpha^2_{j-1} \neq \alpha^2_j$.

\end{itemize}
Applying Lemma \ref{general_induction} to $\alpha^1$ and $\alpha^2_i$ and the tracks $\tau^1_i$, $\tau^1_{i+1}$ and $\tau^2_i$, we can either continue the sequence to $\alpha^2_{i+1}$ or the sequence terminates and $d(\alpha^1, \tau^2_i) \le C_{\#(i+1)}$.

First assume the sequence ends at $i$. Then $d(\alpha^1, \tau^2_i)\le C_d$ where $d$ is the maximal index. 
By Proposition \ref{intersection} there is a track $\eta'$ with $\S(\eta') = \S(\eta) = \S(\tau_i^2)$ and $\eta' \to \eta$. As $\S(\eta) \subset \S(\tau^2_i)$ we have $\S(\eta') = \S(\eta)$. In particular, both tracks have the same set of vertex cycles.
A subtrack of $\tau^2_i$ is carried by $\tau^2$ so the vertex cycles of $\eta'$, $\tau^2_i$, and $\tau^2$ lie on a quasi-geodesic, in that order. In particular the bound on $d(\eta, \tau^2) = d(\eta', \tau^2)$ implies that $d(\tau^2_i, \tau^2)$ is bounded and hence $d(\alpha^1, \tau^2)$ is bounded and we are in case (1).

If the sequence continues to $\alpha^2_m$ we let $\gamma = \alpha^2_m$ and then, similarly as in the proof of Proposition \ref{nested}, $\gamma \in \S(\sigma^1) \cap \S(\sigma^2)$ with $d(\gamma, \alpha^1)$ and $d(\gamma,\alpha^2)$ uniformly bounded.
\end{proof}

\begin{prop}\label{curve version}
Given $C>0$ there exists $C' = C'(\Sigma, m,C)>0$ such that the following holds. Let $\tau, \tau_1, \dots, \tau_m$ be train tracks with $\tau_i \ss \tau$ and let $\sigma_1 \subset \tau_1, \dots, \sigma_m \subset \tau_m$ be subtracks. Assume that $\alpha_1 \in \S(\sigma_1), \dots, \alpha_m \in \S(\sigma_m)$ with $d(\alpha_i, \alpha_j) \le C$ for all $i, j$. Then either
\begin{enumerate}[(1)]
\item $\min\{d(\alpha_1, \sigma_1), \dots, d(\alpha_m, \sigma_m)\} \le C'$, or
\item there exists $\tilde\alpha \in \S(\sigma_1) \cap \cdots \cap \S(\sigma_m)$ with $d(\alpha_i, \tilde\alpha) \le C'$ for all $i$.
\end{enumerate} 
\end{prop}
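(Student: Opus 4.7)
The plan is to induct on $m$, taking $m=2$ as the base case (which is precisely Proposition \ref{general}). For the inductive step, assume the result for $m-1$ curves with constant $C'_{m-1}=C'_{m-1}(\Sigma,C)$, and apply the inductive hypothesis to $\alpha_1,\ldots,\alpha_{m-1}$. If its conclusion (1) holds, it immediately yields conclusion (1) of the $m$-curve statement (the minimum over a smaller set dominates). Otherwise we obtain a curve $\tilde\alpha'\in\S(\sigma_1)\cap\cdots\cap\S(\sigma_{m-1})$ with $d(\alpha_i,\tilde\alpha')\le C'_{m-1}$ for each $i<m$, and hence $d(\tilde\alpha',\alpha_m)\le C+C'_{m-1}$ by the triangle inequality.

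The crux is to present the partial intersection $\S(\sigma_1)\cap\cdots\cap\S(\sigma_{m-1})$ as $\S(\eta)$ for some subtrack $\eta$ of a track obtained from $\tau$ by splitting, so that it can be fed into the two-subtrack statement. I will iterate Proposition \ref{intersection}: starting with $\sigma_1\subset\tau_1$ and $\sigma_2\subset\tau_2$ it produces a common subtrack $\eta^{(2)}\subset\zeta^{(2)}$ with $\zeta^{(2)}\ss\tau_1\ss\tau$ (splitting sequences compose) and $\S(\eta^{(2)})=\S(\sigma_1)\cap\S(\sigma_2)$. The non-emptiness hypothesis at each stage is guaranteed by the existence of $\tilde\alpha'$ in the full intersection. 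Iterating with $\eta^{(j)}\subset\zeta^{(j)}$ and $\sigma_{j+1}\subset\tau_{j+1}$ (both split from $\tau$) for $j=2,\ldots,m-2$ produces $\eta:=\eta^{(m-1)}\subset\zeta\ss\tau$ satisfying $\S(\eta)=\S(\sigma_1)\cap\cdots\cap\S(\sigma_{m-1})$.

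Now I apply Proposition \ref{general} to $\eta\subset\zeta$ and $\sigma_m\subset\tau_m$ (both split from $\tau$) with curves $\tilde\alpha'\in\S(\eta)$ and $\alpha_m\in\S(\sigma_m)$, using the bound $d(\tilde\alpha',\alpha_m)\le C+C'_{m-1}$. If we land in conclusion (1) of Proposition \ref{general}, some curve among $\tilde\alpha',\alpha_m$ is within the relevant constant of $\tau$; since $\tilde\alpha'$ is in turn within $C'_{m-1}$ of each $\alpha_i$ for $i<m$, one of the $\alpha_i$ realizes the bound, which gives conclusion (1) of Proposition \ref{curve version}. Otherwise we get $\tilde\alpha\in\S(\eta)\cap\S(\sigma_m)=\S(\sigma_1)\cap\cdots\cap\S(\sigma_m)$ within a uniform distance of both $\tilde\alpha'$ and $\alpha_m$, so another application of the triangle inequality places $\tilde\alpha$ within $C'_{m-1}+C'$ of every $\alpha_i$, yielding conclusion (2).

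The main obstacle is the iterated application of Proposition \ref{intersection}: one must verify that the common subtrack produced at each step sits inside a track that again splits from $\tau$, so that Proposition \ref{intersection} is applicable at the next step, and that the distance bounds and quasi-convexity constants propagate controllably through the chain of compositions. The constants $C'$ inevitably grow with $m$, but this is permitted by the statement, which allows $C'$ to depend on $m$.
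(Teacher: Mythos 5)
Your overall strategy is the same as the paper's: induct on $m$ with Proposition \ref{general} as the base case, and use Proposition \ref{intersection} to realize intersections of the sets $\S(\sigma_i)$ as $\S(\eta)$ for a single subtrack of a track split from $\tau$. The paper organizes the induction in the opposite order (it first combines $\sigma_1$ and $\sigma_2$ into one subtrack $\sigma'$ and then applies the inductive hypothesis to the $m-1$ sets $\S(\sigma'),\S(\sigma_3),\dots,\S(\sigma_m)$, so only one application of Proposition \ref{intersection} is needed per step), but that difference is cosmetic.

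There is, however, a concrete gap in your handling of the case where the final application of Proposition \ref{general} to $(\eta,\tilde\alpha')$ and $(\sigma_m,\alpha_m)$ lands in its conclusion (1). That conclusion asserts $\min\{d(\tilde\alpha',\eta),\,d(\alpha_m,\sigma_m)\}\le C'$ --- distance to the respective \emph{subtracks} (i.e.\ to their vertex cycles), not to $\tau$ as you write. The subcase $d(\alpha_m,\sigma_m)\le C'$ is fine, but the subcase $d(\tilde\alpha',\eta)\le C'$ does not ``immediately'' transfer to a bound on some $d(\alpha_i,\sigma_i)$: knowing $\tilde\alpha'$ is close to $V(\eta)$ and that $V(\eta)\subset\S(\sigma_i)$ says nothing a priori about proximity to $V(\sigma_i)$. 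The paper resolves the analogous subcase by invoking the ``Furthermore'' clause of Proposition \ref{intersection}, which bounds $\min\{d(\sigma_1,\sigma'),d(\sigma_2,\sigma')\}$; in your setup you must propagate this clause through all $m-2$ iterations to conclude that $V(\eta)$ is within a bounded (in $m$) distance of $V(\sigma_j)$ for \emph{some} $j<m$, and only then does $d(\alpha_j,\sigma_j)\le d(\alpha_j,\tilde\alpha')+d(\tilde\alpha',V(\eta))+d(V(\eta),V(\sigma_j))$ give conclusion (1). You flag this propagation as ``the main obstacle'' but never carry it out, and the sentence you do write in its place is incorrect as stated. The rest of the argument (non-emptiness of the intermediate intersections via $\tilde\alpha'$, composability of splitting sequences so each $\zeta^{(j)}\ss\tau$, and the triangle-inequality bookkeeping in conclusion (2)) is sound.
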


\begin{proof}
If for any $i \neq j$ we have $\S(\sigma_i) \cap \S(\sigma_j) = \emptyset$ then by Proposition \ref{general} we have that
$$\min\{d(\alpha_1, \sigma_1), \dots, d(\alpha_m, \sigma_m)\} \le \min\{d(\alpha_i, \sigma_i), d(\alpha_j, \sigma_j)\} \le C'$$
where $C'$ is the constant from Proposition \ref{general}.

Therefore we can assume that $\S(\sigma_1) \cap \S(\sigma_2) \neq \emptyset$. Then, by Proposition \ref{general}, either (1) holds or there is an $\alpha' \in \S(\sigma_1) \cap \S(\sigma_2)$ with both $d(\alpha_1, \alpha')$ and $d(\alpha_2, \alpha')$, and hence $d(\alpha', \alpha_i)$, bounded. 
By Lemma \ref{intersection} there exists $\tau' \ss \tau$ and a subtrack $\sigma' \subset \tau'$ with $\S(\sigma') = \S(\sigma_1) \cap \S(\sigma_2)$
and $\min\{d(\sigma_1,\sigma'),d(\sigma_2,\sigma')\}$ bounded. By induction we can assume that proposition holds for $\alpha' \in \S(\sigma'), \alpha_3, \in \S(\sigma_3), \dots, \alpha_m \in \S(\sigma_m)$. If (1) holds than one of $d(\alpha', \sigma'), d(\alpha_3, \sigma_3), \dots, d(\alpha_m, \sigma_m)$ is uniformly bounded. In the case when only $d(\alpha', \sigma')$ is bounded, as $\min\{d(\sigma_1, \sigma'), d(\sigma_2, \sigma')\}$ is bounded, we can assume that, say, $d(\sigma_1, \sigma')$ is bounded. Since $d(\alpha_1, \alpha')$ is bounded we have that $d(\alpha_1, \sigma_1)$ is bounded and we are in case (1).  If not there is a
$$\tilde\alpha \in \S(\sigma') \cap \S(\sigma_3) \cdots \S(\sigma_m) = \S(\sigma_1) \cap \S(\sigma_2) \cap \cdots \S(\sigma_m)$$
with 
$$d(\tilde\alpha, \alpha'), d(\tilde\alpha,\alpha_3), \dots, d(\tilde\alpha,\alpha_m)$$
all bounded. Since $d(\alpha_1,\alpha')$ and $d(\alpha_2, \alpha')$ are bounded this implies that $d(\tilde\alpha, \alpha_1)$ and $d(\tilde\alpha, \alpha_2)$ are bounded. Therefore (2) holds.
\end{proof}

\begin{prop}\label{alternative}
  Given $C>0$ there exists $C' = C'(\Sigma, m, C)>0$
  such that the following holds. Let $\tau, \tau_1, \dots, \tau_m$ be train tracks with $\tau_i \ss \tau$ and let $\sigma_1 \subset \tau_1, \dots, \sigma_m \subset \tau_m$ be subtracks. Then either
\begin{enumerate}[(1)]
\item the diameter of $I_C(\S(\sigma_1), \dots, \S(\sigma_m))$ is $\le C'$, or
\item $I_C(\S(\sigma_1), \dots, \S(\sigma_m)) \subset B_{C'}(\S(\sigma_1) \cap \cdots \cap\S(\sigma_m))$.
\end{enumerate}
\end{prop}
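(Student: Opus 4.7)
The plan is to deduce Proposition \ref{alternative} pointwise from Proposition \ref{curve version}. For any $\beta \in I_C(\S(\sigma_1), \ldots, \S(\sigma_m))$ choose $\alpha_i \in \S(\sigma_i)$ with $d(\alpha_i, \beta) \le C$; the $\alpha_i$'s are then pairwise $2C$-close, so Proposition \ref{curve version} applies and yields a constant $C_1'$ together with one of two alternatives: either (A) there is $\tilde\alpha \in \bigcap_j \S(\sigma_j)$ with $d(\alpha_i, \tilde\alpha) \le C_1'$ for all $i$, whence $d(\beta, \tilde\alpha) \le C + C_1'$, or (B) some $\alpha_{i_0}$ satisfies $d(\alpha_{i_0}, V(\sigma_{i_0})) \le C_1'$, whence $d(\beta, V(\sigma_{i_0})) \le C + C_1'$.

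If (A) holds for every $\beta \in I_C$, then $I_C$ is contained in the $(C + C_1')$-neighborhood of $\bigcap_j \S(\sigma_j)$, which is conclusion (2) of Proposition \ref{alternative}. Otherwise some $\beta_0 \in I_C$ forces (B), and since $V(\sigma_{i_0})$ has uniformly bounded diameter $D_0$ and the $\alpha_j$'s are pairwise $2C$-close, $\beta_0$ and all of the $\alpha_j$'s lie within $C + C_1' + D_0$ of any vertex cycle of $\sigma_{i_0}$.

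The remaining task is to upgrade this observation into a uniform diameter bound on $I_C$ (conclusion (1)). For this I would use quasi-convexity of $I_C$ (Lemma \ref{fellow travel}(2)) together with the fellow-traveling estimate (Lemma \ref{fellow travel}(1)) to show that points on a geodesic $[\beta_0, \beta']$ in $\C$, for any $\beta' \in I_C$, sufficiently far from both endpoints lie in $I_{Q+2\delta}(\S(\sigma_1), \ldots, \S(\sigma_m))$. Applying the same Proposition \ref{curve version} dichotomy to these intermediate points sorts each into one of $m + 1$ categories: close to $\bigcap_j \S(\sigma_j)$, or close to one of the $m$ uniformly bounded sets $V(\sigma_i)$. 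A pigeonhole argument on these categories, using the uniform bound $D_0$ on each $V(\sigma_i)$ together with hyperbolicity, then bounds $d(\beta_0, \beta')$.

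The main obstacle is controlling how the geodesic may transition between the various categories: two intermediate points close to different $V(\sigma_i)$'s need not themselves be close, so one must argue that only boundedly many such transitions can occur along the geodesic and that each consumes only a uniformly bounded amount of geodesic length. This is where I expect the proof will lean most heavily on the quasi-convexity constants from Lemma \ref{fellow travel} and on the constants of Proposition \ref{curve version}, to conclude that the total geodesic length, and hence $\diam I_C$, is uniformly bounded in terms of $\Sigma$, $m$, and $C$.
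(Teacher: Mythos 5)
Your first step is exactly right and matches the paper: for each $\beta\in I_C(\S(\sigma_1),\dots,\S(\sigma_m))$, picking $\alpha_i\in\S(\sigma_i)$ with $d(\alpha_i,\beta)\le C$ and applying Proposition \ref{curve version} (with $2C$) shows that $\beta$ lies within a uniform distance $C_1$ either of $\S(\sigma_1)\cap\cdots\cap\S(\sigma_m)$ or of one of the $m$ bounded sets $V(\sigma_i)$. The gap is in how you then branch. You split into ``(A) holds for every $\beta$'' versus ``some $\beta_0$ forces (B)'' and in the second case you set out to prove conclusion (1), a uniform bound on $\diam I_C$. That cannot work: the two alternatives of Proposition \ref{curve version} are not mutually exclusive, so a point $\beta_0$ can land in case (B) even when the common intersection $\S(\sigma_1)\cap\cdots\cap\S(\sigma_m)$ is nonempty and has arbitrarily large diameter (for instance, $\beta_0$ could be a curve near $V(\sigma_{i_0})$ that also lies in the common intersection, while the intersection itself contains a long splitting-sequence quasi-geodesic, so $I_C$ has large diameter). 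In that situation conclusion (1) is false and only conclusion (2) holds, so no refinement of your pigeonhole can succeed; indeed the pigeonhole as you describe it sorts intermediate points into $m+1$ categories one of which (``close to $\bigcap_j\S(\sigma_j)$'') carries no diameter bound, so counting transitions between categories cannot bound $d(\beta_0,\beta')$.

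The correct continuation, which is what the paper does, is to branch on whether $\tilde{\bf I}=\S(\sigma_1)\cap\cdots\cap\S(\sigma_m)$ is empty, not on which case of Proposition \ref{curve version} occurs at some point. Your pointwise step already gives the covering $I_C\subset B_{C_1}(\tilde{\bf I})\cup\bigcup_i B_{C_1}(V(\sigma_i))$. Since each $\S(\sigma_i)$ is $Q$-quasi-convex (Theorem \ref{quasigeodesics}), Lemma \ref{fellow travel}(2) makes $I_C$ (for $C\ge Q$) quasi-convex and hence $K$-coarsely connected for a uniform $K$. If $\tilde{\bf I}=\emptyset$, a $K$-coarsely connected set covered by $m$ sets of uniformly bounded diameter has uniformly bounded diameter, giving (1). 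If $\tilde{\bf I}\neq\emptyset$, then $\tilde{\bf I}\subset I_C$, and following a $K$-chain in $I_C$ from an arbitrary point to $\tilde{\bf I}$, the chain can spend only a uniformly bounded total length in the $m$ bounded pieces before entering $B_{C_1}(\tilde{\bf I})$, so every point of $I_C$ is uniformly close to $\tilde{\bf I}$, giving (2). No analysis of geodesics or transition counts is needed once the covering and coarse connectivity are in hand.
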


\begin{proof}
Let ${\bf I} = I_C(\S(\sigma_1), \dots, \S(\sigma_m))$, ${\bf \tilde I} =  \S(\sigma_1) \cap \cdots \cap \S(\sigma_m)$ and $V_i = V(\sigma_i)$.

  We claim there is $C_1=C_1(\Sigma,m,C)$ so
  that $${\bf I} \subset B_{C_1}\left({\bf \tilde I}\right) \cup \bigcup_i B_{C_1}(V_i).$$

  {\it Proof of Claim.} Let $\beta\in {\bf I}$.
Thus there are $\alpha_i\in S(\sigma_i)$ so that
  $d(\alpha_i,\beta)\leq C$. Let $C'$ be the constant from Proposition
  \ref{curve version} with $2C$ as $C$. Since $d(\alpha_i,\alpha_j)\leq
  2C$ it then follows that either
  $d(\alpha_i,\sigma_i)\leq C'$ for some $i$
  or there is $\tilde\alpha\in
  \S(\sigma_1) \cap \cdots \cap\S(\sigma_m)$ with
  $d(\alpha_i,\tilde\alpha)\leq C'$ for all $i$.
  In the first case it follows that
  $d(\beta,\sigma_i)\leq C+C'$ and in the second it follows that
  $\beta\in B_{C+C'}\left({\bf \tilde I}\right)$.
  Either way, the claim is proved with $C_1=C+C'$.

By Theorem \ref{quasigeodesics} there is a $Q>0$ such that the sets $S(\sigma_i)$ are $Q$-quasi-convex. It suffices to assume that $C \ge Q$ and then by \eqref{intersection quasiconvex} of  Lemma \ref{fellow travel} we have that ${\bf I}$ is $C+2\delta$-quasi-convex and hence $2C+4\delta + 1$-connected. By the claim ${\bf I}$ is contained in the union of the $m$ sets $B_{C_1}(V_i)$, each of which has uniformly bounded diameter, and the set  $B_{C_1}({\bf \tilde I})$. If ${\bf \tilde I} = \emptyset$ then this implies that the diameter of ${\bf I}$ is uniformly bounded while if ${\bf \tilde I} \neq \emptyset$ then ${\bf I}$ is contained in a uniform neighborhood of ${\bf \tilde I}$. In the former case (1) holds while in the latter (2) holds.
\end{proof}

\begin{lemma}
Fix constants $Q$ and $C' \ge Q+2\delta$ and let $X$ be a $\delta$-hyperbolic graph. Let $\A$ be a collection of $Q$-quasi-convex subsets of $X$ that is closed under finite intersections. Assume that there exists $D_X>0$ such that for any $A_1, \dots, A_m \in \A$ either
\begin{enumerate}[(1)]
\item $\diam_X I_{C'}(A_1,\dots, A_m) \le D_X$ or;
\item $I_{C'}(A_1, \dots, A_m) \subset B_{D_X}(A_1\cap \cdots \cap  A_m)$.
\end{enumerate}
Let $Y$ be a $\delta$-hyperbolic graph and $f\colon X\to Y$ an injective, $L_0$-Lipschitz map. Also assume that there is $L_1 >0$ such that  for all $x_1, x_2 \in X$ we have
$$f([x_1,x_2]) \subset B_{L_1}([f(x_1), f(x_2)]).$$
Then for any $C>0$ there is $D_Y = D_Y(D_X,C, L_0, L_1,m)$  such that:
\begin{enumerate}[(A)]
\item $\diam_Y I_C(f(A_1),  \dots, f(A_m)) \le D_Y$ or;

\item  \begin{eqnarray*}
I_C(f(A_1), \dots, f(A_m)) & \subset& B_{D_Y}(f(A_1\cap \cdots \cap A_m)) \\ &\subset& B_{D_Y}(f(A_1)\cap \cdots \cap  f(A_m)).
\end{eqnarray*}
\end{enumerate}
\end{lemma}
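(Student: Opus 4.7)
The plan is to transfer the $X$-dichotomy to a $Y$-dichotomy by showing that every point of the $Y$-coarse-intersection is uniformly close to the $f$-image of a point of the $X$-coarse-intersection, and then combining this with the hypothesis. To get started, I note that by Proposition \ref{KR}, each $f(A_i)$ is $Q'$-quasi-convex in $Y$ for some $Q' = Q'(\delta, L_0, L_1, Q)$, so by part \eqref{intersection quasiconvex} of Lemma \ref{fellow travel} the set $J_Y \vcentcolon= I_C(f(A_1), \dots, f(A_m))$ is itself $(C + 2\delta)$-quasi-convex in $Y$.

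The main step is to establish the following coarse lifting: there exist constants $C''$ and $D'$, depending on $\delta$, $L_0$, $L_1$, $Q$, $C$, $m$ (with $C'' \le C'$ after enlarging $C'$ if necessary), such that for every $y \in J_Y$ there is a point $x \in I_{C''}(A_1, \dots, A_m) \subset X$ with $d_Y(f(x), y) \le D'$. To produce such an $x$, I would pick representatives $a_i \in A_i$ with $d_Y(y, f(a_i)) \le C$. By the alignment-preserving hypothesis, $f([a_i, a_j]) \subset B_{L_1}([f(a_i), f(a_j)])$, and the latter geodesic has length at most $2C$; so the $f$-image of any $X$-geodesic between the $a_i$'s stays within a uniformly bounded $Y$-distance of $y$. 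Using the Morse-type behavior of nearest-point projections to quasi-convex subsets in the hyperbolic space $X$ (the projection of $a_i$ onto $A_j$ lies $O(\delta + Q)$-close to every $X$-geodesic from $a_i$ to a point of $A_j$), these projections have $f$-images that remain close to $y$ in $Y$. Combining such projections in a controlled fashion produces an $x \in X$ with $d_X(x, A_j) \le C''$ for every $j$ while $d_Y(f(x), y) \le D'$.

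Given the coarse lifting, the dichotomy transfers cleanly. If the $X$-hypothesis provides $\diam_X I_{C''}(A_1, \dots, A_m) \le D_X$, then $f(I_{C''})$ has diameter at most $L_0 D_X$ in $Y$, and the lifting yields $J_Y \subset B_{D'}(f(I_{C''}))$, so $\diam_Y J_Y \le 2D' + L_0 D_X$, giving conclusion (A). If instead $I_{C''}(A_1, \dots, A_m) \subset B_{D_X}(\tilde{I})$ with $\tilde{I} = A_1 \cap \dots \cap A_m$, then $f(I_{C''}) \subset B_{L_0 D_X}(f(\tilde{I}))$ by the Lipschitz condition, and the lifting gives $J_Y \subset B_{D' + L_0 D_X}(f(\tilde{I}))$, which is conclusion (B); the second inclusion $f(\tilde{I}) \subset f(A_1) \cap \dots \cap f(A_m)$ is immediate from $\tilde{I} \subset A_i$ for all $i$, and in fact becomes an equality by the injectivity of $f$.

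The hard part will be the coarse lifting step. Because $f$ is Lipschitz but generally not a quasi-isometric embedding, individual $Y$-distance constraints do not control $X$-distances; the alignment-preserving property is the only geometric bridge between the two metrics, and leveraging it to produce a single point $x \in X$ close to every $A_j$ in the $X$-metric while keeping $f(x)$ close to $y$ in the $Y$-metric requires careful use of hyperbolic geometry in $X$, in particular the controlled behavior of nearest-point projections to quasi-convex subsets along geodesics.
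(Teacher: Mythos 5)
Your overall strategy---relate points of $I_C(f(A_1),\dots,f(A_m))$ to points of a coarse intersection upstairs in $X$ and then quote the $X$-dichotomy---is natural, but the central ``coarse lifting'' claim is false as stated, and this is exactly where the difficulty of the lemma lives. Because $f$ is only Lipschitz and alignment-preserving, not a quasi-isometric embedding, the sets $A_1,\dots,A_m$ can be arbitrarily far apart in $X$ while their images have a nonempty (even large) coarse intersection in $Y$: if $d_Y(f(a_1),y)\le C$ and $d_Y(f(a_2),y)\le C$, alignment preservation only says that the possibly very long geodesic $[a_1,a_2]$ maps into a bounded neighborhood of $y$; it gives no upper bound on $d_X(a_1,a_2)$, hence none on $d_X(A_1,A_2)$. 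In that situation $I_{C''}(A_1,\dots,A_m)=\emptyset$ for every uniform $C''$, there is nothing to lift to, and your argument produces no bound on $\diam_Y I_C(f(A_1),\dots,f(A_m))$, even though conclusion (A) must still be established. Worse, your case analysis then misfires: hypothesis (1) holds vacuously for the empty set, and the inclusion of $I_C(f(A_1),\dots,f(A_m))$ into a neighborhood of $f(I_{C''}(A_1,\dots,A_m))=\emptyset$ would force the $Y$-coarse intersection to be empty, which need not hold.

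The repair---and the route the paper takes---is a two-point argument rather than a one-point lift. Given $x,y\in I_C(f(A_1),f(A_2))$ with representatives $a_1,a_2$ (for $x$) and $b_1,b_2$ (for $y$), choose $c\in[a_1,a_2]$ and $d\in[b_1,b_2]$ realizing the minimal distance between the two geodesics. Alignment preservation gives $d_Y(x,f(c))\le 3C+L_1$ and $d_Y(y,f(d))\le 3C+L_1$, and hyperbolicity shows that either $d_X(c,d)\le 4\delta$ or $[c,d]\subset I_{Q+4\delta}(A_1,A_2)$; in the latter case the $X$-hypothesis bounds $d_X(c,d)$ (case (1)) or, taking $b_1=b_2=d\in A_1\cap A_2$, places $c$ uniformly near $A_1\cap A_2$ (case (2)). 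Either way the conclusion follows without ever assuming the coarse intersection in $X$ is nonempty. Note also that general $m$ is handled by induction, replacing $A_1,A_2$ by $A_1\cap A_2$ (this is where closure of $\A$ under intersections and injectivity of $f$ enter); your plan to produce a single point simultaneously close to all $m$ sets by combining nearest-point projections would need the same inductive scaffolding, and in any case cannot get off the ground until the empty-intersection case is dealt with.
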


\begin{proof}
We first assume that there are only two sets $A_1$ and $A_2$. The general case will follow by induction.

Let $a_1, b_1 \in A_1$ and $a_2, b_2 \in A_2$ and fix $c$ and $d$ in the geodesics $[a_1, a_2]$ and $[b_1,b_2]$ such that $d_X(c,d)$ minimizes the distance between the two geodesics. In particular, this implies that $B_{2\delta}([a_1, c]) \cap [c,d] \subset B_{2\delta}(\{c\})$ with a similar statement for $b_1$ and $d$. By $\delta$-hyperbolicity we have 
$$[c,d] \subset B_{2\delta}([a_1, c] \cup [a_1, b_1] \cup [b_1, d])$$
so if $d_X(c,d) > 4\delta$ we have $[c,d] \subset B_{4\delta}([a_1,b_1])$ and therefore $[c,d] \subset B_{Q+4\delta}(A_1)$. A similar statement holds for $A_2$ so if $d_X(c,d) > 4\delta$ we have $[c,d] \subset I_{Q+4\delta}(A_1, A_2)$. 

Assume we are in case (1) with $x,y \in I_C(f(A_1), f(A_2))$. Then there are $a_1, b_1 \in A_1$ and $a_2, b_2 \in A_2$ with $d_Y(f(a_1), x), d_Y(f(a_2), x), d_Y(f(b_1), y), d_Y(f(b_2), y) < C$. We choose $c$ and $d$ as above
and observe that
$$d_Y(x, f(c)), d_Y(y, f(d)) \le 3C+L_1$$
and therefore
\begin{eqnarray*}
d_Y(x,y) &\le &  d_Y(f(c), f(d)) + d_Y(x, f(c)) + d_Y(y, f(d))\\ & \le & L_0 D_X + 6C + 2L_1 = D_Y.
\end{eqnarray*}
This proves that (A) holds.

Now assume we are in case (2) with $x \in I_C(f(A_1), f(A_2))$ and $a_1,a_2$ and $c$ as before. Again we have $d_Y(x, f(c)) \le 3C+L_1$. Then fix $d \in A_1\cap A_2$ and set $b_1 = b_2 = d$. Therefore if $d_X(c,d) > 4\delta$ then $c \in I_{Q+ 4\delta}(A_1, A_2)$ and, as (2) holds, we can further assume that $d$ has been chosen such that $d_X(c,d) \le D_X$. We then have $d_X(c,d) \le \min\{ 4\delta, D_X\} = D_X$ and it follows that
\begin{eqnarray*}
d_Y(x, f(d)) & \le & d_Y(x, f(c)) + d_Y(f(c), f(d)) \\ & \le & 3C+ L_1 + L_0 D_X = D_Y.
\end{eqnarray*}
Therefore (B) holds.

Now assume that the lemma holds for any collection of $m-1$ sets in $\A$ and let $A_1, \dots, A_m$ be sets in $\A$.  If $A_1 \cap A_2 = \emptyset$ then we are in case (1) for the sets $A_1$ and $A_2$ so
$$I_C(f(A_1), \cdots, f(A_2)) \subset I_C(f(A_1), f(A_2))$$
has diameter $\le D^2_Y= D_Y(D_X, C, C', L_0, L_1, 2)$. Therefore we can assume that $A_1 \cap A_2 \neq \emptyset$. Since $f$ is injective we have $f(A_1 \cap A_2) = f(A_1) \cap f(A_2)$ and
\begin{eqnarray*}
I_C(f(A_1), \dots, f(A_m)) &=& I_C(f(A_1), f(A_2)) \cap I_C(f(A_3), \dots, f(A_m)) \\
& \subset & B_{D^2_Y}(f(A_1) \cap f(A_2)) \cap I_{D^2_Y}(f(A_3), \dots, f(A_m)) \\
& \subset & B_{D^2_Y}(f(A_1 \cap A_2)) \cap I_{D^2_Y}(f(A_3), \dots, f(A_m)) \\
& \subset & I_{D^2_Y}(f(A_1 \cap A_2),f(A_3), \dots, f(A_m)).
\end{eqnarray*}
Therefore $D_Y(D_X, C,C', L_0, L_1, m) = D_Y(D_X, D^2_Y,C', L_0, L_1, m-1)$ is the desired constant.
\end{proof}

Combining this lemma with Propositions \ref{KR} and \ref{hyperbolic} we have:

\begin{prop}\label{k alternative}
  Proposition \ref{alternative} holds as stated for the distance $d_k$
  in $\C_k$.
\end{prop}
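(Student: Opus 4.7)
The plan is to apply the preceding (unnamed) transfer lemma with $X = \C$, $Y = \C_k$, and $f = \pi_k : \C \to \C_k$ the natural map which is the identity on vertex sets, taking as input collection
$$\A = \{\S(\sigma) : \sigma \text{ is a subtrack of some } \tau' \text{ with } \tau' \ss \tau\}.$$
The content of Proposition \ref{k alternative} is then literally the conclusion of the transfer lemma for $\A$ and $f$, so the work reduces to verifying the two sets of hypotheses.

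First I would verify the hypotheses on $(X,\A)$. The sets $\S(\sigma) \in \A$ are uniformly quasi-convex in $\C$ by Theorem \ref{quasigeodesics}. The collection $\A$ is closed under finite non-empty intersections: by Proposition \ref{intersection}, if $\sigma_i \subset \tau_i$ with $\tau_i \ss \tau$ for $i = 1, 2$ and $\S(\sigma_1) \cap \S(\sigma_2) \ne \emptyset$, then there exist tracks $\eta_i \ss \tau_i$ sharing a common subtrack $\eta$ with $\S(\eta) = \S(\sigma_1) \cap \S(\sigma_2)$; since $\eta \subset \eta_1 \ss \tau$, we have $\S(\eta) \in \A$. (For empty intersections, alternative (1) of the transfer lemma is trivial.) The ``(1) or (2)'' alternative hypothesis of the transfer lemma for $\A$ in $\C$ is then exactly Proposition \ref{alternative}, after enlarging its constant $C'$ so that $C' \ge Q + 2\delta$ for $Q$ the uniform quasi-convexity constant of sets in $\A$.

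Next I would verify the hypotheses on $f = \pi_k$. The map is $1$-Lipschitz and the identity on vertex sets, hence injective, so $L_0 = 1$ works. For the $L_1$ hypothesis we need a constant $E$ with $\pi_k([x_1, x_2]) \subset B_E([\pi_k(x_1), \pi_k(x_2)])$ in $\C_k$ for all $x_1, x_2 \in \C$. This is precisely the last bullet of Proposition \ref{KR} as applied in the proof of Proposition \ref{hyperbolic}, which exhibits the required $E = E(\Sigma)$.

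With both packages of hypotheses in place, the transfer lemma applied to $\{\S(\sigma_1), \ldots, \S(\sigma_m)\} \subset \A$ yields the $(A)$/$(B)$ dichotomy for $I_C(\S(\sigma_1), \ldots, \S(\sigma_m))$ in $\C_k$. Since $\pi_k$ is the identity on vertices, $\pi_k(\S(\sigma_i)) = \S(\sigma_i)$ and $\pi_k(\S(\sigma_1) \cap \cdots \cap \S(\sigma_m)) = \S(\sigma_1) \cap \cdots \cap \S(\sigma_m)$, so the conclusion is word-for-word Proposition \ref{k alternative}. There is no substantive obstacle here: the proposition is a routine repackaging, with all the real work carried out in Proposition \ref{alternative} (the statement in $\C$) and in the transfer lemma (pushing the alternative across $\pi_k$).
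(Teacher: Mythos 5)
Your proposal is correct and is essentially the paper's own proof: the paper derives Proposition \ref{k alternative} by combining the preceding transfer lemma with Propositions \ref{KR} and \ref{hyperbolic}, exactly as you do, with the hypotheses on the collection $\A$ supplied by Theorem \ref{quasigeodesics}, Proposition \ref{intersection}, and Proposition \ref{alternative}. Your verification of the hypotheses (including closure of $\A$ under intersections and the choice $L_0=1$, $L_1=E$ from the last bullet of Proposition \ref{KR}) matches the intended argument.
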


\bibliographystyle{plain}
\bibliography{disintegrating}

\end{document}